\documentclass[11pt]{article}
\usepackage{enumerate}
\usepackage{amssymb,a4wide,latexsym,makeidx,epsfig}
\usepackage{amsthm}
\usepackage{dsfont}
\usepackage{amsmath}
\usepackage{lipsum}
\usepackage{enumerate}
\usepackage{mathrsfs}
\usepackage{xcolor}
\usepackage{tikz}
\usepackage{setspace}
\usepackage{geometry}
\usepackage{appendix}
\usepackage{multirow}
\usepackage{subcaption}
\usepackage[colorlinks=true, linkcolor=black, anchorcolor=black, citecolor=black, urlcolor=black, CJKbookmarks=true]{hyperref}
\usepackage{microtype}
\usepackage{colonequals}
\usepackage{enumitem}%通过[leftmargin=0.55cm, itemindent=1cm]让item缩进
\allowdisplaybreaks

\newtheorem{theorem}{Theorem}[section]

\newtheorem{definition}[theorem]{Definition}
\newtheorem{lemma}[theorem]{Lemma}
\newtheorem{fact}[theorem]{Fact}

\newtheorem{proposition}[theorem]{Proposition}
\newtheorem{corollary}[theorem]{Corollary}
\newtheorem{problem}[theorem]{Problem}

\newtheorem{claim}{Claim}[theorem]

\numberwithin{equation}{section}

\newcommand{\pr}{{\rm Pr}}
\newcommand{\bernoulli}{{\rm Bernoulli}}
\newcommand{\binomial}{{\rm Binomial}}

\begin{document}
\textwidth 150mm \textheight 225mm

\title{Ramsey-type results for threshold graphs and beyond}

\author{
Xihe Li\footnote{School of Mathematics and Statistics, Shaanxi Normal University, Xi'an, Shaanxi 710119, China.}~\footnote{Corresponding author. E-mail address: xiheli@snnu.edu.cn.}
}
\date{}
\maketitle
\newcommand\blfootnote[1]{%
\begingroup
\renewcommand\thefootnote{}\footnote{#1}%
\addtocounter{footnote}{-1}%
\endgroup
}
%\blfootnote{E-mail addresses: xiheli@snnu.edu.cn.}
\begin{center}
\begin{minipage}{145mm}
%\vskip 0.3cm
\begin{center}
{\small {\bf Abstract}}
\end{center}
{\small
A {\it threshold graph} is a graph that can be constructed from the one-vertex graph by repeatedly adding either a dominating vertex or an isolated vertex.
Motivated by an induced Ramsey-type problem for this class, we define $r'_2(s)$ to be the minimum integer $n$ such that every $n$-vertex graph contains an induced threshold graph on $s$ vertices.
We establish exponential upper and lower bounds for $r'_2(s)$ and determine its exact values for $s\in\{3,4,5,6\}$.
To study this problem from an edge-coloring perspective, we use the notion of an orderable coloring, introduced by Richer [{\it J. Combin. Theory Ser. B}, 80(1) (2000), 172--177].
An edge-colored graph is {\it orderable} if its vertices can be ordered so that, for each vertex, all edges from it to later vertices have the same color.
Equivalently, $r'_2(s)$ is the minimum $n$ such that every $2$-edge-coloring of $K_n$ contains an orderable $K_s$.
We also determine the exact value of the unordered canonical Ramsey number $CR(s, 3)$ for all $s \ge 3$, where $CR(s,3)$ denotes the minimum integer $n$ such that every edge-coloring of $K_n$ contains either an orderable $K_s$ or a rainbow $K_3$.
More generally, for graphs $G$ and $H$, we study $r'_2(G)$, the corresponding $2$-color Ramsey number for an orderable $G$, and $CR(G,H)$, where the alternative is a rainbow $H$.
For complete bipartite graphs, we prove that for every fixed $s$, $r'_2(K_{s,t}) = CR(K_{s,t}, K_3)= \left(\frac{2^s}{s+1}+o(1)\right)t$ as $t\to\infty$.
For $s\in \{2,3\}$, we further determine the exact values of these parameters for infinitely many $t$, using constructions arising from strongly regular graphs, Hadamard matrices and conference matrices.
\hspace{2em}

\vskip 0.1in \noindent {\bf AMS Subject Classification (2020)}: \ 05C55, 05D10
\vskip 0.1in \noindent {\bf Keywords}: \ Ramsey number, unordered canonical Ramsey number, Gallai-coloring, threshold graph, orderable coloring
}
\end{minipage}
\end{center}

\section{Introduction}
\label{sec:introduction}

Ramsey theory is a classical area of combinatorics concerned with the existence of prescribed substructures in sufficiently large host structures.
For positive integers $s$ and $t$, the {\it Ramsey number} $r(s,t)$ is the minimum integer $n$ such that every $n$-vertex graph contains either a clique of order $s$ or an independent set of size $t$.
In recent years, researchers have achieved significant breakthroughs in upper and lower bounds for Ramsey numbers (see, for example, \cite{BBCGHMST,CGMS,MaSX,MaVe}).
For further developments in Ramsey theory, we refer the interested reader to three surveys~\cite{Mor,Rad,Ver}.

In this paper, we study Ramsey-type problems for threshold graphs, which play an important role in graph theory and have wide applications in fields such as computer science, scheduling theory and psychology.
A {\it threshold graph} of order $s$ is an $s$-vertex graph that can be constructed from the one-vertex graph by repeatedly adding a new vertex that is either adjacent to all the previous vertices or nonadjacent to all the previous vertices.
Threshold graphs have several equivalent definitions, and we will introduce them in Section~\ref{sec:pf_complete};
see monograph~\cite{MaPe} for more information on threshold graphs.
Note that if $T$ is a threshold graph, then its complement $\overline{T}$ is also a threshold graph.
We extend the classical Ramsey number from complete graphs to threshold graphs as follows.

\begin{definition}\label{def:r-threshold}
{\rm
For any positive integer $s$, let $r'_2(s)$ be the minimum integer $n$ such that every $n$-vertex graph contains a threshold graph of order $s$ as an induced subgraph.
}
\end{definition}

We provide three remarks on the definition of $r'_2(s)$.
First, since the empty graph\footnote{An {\it empty graph} is a graph with an empty edge set.} is a threshold graph, every graph on at least $s$ vertices contains a threshold graph of order $s$ as an ordinary subgraph.
Hence, to make the definition meaningful, we require the threshold graph to be an induced subgraph of the host graph in Definition~\ref{def:r-threshold}.
Second, there are $2^{s-1}$ nonisomorphic threshold graphs of order $s$.
If we denote by $\mathcal{T}_s$ the set of all threshold graphs of order $s$, then $r'_2(s)$ is the minimum integer $n$ such that every $n$-vertex graph contains some $T\in \mathcal{T}_s$ as an induced subgraph.
Third, since complete graphs and empty graphs are threshold graphs, we have $r'_2(s)\leq r(s,s)$, so the parameter $r'_2(s)$ is well-defined.

Our first result provides upper and lower bounds on $r'_2(s)$.
The upper bound given by Theorem~\ref{thm:r2s} actually holds for every integer $s\geq 6$.
We will also prove that the exact values of $r'_2(s)$ are $3, 6, 10, 18$ for $s=3, 4, 5, 6$, respectively; see Theorems~\ref{thm:r34}, \ref{thm:r5} and \ref{thm:r6}.

\begin{theorem}\label{thm:r2s}
As $s\to \infty$, we have
$$\left(\sqrt{2}\ln 2-o(1)\right)2^{\frac{s}{2}}< r'_2(s)\leq \left\lceil\frac{13}{48}2^s\right\rceil.$$
\end{theorem}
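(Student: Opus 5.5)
The two bounds call for different arguments: the upper bound comes from an induction seeded by the exact value $r'_2(6)=18$ (Theorem~\ref{thm:r6}), and the lower bound from a probabilistic construction.

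\textbf{Upper bound.} Put $a_s=\lceil \tfrac{13}{48}2^s\rceil=\lceil 13\cdot 2^{s-4}/3\rceil$, so $a_6=18=r'_2(6)$. The key lemma is:
\[
r'_2(s)\le 2r'_2(s-1)\quad\text{always, and}\quad r'_2(s)\le 2r'_2(s-1)-1 \text{ whenever } r'_2(s-1) \text{ is even.}
\]
To prove it, write $m=r'_2(s-1)$ and let $G$ have $n$ vertices. Suppose some vertex $v$ has $|N(v)|\ge m$. Then $N(v)$ contains an induced threshold graph of order $s-1$, and adding $v$ as a dominating vertex gives one of order $s$. Symmetrically, if $v$ has at least $m$ non-neighbours, we add $v$ as an isolated vertex. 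This case applies automatically when $n=2m$, since $n-1=2m-1$ forces one of the two sides to have size at least $m$.

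When $n=2m-1$, the argument fails only if every vertex has exactly $m-1$ neighbours and $m-1$ non-neighbours. Then $G$ is $(m-1)$-regular on $2m-1$ vertices, so $(m-1)(2m-1)$ must be even. Since $2m-1$ is odd, this forces $m$ to be odd. Hence for even $m$ no such graph exists and $n=2m-1$ suffices.

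Next I check that $a_s$ satisfies the matching recursion. Since $\lceil 2x\rceil\le 2\lceil x\rceil$, we always have $2a_{s-1}\ge a_s$. Moreover $a_s=2a_{s-1}-1$ can occur only when $s-1$ is even. In that case $2^{s-5}\equiv 1 \pmod 3$, so
\[
a_{s-1}=\frac{13\cdot 2^{s-5}+2}{3},
\]
whose numerator is even; hence $a_{s-1}$ is even. (For example, $a_7=35=2\cdot 18-1$ and $a_9=139=2\cdot 70-1$.) Induction from $s=6$, using the lemma with the parity case exactly where it is needed, then gives $r'_2(s)\le a_s$ for all $s\ge 6$.

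\textbf{Lower bound.} Take $G=G(n,1/2)$. For a fixed $s$-set $S$, the probability that $G[S]$ is a threshold graph is $t_s2^{-\binom{s}{2}}$, where $t_s$ is the number of labeled threshold graphs on $s$ vertices. This count is known exactly via set compositions, namely $t_s=\sum_k k!\,S(s,k)$ up to the doubling from the first step. It has asymptotics $t_s=\Theta\bigl(s!/(\ln 2)^{s}\bigr)$, the same order as the ordered Bell numbers.

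A first-moment (or alteration) argument then needs $\binom{n}{s}t_s2^{-\binom s2}<1$. By Stirling this gives $n\approx (\ln 2)\,2^{(s-1)/2}\cdot(1+o(1))$. To reach the stated constant $\sqrt2\ln 2$, I would apply the Lov\'asz Local Lemma exactly as in Spencer's improvement of the Erd\H{o}s bound for $r(s,s)$. The bad events are indexed by $s$-sets, and two events are dependent only when the sets share at least two vertices. Optimizing the LLL condition removes the $s$-th-root losses and should yield $n\ge(\sqrt2\ln2-o(1))2^{s/2}$.

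\textbf{Main obstacle.} I expect the main difficulty to be the lower-bound constant. This needs the precise asymptotic count of labeled threshold graphs, to pin down the $\ln 2$, together with careful LLL bookkeeping, to obtain the exact factor $\sqrt2$ rather than a weaker constant. The upper bound reduces to the parity check above.
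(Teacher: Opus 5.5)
Your proposal is correct and follows the paper's own route. The upper bound comes from the same degree-parity recursion ($r'_2(s)\le 2r'_2(s-1)$, improved to $2r'_2(s-1)-1$ when $r'_2(s-1)$ is even), seeded by $r'_2(6)=18$. The lower bound comes from the Lov\'asz Local Lemma on $G(n,1/2)$ with dependencies through $s$-sets sharing two vertices; the labeled threshold count $\Theta(s!/(\ln 2)^{s})$ leaves only the polynomial factor $s!/(s-2)!$, which is what gives the constant $\sqrt{2}\ln 2$.

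One small correction: the exact count is $2(F_s-sF_{s-1})$ with $F_s=\sum_k k!\,S(s,k)$. Your formula, taken literally, already exceeds the $2^{3}=8$ labeled graphs on three vertices. However, the bound $t_s\le 2F_s$, together with $F_s\sim s!/(2(\ln 2)^{s+1})$, is all the Local Lemma step needs.
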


To study this problem from the perspective of edge-colorings, we use the notion of an orderable coloring introduced by Richer~\cite{Ric} in 2000.
Throughout this paper, we use $\chi$ to denote an edge-coloring of a graph, and $\chi(e)$ to denote the color assigned to an edge $e$.
An edge-colored $s$-vertex graph $G$ is called {\it orderable} if there exist an ordering $(v_1, v_2, \ldots, v_s)$ of $V(G)$ and a list $(c_1, c_2, \ldots, c_{s-1})$ of not necessarily distinct colors such that, for every edge $v_iv_j\in E(G)$ with $i<j$, we have $\chi(v_iv_j)=c_i$.
We now extend the classical Ramsey number from monochromatic subgraphs to orderable subgraphs.

\begin{definition}\label{def:r}
{\rm
For any graph $G$, let $r'_2(G)$ be the minimum integer $n$ such that every 2-edge-coloring of $K_n$ contains an orderable copy of $G$.
}
\end{definition}

We now show that $r'_2(K_s)=r'_2(s)$.
On the one hand, consider a 2-edge-coloring of $K_n$ using red and blue, where $n=r'_2(s)$.
Let $H_r$ be the spanning subgraph of $K_n$ consisting of all the red edges.
Since $|V(H_r)|=n=r'_2(s)$, $H_r$ contains an induced threshold graph $T$ of order $s$.
By the definition of a threshold graph, we may assume that $V(T)=\{u_1, u_2, \ldots, u_s\}$ such that for each $2\leq i\leq s$, $u_i$ is either adjacent to or nonadjacent to all the vertices of $\{u_1, \ldots, u_{i-1}\}$ in $T$.
This implies that for each $2\leq i\leq s$, $u_i$ is joined to $\{u_1, \ldots, u_{i-1}\}$ by either all red edges or all blue edges in $K_n$.
Thus $K_n$ contains an orderable $K_s$ under the ordering $(u_s, u_{s-1}, \ldots, u_1)$, so $r'_2(K_s)\leq r'_2(s)$.
On the other hand, assume that $F$ is an $n$-vertex graph with $n=r'_2(K_s)$.
Let $F'$ be the 2-edge-colored $K_n$ obtained from $F$ by coloring all edges of $F$ with red and coloring all edges of the complement $\overline{F}$ with blue.
Since $|V(F')|=n=r'_2(K_s)$, $F'$ contains an orderable $K_s$ with an ordering $(v_1, v_2, \ldots, v_s)$.
We construct a threshold graph of order $s$ that is an induced subgraph of $F$ as follows.
Starting with $v_s$ and adding $v_{s-1}, \ldots, v_1$, each $v_i$ is adjacent to all of $v_{i+1}, \ldots, v_s$ if all edges from $v_i$ to $\{v_{i+1}, \ldots, v_s\}$ are red in $F'$,
and nonadjacent to all of $v_{i+1}, \ldots, v_s$ if all edges from $v_i$ to $\{v_{i+1}, \ldots, v_s\}$ are blue in $F'$.
This implies that $r'_2(s) \leq r'_2(K_s)$.

Richer's motivation for introducing orderable colorings arose from the Erd\H{o}s-Rado Canonical Ramsey Theorem.
An edge-colored graph is called {\it monochromatic} if all edges are colored the same,
{\it rainbow} if all edges are colored differently,
and {\it lexical} if there is a total order of its vertices such that two edges have the same color if and only if they share the same smaller endpoint.
The celebrated Erd\H{o}s-Rado Canonical Ramsey Theorem~\cite{ErRa} states that for any positive integer $s$, there exists a minimum integer $n$ such that every edge-coloring of $K_n$ with an arbitrary number of colors contains a monochromatic, a rainbow or a lexical copy of $K_s$.
Note that in the definition of orderable edge-colored graphs, if all the colors in the list $(c_1, c_2, \ldots, c_{s-1})$ are the same, then $G$ is monochromatic, and if all the colors in the list are distinct, then $G$ is lexically colored.
Therefore, the notion of an orderable coloring unifies the notions of monochromatic and lexical colorings.
Moreover, the Canonical Ramsey Theorem implies that for any positive integer $s$, if $n$ is large enough, then every edge-coloring of $K_n$ with any number of colors contains either an orderable copy or a rainbow copy of $K_s$.
Richer~\cite{Ric} defined the unordered canonical Ramsey number as follows.

\begin{definition}{\normalfont (\cite{Ric})}\label{def:CR}
{\rm
For positive integers $s$ and $t$, the {\it unordered canonical Ramsey number} $CR(s,t)$ is defined as the minimum integer $n$ such that every edge-coloring of $K_n$ with any number of colors contains either an orderable copy of $K_s$ or a rainbow copy of $K_t$.
}
\end{definition}

For general $s$ and $t$, Richer~\cite{Ric} proved that $$\left({t\choose 2}-1\right)^{s-2}+1\leq CR(s,t)\leq 7^{3-s}t^{4s-4}.$$
Before the present work, the asymptotic order of $CR(s,t)$ was known in the regime where $s$ is fixed and $t\to \infty$:
$$CR(s,t)=\Theta\left(\left(\frac{t^3}{\ln t}\right)^{s-2}\right),$$
where the lower bound was obtained by Jiang~\cite{Jiang} in 2009, and the upper bound was obtained by Araujo and Peng~\cite{ArPe} in 2025.
In this paper, we focus on the case in which $t$ is fixed.
We will mainly consider the first open case, namely $t=3$.
Our second result is the exact value of $CR(s, 3)$ for all integers $s\geq 3$.

\begin{theorem}\label{thm:CRs3}
For any integer $s\geq 3$, we have
$$CR(s,3)=
\left\{
   \begin{aligned}
    &2\cdot 5^{\frac{s-3}{2}}+1 & & \mbox{if $s$ is odd},\\
    &5^{\frac{s-2}{2}}+1 & & \mbox{if $s$ is even}.
   \end{aligned}
   \right.$$
\end{theorem}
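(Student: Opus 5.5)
Colorings with no rainbow triangle are Gallai colorings, so the plan is to use Gallai's structure theorem. It says that every Gallai coloring of $K_n$ with $n\ge 2$ has a reduction: $V(K_n)$ splits into $m\ge 2$ nonempty parts $V_1,\dots,V_m$ such that all edges between any two parts get one color, and at most two colors appear between parts. We may take the reduced graph on the parts to be either a $2$-colored $K_2$ or a $2$-colored $K_m$ in which both colors genuinely occur. Write $f(s)$ for the claimed value minus one, i.e. the largest $n$ admitting a Gallai coloring with no orderable $K_s$. The formula amounts to $f(3)=2$ and $f(s+1)=\tfrac{5}{2}f(s)$ for $s$ even, $f(s+1)=2f(s)$ for $s$ odd; equivalently $f(s+2)=5f(s)$, with $f(2)=1$ and $f(3)=2$.

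\emph{Lower bound.} I would build the coloring recursively. For even $s$, take the extremal coloring on $f(s)$ vertices and substitute a copy of it into each vertex of a $2$-colored $K_5$ that is a red and a blue $C_5$, using two fresh colors for the between-part edges. This gives $5f(s)=f(s+2)$ vertices. For odd $s$, I would instead blow up two parts with a single new color and compare, checking that $2\cdot 5^{(s-3)/2}$ matches the recursion $f(s+1)=\tfrac{5}{2}f(s)$.

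To verify that no orderable $K_{s+2}$ appears in the $C_5$ construction, I take an orderable clique $Q$ and look at how it meets the parts. An orderable coloring of $K_5$ restricted to the reduced $C_5$ pattern can use at most $2$ parts: the $C_5$ $2$-coloring has no orderable $K_3$, since every triangle in it uses both colors in a non-orderable way. If $Q$ meets two parts, the first vertex (in the ordering) whose later neighbours lie across both parts must send a single color to all of them. One checks this forces all but one vertex of $Q$ to lie in one part, so $|Q|\le (s-1)+1$ inside blocks. The color freshness is what makes this work.

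\emph{Upper bound.} This is the heart of the argument and the main obstacle: proving $f(s+2)\le 5f(s)$, or alternatively $f(s+1)\le \tfrac{5}{2}f(s)$ and $f(s+1)\le 2f(s)$ with the right parity. Given a Gallai coloring on $n>f(s+2)$ vertices, I would take its Gallai partition with reduced graph $R$ in colors red and blue, and induct. If some part $V_i$ has more than $f(s)$ vertices, it contains an orderable $K_s$. I then want to extend this clique by two vertices chosen from other parts whose edges to $V_i$, and to each other, fit an orderable prefix. Such vertices exist when $R$ contains, for example, a vertex $j$ with a monochromatic path $j,i$ plus another suitable vertex; this uses the fact that any two-coloring of $K_m$ with $m\ge 3$ has an orderable triangle containing a given vertex unless the structure is tightly restricted.

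Otherwise every part is small, so $m$ is large. Then I would apply the orderable Ramsey property of $R$ itself, weighted by part sizes. Choosing a vertex $v$ of $R$, the parts joined to it in red and in blue split the remaining weight, and the bigger side recursively supplies an orderable clique of size $s+1$, to which $v$ is prepended. Making this weighted counting give exactly the factors $5$ and $\tfrac{5}{2}$ needs a careful potential argument, essentially a weighted version of $r'_2(3)=3$ and of the $C_5$ extremality. I would try a direct induction on the statement ``total weight $>f(k)$ implies an orderable $K_k$'', where a part $V_i$ contributes an internal clique of size $g$ with $f(g-1)<|V_i|$, and handle the two colors out of each vertex by case analysis.
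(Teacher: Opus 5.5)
Your lower-bound construction is right, but its verification contains a false step, and the upper bound is not proved: your sketch closes the even inductive step but not the odd one.

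\textbf{Upper bound.} The proposal stops exactly where the difficulty begins. Write $f(k)=CR(k,3)-1$ for target size $k$. Your parity is swapped: $f(k)=\frac52 f(k-1)$ for even $k$ and $f(k)=2f(k-1)$ for odd $k$ (e.g.\ $f(3),f(4),f(5)=2,5,10$).

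Your opening moves, made precise, are the paper's:
\begin{itemize}
\item If some part is joined to the rest in a single color, then $n\le 2f(k-1)\le f(k)$.
\item Otherwise every part $V_i$ sees both colors. Two parts joined to $V_i$ in different colors give a two-vertex prefix, so $|V_i|\le f(k-2)$.
\item For any part $V_v$, its red side and its blue side each have total size at most $f(k-1)$.
\end{itemize}
Hence $n\le f(k-2)+2f(k-1)$. This equals $f(k)$ for even $k$, so the factor-$\frac52$ step is actually the easy one. For odd $k$ it gives only $\frac65 f(k)$, and your ``careful potential argument'' stands in for the entire difficulty.

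The paper closes the odd case with a second layer of structure:
\begin{itemize}
\item Take $V_m$ smallest, with red and blue sides $A,B$ (sets of parts of total size $\le f(k-1)$). Show each side has at least two parts.
\item Take smallest parts $V_{m-1}$ in $A$ and $V_{m-2}$ in $B$. Split the remaining parts of $A$ and $B$ by their color to these into $X,Y,U,W$, each of total size $\le f(k-2)$.
\item Show each of $X,Y,U,W$ contains at most two parts. Among three parts, one has equal colors to the other two; this gives a three-vertex prefix and forces parts of size $\le f(k-3)$.
\item Deduce that $A$ and $B$ each have at least three parts, then exactly five, so $m=11$ and every part has size $\le f(k-3)$.
\item A part of $A$ with three red neighbours in $B$ yields a four-vertex prefix, forcing three parts of size $\le f(k-4)$ and $n<f(k)$. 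The same holds for blue neighbours of parts of $B$. So the $5\times 5$ reduced bipartite graph between $A$ and $B$ has at most $10+10<25$ edges, a contradiction.
\end{itemize}
This needs $f(k-3)\le \frac1{10}f(k)$ and $f(k-4)=\frac1{25}f(k)$, i.e.\ an induction reaching down to $k-4$. The paper imports the base cases $3\le k\le 6$ from Brosch et al.; your plan does not say how small $k$ would be handled.

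\textbf{Lower bound.} Blowing up an extremal coloring into the $C_5$-colored $K_5$ with two fresh colors does work for both parities, starting from $f(2)=1$ and $f(3)=2$. The false step is that the $C_5$ pattern has no orderable triangle. Every $2$-colored triangle is orderable: put first the vertex whose two edges agree.

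Your conclusion $|Q|\le s$ therefore fails. Label the red $C_5$ as $1,2,3,4,5$ in cyclic order, so $12$ and $15$ are red and $25$ is blue. With $u\in V_1$, $w\in V_2$ and an orderable $K_{s-1}$ inside $V_5$, the ordering $(u,w,\dots)$ is an orderable $K_{s+1}$. If your bound held, you would get $f(s+1)\ge 5f(s)$, contradicting the formula.

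The correct count goes as follows:
\begin{itemize}
\item The part $P$ containing the last vertex of $Q$ contains a final segment of the ordering. An earlier vertex of $P$ sends an internal color to the last vertex, hence to everything after it.
\item Every vertex before that segment sends a fresh cross color, so it is alone in its part.
\item These singleton parts, followed by $P$, form an orderable clique of the reduced $K_5$ ending at $P$. The reduced $K_5$ has no orderable $K_4$, since each vertex has only two edges of each color.
\end{itemize}
Hence $|Q|\le 2+(s-1)=s+1$, which is what you need.

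The paper argues more simply. Its iterated construction has no monochromatic triangle and uses only $k-2$ colors. A repeated color $c_i=c_j$ with $i<j$ in an orderable $K_k$ would make $v_iv_jv_{j+1}$ monochromatic.
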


In 2025, Brosch, Lidick\'{y}, Miyasaki and Puges~\cite{BLMP} generalized the unordered canonical Ramsey number from complete graphs to general graphs.

\begin{definition}{\normalfont (\cite{BLMP})}\label{def:CRGH}
{\rm
Given two graphs $G$ and $H$, the unordered canonical Ramsey number $CR(G,H)$ is defined as the minimum integer $n$ such that every edge-coloring of $K_n$ with any number of colors contains either an orderable copy of $G$ or a rainbow copy of $H$.
}
\end{definition}

Note that $CR(K_s,K_t)=CR(s,t)$.
To the best of our knowledge, apart from results for $CR(s,t)$, all currently known results on $CR(G,H)$ concern small bipartite graphs, which were obtained by Brosch et al.~\cite{BLMP} via flag algebras and integer linear programs.
In this paper, we study $CR(K_{s,t}, K_3)$, that is, the problem of finding an orderable $K_{s,t}$ or a rainbow $K_3$.
Since a 2-edge-coloring of $K_n$ certainly contains no rainbow $K_3$, we have $CR(G, K_3)\geq r'_2(G)$ for any graph $G$.
Our next result implies that $CR(K_{s,t}, K_3)$ and $r'_2(K_{s,t})$ are equal when $t$ is large enough.
This contrasts with the behavior for complete graphs, where the two parameters need not coincide.
Theorem~\ref{thm:CRKst} below also implies that the asymptotic values of $CR(K_{s,t}, K_3)$ and $r'_2(K_{s,t})$ are $\left(\frac{2^s}{s+1}+o(1)\right)t$ as $t\to\infty$.

\begin{theorem}\label{thm:CRKst}
For any fixed integer $s \geq 2$ and sufficiently large $t$, we have
$$\frac{2^s}{s+1}t-O_s\left(\sqrt{t \ln t}\right)\leq r'_2(K_{s,t})= CR(K_{s,t}, K_3) \leq \frac{2^s}{s+1}t+O_s(1).$$
\end{theorem}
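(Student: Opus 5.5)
The plan has three parts: a counting argument for the upper bound on $r'_2(K_{s,t})$, a random coloring for the lower bound, and a reduction from Gallai colorings to $2$-colorings that gives $CR(K_{s,t},K_3)=r'_2(K_{s,t})$. Since $CR(K_{s,t},K_3)\ge r'_2(K_{s,t})$ trivially, the reduction together with the two bounds yields the statement.

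\emph{Reformulation and upper bound for $2$-colorings.} Write $A=\{a_1,\dots,a_s\}$ for the small side, listed in the order in which its vertices appear in an orderable ordering. Each vertex $a_i$ sends a single colour $c_i$ to all later vertices. Take a vertex $b$ of the large side placed after $a_1,\dots,a_k$ and before $a_{k+1},\dots,a_s$. The colour vector of $b$ towards $A$ then has the form $(c_1,\dots,c_k,x,\dots,x)$, where $x$ is the common colour of the forward edges of $b$. Conversely, given $A$, an ordering of it, and a set of $t$ outside vertices whose vectors have this form, we can insert each such vertex at the appropriate position; the edges inside the large side play no role. So an orderable $K_{s,t}$ exists if and only if some $s$-set $A$, ordering $\sigma$ of $A$ and colour vector $c$ have at least $t$ outside vertices whose colour vector to $A$ lies in the family $F(\sigma,c)$ of such chain vectors. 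With two colours we may take $x=1-c_{k+1}$ for $k<s$, so $F(\sigma,c)$ consists of exactly $s+1$ distinct vectors in $\{0,1\}^s$. Fix any $s$-set $A$ and choose $\sigma$ and $c$ uniformly at random. By symmetry every vector of $\{0,1\}^s$ lies in $F(\sigma,c)$ with probability $(s+1)/2^s$. Hence the expected number of the $n-s$ outside vertices falling in $F(\sigma,c)$ is $(n-s)(s+1)/2^s$. This is at least $t$ once $n\ge \frac{2^s}{s+1}t+s$, so $r'_2(K_{s,t})\le \frac{2^s}{s+1}t+O_s(1)$.

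\emph{Lower bound.} Colour $K_n$ uniformly at random with two colours, where $n=\frac{2^s}{s+1}t-C_s\sqrt{t\ln t}$. For each fixed $s$-set $A$ and each pair $(\sigma,c)$, the number of outside vertices with colour vector in $F(\sigma,c)$ is distributed as $\binomial(n-s,(s+1)/2^s)$, whose mean is $t-\Theta_s(\sqrt{t\ln t})$. By Chernoff's bound, the probability that this count reaches $t$ is at most $n^{-s-1}$ when $C_s$ is large. A union bound over the at most $n^s\cdot s!\,2^s$ choices of $(A,\sigma,c)$ then shows that with positive probability no orderable $K_{s,t}$ exists. This gives the lower bound.

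\emph{$CR(K_{s,t},K_3)=r'_2(K_{s,t})$.} It remains to show that every colouring of $K_n$ with no rainbow triangle, $n\ge r'_2(K_{s,t})$, contains an orderable $K_{s,t}$; I expect this to be the main obstacle. I would use Gallai's theorem: such a colouring has a partition $V_1,\dots,V_m$ with $m\ge2$, where all edges between two parts have one colour and at most two colours occur between parts. I would argue by induction on $n$.
\begin{itemize}
\item If the reduced $2$-coloured graph on the parts is large, choose $A$ with its vertices in distinct parts. Vertices outside the parts of $A$ then have $2$-coloured vectors to $A$, and the averaging above applies. The loss of at most $s$ parts costs only $O_s(1)$, which is the source of the additive constant.
\item If some part $V_j$ is large, there are two options. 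One is to apply induction inside $V_j$. The other is to take $A\subseteq V_j$: every outside vertex then has a constant vector to $A$, and a constant vector lies in $F(\sigma,c)$ for $k=0$. This makes it easy to collect $t$ vertices once the outside of $V_j$ is large.
\end{itemize}
Balancing these cases to recover exactly the same threshold as in the $2$-coloured case, rather than only the asymptotic one, is the delicate point. If exact equality resists this argument, I would first establish the asymptotic version $CR(K_{s,t},K_3)\le \frac{2^s}{s+1}t+O_s(1)$, since that already suffices for the displayed bounds.
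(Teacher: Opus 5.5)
Your lower bound (uniform random $2$-colouring, Chernoff, union bound over all $(A,\sigma,c)$) is essentially the paper's and is fine, and your reformulation via the chain families $F(\sigma,c)$ is correct. The $2$-colour upper bound, however, rests on a false claim. For a fixed $A$, the vectors of $\{0,1\}^s$ are \emph{not} equally likely to lie in $F(\sigma,c)$. Both constant vectors lie in every $F(\sigma,c)$: the all-$\bar c_1$ vector is the case $k=0$, and the all-$c_1$ vector arises at the first index where $c$ changes value (or is $c$ itself if $c$ is constant). Hence the nonconstant vectors have average probability only $(s-1)/(2^s-2)<(s+1)/2^s$. With $A$ fixed the conclusion actually fails. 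Take $s=2$, and let half the outside vertices send (red, blue) to $(a_1,a_2)$ and half send (blue, red). Every $F(\sigma,c)$ contains only one of these two patterns, so the largest orderable $K_{2,\ell}$ with small side $A$ has $\ell=(n-2)/2$, not $\frac34(n-2)$.

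The fix is to average over the $s$-set as well, and this needs a balancing inequality you do not have. In the paper's form, fix the single family $\mathcal{C}_s=\{1^j0^{s-j}\}$ and double count pairs $(\mathbf{x},y)$, where $\mathbf{x}$ is an ordered $s$-tuple and $y$ is compatible with it. In $K_{q+1}$, a vertex of red degree $d$ is compatible with exactly $F_{q,s}(d)=\sum_{j=0}^{s}(d)_j(q-d)_{s-j}$ tuples. One then proves $F_{q,s}(d)\ge F_{q,s}(\lfloor q/2\rfloor)$, using $F_{q,s}(d+1)\le F_{q,s}(d)$ for $d<q-d$ together with symmetry. This yields some $\mathbf{x}$ with $|Y_\chi(\mathbf{x})|\ge F_{q,s}(\lfloor q/2\rfloor)/(q)_{s-1}=\frac{s+1}{2^s}q-O_s(1)$. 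Your first Gallai bullet has the same defect: weighted averaging over the reduced graph needs a random tuple of parts and the inequality $\sum_{j=0}^{s}p^j(1-p)^{s-j}\ge\frac{s+1}{2^s}$. Also, discarding the parts containing $A$ costs only $O_s(1)$ when all parts have size $O_s(1)$.

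The second gap is the central one. The statement asserts the exact equality $r'_2(K_{s,t})=CR(K_{s,t},K_3)$ for large $t$. Your fallback $CR(K_{s,t},K_3)\le\frac{2^s}{s+1}t+O_s(1)$ therefore does not prove it, contrary to your remark. Reproducing the $2$-colour threshold exactly is also the wrong target, since $r'_2(K_{s,t})$ itself is not known. The missing idea is a gap argument. Show that a rainbow-triangle-free colouring of $K_n$ that uses at least three colours and has no orderable $K_{s,t}$ satisfies $n\le\theta_st+O_s(1)$, with $\theta_s$ strictly below $c_s=\frac{2^s}{s+1}$. The paper gets $\theta_2=\frac54$, $\theta_3=\frac32$, and $\theta_s=\frac{2^{s-1}}{s-1}$ for $s\ge4$. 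Your lower bound $r'_2(K_{s,t})\ge c_st-O_s(\sqrt{t\ln t})$ then forces any such colouring on $r'_2(K_{s,t})$ vertices to use only two colours, which contradicts the definition of $r'_2$.

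To obtain $\theta_s$, repeatedly pass to a Gallai part $X$ of size at least $s$. It stays colour-homogeneous in $K_n$, and its complement $Y$ has fewer than $t$ vertices, since otherwise $X$ and $Y$ would span an orderable $K_{s,t}$. The process stops in one of two situations.
\begin{itemize}
\item All parts have size at most $s-1$ and some part has size at least $2$. Take two vertices of a largest part, together with an $(s-2)$-tuple of parts joined to it in the heavier colour, chosen by weighted averaging. This gives an orderable $K_{s,\ell}$ in $X$ with $\ell\ge\frac{s-1}{2^{s-1}}|X|-O_s(1)$; for $s=3$ a separate argument gives $\frac34|X|$. It extends to an orderable $K_{s,\ell+|Y|}$ in $K_n$.
\item All parts are singletons. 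Then $X$ is $2$-coloured, and $Y\neq\emptyset$ because at least three colours occur. So $X$ contains no orderable $K_{s-1,t}$ and no orderable $K_{s,t-|Y|}$. The corrected $2$-colour bound gives $n\le c_{s-1}t+O_s(1)$ when $|Y|<s$. When $|Y|\ge s$ it gives $|X|\le\min\{t-1,\,c_s(t-|Y|)+O_s(1)\}$, hence $n\le(2-1/c_s)t+O_s(1)$.
\end{itemize}
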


For $K_{1,t}$, we have $r'_2(K_{1,t})= CR(K_{1,t}, K_3)=t+1$ (see Proposition~\ref{prop:forest}).
For $s\geq 2$, we can in fact deduce the following upper bounds for all $t\geq s$.

\begin{theorem}\label{thm:CRKstUpper}
For any integer $t\geq 2$, we have
$$r'_2(K_{2,t})\leq CR(K_{2,t}, K_3) \leq \left\lfloor \frac{4t}{3}\right\rfloor+2.$$
For any integers $t\geq s \geq 3$, we have
$$r'_2(K_{s,t})\leq CR(K_{s,t}, K_3) \leq \left\lceil \frac{2^s}{s+1}(t-1)+\frac{(s-1)(s+6)}{6}\right\rceil+1.$$
\end{theorem}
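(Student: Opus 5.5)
The plan is to treat the two bounds separately, reducing everything to a counting argument on colour vectors of vertices relative to a well-chosen $s$-set. The leftmost inequality $r'_2(K_{s,t})\le CR(K_{s,t},K_3)$ holds because a $2$-colouring has no rainbow $K_3$, as already noted. So the work is the upper bound on $CR$ for an arbitrary Gallai colouring $\chi$ of $K_n$, with $n$ equal to the right-hand side.

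\emph{Step 1: reduction to two colours.} By Gallai's structure theorem, $V(K_n)$ splits into blocks $V_1,\dots,V_m$ ($m\ge 2$) with a single colour between any two blocks and at most two colours used between blocks overall. If some block is large enough, induct inside it. Otherwise the reduced graph on the blocks is $2$-coloured and every block is small. An orderable $K_{s,t}$ in a blow-up can then be assembled from blocks: vertices inside one block that all play the same role see the other blocks in one colour. This will be the least delicate step, and I expect it to cost only the additive $O_s(1)$. After it I assume $\chi$ uses two colours, red and blue.

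\emph{Step 2: the configurations to count.} For an ordered $s$-set $A=(a_1,\dots,a_s)$ and a vertex $w\notin A$, write $v(w)\in\{R,B\}^s$ for its colour vector towards $A$. The most basic orderable $K_{s,t}$ lists $a_1,\dots,a_s$ first, followed by $t$ vertices with the same vector. Pigeonhole alone only gives $n\approx 2^s t$.

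The gain of $s+1$ must come from interleaving: a vertex $b$ of the $t$-side may be placed immediately before $a_{i}$, provided its edges to $a_{i},\dots,a_s$ are monochromatic. Its edges to $a_1,\dots,a_{i-1}$ are then forced to agree with the colours $c_1,\dots,c_{i-1}$ of those earlier vertices. So for the $s+1$ insertion positions $i=1,\dots,s+1$, the admissible vectors form a family of size about $2^s/(s+1)$ on average. I will choose $A$ greedily, as in the standard Erd\H{o}s--Szekeres style argument. Take $a_1$ arbitrary and restrict to its majority-colour neighbourhood. Then take $a_2$ there, and continue; but at each step record the minority vertices, since they are exactly the vertices that can be inserted before $a_i$ once their later edges are checked.

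\emph{Step 3: the weighted count.} I will assign each vertex $w$ the position $i(w)$ at which it leaves the nested chain, meaning the first index where its colour to $a_i$ differs from $c_i$. Vertices with the same $i(w)$ and the same colour pattern on $a_i,\dots,a_s$ are mutually compatible, together with the vertices of the final class. Counting the patterns gives $\sum_{i=1}^{s+1}2^{s-i}\approx 2^s$ classes. To get $2^s/(s+1)$ one should instead average over the choice of which $a_i$ is chosen with which colour. Equivalently, apply pigeonhole to a weighted sum in which a class at level $i$ shares its capacity $t-1$ with the appropriate chains. The additive term $\frac{(s-1)(s+6)}{6}$ suggests summing small losses of $O(i)$ over the levels.

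For $s=2$ I would do this by hand: with $a_1,a_2$ and three compatible classes, one shows that $\lfloor 4t/3\rfloor+2$ vertices force $t$ vertices in one admissible family.

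The main obstacle is Step 3: designing the right weighting, or the right averaged choice of $A$, so that the compatible families genuinely cover vertices with total multiplicity $(s+1)/2^s$ rather than $1/2^s$. I would first try averaging over all $s$-sets $A$ inside a maximal monochromatic-star chain. If that fails, I would double count pairs (ordered $A$, insertion position) using convexity.
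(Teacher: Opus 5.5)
Your local picture is right: for a fixed ordered $s$-side there are exactly $s+1$ admissible colour vectors. But there is a genuine gap exactly where you flag it. Step~3 is not yet an argument, and the greedy chain of Step~2 cannot supply one. A vertex $w$ leaving the chain at level $i$ is usable only if all its edges to $a_{i+1},\dots,a_s$ have the colour opposite to $c_i$. Those vertices are chosen inside the majority set, which excludes $w$, so nothing controls these colours. The chain only guarantees its final class of roughly $n/2^s$ vertices, i.e.\ $n\approx 2^s t$.

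The missing idea, which your last fallback gestures at, is to average over \emph{all} ordered $s$-tuples of the whole vertex set and to organise the count by the compatible vertex. Call $y$ compatible with an ordered tuple $\mathbf{x}$ if its colour vector towards $\mathbf{x}$ is $1^j0^{s-j}$ for some $j$ (red $=1$). By Ryser's lonesum characterisation, every orderable $K_{s,t}$ is, after reordering its $s$-side, a tuple together with $t$ compatible vertices.

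In a $2$-coloured $K_{q+1}$, a vertex of red degree $d$ is compatible with exactly $F_{q,s}(d)=\sum_{j=0}^{s}(d)_j(q-d)_{s-j}$ ordered tuples. This quantity is symmetric under $d\mapsto q-d$ and, by an exchange computation, nonincreasing for $d\le (q-1)/2$. Hence it is at least $F_{q,s}(\lfloor q/2\rfloor)$. Dividing the double count by $(q+1)_s$ gives a tuple with at least $F_{q,s}(\lfloor q/2\rfloor)/(q)_{s-1}$ compatible vertices.

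The real work is showing this is at least $\frac{s+1}{2^s}\bigl(q-\frac{(s-1)(s+6)}{6}\bigr)$, strictly for $s\ge 3$. This is done by expanding $(2m)_r$ and $F_{2m,s}(m)$ in the basis $(m)_{s-k}$, separately for $q=2m$ and $q=2m+1$. That exact estimate is where the additive constant comes from, not a sum of $O(i)$ losses along a chain. The case $s=2$ follows from the same estimate (tight for even $q$) rather than from a separate hand computation.

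Step~1 also does not work as written. The statement is an exact bound, so an unquantified additive $O_s(1)$ loss is not admissible. Moreover, when Gallai parts have sizes between $2$ and $s-1$, the colouring inside them may use further colours, so you cannot simply assume two colours afterwards. A weighted count on the reduced graph does lose an $O_s(1)$ term and only gives asymptotics.

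The loss-free route is a double induction, first on $s$ and then on $t$. Write $N_s(t)$ for the claimed bound (with $N_1(t)=t+1$) and prove it for \emph{all} $t\ge 1$.
\begin{itemize}
\item If all parts are singletons, the colouring is a $2$-colouring and the count above applies.
\item If some part $V_i$ has $|V_i|\ge s$, it is colour-homogeneous. Either $|V(K_n)\setminus V_i|\ge t$, which already gives an orderable $K_{s,t}$, or $h=|V(K_n)\setminus V_i|\le t-1$. In the latter case $N_s(t)-h\ge N_s(t-h)$ gives an orderable $K_{s,t-h}$ inside $V_i$, and you append the $h$ outside vertices to its $t$-side.
\item If some part has $2\le |V_1|=h\le s-1$, then $N_s(t)-h\ge N_{s-h}(2t-1)$ gives an orderable $K_{s-h,2t-1}$ outside $V_1$. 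At least $t$ vertices of its large side see $V_1$ in one colour, and putting $V_1$ first yields an orderable $K_{s,t}$.
\end{itemize}
These two inequalities for $N_s$ are precisely what let the constant $\frac{(s-1)(s+6)}{6}$ survive the induction.
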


Note that for $s=3$, the upper bound given by Theorem~\ref{thm:CRKstUpper} is $2t+2$.
We now show that $r'_2(K_{2,t})= CR(K_{2,t}, K_3)=\left\lfloor \frac{4t}{3}\right\rfloor+2$ and $r'_2(K_{3,t})= CR(K_{3,t}, K_3)=2t+2$ for infinitely many $t$ using the properties of strongly regular graphs.
Hence, for $s\in \{2,3\}$, the upper bounds in Theorem~\ref{thm:CRKstUpper} are attained for infinitely many values of $t$.
A {\it strongly regular graph} with parameters $(n, k, \lambda, \mu)$, denoted by $SRG(n, k, \lambda, \mu)$, is a $k$-regular graph on $n$ vertices in which any two adjacent vertices have exactly $\lambda$ common neighbors, and any two nonadjacent vertices have exactly $\mu$ common neighbors.
A strongly regular graph with parameters $\left(n, \frac{n-1}{2}, \frac{n-5}{4}, \frac{n-1}{4}\right)$ is also called a {\it conference graph}.

\begin{theorem}\label{thm:CRK2t3t} Let $t$ be a positive integer.
\begin{itemize}
\item[{\rm (i)}] If $t\equiv 0 \pmod{3}$ and there exists an $SRG\left(n, \frac{n-1}{2}, \frac{n-5}{4}, \frac{n-1}{4}\right)$ with $n=\frac{4t}{3}+1$, then
$$r'_2(K_{2,t})= CR(K_{2,t}, K_3)=\frac{4t}{3}+2.$$
\item[{\rm (ii)}] If $t\equiv 0 \pmod{2}$ and there exists an $SRG\left(2t+1, t, \frac{t}{2}-1, \frac{t}{2}\right)$, then
$$r'_2(K_{3,t})= CR(K_{3,t}, K_3)=2t+2.$$
\end{itemize}
\end{theorem}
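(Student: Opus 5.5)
The upper bounds follow directly from Theorem~\ref{thm:CRKstUpper}. For $s=2$ and $t\equiv 0\pmod 3$ we have $\lfloor 4t/3\rfloor+2=\frac{4t}{3}+2$. For $s=3$ the bound is $2t+2$, as noted after that theorem. Since $r'_2(G)\le CR(G,K_3)$ always holds, it is enough to prove the matching lower bound $r'_2(K_{s,t})\ge n+1$, where $n$ is the order of the given strongly regular graph. To do this we 2-colour $K_n$ by colouring the edges of the SRG $\Gamma$ red and its complement blue, and we show that this colouring contains no orderable $K_{2,t}$ (resp.\ $K_{3,t}$).

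The first step is to understand orderable $K_{s,t}$ with $t$ large. Fix an ordering witnessing orderability. Each vertex of the $t$-side that precedes all vertices of the $s$-side is joined to the whole $s$-side in one colour. A vertex of the $s$-side that comes early forces all later vertices of the $t$-side to receive one colour from it. Write $S=\{a_1,\dots,a_s\}$ in the order in which they appear. The $t$-vertices then fall into at most $s+1$ blocks according to their position relative to the $a_i$. Consider the vertices after $a_1$ but before $a_2$, for example. Each such vertex sees $a_1$ in a fixed colour $c_1$ (determined by $a_1$). It sees $a_2,\dots,a_s$ monochromatically in a colour it chooses freely. So every $t$-vertex $x$ in block $j$ has colour pattern $(c_1,\dots,c_{j},\gamma,\gamma,\dots,\gamma)$ towards $(a_1,\dots,a_s)$. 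Here $c_i$ is the colour of $a_i$, shared by all later $t$-vertices, and $\gamma$ depends on $x$. Counting patterns, in each block $j$ there are only two admissible patterns, or one in the last block. The patterns are therefore restricted to a family of at most $2s+1$ sign vectors, of which only $s+2$ are simultaneously admissible once $c_1,\dots,c_s$ are fixed. This gives the bound: $t\le\sum_{\text{admissible patterns }p}N_p(S)$, where $N_p(S)$ is the number of vertices outside $S$ whose red/blue pattern to $S$ is $p$.

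The second step computes $N_p(S)$ in the SRG using $\lambda,\mu$ and inclusion--exclusion. For (i), with $s=2$ and $\Gamma$ a conference graph on $n=\frac{4t}{3}+1$ vertices, each pair $\{a_1,a_2\}$ partitions the other $n-2$ vertices into four classes of sizes determined by $\lambda=\frac{n-5}{4}$ or $\mu=\frac{n-1}{4}$. All four classes have size about $\frac{n}{4}=\frac{t}{3}$, with corrections of $\pm1$ depending on adjacency. The admissible set for $s=2$ consists of three of the four patterns: both the same colour, or the first equal to $c_1$. We check that the sum of any three classes is at most $t-1$; equivalently, each class has size at least $n-2-(t-1)=\frac{t}{3}$, which we verify in both the adjacent and nonadjacent cases. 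For (ii), $\Gamma=SRG(2t+1,t,\frac t2-1,\frac t2)$ is again a conference graph. For a triple $S$ the eight pattern counts depend on the number of edges inside $S$ and on the number of common neighbours of all three vertices, which the parameters do not determine. The admissible patterns number at most $5$ out of $8$. We need the three excluded patterns to contain at least $2t-2-(t-1)=t-1$ vertices in total.

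The main obstacle is this $s=3$ count, because triple intersections in an SRG are not determined by the parameters. I would handle it by noting that the excluded patterns always include a complementary structure. For instance, with $c_1$ fixed, the patterns disagreeing with $c_1$ at $a_1$ and not constant on $\{a_2,a_3\}$ are excluded, and these form the set of vertices whose adjacencies to $a_2$ and $a_3$ differ, intersected with one side of $a_1$. Summing complementary pairs of patterns eliminates the unknown triple count, so that everything can be expressed through pair counts only. Concretely, the vertices splitting $\{a_2,a_3\}$ number $2(k-\lambda-1)$ or $2(k-\mu)$, which is about $t$. I expect a short case analysis over the adjacency pattern inside $S$ and the choice of $c_1$ to show that the excluded patterns always total at least $t-1$, and that $t\le$ (admissible count) fails. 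If this is not enough, I would additionally use the freedom in $c_2$ within block 2 to exclude a further pattern.
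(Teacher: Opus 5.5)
Your overall plan matches the paper's: upper bounds from Theorem~\ref{thm:CRKstUpper}, a red/blue colouring from the strongly regular graph, and a count of vertices whose pattern to an ordered $s$-set is admissible. Part (i) is essentially Corollary~\ref{co:CRK2tlower-construction-SRG}, but your stated check is false as written. The four classes do \emph{not} all have size at least $\frac t3$: for an adjacent pair $n_{rr}=\lambda=\frac t3-1$, and for a nonadjacent pair $n_{bb}=\frac t3-1$. So a sum of three classes can equal $t$. What you actually need is weaker, and it is exactly what your own description of the admissible set gives: the excluded class is always a mixed one. For the mixed classes, $n_{rb}=n_{br}=k-1-\lambda=k-\mu=\frac t3$ exactly, which suffices.

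The genuine gap is in (ii). Your pattern count is wrong. Once the ordering fixes both $c_1$ and $c_2$, there are exactly $s+1=4$ admissible patterns, a permuted copy of $\mathcal{C}_3$, not $s+2$. Your five patterns come from fixing $c_1$ only and taking the union over $c_2$.

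With that weaker admissible set the argument provably fails. Take $c_1$ red. The three excluded patterns are those blue at $a_1$ other than blue--blue--blue. If $a_1$ is nonadjacent in $H$ to both $a_2$ and $a_3$, at most $t-2$ outside vertices are blue to $a_1$. Hence the five ``admissible'' patterns contain at least $t$ vertices, and no contradiction arises. Your example class (vertices splitting $\{a_2,a_3\}$ intersected with one side of $a_1$) also still depends on triple intersections.

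The missing step is the fallback you mention but do not execute. Fix an ordered triple $(x_1,x_2,x_3)$ with admissible set $\mathcal{C}_3=\{000,100,110,111\}$; other choices of $c_1,c_2$ reduce to this by reordering the triple, as in Lemma~\ref{le:Y}. Write $y_i=\mathbf{1}_{\{x_iy\in E(H)\}}$ and $\mathbf{1}_{ij}=\mathbf{1}_{\{x_ix_j\in E(H)\}}$. The four excluded patterns split disjointly as $\{y_1=0,y_2=1\}\cup\{y_2=0,y_3=1\}$, and each part is a pair statistic.

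In $SRG(2t+1,t,\frac t2-1,\frac t2)$, among the $2t-2$ outside vertices these two parts have sizes $\frac t2-\mathbf{1}_{23}(1-\mathbf{1}_{13})$ and $\frac t2-\mathbf{1}_{13}(1-\mathbf{1}_{12})$. The two corrections cannot both equal $1$, since the first needs $\mathbf{1}_{13}=0$ and the second needs $\mathbf{1}_{13}=1$. So at least $t-1$ vertices are excluded and $|Y_\chi(\mathbf{x})|\le t-1$. This is precisely what the paper's identity $\mathbf{1}_{\{(y_1,y_2,y_3)\in\mathcal{C}_3\}}=1-y_2-y_3+y_2y_3+y_1y_2$ in Lemma~\ref{le:CRK3tlower-SRG} encodes: no triple products appear. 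Without this computation, part (ii) is not proved.
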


Let $q$ be a prime power with $q\equiv 1 \pmod{4}$.
The {\it Paley graph} of order $q$ is the graph whose vertices are the elements of the field $\mathbb{F}_q$, and two vertices $u$ and $v$ are adjacent if and only if $u-v$ is a non-zero quadratic residue in $\mathbb{F}_q$.
It is known that the Paley graph of order $q$ is a strongly regular graph with parameters $\left(q, \frac{q-1}{2}, \frac{q-5}{4}, \frac{q-1}{4}\right)$; see, for example, \cite[Section~1.1.9]{BrVaMa}.
Therefore, we have the following corollary.

\begin{corollary}\label{cor:CRK2t3t} Let $t$ be a positive integer.
\begin{itemize}
\item[{\rm (i)}] If $t\equiv 0 \pmod{3}$ and $\frac{4t}{3}+1$ is a prime power, then
$r'_2(K_{2,t})= CR(K_{2,t}, K_3)=\frac{4t}{3}+2.$
\item[{\rm (ii)}] If $t$ is even and $2t+1$ is a prime power, then
$r'_2(K_{3,t})= CR(K_{3,t}, K_3)=2t+2.$
\end{itemize}
\end{corollary}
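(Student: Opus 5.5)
This corollary follows directly from Theorem~\ref{thm:CRK2t3t}, using the Paley graph as the strongly regular graph that theorem requires. For each part, we exhibit a Paley graph of the right order and check that its parameters match the hypothesis. Recall that for a prime power $q\equiv 1 \pmod 4$, the Paley graph of order $q$ is an $SRG\left(q,\frac{q-1}{2},\frac{q-5}{4},\frac{q-1}{4}\right)$ (see \cite[Section~1.1.9]{BrVaMa}). The only thing to verify in each part is this congruence condition, since the Paley graph is defined only when $q\equiv 1\pmod 4$.

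For part (i), set $q=\frac{4t}{3}+1$. Since $t\equiv 0\pmod 3$, we have $\frac{4t}{3}=4\cdot\frac{t}{3}\equiv 0\pmod 4$, so $q\equiv 1\pmod 4$. By hypothesis $q$ is a prime power, so the Paley graph of order $q$ exists. It is an $SRG\left(n,\frac{n-1}{2},\frac{n-5}{4},\frac{n-1}{4}\right)$ with $n=\frac{4t}{3}+1$, which is exactly the conference graph required in Theorem~\ref{thm:CRK2t3t}(i). That theorem then gives $r'_2(K_{2,t})=CR(K_{2,t},K_3)=\frac{4t}{3}+2$.

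For part (ii), set $q=2t+1$. Since $t$ is even, $2t\equiv 0\pmod 4$, so $q\equiv 1\pmod 4$, and $q$ is a prime power by hypothesis. The Paley graph of order $q$ therefore exists, with parameters
$$\left(2t+1,\ \frac{2t}{2},\ \frac{2t-4}{4},\ \frac{2t}{4}\right)=\left(2t+1,\ t,\ \frac t2-1,\ \frac t2\right).$$
This is precisely the $SRG$ required in Theorem~\ref{thm:CRK2t3t}(ii), which yields $r'_2(K_{3,t})=CR(K_{3,t},K_3)=2t+2$.

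There is no real obstacle: the proof is a parameter substitution together with the two congruence checks above.
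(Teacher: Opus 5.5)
Your proposal is correct and matches the paper's argument: the corollary follows by taking the Paley graph of order $q$, an $SRG\left(q,\frac{q-1}{2},\frac{q-5}{4},\frac{q-1}{4}\right)$, as the strongly regular graph in Theorem~\ref{thm:CRK2t3t}. Your check that $q\equiv 1 \pmod 4$ in both parts, which the paper leaves implicit, is right, and so is the substitution giving $\left(2t+1, t, \frac{t}{2}-1, \frac{t}{2}\right)$ in part (ii).
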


Using properties of Hadamard matrices and conference matrices, we obtain additional exact values of $r'_2(K_{2,t})$ and $CR(K_{2,t}, K_3)$.
For any positive integer $n$, let $[n]\colonequals \{1, 2, \ldots, n\}$.
A {\it Hadamard matrix} $H$ of order $n$ is a square matrix with entries in $\{1,-1\}$ such that $HH^{\top}=nI_n$, where $H^{\top}$ is the transpose of $H$ and $I_n$ is the $n\times n$ identity matrix.
It is called {\it symmetric} if $H=H^{\top}$.
We say that $H=(h_{ij})$ has a {\it constant diagonal} if $h_{ii}=1$ for all $i\in [n]$ or $h_{ii}=-1$ for all $i\in [n]$.
The matrix $H$ is called {\it graphical} if $H$ is symmetric and has a constant diagonal.
A {\it conference matrix} $C=(c_{ij})$ of order $n$ is a square matrix with $c_{ii}=0$ for all $i\in [n]$, $c_{ij}\in \{1, -1\}$ for all $1\leq i\neq j\leq n$, and $C^{\top}C=(n-1)I_n$.
It is called {\it symmetric} if $C=C^{\top}$,
and {\it regular} if $C\mathbf{1}=\pm \sqrt{n-1} \mathbf{1}$, where $\mathbf{1}$ is the all-ones vector.

\begin{theorem}\label{thm:CRK2t+} Let $m$ be a positive integer.
\begin{itemize}
\item[{\rm (i)}] If there exists a Hadamard matrix of order $m$ $(m\geq 2)$, then
$$r'_2\Big(K_{2,\frac{3m^2}{4}-1}\Big)= CR\Big(K_{2,\frac{3m^2}{4}-1}, K_3\Big)=m^2.$$
\item[{\rm (ii)}] If there exists a graphical Hadamard matrix of order $4m^2$, then
$$r'_2(K_{2,3m^2-1})= CR(K_{2,3m^2-1}, K_3)=4m^2.$$
\item[{\rm (iii)}] If there exists a conference matrix of order $m+1$, then
$$r'_2\Big(K_{2,\frac{3m^2+1}{4}}\Big)= CR\Big(K_{2,\frac{3m^2+1}{4}}, K_3\Big)=m^2+2.$$
\item[{\rm (iv)}] If there exists a regular symmetric conference matrix of order $m^2+1$, then
$$r'_2\Big(K_{2,\frac{3m^2+1}{4}}\Big)= CR\Big(K_{2,\frac{3m^2+1}{4}}, K_3\Big)=m^2+2.$$
\end{itemize}
\end{theorem}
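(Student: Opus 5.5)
The upper bounds all come from Theorem~\ref{thm:CRKstUpper}. It gives $CR(K_{2,t},K_3)\le\lfloor 4t/3\rfloor+2$, and $r'_2\le CR$ always holds. For $t=\frac{3m^2}{4}-1$ we get $\lfloor m^2-\frac43\rfloor+2=m^2$. For $t=3m^2-1$ we get $4m^2$. For $t=\frac{3m^2+1}{4}$ we get $\lfloor m^2+\frac13\rfloor+2=m^2+2$. So everything reduces to the lower bound $r'_2(K_{2,t})\ge n$, where $n$ is the claimed value. That is, we need a red/blue coloring of $K_{n-1}$ with no orderable $K_{2,t}$.

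\textbf{Step 1: combinatorial criterion.} I would first characterize orderable $K_{2,t}$ in a $2$-coloring. Let the parts be $\{a,b\}$ and $T$. Any $x\in T$ with $\chi(xa)=\chi(xb)$ can be placed first in the ordering. A star is always orderable. Hence the copy is orderable iff the remaining vertices $x\in T$, those with $\chi(xa)\ne\chi(xb)$, all have the same colour pattern. Either all have $xa$ red and $xb$ blue, or all have the reverse.

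Now fix vertices $u,v$ of $K_N$. Split the other $N-2$ vertices into three sets:
\begin{itemize}
\item $S$, the vertices on which $u$ and $v$ agree;
\item $D_1$, the vertices $x$ with $xu$ red and $xv$ blue;
\item $D_2$, the vertices $x$ with $xu$ blue and $xv$ red.
\end{itemize}
The largest orderable $K_{2,t}$ using $\{u,v\}$ then has $t=|S|+\max(|D_1|,|D_2|)=N-2-\min(|D_1|,|D_2|)$. So a coloring of $K_{n-1}$ avoids orderable $K_{2,t}$ iff
\[
\min(|D_1(u,v)|,|D_2(u,v)|)\ge n-2-t \quad\text{for all pairs } u,v.
\]
In graph language, with $G$ the red graph, both $|N(u)\setminus N[v]|$ and $|N(v)\setminus N[u]|$ must be at least $n-2-t$. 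In the four cases this threshold equals $\frac{m^2}{4}-1$, $m^2-1$, $\frac{m^2-1}{4}$ and $\frac{m^2-1}{4}$ respectively.

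\textbf{Step 2: constructions via $\pm1$ matrices.} Encode the red graph by a symmetric $\pm1$ matrix $M$ ($+1$ for red). Then $|D_1|+|D_2|$ is governed by the inner product of rows $u$ and $v$, and $|D_1|-|D_2|$ by the difference of the two row sums, restricted to the vertex set and corrected for the entries at $u,v$. Orthogonality of rows makes the disagreement set $D_1\cup D_2$ about half of the vertices. We then need $|D_1|$ and $|D_2|$ to be balanced up to the allowed slack.

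For (i), from a Hadamard $H$ of order $m$ I would form the $m^2\times m^2$ matrix $M_{(i,j),(k,l)}=h_{il}h_{kj}$. It is symmetric, has constant diagonal $1$, and is Hadamard, since the inner product of two rows factors as $\bigl(\sum_l h_{il}h_{i'l}\bigr)\bigl(\sum_k h_{kj}h_{kj'}\bigr)$. Deleting one vertex gives the required $K_{m^2-1}$. For (ii), I would use the given graphical Hadamard matrix of order $4m^2$ directly, deleting a vertex. For (iii), I would build from the conference matrix $C$ of order $m+1$ a symmetric conference-type matrix on $m^2+1$ vertices, via an analogous tensor-and-swap construction on an $m\times m$ core plus a bordering vertex. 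For (iv), the given regular symmetric conference matrix of order $m^2+1$ is used as is. Each of (iii) and (iv) yields a colouring of $K_{m^2+1}$.

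\textbf{Main obstacle.} The hard step is controlling $|D_1|-|D_2|$, i.e.\ the row-sum discrepancy. Orthogonality only fixes $|D_1|+|D_2|$. For the tensor construction in (i) the row sums $(\sum_l h_{il})(\sum_k h_{kj})$ are not constant. I would first normalize $H$, by multiplying rows and columns by $\pm1$ (which preserves the Hadamard property), to make these sums as uniform as possible. I would then choose the deleted vertex so that the remaining discrepancies stay within the slack $\frac{m^2}{4}-1$ versus $\frac{m^2}{4}$. If that fails, I would exploit the freedom of complementing colours on a vertex (a Seidel switching), which preserves the orthogonality structure. In (iv), regularity of $C$ should make the row sums constant and the count exact, and the matching count in (iii) has to be arranged by the explicit construction.
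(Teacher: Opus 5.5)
Your overall frame matches the paper's. The upper bounds come from Theorem~\ref{thm:CRKstUpper}, and your Step~1 criterion is Lemma~\ref{le:r2K2tcondition}. Part (iv) also works as you expect: the zero diagonal kills the two self-terms, so $|D_1|+|D_2|=\frac{m^2-1}{2}$, and constant row sums then force $|D_1|=|D_2|=\frac{m^2-1}{4}$, which is exactly the threshold.

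\textbf{Gap in (i) and (ii).} ``Use the graphical Hadamard matrix directly, deleting a vertex'' fails in general, and not only because of row sums. Take a graphical Hadamard $M$ of order $n_0$ (diagonal $+1$, after negating if necessary) and a deleted vertex $w$. Orthogonality of rows $u,v$ gives $|D_1|+|D_2|=\frac12\bigl(n_0-3+2M_{uv}+M_{uw}M_{vw}\bigr)$. This equals $\frac{n_0}{2}-3<2\bigl(\frac{n_0}{4}-1\bigr)$ whenever $M_{uv}=M_{uw}M_{vw}=-1$. What you list only as a fallback is the actual proof: first switch at $w$, replacing $M$ by $DMD$ with $D=\mathrm{diag}(M_{wx})_{x}$. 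Then:
\begin{itemize}
\item $DMD$ is still graphical Hadamard, and row $w$ is all $+1$;
\item orthogonality with row $w$ makes every other row sum $0$, and $M_{uw}M_{vw}=1$ for all remaining $u,v$;
\item the blue graph (entries $-1$) on the other $n_0-1$ vertices is an $SRG(n_0-1,\frac{n_0}{2},\frac{n_0}{4},\frac{n_0}{4})$, so $|D_1|=|D_2|\in\{\frac{n_0}{4}-1,\frac{n_0}{4}\}$.
\end{itemize}
This is the cited fact the paper uses for (ii). Your matrix $(h_{il}h_{kj})$ is indeed a graphical Hadamard matrix of order $m^2$; this is what the paper imports from Goethals and Seidel to reduce (i) to (ii) with $n_0=m^2$. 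If $H$ is normalized so that its first row and column are all $+1$, then row $(1,1)$ of $M$ is already all $+1$, so $(1,1)$ is the vertex to delete. Making the row sums of $M$ ``as uniform as possible'' is not the relevant target; what matters is that the deleted vertex's row be constant, and you never fix that vertex or verify the counts.

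\textbf{Gap in (iii).} There is no construction at all: ``an analogous tensor-and-swap construction on an $m\times m$ core plus a bordering vertex'' is neither specified nor checked, and it carries the whole difficulty. Along your route $|D_1|+|D_2|$ is exactly twice the threshold $\frac{m^2-1}{4}$, so you need $|D_1|=|D_2|$ for every pair. That means a symmetric conference matrix of order $m^2+1$ with \emph{constant} row sums, equivalently (up to swapping colours) an $SRG\bigl(m^2+1,\frac{m^2-m}{2},\frac{m^2-2m-3}{4},\frac{(m-1)^2}{4}\bigr)$. Regularity is not automatic and must come out of the construction. The paper does not build this graph either; it cites the known result that a conference matrix of order $m+1$ yields it, after which (iii) is the same computation as (iv). You must either cite this or carry out the construction and prove regularity.

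\textbf{Minor points.} Step~1 only justifies the ``if'' direction of your criterion, whereas the lower bound needs ``only if'' (an alternating $C_4$ is not orderable). Also, $m=1$ in (iii)/(iv) gives $t=1$, outside the range $t\ge 2$ of Theorem~\ref{thm:CRKstUpper}, so that case should be handled by Proposition~\ref{prop:forest}.
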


It is known that the order of a Hadamard matrix must be 1, 2, or a multiple of 4 (see, for example, \cite[Chapter~2]{Hor}).
A long-standing problem in the field is whether a Hadamard matrix of order $4k$ exists for every positive integer $k$.
Sylvester~\cite{Syl} constructed Hadamard matrices of order $2^k$ for every non-negative integer $k$.
This together with Theorem~\ref{thm:CRK2t+}~(i) implies Corollary~\ref{cor:CRK2t+}~(i) below.
Belevitch~\cite{Bel} proved that the order of a conference matrix must be even, and if $n$ is the order of a symmetric conference matrix, then it must hold that $n\equiv 2 \pmod{4}$ and $n-1$ is the sum of two squares.
The existence of conference matrices of order $n$ is known only for some values of $n$.
It is known that for any odd prime power $q$, there exists a regular symmetric conference matrix of order $q^2+1$ (see~\cite[Section~4.2]{GHKS}), so Corollary~\ref{cor:CRK2t+}~(ii) holds by Theorem~\ref{thm:CRK2t+}~(iv).

\begin{corollary}\label{cor:CRK2t+} The following statements hold.
\begin{itemize}
\item[{\rm (i)}] For any positive integer $k$, we have
$r'_2(K_{2,3\cdot 4^{k-1}-1})= CR(K_{2,3\cdot 4^{k-1}-1}, K_3)=4^{k}.$
\item[{\rm (ii)}] For any odd prime power $q$, we have
$r'_2\Big(K_{2,\frac{3q^2+1}{4}}\Big)= CR\Big(K_{2,\frac{3q^2+1}{4}}, K_3\Big)=q^2+2.$
\end{itemize}
\end{corollary}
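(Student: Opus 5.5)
Both parts follow by specializing Theorem~\ref{thm:CRK2t+}, so the plan is only to check that the hypotheses there are met and that the parameters match.

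For (i), I will use Sylvester's construction~\cite{Syl}, which gives a Hadamard matrix of order $m=2^k$ for every $k\ge 1$ (so $m\ge 2$). Theorem~\ref{thm:CRK2t+}~(i) then gives
$$r'_2\Big(K_{2,\frac{3m^2}{4}-1}\Big)=CR\Big(K_{2,\frac{3m^2}{4}-1},K_3\Big)=m^2.$$
Since $m^2=4^k$, we have $\frac{3m^2}{4}-1=3\cdot 4^{k-1}-1$, and this is exactly the claim.

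The case $k=1$ needs a separate look. Here $m=2$, and the Hadamard matrix $\begin{pmatrix}1&1\\1&-1\end{pmatrix}$ exists. The target graph is $K_{2,2}$ and the claimed value is $4$. Theorem~\ref{thm:CRK2t+}~(i) explicitly allows $m\ge 2$, so this case is covered by the same argument.

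For (ii), let $q$ be an odd prime power. By~\cite[Section~4.2]{GHKS} there is a regular symmetric conference matrix of order $q^2+1$. Applying Theorem~\ref{thm:CRK2t+}~(iv) with $m=q$ gives the value $q^2+2$ for the bipartite graph $K_{2,(3q^2+1)/4}$.

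The only remaining check is that $(3q^2+1)/4$ is an integer. Since $q$ is odd, $q^2\equiv 1\pmod 8$, so $3q^2+1\equiv 4\pmod 8$. Hence $(3q^2+1)/4$ is an (odd) integer.

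Neither part has a real obstacle, since all the work is in Theorem~\ref{thm:CRK2t+}. The only things to verify are the parameter substitutions and the small boundary case $k=1$ in (i), both done above.
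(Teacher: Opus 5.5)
Your proposal is correct and follows the paper's own route. Part (i) comes from Sylvester's Hadamard matrices of order $2^k$ fed into Theorem~\ref{thm:CRK2t+}~(i), and part (ii) from the regular symmetric conference matrices of order $q^2+1$ (the existence result the paper cites) fed into Theorem~\ref{thm:CRK2t+}~(iv). Your extra checks, the $k=1$ case and the integrality of $(3q^2+1)/4$, are correct; the paper handles both inside the proofs of the underlying lemmas.
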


The remainder of this paper is organized as follows.
In the next section, we introduce some additional terminology and notation, as well as several results that will be used in our proofs.
In Section~\ref{sec:pf_complete}, we prove Theorems~\ref{thm:r2s} and \ref{thm:CRs3}, and determine exact values of $r'_2(s)$ for $s\in \{3,4,5,6\}$.
In Section~\ref{sec:pf_complete_bipar}, we present our proofs of Theorems~\ref{thm:CRKst}, \ref{thm:CRKstUpper}, \ref{thm:CRK2t3t} and \ref{thm:CRK2t+}.
Finally, we conclude this paper with some remarks and open problems in Section~\ref{sec:conclu}.

\section{Preliminaries}
\label{sec:pre}

We begin with some terminology and notation.
Let $\chi$ be an edge-coloring of a graph $G$.
For disjoint subsets $U, V\subset V(G)$, let $E(U, V)\colonequals \{uv\in E(G)\colon\, u\in U, v\in V\}$ and $\chi(U, V)\colonequals \{\chi(e)\colon\, e\in E(U,V)\}$.
If $\left|\chi(U, V)\right|=1$, then we also use $\chi(U, V)$ to denote the unique color in $\chi(U, V)$.
The subgraph of $G$ induced by $U$ is denoted by $G[U]$, and $G-U$ is shorthand for $G[V(G)\setminus U]$.
If $U$ consists of a single vertex $u$, then we simply write $E(\{u\}, V)$, $\chi(\{u\}, V)$ and $G-\{u\}$ as $E(u, V)$, $\chi(u, V)$ and $G-u$, respectively.
For a set $E\subseteq E(G)$ of edges, the {\it subgraph induced by $E$} is the subgraph consisting of all the edges in $E$ and all the vertices that are incident with them.
For a color $c$, the {\it subgraph induced by color $c$} is the subgraph induced by the set of all edges of color $c$.
Given a vertex $v\in V(G)$, let $N_G(v)$ be the neighborhood of $v$ in $G$, and $d_G(v)\colonequals |N_G(v)|$ be the degree of $v$.
Note that $N_{\overline{G}}(v)=V(G)\setminus (N_G(v) \cup \{v\})$.
Let $\delta(G)$ and $\Delta(G)$ be the minimum degree and maximum degree of $G$, respectively.

In this paper, we use $P_n$ and $C_n$ to denote the path and the cycle on $n$ vertices, respectively.
Given two graphs $G$ and $H$, let $G\cup H$ be the disjoint union of $G$ and $H$.
For any positive integer $n$, we use $nG$ to denote the disjoint union of $n$ copies of $G$.

Given a set $X$ and a positive integer $n\leq |X|$, let ${X\choose n}\colonequals \{X'\subseteq X\colon\, |X'|=n\}$.
For an event $A$, let $\mathbf{1}_A$ denote its indicator function, defined by
$$\mathbf{1}_A \colonequals
\left\{
   \begin{aligned}
    &1 & & \mbox{if $A$ occurs},\\
    &0 & & \mbox{otherwise}.
   \end{aligned}
   \right.$$
We will use the standard Bachmann-Landau notation such as $O$, $o$ and $\Theta$ to indicate asymptotic growth rates of functions.
Subscripts will be used to indicate that a function depends on a parameter.
For example, for a non-negative real valued function $f(t)$, the notation $O_s\left(f(t)\right)$ denotes a quantity that is at most $C_sf(t)$ for some constant $C_s$ depending only on $s$.

An edge-coloring of a complete graph is called a {\it Gallai-coloring} if it contains no rainbow triangle, i.e., every subgraph $K_3$ is colored by at most two colors.
Gallai~\cite{Gallai} obtained the following characterization of the structure of a complete graph with a Gallai-coloring.

\begin{theorem}{\normalfont (\cite{Gallai})}\label{thm:Gallai}
In any Gallai-coloring of a complete graph on at least two vertices, the vertex set can be partitioned into nonempty sets $V_1, V_2, \ldots, V_m$ with $m\geq 2$ such that
\begin{itemize}
\item[{\rm (1)}] for every $1\leq i<j\leq m$, we have $\left|\chi(V_i, V_j)\right|=1$, and
\item[{\rm (2)}] $\left|\bigcup_{1\leq i<j\leq m}\chi(V_i, V_j)\right| \leq 2$.
\end{itemize}
\end{theorem}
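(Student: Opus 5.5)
The plan is to argue by induction on the number $n$ of vertices, after first disposing of the case in which at most two colors appear. If $\chi$ uses at most two colors on $K_n$, the partition into singletons $V_i=\{v_i\}$ already satisfies (1) and (2). So we may assume that at least three colors occur. Throughout, call a partition \emph{homogeneous} if it has $m\geq 2$ parts and property (1) holds. For a homogeneous partition, the \emph{reduced coloring} on the parts is again a Gallai-coloring of $K_m$, since any three parts span a copy of it.

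The key intermediate claim we will aim for is the following.

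\textbf{Claim.} In every Gallai-coloring using at least three colors, there is a color $c$ such that the spanning graph $G_{\neq c}$, formed by all edges whose color differs from $c$, is disconnected.

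The claim finishes the proof immediately. Take $V_1,\dots,V_m$ ($m\geq 2$) to be the components of $G_{\neq c}$. Every edge between two distinct components has color $c$, so (1) holds and the union in (2) is the single color $c$.

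To prove the claim, we induct on $n$; the base $n\le 3$ is trivial. We fix a vertex $v$ and apply the theorem (in the inductive form) to $K_n-v$. This gives a homogeneous partition $W_1,\dots,W_k$ of $V\setminus\{v\}$ whose cross colors lie in a set $\{a,b\}$. Among all such partitions we choose one with $k$ minimal. By minimality, when both $a$ and $b$ occur, both the $a$-graph and the $b$-graph of the reduced coloring are connected. Otherwise we could merge the parts along components and obtain a coarser homogeneous partition.

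Next we analyze the colors on the edges from $v$ to each $W_i$. A short computation uses only the no-rainbow-triangle condition on triangles $v\,x\,y$ with $x\in W_i$, $y\in W_j$ and $\chi(xy)\in\{a,b\}$. Suppose $v$ sends some edge of a third color $d\notin\{a,b\}$ into $W_i$. Then all edges from $v$ to every $W_j$ with $j\neq i$ are forced to have the color of the pair $(W_i,W_j)$, provided the reduced colors around $W_i$ are not mixed. Using the connectivity of the $a$- and $b$-graphs, this propagates to show that $v$ behaves like a vertex of $W_i$ towards all other parts. Then $W_1,\dots,W_i\cup\{v\},\dots,W_k$ is a homogeneous partition of $K_n$ with cross colors in $\{a,b\}$, and a third color can live only inside parts. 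In that situation, taking $c=a$ (or $c=b$) shows that $G_{\neq c}$ is disconnected, after possibly merging parts. If instead all edges at $v$ use only the colors $a,b$, we either add $\{v\}$ as a new part or absorb $v$ into some $W_i$ in the same way.

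The main obstacle is this reinsertion step. One must rule out the configuration where $v$ sends two different colors into the same part $W_i$ while the reduced coloring is genuinely two-colored, and handle $k=2$ separately. I would attack it by contradiction: assume no color $c$ has $G_{\neq c}$ disconnected, pick a shortest path in the reduced $a$-graph or $b$-graph joining parts that receive different colors from $v$, and exhibit a rainbow triangle at its first color change.
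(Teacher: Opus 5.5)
The paper quotes this theorem from Gallai without proof, so your argument has to stand on its own. It does not: the Claim it rests on is false.

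\textbf{Counterexample to the Claim.} Let $\chi_0$ be the $2$-coloring of $K_5$ on $\mathbb{Z}_5$ in which $ij$ is red if $i-j\equiv\pm1$ and blue if $i-j\equiv\pm2 \pmod 5$. Add a vertex $4'$ with $\chi(4'j)=\chi(4j)$ for $j\in\{0,1,2,3\}$, and color $44'$ green.
\begin{itemize}
\item Every triangle either lies in one of the two copies of $\chi_0$, or has colors green, $\chi(4j)$, $\chi(4j)$. So this is a Gallai-coloring with three colors.
\item $G_{\neq\mathrm{green}}$ and $G_{\neq\mathrm{blue}}$ both contain the red cycle $0,1,2,3,4$ plus the red edge $4'0$.
\item $G_{\neq\mathrm{red}}$ contains the blue cycle $0,2,4,1,3$ plus the blue edge $4'1$.
\item For an unused color $c$, $G_{\neq c}=K_6$.
\end{itemize}
All of these graphs are connected. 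A partition with a single cross color $c$ exists exactly when $G_{\neq c}$ is disconnected, so this coloring has no such partition. The bound $2$ in (2) is sharp even when further colors are present. The coloring $\chi_4=\chi'_4\otimes\chi_2$ in the proof of Lemma~\ref{le:CRs3lower} is another counterexample.

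\textbf{Where your sketch breaks.} It fails exactly at ``taking $c=a$ (or $c=b$)''. After absorbing $v$ into $W_i$, if both $a$ and $b$ occur between parts, your own minimality of $k$ makes both reduced color graphs connected. Then $G_{\neq a}$ and $G_{\neq b}$ contain spanning connected blow-ups, and no merging is possible. At that point you already have what the theorem asks for, so the induction should carry the two-color statement itself.

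Your component argument does generalize correctly. For any set $S$ of colors, the components of the spanning graph of $S$-colored edges are color-homogeneous and pairwise joined in a single color; this follows by looking at triangles $uxy$ with $\chi(xy)\in S$. So it suffices to find two colors $a,b$ such that the edges colored outside $\{a,b\}$ form a disconnected graph.

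\textbf{The reinsertion step is not established.} Even with the right target, this step carries the whole content of Gallai's theorem, and your sketch does not prove it.
\begin{itemize}
\item From $\chi(vx)=d$ and $\chi(xy)=e$ you only get $\chi(vy)\in\{d,e\}$, not $\chi(vy)=e$.
\item Absorbing $v$ into a part, or adding $\{v\}$ as a new part, does not suffice in general. Take $K_n-v$ to be $\chi_0$, whose only Gallai partition is into singletons because $C_5$ has no nontrivial module. Join $v$ to all of it in green. Neither option works, and the unique Gallai partition of the resulting $K_6$ is $\{v\},\{0,1,2,3,4\}$, which merges all the old parts.
\end{itemize}

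\textbf{A correct way to organize the proof.} If there is a color-homogeneous set $X$ with $2\le|X|\le n-1$, contract it to a single vertex, apply induction, and blow it back up. This reduces everything to showing that a Gallai-coloring of $K_n$ ($n\ge 3$) with no such set uses at most two colors. That is the real work your proposal still lacks.
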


A vertex partition satisfying the conclusions of Theorem~\ref{thm:Gallai} is called a {\it Gallai partition}.
Note that the subgraph induced by $\bigcup_{1\leq i<j\leq m}E(V_i, V_j)$ can be viewed as a blow-up of a 2-edge-colored $K_m$.
However, the above result provides no information regarding the sizes of the parts or the coloring within each part, except that these colorings contain no rainbow triangle.
For more Ramsey-type results on Gallai-colorings, we refer the interested reader to \cite{AJMP,AxJa05GC,FoGP,GSSS,LiBW,LiLS,LMSSS,MWMS,ZhSC}.

Finally, we show that for an arbitrarily edge-colored forest, there exists an ordering of its vertices such that the coloring is orderable.

\begin{proposition}\label{prop:forest}
Let $F$ be an arbitrarily edge-colored forest. Then there exists an ordering of $V(F)$ such that the coloring is orderable.
Moreover, for any forest $F$ and any graph $H$ with $|E(H)|\geq 2$, we have $r'_2(F)= CR(F, H)=|V(F)|$.
\end{proposition}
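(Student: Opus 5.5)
We argue by induction on $|V(F)|$, using a leaf-peeling ordering, and then read off the Ramsey statement directly. The idea is that in an orderable ordering only the edges from $v_i$ to \emph{later} vertices must share a color. So a vertex that has at most one neighbor among the later vertices imposes no constraint at all.

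\textbf{Ordering.} We choose $v_1$ to be a vertex of degree at most $1$ in $F$; such a vertex exists because every forest has a leaf or an isolated vertex. Then $F-v_1$ is again a forest. By induction, $V(F)\setminus\{v_1\}$ has an ordering $(v_2,\ldots,v_s)$ that makes the restricted coloring orderable, with colors $c_2,\ldots,c_{s-1}$.
\begin{itemize}
\item If $v_1$ is isolated, we let $c_1$ be arbitrary.
\item If $v_1$ is a leaf, it has exactly one edge to later vertices, and we let $c_1$ be the color of that edge.
\end{itemize}
Every edge $v_iv_j$ with $i<j$ then has color $c_i$, so the ordering $(v_1,\ldots,v_s)$ witnesses orderability. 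Equivalently, we repeatedly delete a vertex of degree at most one and list the vertices in order of deletion.

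\textbf{Ramsey values.} Let $s=|V(F)|$. Trivially $r'_2(F)\ge s$, since a smaller $K_n$ contains no copy of $F$ at all. Also $CR(F,H)\ge r'_2(F)$: a $2$-edge-coloring uses only two colors, so it cannot contain a rainbow $H$ when $|E(H)|\ge 2$... but this would require $|E(H)|\ge 3$. Instead we argue directly that $CR(F,H)\ge s$, because any coloring of $K_{s-1}$ contains no copy of $F$, orderable or not. (Possibly it also contains no rainbow $H$, e.g.\ under a monochromatic coloring, and $|E(H)|\ge 2$ guarantees that a monochromatic $H$ is not rainbow.) So we use the monochromatic coloring of $K_{s-1}$: it has neither a copy of $F$ nor a rainbow $H$, since $H$ has at least two edges, which receive equal colors. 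This gives $CR(F,H)\ge s$, and likewise $r'_2(F)\ge s$.

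For the upper bounds, take any edge-coloring of $K_s$ with any number of colors. It contains a copy of $F$ on all $s$ vertices, and by the first part this copy, with its inherited coloring, is orderable. Hence $r'_2(F)\le s$ and $CR(F,H)\le s$. The only subtle point is ensuring that the lower-bound coloring avoids a rainbow $H$, which is exactly where the hypothesis $|E(H)|\ge 2$ is needed.
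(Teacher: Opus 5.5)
Your proof is correct and follows essentially the paper's route. Your leaf-peeling order is a variant of the paper's ordering of each tree by decreasing distance from a root: in both, every vertex has at most one neighbor after it, and the lower bound via a monochromatic $K_{|V(F)|-1}$ is the same. You can delete the abandoned aside about $CR(F,H)\ge r'_2(F)$, which, as you noticed, only follows when $|E(H)|\ge 3$.
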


\noindent {\bf Proof.}
%\begin{proof}
Let $F_1, \ldots, F_m$ be the connected components of $F$.
Then each $F_i$ is a tree.
For any component $F_i$ and a fixed vertex $v_i\in V(F_i)$, let $\ell_i$ be the largest distance from $v_i$ to a vertex of $F_i$.
For $j=0, 1, \ldots, \ell_i$, let $N_{i}^{(j)}$ be the set of vertices at distance $j$ from $v_i$.
Note that $N_{i}^{(0)}=\{v_i\}$.
Moreover, since $F_i$ is a tree, we have that for every $j=1, 2, \ldots, \ell_i$, $N_{i}^{(j)}$ is an independent set,
and each vertex of $N_{i}^{(j)}$ has exactly one neighbor in $N_{i}^{(0)}\cup \cdots \cup N_{i}^{(j-1)}$.
We consider an ordering $\sigma$ of $V(F)$ such that
\begin{itemize}
\item for $1\leq i<i'\leq m$, the vertices of $F_i$ appear before those of $F_{i'}$;
\item for $i\in [m]$ and $\ell_i\geq j>j'\geq 0$, the vertices of $N_{i}^{(j)}$ appear before those of $N_{i}^{(j')}$.
\end{itemize}
Note that every non-root vertex has exactly one neighbor appearing after it, and every root $v_i$ has no neighbor appearing after it.
Thus $F$ is orderable under $\sigma$, regardless of the original coloring.
This implies that $r'_2(F)\leq |V(F)|$ and $CR(F, H)\leq |V(F)|$.
Since $K_{|V(F)|-1}$ contains no $F$, we have $r'_2(F)> |V(F)|-1$, and thus $r'_2(F)= |V(F)|$.
Since $|E(H)|\geq 2$, a monochromatic $K_{|V(F)|-1}$ contains neither an orderable $F$ nor a rainbow $H$.
Hence, $CR(F, H)> |V(F)|-1$, and thus $CR(F, H)= |V(F)|$.
%\end{proof}
\hfill$\blacksquare$
\vspace{0.2cm}

As a consequence of Proposition~\ref{prop:forest}, we can show that $r'_2(G)=|V(G)|$ and $CR(G, K_3)=|V(G)|$ for infinitely many graphs $G$, including all cycles.

\begin{corollary}\label{cor:forest}
Let $G$ be an $n$-vertex graph with a vertex $u$ such that $G-u$ is a forest.
\begin{itemize}
\item[{\rm (i)}] If $d_G(u)\leq
\left\{
   \begin{aligned}
    &(n+1)/2 & & \mbox{if $n\equiv 3 \pmod{4}$},\\
    &\big\lceil(n-1)/2\big\rceil & & \mbox{otherwise},
   \end{aligned}
   \right.$ then $r'_2(G)=|V(G)|$.
\item[{\rm (ii)}] If $d_G(u)\leq \big\lceil\frac{2n}{5}\big\rceil$, then $CR(G, K_3)=|V(G)|$.
\end{itemize}
\end{corollary}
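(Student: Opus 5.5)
The lower bound is immediate in both parts: a $2$-colored (indeed monochromatic) $K_{n-1}$ has too few vertices to contain a copy of $G$. So $r'_2(G)\geq n$ and $CR(G,K_3)\geq n$, and it remains to show that every $2$-edge-coloring of $K_n$ (respectively, every edge-coloring of $K_n$ with no rainbow $K_3$) contains an orderable copy of $G$.

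The embedding will place $u$ \emph{first} in the ordering. Suppose some vertex $w$ of $K_n$ is incident with at least $d_G(u)$ edges of a single color $c$. Map $u$ to $w$, map $N_G(u)$ injectively into the $c$-neighbours of $w$, and map the remaining vertices of $G-u$ arbitrarily onto the rest of $V(K_n)$. Then order the image as follows: $w$ first, followed by the vertices of $G-u$ in the ordering supplied by Proposition~\ref{prop:forest}. All edges from $w$ to later vertices are exactly the edges to the images of $N_G(u)$, so they all have color $c$. By Proposition~\ref{prop:forest}, each later vertex has at most one later neighbour inside the forest, and no edge back to $w$ counts because $w$ precedes it. Hence the copy of $G$ is orderable, whatever the coloring of $G-u$. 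The whole statement therefore reduces to finding a vertex of large monochromatic degree.

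For (i), in a $2$-coloring some color occupies at least $\lceil (n-1)/2\rceil$ edges at any vertex. When $n\equiv 3\pmod 4$, $(n-1)/2$ is odd. If every vertex had red degree exactly $(n-1)/2$, the red graph would be an odd-regular graph on an odd number $n$ of vertices, which is impossible. So either some vertex has red degree at least $(n+1)/2$, or some vertex has red degree at most $(n-3)/2$ and hence blue degree at least $(n+1)/2$. Either way we obtain the bound $(n+1)/2$.

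For (ii), if there is a rainbow $K_3$ we are done, so assume the coloring is a Gallai-coloring. We then need a vertex with monochromatic degree at least $\lceil 2n/5\rceil$; this is the main obstacle. I would first try to cite the Gyárfás--Simonyi theorem that Gallai-colored complete graphs contain monochromatic stars with at least $2n/5$ edges; integrality then gives $\lceil 2n/5\rceil$. For a self-contained argument, I would take a Gallai partition $V_1,\dots,V_m$ from Theorem~\ref{thm:Gallai} with $m$ minimal, so $m=2$ or the reduced $2$-colored $K_m$ has both color classes connected. If $m=2$, a vertex of the smaller part sees at least $n/2$ vertices in one color. If $m\ge 4$, I would weight each part by its size and apply an averaging and extremal argument on the reduced graph to find a part $V_i$ whose total $c$-neighbourhood weight is at least $2n/5$. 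Here the extremal configuration is the blow-up of the $2$-colored $K_5$ with two monochromatic $C_5$'s, which explains the constant $2/5$.
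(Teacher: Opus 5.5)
Your proof is correct. The reduction to a vertex of large monochromatic degree, the embedding (image of $u$ first, then the ordering of $G-u$ from Proposition~\ref{prop:forest}), and the parity argument for (i) are exactly the paper's. The only real difference is in (ii).

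You obtain the vertex of monochromatic degree at least $\lceil 2n/5\rceil$ by citing the Gy\'arf\'as--Simonyi theorem on monochromatic stars in Gallai-colorings. That theorem is a genuine, sharp, published result, so the citation closes the argument. The paper instead proves this bound directly, in a few lines, from an arbitrary Gallai partition $V_1,\dots,V_m$, and its argument is shorter than the self-contained fallback you sketch. It needs no minimality of $m$, no connectivity of the reduced color classes, and no weighted extremal argument; it splits into three cases:
\begin{itemize}
\item If $m\le 3$, some part is joined to the rest in a single color. A vertex on the smaller side of that monochromatic complete bipartite graph then has at least $\lceil n/2\rceil$ edges of that color.
\item If $m\ge 5$, or $m=4$ and some part has size at most $n/5$, take a vertex in a smallest part. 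It has at least $4n/5$ cross edges, all in two colors, so at least $\lceil 2n/5\rceil$ of them share a color.
\item If $m=4$ and every part has more than $n/5$ vertices, then $V_1$ is joined to two of $V_2,V_3,V_4$ in the same color.
\end{itemize}
Your $m\ge 4$ step (``averaging and extremal argument'') is left uncarried-out, so as written your proof of (ii) rests on the citation. Citing buys brevity; the paper's route keeps the corollary self-contained at essentially no extra cost.
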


\noindent {\bf Proof.}
%\begin{proof}
(i) For any $2$-edge-colored $K_n$ and a vertex $v\in V(K_n)$, there exists a color such that $v$ is incident with at least $\big\lceil(n-1)/2\big\rceil$ edges of this color.
If $n\equiv 3 \pmod{4}$, then $n$ is odd and $(n-1)/2$ is odd, so there is no $((n-1)/2)$-regular graph on $n$ vertices.
This implies that for $n\equiv 3 \pmod{4}$, there exists a vertex $v$ that is incident with at least $(n+1)/2$ edges of the same color.
We embed $G$ into $K_n$ such that $v$ plays the role of $u$ and $N_G(u)$ is embedded into the vertices that are connected to $v$ by edges of the same color.
Note that $G-u$ is a forest and the vertex set of the embedded copy of $G-u$ is $V(K_n)\setminus \{v\}$.
Then, by Proposition~\ref{prop:forest}, there exists an ordering $\sigma$ of $V(K_n)\setminus \{v\}$ such that the coloring of the embedded copy of $G-u$ is orderable.
Let $\sigma'$ be the ordering of $V(K_n)$ obtained from $\sigma$ by adding $v$ as the first vertex.
Then we get an orderable $G$ under the ordering $\sigma'$.
This implies that $r'_2(G)\leq n= |V(G)|$.
Since $K_{|V(G)|-1}$ contains no $G$, we have $r'_2(G)> |V(G)|-1$, and thus $r'_2(G)= |V(G)|$.

(ii) The case $n=1$ is trivial, so we assume that $n\geq 2$.
We first show that $CR(G, K_3)\leq |V(G)|=n$.
Consider an arbitrary edge-colored $K_n$ with any number of colors.
Suppose that there is no rainbow $K_3$, and we shall show that there must be an orderable $G$.
By Theorem~\ref{thm:Gallai}, $V(K_n)$ admits a Gallai partition $V_1, V_2, \ldots, V_m$ with $m\geq 2$.
Without loss of generality, we may assume that the colors between the parts belong to $\{\mbox{red, blue}\}$.
We now show that there exists a vertex $v$ that is incident with at least $\big\lceil\frac{2n}{5}\big\rceil$ edges of the same color.
If $2\leq m\leq 3$, then there exists a part $V_i$, such that all edges between $V_i$ and $V(K_n)\setminus V_i$ are of the same color, say red.
This implies that there exists a vertex $v$ such that $v$ is incident with at least $\big\lceil\frac{n}{2}\big\rceil\geq \big\lceil\frac{2n}{5}\big\rceil$ red edges.
If $m=4$ and $|V_i|> \frac{n}{5}$ for every $i\in [4]$, then since at least two of $V_2, V_3, V_4$ are joined to $V_1$ by edges of the same color, every vertex in $V_1$ is incident with at least $\big\lceil\frac{2n}{5}\big\rceil$ edges of the same color.
If $m=4$ and $|V_i|\leq \frac{n}{5}$ for some $i\in [4]$, then every vertex in $V_i$ is incident with at least $\big\lceil\frac{1}{2}(n-\frac{n}{5})\big\rceil= \big\lceil\frac{2n}{5}\big\rceil$ edges of the same color.
If $m\geq 5$, then we may assume that $|V_m|=\min_{i\in [m]}|V_i|$ without loss of generality, so $|V_m|\leq \frac{n}{m}$ and $|V(K_n)\setminus V_m|\geq (1-\frac{1}{m})n.$
Now for any vertex $v\in V_m$, $v$ is incident with at least $\big\lceil\frac{1}{2}(1-\frac{1}{m})n\big\rceil\geq \big\lceil\frac{1}{2}(1-\frac{1}{5})n\big\rceil = \big\lceil\frac{2n}{5}\big\rceil$ edges of the same color.

By the same embedding argument as in (i), there exists an orderable $G$, so $CR(G, K_3)\leq |V(G)|=n$.
Since a monochromatic $K_{|V(G)|-1}$ contains neither an orderable $G$ nor a rainbow $K_3$, we have $CR(G, K_3)> |V(G)|-1$, and thus $CR(G, K_3)= |V(G)|$.
%\end{proof}
\hfill$\blacksquare$
%\vspace{0.2cm}

\section{Results on orderable complete graphs}
\label{sec:pf_complete}

In this section, we study $r'_2(s)$ and $CR(s,3)$.
In particular, we present our proofs of Theorems~\ref{thm:r2s} and \ref{thm:CRs3}.
We divide the proof of Theorem~\ref{thm:r2s} into two parts: Lemma~\ref{le:r2slower} (lower bound) and Corollary~\ref{cor:r2supper} (upper bound).
Section~\ref{subsec:pf_complete_lower} provides three constructive lower bounds and a probabilistic lower bound on $r'_2(s)$, and Section~\ref{subsec:pf_complete_upper} gives a recursive upper bound on $r'_2(s)$.
In Section~\ref{subsec:pf_complete_small}, we determine exact values of $r'_2(s)$ for $s\in \{3,4,5,6\}$.
The proof of Theorem~\ref{thm:CRs3} is presented in Section~\ref{subsec:pf_complete_CR}.

We first collect several results that will be used throughout this section.
Recall that a graph is called a threshold graph if it can be constructed from the one-vertex graph by repeatedly adding a new vertex that is either adjacent to all the previous vertices or nonadjacent to all the previous vertices.
In fact, threshold graphs have several equivalent definitions; see Lemma~\ref{le:threshold-equi} below.
In our proofs, we will mainly use the definition given above and Lemma~\ref{le:threshold-equi}~(ii).

\begin{lemma}{\normalfont (see, for example, \cite[Section~1.2]{MaPe}, \cite{DiHJ})}\label{le:threshold-equi}
For a graph $T$, the following statements are equivalent.
\begin{itemize}
\item[{\rm (i)}] $T$ is a threshold graph.
\item[{\rm (ii)}] $T$ contains no induced $P_4$, $C_4$ or $2K_2$.
\item[{\rm (iii)}] Any induced subgraph of $T$ with at least one vertex has either an isolated vertex or a dominating vertex~\footnote{In a graph $T$, a vertex is called a {\it dominating vertex} if it has degree $|V(T)|-1$.}.
\item[{\rm (iv)}] There exist nonnegative real weights $w_v$ for each vertex $v\in V(T)$ and a threshold value $a$ such that $uv\in E(T)$ if and only if $w_u+w_v>a$.
\item[{\rm (v)}] There exist nonnegative real weights $w_v$ for each vertex $v\in V(T)$ and a threshold value $b$ such that for any subset $U\subseteq V(T)$, $U$ is an independent set if and only if $\sum_{v\in U}w_v\leq b$.
\end{itemize}
\end{lemma}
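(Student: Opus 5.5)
The plan is to prove the cycle of implications (i)$\Rightarrow$(iv)$\Rightarrow$(ii)$\Rightarrow$(iii)$\Rightarrow$(i), and then to treat (v) separately by showing (iv)$\Leftrightarrow$(v).

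\emph{(i)$\Rightarrow$(iv).} Let $T$ be built as $u_1,\dots,u_s$, where each $u_i$ is added either dominating or isolated. Assign weights in reverse order of construction, using powers of $2$. Put $w_{u_i}=2^{i}$ if $u_i$ was added as dominating and $w_{u_i}=0$ if it was added isolated, and take the threshold $a=2^{1}-\tfrac12$ suitably shifted. More robustly, choose $w_{u_i}=a+\varepsilon_i$ for dominating vertices and $w_{u_i}=a-\varepsilon_i'$ for isolated ones, with magnitudes decreasing rapidly in $i$, so that for $j<i$ the sign of $w_{u_i}+w_{u_j}-a$ is decided by $u_i$. Concretely, take $a=1$, put $w_{u_i}=\tfrac12+3^{-(s-i)}$ if $u_i$ is dominating and $w_{u_i}=\tfrac12-3^{-(s-i)}$ if $u_i$ is isolated, and treat $u_1$ as either. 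Then the later vertex has the larger deviation and fixes the sign, as required. Nonnegativity holds.

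\emph{(iv)$\Rightarrow$(ii).} Suppose an induced $P_4$, $C_4$ or $2K_2$ exists. Each of these has two edges $ab$ and $cd$ on four distinct vertices with $ac,bd\notin E$. Then $w_a+w_b>a$ and $w_c+w_d>a$, while $w_a+w_c\le a$ and $w_b+w_d\le a$. Summing the two pairs gives a contradiction.

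\emph{(ii)$\Rightarrow$(iii).} The class of graphs with no induced $P_4$, $C_4$ or $2K_2$ is hereditary, so it suffices to show that such a $T$ has an isolated or dominating vertex. Take $v$ of maximum degree and suppose it is not dominating, so some $x\notin N[v]$ exists. If $x$ is not isolated, let $y$ be a neighbour of $x$. Since $d(y)\le d(v)$ and $y$ is adjacent to $x$ while $v$ is not, $v$ has a neighbour $z$ with $zy\notin E$. Now consider $\{v,z,x,y\}$: we have $vz,xy\in E$ and $vx,zy\notin E$, so it induces $2K_2$, $P_4$ or $C_4$, according to whether $vy$ and $zx$ are edges. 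This is a contradiction, so $x$ is isolated.

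\emph{(iii)$\Rightarrow$(i).} Induct on $|V(T)|$: remove a vertex that is isolated or dominating in $T$, construct $T-v$ by induction, and then add $v$ last.

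\emph{(iv)$\Leftrightarrow$(v).} For (iv)$\Rightarrow$(v), take the (i)-type weights above, which are near $\tfrac12$ for dominating vertices and near $0$ in a second normalization, and use a large $b$. This is the step I expect to be the main obstacle. A set $U$ is independent exactly when all pairwise sums are at most $a$, and converting this pairwise condition into a single linear condition needs a careful weighting. I would derive it from (i) directly: order the vertices by construction and give each dominating vertex $u_i$ weight $b+1$ minus the sum of the later isolated weights. The clean approach is to induct, adding an isolated vertex with weight $0$, or a dominating vertex with weight $b'$ larger than the total previous weight after rescaling $b$ up. For (v)$\Rightarrow$(iv), note that edges are exactly the non-independent pairs. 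Then $uv\in E$ iff $w_u+w_v>b$ whenever $w_u,w_v\le b$, and vertices with $w_v>b$ cannot occur, since singletons are independent. So (iv) holds with $a=b$.
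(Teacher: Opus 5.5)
The paper does not prove this lemma; it quotes it from Mahadev--Peled and Diaconis--Holmes--Janson and only uses (i)$\Leftrightarrow$(ii) later. So your argument has to stand on its own. The cycle (i)$\Rightarrow$(iv)$\Rightarrow$(ii)$\Rightarrow$(iii)$\Rightarrow$(i) and the implication (v)$\Rightarrow$(iv) are fine, up to two small repairs:
\begin{itemize}
\item With $w_{u_i}=\tfrac12\pm 3^{-(s-i)}$, the last vertex gets weight $\tfrac12-1<0$ if it is added isolated, so ``nonnegativity holds'' is false. Use deviations $3^{-(s-i+1)}$ instead.
\item In (ii)$\Rightarrow$(iii) you need $z\neq y$. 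This holds because if every vertex of $N(v)\setminus\{y\}$ were adjacent to $y$, then $N(y)$ would contain $(N(v)\setminus\{y\})\cup\{x\}$, and also $v$ when $y\in N(v)$. That gives $d(y)>d(v)$.
\end{itemize}

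The genuine gap is (iv)$\Rightarrow$(v), equivalently (i)$\Rightarrow$(v): none of the weightings you propose works.
\begin{itemize}
\item Weight $0$ for an isolated-added vertex $u_j$ fails once a later vertex $u_d$ is dominating-added. Independence of $\{u_d\}$ forces $w_{u_d}\le b$, hence $w_{u_j}+w_{u_d}\le b$, although $u_ju_d\in E$. Already $P_3$, built as two isolated additions followed by a dominating one, breaks.
\item Giving a dominating vertex weight $b'$ larger than the total previous weight fails either way. If the threshold is raised to $b'$, every old set passes the test and all old edges are lost; if it is not raised, $\{u_d\}$ itself fails.
\item Giving $u_d$ weight $b+1$ minus the later isolated weights makes the independent set formed by $u_d$ and all later isolated-added vertices weigh $b+1>b$.
\end{itemize}

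The missing ingredient is a description of independent sets in construction order, treating $u_1$ as isolated-added. A set $U$ is independent iff it contains at most one dominating-added vertex $u_d$, and in that case every other vertex of $U$ is isolated-added with index larger than $d$. You can encode this as follows. Set $w_{u_j}=2^{-j}$ for isolated-added $u_j$, take $b=2$, and set $w_{u_d}=2-\sum 2^{-j}$ for dominating-added $u_d$, the sum running over isolated-added $u_j$ with $j>d$. Then:
\begin{itemize}
\item sets of isolated-added vertices weigh less than $1$;
\item $u_d$ together with later isolated-added vertices weighs at most $2$;
\item $u_d$ together with an earlier isolated-added $u_j$ weighs more than $2$, because $2^{-j}>\sum_{k>d}2^{-k}$;
\item any two dominating-added vertices weigh more than $3$.
\end{itemize}
Alternatively, your induction can be rescued. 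Keep all weights strictly positive and all independent-set inequalities strict. Give a new isolated vertex a weight smaller than the least slack $b-\sum_{I}w$ over independent $I$, and give a new dominating vertex weight $b-\tfrac12\min_u w_u$.
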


Recall that $\mathcal{T}_s$ is the set of all threshold graphs of order $s$.
We say that a graph is {\it induced $\mathcal{T}_s$-free} if it contains no $T\in \mathcal{T}_s$ as an induced subgraph.
We shall use the following facts in our proofs.

\begin{fact}\label{fa:threshold-1}
The following statements hold.
\begin{itemize}
\item[{\rm (i)}] If $G$ is induced $\mathcal{T}_s$-free, then the complement $\overline{G}$ is also induced $\mathcal{T}_s$-free.
\item[{\rm (ii)}] Let $G$ be an induced $\mathcal{T}_s$-free graph $(s\geq 2)$ and $v$ be a vertex of $G$. Then $G[N_G(v)]$ and $G[N_{\overline{G}}(v)]$ are induced $\mathcal{T}_{s-1}$-free.
\item[{\rm (iii)}] If $G$ is a $4$-vertex graph and $G\notin \mathcal{T}_4$, then $G\in \{P_4, C_4, 2K_2\}.$
\item[{\rm (iv)}] If $G$ is a $5$-vertex induced $\mathcal{T}_4$-free graph, then $G\cong C_5.$
\end{itemize}
\end{fact}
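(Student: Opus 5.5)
We prove the four parts of Fact~\ref{fa:threshold-1} separately, in order. Each follows quickly from the constructive definition of threshold graphs or from Lemma~\ref{le:threshold-equi}~(ii).

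For (i), we use that the complement of a threshold graph is a threshold graph. In the constructive definition, complementing turns every ``add a dominating vertex'' step into ``add an isolated vertex'' and vice versa. Hence $\mathcal{T}_s$ is closed under complementation. If $\overline{G}$ contained an induced $T\in\mathcal{T}_s$ on a vertex set $U$, then $G[U]\cong\overline{T}\in\mathcal{T}_s$, which contradicts our assumption on $G$.

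For (ii), suppose $G[N_G(v)]$ contains an induced $T\in\mathcal{T}_{s-1}$ on a set $U$. Then $G[U\cup\{v\}]$ is obtained from $T$ by adding $v$ adjacent to all of $U$. This graph is a threshold graph of order $s$ and is induced in $G$, a contradiction. Similarly, suppose $G[N_{\overline{G}}(v)]$ contains an induced $T\in\mathcal{T}_{s-1}$ on a set $U$. Since $v$ is nonadjacent in $G$ to every vertex of $U$, the graph $G[U\cup\{v\}]$ is $T$ plus an isolated vertex, again a member of $\mathcal{T}_s$, a contradiction.

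For (iii), Lemma~\ref{le:threshold-equi}~(ii) applies directly. A $4$-vertex graph that is not threshold must contain an induced $P_4$, $C_4$ or $2K_2$. Any such induced subgraph has $4$ vertices, so it is the whole graph.

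For (iv), let $G$ be a $5$-vertex induced $\mathcal{T}_4$-free graph. By (iii), every $4$-vertex induced subgraph of $G$ lies in $\{P_4,C_4,2K_2\}$. Each of these graphs has exactly $2$, $4$ or $2$ edges and has all degrees at most $2$. We first show that $G$ is $2$-regular.

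\emph{Degrees.} Suppose some vertex $v$ has degree $d_G(v)\ge 3$. Then deleting a vertex $w\ne v$ that is nonadjacent to $v$ leaves $v$ with degree at least $3$ in $G-w$. If no such $w$ exists, then $v$ is adjacent to all other vertices, so deleting any neighbor of $v$ leaves $v$ with degree $3$. In both cases we obtain a $4$-vertex induced subgraph with a vertex of degree $3$, a contradiction. So $\Delta(G)\le 2$. By (i), $\overline{G}$ is also induced $\mathcal{T}_4$-free, so $\Delta(\overline{G})\le 2$ as well, i.e., $\delta(G)\ge 2$. Therefore $G$ is $2$-regular on $5$ vertices.

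\emph{Structure.} A $2$-regular graph on $5$ vertices is a disjoint union of cycles of total order $5$, so it is $C_5$ or $C_3\cup C_2$. The latter is impossible, since $C_2$ is not a simple graph. Hence $G\cong C_5$.

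The only step that needs care is this degree argument in (iv); the rest is routine.
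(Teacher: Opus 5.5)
Your proof is correct and is essentially the paper's argument. In (iv) you show $\Delta(G)\le 2$ directly, since a vertex of degree at least $3$ has degree $3$ in some $4$-vertex induced subgraph, and you get $\delta(G)\ge 2$ by complementation; the paper does the dual (a vertex of degree at most $1$ is isolated in some $4$-set, then complement), and your only slip is the unused remark that $P_4$ has $2$ edges (it has $3$).
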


\noindent {\bf Proof.}
(i) Let $G$ be an induced $\mathcal{T}_s$-free graph.
Suppose for a contradiction that $\overline{G}$ contains an induced subgraph $H$ that is a threshold graph of order $s$.
Then $\overline{H}$ is also a threshold graph of order $s$.
However, $\overline{H}$ is an induced subgraph of $G$, which contradicts the assumption that $G$ is induced $\mathcal{T}_s$-free.
Hence, $\overline{G}$ is induced $\mathcal{T}_s$-free.

(ii) Suppose that $G[N_G(v)]$ or $G[N_{\overline{G}}(v)]$ contains an induced subgraph $H$ that is a threshold graph of order $s-1$.
If $H$ is an induced subgraph of $G[N_G(v)]$, then $G[V(H)\cup \{v\}]$ is a threshold graph in which $v$ is a dominating vertex; 
if $H$ is an induced subgraph of $G[N_{\overline{G}}(v)]$, then $G[V(H)\cup \{v\}]$ is a threshold graph in which $v$ is an isolated vertex.
In either case, this is a contradiction.

(iii) If $G$ is a $4$-vertex graph and $G\notin \{P_4, C_4, 2K_2\}$, then $G$ contains no induced $P_4$, $C_4$ or $2K_2$.
By Lemma~\ref{le:threshold-equi}~(ii), $G$ is a threshold graph, i.e., $G\in \mathcal{T}_4$, a contradiction.

(iv) We first show that $\delta(G)\geq 2$.
Suppose for a contradiction that there exists a vertex $v\in V(G)$ with $d_G(v)\leq 1$.
Then there exist at least three vertices $x, y, z\in V(G)\setminus \{v\}$ such that $E(v, \{x, y, z\})=\emptyset$.
Thus $\delta(G[\{v, x, y, z\}])=0$.
In particular, we have $G[\{v, x, y, z\}]\notin \{P_4, C_4, 2K_2\}$.
By Fact~\ref{fa:threshold-1}~(iii), $G[\{v, x, y, z\}]$ is a threshold graph of order $4$, contradicting the fact that $G$ is induced $\mathcal{T}_4$-free.
Hence, $\delta(G)\geq 2$.
Moreover, $\overline{G}$ is also induced $\mathcal{T}_4$-free by Fact~\ref{fa:threshold-1}~(i).
Hence, by the same argument applied to $\overline{G}$, we have $\delta(\overline{G})\geq 2$, so $\Delta(G)= 4-\delta(\overline{G})\leq 2$.
Combining $\delta(G)\geq 2$ and $\Delta(G)\leq 2$, we know that $G$ is a 2-regular graph.
Since the unique 2-regular graph on five vertices is $C_5$, we have $G\cong C_5.$
\hfill$\blacksquare$
\vspace{0.2cm}

It is easy to check that the number of unlabeled threshold graphs of order $s$ is $2^{s-1}$.
The number of labeled threshold graphs of order $s$ was determined by Beissinger and Peled~\cite{BePe}.

\begin{lemma}{\normalfont (\cite{BePe}, see also \cite[Equality~(2.3)]{DiHJ})}\label{le:threshold}
For $s\geq 2$, the number of labeled threshold graphs on $s$ vertices is $$s!\left(\left(\frac{1}{\ln 2}-1\right)\left(\frac{1}{\ln 2}\right)^{s}+\varepsilon_s\right),$$
where $|\varepsilon_s|\leq \frac{2\zeta(s)}{(2\pi)^s}$ and $\zeta(s)$ is the Riemann zeta function\footnote{The Riemann zeta function is a function of a complex variable $s$ with $Re(s)>1$ defined as $\zeta(s)=\sum\nolimits_{n=1}^{\infty}\frac{1}{n^s}$. The demonstration of the particular value $\zeta(2)=\sum\nolimits_{n=1}^{\infty}\frac{1}{n^2}=\frac{\pi^2}{6}$ is known as the Basel problem.}.
\end{lemma}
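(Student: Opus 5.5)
We plan to prove the formula by reducing the count of labeled threshold graphs to the ordered Bell (Fubini) numbers $F(n)$, which count ordered set partitions of an $n$-set, and then using the known exact series expansion of $F(n)$.

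\textbf{Step 1 (combinatorial reduction).} Fix a labeled threshold graph $T$ on $[s]$ and a construction sequence for it. Group consecutive vertices of the same type (isolated or dominating) into maximal blocks $B_1,\dots,B_k$, listed in order of addition. Within each block the vertices are interchangeable, and the types alternate along the blocks. Hence $T$ is determined by:
\begin{itemize}
\item the ordered set partition $(B_1,\dots,B_k)$ of $[s]$;
\item the type of the last block, which gives a factor $2$.
\end{itemize}
The only non-uniqueness comes from the first vertex, whose type is irrelevant. A first block of size $1$ can therefore always be absorbed into $B_2$. We normalise by requiring $|B_1|\ge 2$, and we check that with this normalisation the representation is unique. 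For uniqueness we would recover the blocks from $T$ itself: repeatedly peel off the set of all isolated vertices or the set of all dominating vertices, using Lemma~\ref{le:threshold-equi}~(iii). For $s\ge2$ at most one of these two sets is nonempty unless the remaining graph is a single block. The $k=1$ case gives the empty graph and the complete graph, which matches the factor $2$. Ordered partitions with $|B_1|=1$ are counted by $sF(s-1)$, so
$$t(s)=2\bigl(F(s)-sF(s-1)\bigr).$$
As sanity checks, this gives $t(2)=2$ and $t(3)=8$.

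\textbf{Step 2 (analytic expansion).} The exponential generating function of $F$ is $1/(2-e^x)$. Its poles are at $x=\ln 2+2\pi i k$ for $k\in\mathbb{Z}$, and each is simple with residue $-1/2$. By a Mittag-Leffler / partial fraction expansion we have, for $n\ge1$,
$$F(n)=\frac{n!}{2}\sum_{k\in\mathbb{Z}} w_k^{\,n+1},\qquad w_k=\frac{1}{\ln 2+2\pi i k}.$$
The series converges absolutely since $|w_k|\le 1/(2\pi|k|)$. Substituting into Step 1 gives
$$t(s)=s!\sum_{k\in\mathbb{Z}} w_k^{\,s}(w_k-1).$$
The $k=0$ term is exactly $s!\left(\frac{1}{\ln2}-1\right)\left(\frac{1}{\ln 2}\right)^{s}$.

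\textbf{Step 3 (error term).} Set $\varepsilon_s=\sum_{k\neq0}w_k^s(w_k-1)$. For $k\ne0$ we have $|w_k-1|=|1-\ln2-2\pi ik|/|\ln2+2\pi ik|\le 1$, because $1-\ln 2<\ln 2$. We also have $|w_k|\le (2\pi|k|)^{-1}$. Therefore
$$|\varepsilon_s|\le 2\sum_{k\ge1}(2\pi k)^{-s}=\frac{2\zeta(s)}{(2\pi)^s}.$$

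\textbf{Main obstacle.} I expect the delicate point to be justifying the exact series for $F(n)$, since the sum over poles is only conditionally convergent at low orders. I would handle it by integrating $1/(2-e^z)\,z^{-n-1}$ over large squares whose sides pass between consecutive poles. On these contours $|2-e^z|$ is bounded below, so the contour integral tends to $0$ for $n\ge1$ and the residue theorem yields the expansion. The uniqueness claim in Step 1 also needs a careful check, but it is routine.
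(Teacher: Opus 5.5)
Your proposal is correct. Note that the paper gives no proof of this lemma: it imports it from Beissinger--Peled and Diaconis--Holmes--Janson. The comparison is therefore with the standard derivation behind those citations.

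That derivation starts from the exponential generating function $\sum_s t(s)x^s/s! = e^x(1-x)/(2-e^x)$ and expands it over the simple poles $\ln 2+2\pi i k$. Your identity $t(s)=2(F(s)-sF(s-1))$ gives the generating function $2(1-x)/(2-e^x)=e^x(1-x)/(2-e^x)+(1-x)$, which agrees with the literature's from degree $2$ on. Taking residues of either function at $\ln 2+2\pi i k$ yields exactly your series $s!\sum_k w_k^s(w_k-1)$. The same bound then follows from $|w_k-1|<1$ and $|w_k|\le (2\pi|k|)^{-1}$.

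So you are following essentially the standard route. The difference is that you replace the cited generating function with a self-contained reduction to ordered Bell numbers; your formula reproduces $2,8,46,332,2874$.

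Two small remarks:
\begin{itemize}
\item The analytic step is cleaner than you feared. For $n\ge 1$ the pole series for $F(n)$ is already absolutely convergent; only $n=0$ is conditionally convergent. On your squares with horizontal sides at $\pm(2N+1)\pi$, one has $|2-e^z|\ge 1$ for large $R$, so the contour integral is at most $8R^{-n}$.
\item In Step 1, a graph on at least two vertices can never have both a dominating and an isolated vertex, so your caveat about single blocks is unnecessary. What the normalisation $|B_1|\ge 2$ actually secures is that the peeled set equals the whole last block. For example, it rules out reading $K_3$ as $\{a\}$ followed by a dominating block $\{b,c\}$.
\end{itemize}
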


For an event $A$, let $\pr[A]$ be the probability of $A$.
We shall use the symmetric version of the Lov\'{a}sz Local Lemma.

\begin{lemma}{\normalfont (\cite{ErLo,Spe})}\label{le:Local_Lemma}
Let $A_1, A_2, \ldots, A_n$ be events in an arbitrary probability space.
Assume that each event $A_i$ is mutually independent of all but at most $d$ of the other events, and that $\pr[A_i]\leq p$ for all $i\in [n]$.
If $ep(d+1)\leq 1$, then $\pr\left[\bigwedge_{i\in [n]}\overline{A_i}\right]>0$.
\end{lemma}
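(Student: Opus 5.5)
Since this is the classical Lov\'{a}sz Local Lemma, I would follow the standard inductive argument of Erd\H{o}s and Lov\'{a}sz. The case $d=0$ is immediate: then the events are mutually independent and $p\le 1/e<1$, so $\pr\left[\bigwedge_{i}\overline{A_i}\right]=\prod_i(1-\pr[A_i])>0$. So assume $d\geq 1$. For each $i$, fix a set $D_i\subseteq [n]\setminus\{i\}$ with $|D_i|\le d$ such that $A_i$ is mutually independent of $\{A_j\colon j\notin D_i\cup\{i\}\}$. Set $x\colonequals \frac{1}{d+1}$.

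The key claim, proved by induction on $|S|$, is the following. For every $S\subseteq[n]$ and every $i\notin S$, we have $\pr\left[\bigwedge_{j\in S}\overline{A_j}\right]>0$ and
$$\pr\Big[A_i \,\Big|\, \bigwedge_{j\in S}\overline{A_j}\Big]\le x.$$
Positivity follows from the bound for smaller sets. Writing $S=\{j_1,\dots,j_k\}$, the chain rule gives $\pr\left[\bigwedge_{j\in S}\overline{A_j}\right]=\prod_{\ell}\left(1-\pr\left[A_{j_\ell}\mid \overline{A_{j_1}}\wedge\cdots\wedge\overline{A_{j_{\ell-1}}}\right]\right)\ge(1-x)^{k}>0$.

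For the conditional bound, split $S$ into $S_1=S\cap D_i$ and $S_2=S\setminus D_i$, and write $B_1=\bigwedge_{j\in S_1}\overline{A_j}$ and $B_2=\bigwedge_{j\in S_2}\overline{A_j}$. If $S_1=\emptyset$, mutual independence gives $\pr[A_i\mid B_2]=\pr[A_i]\le p\le \frac{1}{e(d+1)}\le x$. Otherwise,
$$\pr[A_i\mid B_1\wedge B_2]=\frac{\pr[A_i\wedge B_1\mid B_2]}{\pr[B_1\mid B_2]}.$$
The numerator is at most $\pr[A_i\mid B_2]=\pr[A_i]\le p$, again by mutual independence of $A_i$ from the events indexed by $S_2$. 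For the denominator, list $S_1=\{j_1,\dots,j_r\}$ with $r\le d$ and expand by the chain rule. Each factor has the form $1-\pr\left[A_{j_\ell}\mid B_2\wedge\overline{A_{j_1}}\wedge\cdots\right]$, and the conditioning set has size less than $|S|$. So the induction hypothesis gives each factor at least $1-x$, and hence $\pr[B_1\mid B_2]\ge (1-x)^d=\left(1-\frac{1}{d+1}\right)^d\ge \frac1e$. Therefore the ratio is at most $ep\le\frac{1}{d+1}=x$, which completes the induction.

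Finally, applying the chain rule to all $n$ events and using the claim,
$$\pr\left[\bigwedge_{i\in[n]}\overline{A_i}\right]\ge (1-x)^n>0.$$
The only delicate step is the denominator estimate. There one must check that every conditioning event appearing in the induction has positive probability, so that all conditional probabilities are defined; this is exactly why the positivity statement is carried along inside the induction. One must also use the elementary inequality $(1-\frac{1}{d+1})^d\ge e^{-1}$.
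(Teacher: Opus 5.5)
The paper does not prove this lemma; it cites it from Erd\H{o}s--Lov\'{a}sz and Spencer. Your argument is the standard inductive proof from those sources, namely the general Local Lemma specialized to all weights equal to $x=\frac{1}{d+1}$, and it is correct. You also correctly treat $d=0$ separately, which is genuinely needed because $x=1$ would make the final bound $(1-x)^n$ vacuous.
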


Next, we introduce the lexicographic product of two graphs and the lexicographic product coloring of two edge-colorings.
Given two graphs $G$ and $H$, the {\it lexicographic product} $G \otimes H$ is the graph defined as follows:
its vertex set is $V(G)\times V(H)$, and two vertices $(x_1, y_1)$ and $(x_2, y_2)$ are adjacent if and only if either $x_1x_2\in E(G)$, or $x_1=x_2$ and $y_1y_2\in E(H)$.
Given an edge-coloring $\chi$ of $K_{m}$ and an edge-coloring $\chi'$ of $K_{n}$, the {\it lexicographic product coloring} $\chi \otimes \chi'$ of $K_{mn}$ is defined as follows:
identify $V(K_{mn})$ with $[m]\times [n]$, and the color of an edge $\{(u_1, v_1), (u_2, v_2)\}$ is $\chi(u_1u_2)$ if $u_1\neq u_2$, and $\chi'(v_1v_2)$ if $u_1= u_2$.
Let $\tau(G)\colonequals \max\{|S|\colon\, \mbox{$G[S]$ is a threshold graph}\}$.

\begin{fact}\label{fa:tau}
For any graphs $G$ and $H$, we have $\tau(G \otimes H)\leq \tau(G)\tau(H).$
\end{fact}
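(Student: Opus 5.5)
Let $S\subseteq V(G)\times V(H)$ with $(G\otimes H)[S]$ a threshold graph. The goal is $|S|\le \tau(G)\tau(H)$. I would split $S$ into fibres. Let $\pi\colon V(G)\times V(H)\to V(G)$ be the projection, set $X\colonequals \pi(S)$, and for each $x\in X$ let $S_x\colonequals \{y\in V(H)\colon (x,y)\in S\}$. Then $|S|=\sum_{x\in X}|S_x|$. So it suffices to prove two things: $G[X]$ is a threshold graph, which gives $|X|\le \tau(G)$; and each $H[S_x]$ is a threshold graph, which gives $|S_x|\le \tau(H)$.

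The second claim is immediate. Within a single fibre $\{x\}\times V(H)$, adjacency in $G\otimes H$ is exactly adjacency in $H$. Hence $H[S_x]$ is isomorphic to the induced subgraph $(G\otimes H)[\{x\}\times S_x]$ of the threshold graph $(G\otimes H)[S]$. Induced subgraphs of threshold graphs are threshold graphs, for instance by Lemma~\ref{le:threshold-equi}~(ii).

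For the first claim, choose one representative $y_x\in S_x$ for each $x\in X$. Two vertices $(x,y_x)$ and $(x',y_{x'})$ with $x\neq x'$ are adjacent in $G\otimes H$ if and only if $xx'\in E(G)$. Therefore $G[X]$ is isomorphic to the induced subgraph $(G\otimes H)[\{(x,y_x)\colon x\in X\}]$ of $(G\otimes H)[S]$. This subgraph is again threshold, so $G[X]$ is threshold and $|X|\le\tau(G)$.

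Combining the two claims gives $|S|\le |X|\,\tau(H)\le \tau(G)\tau(H)$. Taking $S$ of maximum size proves the fact. The only point that needs care is that a threshold graph remains threshold under taking induced subgraphs, and this follows at once from the forbidden-subgraph characterization in Lemma~\ref{le:threshold-equi}~(ii). I do not expect any real obstacle.
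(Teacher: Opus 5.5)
Your proposal is correct and follows the paper's proof essentially step for step. Both split $S$ into fibres $S_x$, bound each $|S_x|\le\tau(H)$ via the induced copy of $H[S_x]$ inside a fibre, and pick one representative per fibre to obtain an induced copy of $G[X]$, giving $|X|\le\tau(G)$. Both rely on induced subgraphs of threshold graphs being threshold.
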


\noindent {\bf Proof.}
Let $S$ be a subset of $V(G \otimes H)$ with $|S|=\tau(G \otimes H)$ such that $S$ induces a threshold graph.
Let $V(G)=\{x_1, x_2, \ldots, x_{|V(G)|}\}$.
For each $x_i\in V(G)$, let $S_{x_i}\colonequals \{y\in V(H)\colon\, (x_i,y)\in S\}$.
Note that some of the sets $S_{x_1}, S_{x_2}, \ldots, S_{x_{|V(G)|}}$ might be empty.
After relabeling the vertices of $G$, we may assume that there exists some $1\leq m\leq |V(G)|$ such that $S_{x_i}\neq \emptyset$ for $1\leq i\leq m$, and $S_{x_i}= \emptyset$ for $m+1\leq i\leq |V(G)|$.

In order to prove the result, we use the property that every induced subgraph of a threshold graph is also a threshold graph.
First, for each $i\in [m]$, the subgraph of $G \otimes H$ induced by $\{(x_i,y)\colon\, y\in S_{x_i}\}$ is an induced subgraph of $(G \otimes H)[S]$ and is isomorphic to $H[S_{x_i}]$.
Thus $H[S_{x_i}]$ is a threshold graph, and $|S_{x_i}|\leq \tau(H)$.
Second, for each $i\in [m]$, choose one vertex $y_i\in S_{x_i}$.
Then the subgraph of $G \otimes H$ induced by $\{(x_i,y_i)\colon\, i\in [m]\}$ is an induced subgraph of $(G \otimes H)[S]$ and is isomorphic to $G[\{x_1, \ldots, x_m\}]$.
Thus $G[\{x_1, \ldots, x_m\}]$ is a threshold graph, and $m\leq \tau(G)$.
Therefore, we have $\tau(G \otimes H)=|S|=\sum_{i\in [m]}|S_{x_i}|\leq \sum_{i\in [m]}\tau(H)=m\tau(H)\leq \tau(G)\tau(H).$
\hfill$\blacksquare$
\vspace{0.2cm}

Finally, we introduce the Cartesian product of two graphs.
Given two graphs $G$ and $H$, the {\it Cartesian product} $G \Box H$ is the graph defined as follows: its vertex set is $V(G)\times V(H)$, and two vertices $(x_1, y_1)$ and $(x_2, y_2)$ are adjacent if and only if either $y_1=y_2$ and $x_1x_2\in E(G)$, or $x_1=x_2$ and $y_1y_2\in E(H)$.
Note that the Cartesian product $K_{m}\Box K_{n}$ of two complete graphs $K_m$ and $K_n$ is the line graph of the complete graph $K_{m,n}$.
The graph $K_{m}\Box K_{n}$ is also called an {\it $m\times n$ grid}, or an {\it $m\times n$ rook's graph}.
A graph is said to be {\it locally $m\times n$ grid} if the induced subgraph on the neighbourhood of any vertex is isomorphic to an $m\times n$ grid.
We shall use the following lemma in our proof of $r'_2(6)=18$.

\begin{lemma}{\normalfont (see \cite[Section~10.2]{BrVaMa})}\label{le:grid}~
\begin{itemize}
\item[{\rm (i)}] The unique strongly regular graph with parameters $(9,4,1,2)$ is $K_{3}\Box K_{3}$.
\item[{\rm (ii)}] There are precisely two connected locally $3\times 3$ grid graphs, one on $16$ vertices and the other one on $20$ vertices.
\end{itemize}
\end{lemma}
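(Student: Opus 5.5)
The first part, (i), is a local-to-global rigidity argument. For (ii), the plan is to first identify the two candidate graphs and then show by a neighbourhood-extension argument that no others occur.

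\emph{Part (i).} Let $G$ be an $SRG(9,4,1,2)$. Since $\lambda=1$, every edge lies in exactly one triangle. So for each vertex $v$, the graph $G[N_G(v)]$ is $1$-regular on $4$ vertices, i.e.\ $2K_2$, and $v$ lies in exactly two triangles. There are therefore $9\cdot 2/3=6$ triangles, and every edge belongs to exactly one of them. Call two triangles \emph{parallel} if they are disjoint.
\begin{itemize}
\item Fix a triangle $T$. The $6$ vertices outside $T$ receive exactly $3\cdot 2=6$ edges from $T$.
\item An outside vertex adjacent to two vertices of $T$ would create a second triangle on an edge of $T$, contradicting $\lambda=1$. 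Hence each outside vertex has exactly one neighbour in $T$.
\item The three triangles through the three vertices of $T$ other than $T$ itself are pairwise disjoint: if two of them met, their common vertex would have two neighbours in $T$.
\item So these three triangles partition $V(G)$, and the remaining triangles form a second parallel class of three triangles.
\end{itemize}
Index the vertices by (class-1 triangle, class-2 triangle). Two triangles from different classes meet in exactly one vertex, by the counting above. Moreover, adjacency holds exactly when two vertices share a triangle, since every edge lies in some triangle. This is precisely $K_3\Box K_3$. Conversely, $K_3\Box K_3$ visibly has parameters $(9,4,1,2)$.

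\emph{Part (ii): the two examples.} The Johnson graph $J(6,3)$ on $20$ vertices is locally $3\times 3$ grid. The neighbourhood of a $3$-set $A$ is $\{A-a+b\}$, and $A-a+b\sim A-a'+b'$ iff $a=a'$ or $b=b'$, giving $K_3\Box K_3$. The complement $\overline{K_4\Box K_4}$ on $16$ vertices is also locally $3\times 3$ grid. The neighbourhood of $(1,1)$ is $\{(i,j)\colon i,j\neq 1\}$, with adjacency ``$i\neq i'$ and $j\neq j'$''. This is the complement of $K_3\Box K_3$, which is again isomorphic to $K_3\Box K_3$ because the complement of an $SRG(9,4,1,2)$ is again an $SRG(9,4,1,2)$, and part (i) applies.

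\emph{Part (ii): uniqueness.} Let $G$ be connected and locally $3\times 3$ grid, and fix $v$ with $N=N_G(v)\cong K_3\Box K_3$. For $u\in N$, the set $N_G(u)\cap N$ is a $4$-set inducing $2K_2$ (the grid neighbours of $u$). Together with $v$, this accounts for $5$ of the $9$ vertices of $N_G(u)$, and these $5$ vertices must induce the subgraph $K_1\vee 2K_2$ inside a $3\times 3$ grid. In a $3\times 3$ grid the vertex playing the role of $v$ is determined, and the remaining $4$ vertices of $N_G(u)$, namely the grid vertices not in its row or column, lie at distance $2$ from $v$. This pins down how each $u\in N$ sees the second neighbourhood $N_2$, and in particular which $4$-sets of $N$ occur as $N_G(w)\cap N$ for $w\in N_2$. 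Each such $\mu$-graph must be a $4$-cycle of the grid, i.e.\ two rows times two columns, or its complement-type configuration.

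The main obstacle is the counting and consistency step that follows. One shows that either every $2\times 2$ subgrid is realised by a distinct $w$ and there is a unique antipode, giving $1+9+9+1=20$ vertices (the $J(6,3)$ case). Otherwise the $\mu$-graphs are forced to be the ``anti-diagonal'' type, each $w\in N_2$ has $6$ neighbours in $N$, and $|N_2|=6$ with no third layer, giving $1+9+6=16$ vertices. I would handle this by case analysis on $|N_G(w)\cap N|$ for one $w\in N_2$, using the local grid condition at $w$ to propagate. In each case one checks that the adjacency is then completely determined and matches the corresponding example.
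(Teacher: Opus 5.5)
Your argument for (i) is complete and correct (it uses only $n=9$, $k=4$, $\lambda=1$). You also correctly verify that $J(6,3)$ and $\overline{K_4\Box K_4}$ are connected and locally $3\times 3$ grid. The paper itself proves nothing here; it simply cites Brouwer--Van Maldeghem.

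The genuine gap is the uniqueness half of (ii). This is exactly what the paper relies on, to exclude an $18$-vertex $G$ in the proof of $r'_2(6)=18$. After your correct observation that every $u\in N$ has exactly four neighbours in $N_2$, the rest is announced rather than proved:
\begin{itemize}
\item You give no reason why $G[N_G(w)\cap N]$ is restricted at all.
\item Your description ``a $4$-cycle \dots\ or its complement-type configuration'' is inaccurate. The complement of a $2\times 2$ subgrid is a row plus a column, which never occurs. The actual second possibility is the $6$-cycle left after deleting a transversal. This also conflicts with your earlier phrase about ``which $4$-sets'' occur.
\item The dichotomy ``all $C_4$-type with a unique antipode, or all $C_6$-type with $|N_2|=6$'' is asserted without argument.
\end{itemize}
That dichotomy is the crux. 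Let $n_i$ be the number of $w\in N_2$ with $|N_G(w)\cap N|=i$. For an $18$-vertex graph, the count $4n_4+6n_6=36$ together with $n_4+n_6\le 8$ still allows the mixed configurations $(n_4,n_6)=(6,2)$ or $(3,4)$. So something must rule out mixing, and a case analysis on a single $w$ does not do this by itself.

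Here is one way to close the gap.
\begin{itemize}
\item \emph{The $\mu$-graphs.} If $u\in N\cap N_G(w)$, then $v,w$ are nonadjacent vertices of the grid $N_G(u)$, so they have exactly two common neighbours there. Hence $G[N_G(w)\cap N]$ is $2$-regular, so it is a line, a $2\times 2$ subgrid, or the complement of a transversal. A line is impossible, because two adjacent grid vertices have only one common neighbour in $N_G(u)$.
\item \emph{No mixing.} Let $x_1,x_2$ and $y_1,y_2$ be the column-mates and row-mates of $u$ in $N$. The four $N_2$-neighbours of $u$ are the vertices $w_{ij}$ adjacent to $u,x_i,y_j$, and they form a $4$-cycle with $w_{11}\sim w_{12}$. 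Suppose $w_{11}$ and $w_{12}$ had different types. Then their common neighbours inside $N$ would be exactly $u$, $x_1$, and the fourth corner of the $2\times2$ subgrid of the $C_4$-type one. These three vertices induce a path $P_3$, contradicting the fact that the common neighbourhood of every edge induces $2K_2$. By connectivity of the grid, all of $N_2$ has one type.
\item \emph{The $C_6$ case.} The three vertices of $N_G(w)\setminus N$ form a transversal of the grid $N_G(w)$, and each has four neighbours in $N$. So there is no third layer, and $|V(G)|=16$.
\item \emph{The $C_4$ case.} The centre of the remaining cross in $N_G(w)$ cannot lie in $N_2$, since any two $2\times2$ subgrids of a $3\times3$ grid meet. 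So each $w$ has exactly one neighbour in $N_3$. Propagating through the grid $N_G(a)$ for $a\in N_3$ gives $N_G(a)=N_2$, hence $|N_3|=1$ and $|V(G)|=20$.
\end{itemize}
Only after this can the adjacencies be reconstructed and matched with the two examples.
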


\subsection{Lower bounds on $r'_2(s)$}
\label{subsec:pf_complete_lower}

In this subsection, we present three constructive lower bounds (Lemmas~\ref{le:r2slower-construction-1}, \ref{le:r2slower-construction-2} and \ref{le:r2slower-construction-3}) and a probabilistic lower bound (Lemma~\ref{le:r2slower}) on $r'_2(s)$.
In general, the constructive lower bounds are weaker than the probabilistic lower bound for large $s$.
However, for several small values of $s$, the constructive lower bounds are tight.
In particular, the lower bound in Lemma~\ref{le:r2slower-construction-1} is tight for $s\in \{3,4,6\}$, and the lower bound in Lemma~\ref{le:r2slower-construction-2} is tight for $s=5$; see Theorems~\ref{thm:r34}, \ref{thm:r5} and \ref{thm:r6}.

\begin{lemma}\label{le:r2slower-construction-1}
For any positive integers $s,a,b$ with $a+b=s+2$, we have $r'_2(s)\geq r(a,b)$, so
$r'_2(s)\geq \max\limits_{a,b\geq 1, a+b=s+2}r(a,b)$.
\end{lemma}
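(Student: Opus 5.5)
We must show that for $a+b=s+2$ there is a graph on $r(a,b)-1$ vertices with no induced threshold graph of order $s$. The natural candidate is a graph $G$ on $n=r(a,b)-1$ vertices with neither a clique of order $a$ nor an independent set of size $b$, which exists by the definition of $r(a,b)$. The claim is that every induced threshold subgraph of $G$ has fewer than $s=a+b-2$ vertices; this gives $r'_2(s)\ge n+1=r(a,b)$, and maximizing over $a,b$ gives the second inequality. Degenerate cases ($a=1$ or $b=1$) are trivial: $r(1,b)=1$, so the bound $r'_2(s)\ge 1$ holds automatically.

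The key step is a structural fact about threshold graphs: every threshold graph $T$ of order $s$ satisfies $\omega(T)+\alpha(T)\ge s+1$. To prove it, use the construction order $u_1,\dots,u_s$ from the definition, where each $u_i$ ($i\ge2$) is added as either dominating or isolated with respect to $\{u_1,\dots,u_{i-1}\}$. Let $D$ be the set of vertices added as dominating and $I$ the set of those added as isolated, and assign $u_1$ to both sets.

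First, $D$ is a clique. For two vertices of $D$, the later one was added as dominating, so it is adjacent to the earlier one. Second, $I$ is independent. For two vertices of $I$, the later one was added as isolated, so it is nonadjacent to the earlier one. Since $u_1$ belongs to both sets, $|D|+|I|=s+1$, and hence $\omega(T)+\alpha(T)\ge s+1$.

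Now suppose $G$ contained an induced threshold graph $T$ of order $s$. Then $\omega(T)\le a-1$ and $\alpha(T)\le b-1$, since $T$ is induced in $G$. This gives $s+1\le a+b-2=s$, a contradiction. The only genuine step is the clique/independent-set decomposition with the shared first vertex; the rest is bookkeeping, so I expect no real obstacle.
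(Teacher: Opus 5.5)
Your proposal is correct and follows the paper's proof essentially verbatim: the same split of the construction order into dominating-added and isolated-added vertices with $u_1$ counted in both, giving $\omega(T)+\alpha(T)\ge s+1$, applied to a graph on $r(a,b)-1$ vertices with $\omega\le a-1$ and $\alpha\le b-1$. Your separate treatment of $a=1$ or $b=1$ is harmless but not needed, since the general argument already covers those cases.
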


\noindent {\bf Proof.}
Let $\omega(T)$ and $\alpha(T)$ be the clique number and independence number of a graph $T$.
First, we show that for any threshold graph $T$ of order $s$, we have $\omega(T)+\alpha(T)\geq s+1$.
By the definition, we may assume that $V(T)=\{u_1, u_2, \ldots, u_s\}$ such that for each $2\leq i\leq s$, $u_i$ is either adjacent to or nonadjacent to all the vertices of $\{u_1, \ldots, u_{i-1}\}$ in $T$.
Let $X\subseteq \{u_2, \ldots, u_s\}$ be the set of vertices $u_i$ such that $u_i$ is adjacent to all the vertices of $\{u_1, \ldots, u_{i-1}\}$,
and $Y\subseteq \{u_2, \ldots, u_s\}$ be the set of vertices $u_i$ such that $u_i$ is nonadjacent to all the vertices of $\{u_1, \ldots, u_{i-1}\}$.
Then $X\cup \{u_1\}$ induces a clique of $T$ and $Y\cup \{u_1\}$ induces an independent set of $T$.
Hence, $\omega(T)+\alpha(T)\geq |X|+1+|Y|+1=s+1$.

Now we show that $r'_2(s)\geq r(a,b)$ whenever $a+b=s+2$.
Let $G$ be a graph on $r(a,b)-1$ vertices with $\omega(G)\leq a-1$ and $\alpha(G)\leq b-1$.
Suppose that $G$ contains an induced subgraph that is a threshold graph $T$ of order $s$.
Then we have $\omega(T)+\alpha(T)\geq s+1=a+b-1$ by the preceding argument.
However, $\omega(T)+\alpha(T)\leq \omega(G)+\alpha(G)\leq a-1+b-1=a+b-2$, a contradiction.
Therefore, we have $r'_2(s)\geq |V(G)|+1=r(a,b).$
\hfill$\blacksquare$
%\vspace{0.2cm}

\begin{lemma}\label{le:r2slower-construction-2}
For any positive integer $s\geq 3$, we have $r'_2(s)\geq (s-2)^2+1$.
\end{lemma}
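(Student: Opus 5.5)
The plan is to exhibit a graph on $(s-2)^2$ vertices with no induced threshold graph of order $s$. I take $m\colonequals s-2$ and the $m\times m$ rook's graph $G=K_m\Box K_m$ introduced above. By the earlier remark, $G$ is the line graph of $K_{m,m}$. An induced subgraph $G[S]$ is therefore the line graph $L(B)$ of the bipartite graph $B\subseteq K_{m,m}$ with edge set $S$; the rows and columns form the two sides of $B$. It then suffices to prove $\tau(G)\le m+1=s-1$, since this gives $r'_2(s)\ge |V(G)|+1=(s-2)^2+1$.

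By Lemma~\ref{le:threshold-equi}~(ii), $L(B)$ is threshold exactly when it has no induced $P_4$, $C_4$ or $2K_2$. I will use three translations of these forbidden subgraphs into $B$. An induced $2K_2$ in $L(B)$ comes from two vertex-disjoint paths of length $2$ in $B$ that are not joined by any edge of $B$ between their edge sets; the simplest instance is two stars $K_{1,2}$ on disjoint vertex sets. An induced $C_4$ in $L(B)$ comes from a $4$-cycle in $B$. An induced $P_4$ in $L(B)$ comes from a path on five vertices in $B$.

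The plan is to show that if $L(B)$ is threshold then $|E(B)|\le m+1$, by the following case analysis.
\begin{itemize}
\item If $B$ is a matching, then $|E(B)|\le m$ and we are done.
\item Otherwise let $v$ be a vertex of maximum degree $d\ge 2$, say a row; its edges use $d$ distinct columns. Consider any edge $e$ of $B$ not incident to $v$.
\item If $e$ misses all the columns used by $v$, then two edges at $v$ together with $e$ already give an induced $2K_2$ in $L(B)$: the two edges at $v$ are adjacent, $e$ is adjacent to neither, and a second such edge forming a $K_{1,2}$ disjoint from $v$'s star closes the $2K_2$. So each non-$v$ edge must meet one of $v$'s columns, or else be isolated from $v$'s star in a way that is ruled out by the $2K_2$ or $P_4$ conditions.
\item If two non-$v$ edges go to two different columns of $v$, then using $v$'s edges to those columns we get either a $C_4$ in $B$ or a $5$-vertex path in $B$, hence an induced $C_4$ or $P_4$ in $L(B)$.
\item If non-$v$ edges hit a column $c$ of $v$ twice and $v$ has another edge $vc'$ with $c'\ne c$, then $vc'$ together with one of those edges gives the required disjoint pair for a $2K_2$ once there are at least two edges on each side.
\end{itemize}
The conclusion I aim for is that $B$ is a ``broom'': a star at $v$ plus at most one further edge attached at one of $v$'s columns, or symmetrically a star at a column $c$ plus at most one edge elsewhere. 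In either case $|E(B)|\le m+1$.

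The main obstacle is making this case analysis airtight, in particular verifying that every double-star configuration with at least two extra edges on each centre yields an induced $2K_2$, $P_4$ or $C_4$ in $L(B)$. I will organize it by the degrees of the two endpoints of one edge of $B$, exploiting the symmetry between rows and columns. It remains to check sharpness: a full row plus one extra vertex in a column of that row gives $m+1$ vertices inducing a threshold graph, which shows the bound $\tau(G)\le m+1$ cannot be improved.
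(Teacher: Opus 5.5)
Your construction and reduction are exactly the paper's: take $G=K_{s-2}\Box K_{s-2}$, view it as the line graph of $K_{m,m}$, and show $\tau(G)\le m+1$. The gap is in the structural conclusion. In your third bullet you claim that every edge of $B$ not at $v$ must meet one of $v$'s columns, with every alternative ``ruled out by the $2K_2$ or $P_4$ conditions''. This is false. An edge of $B$ that shares no vertex with any other edge of $B$ is an isolated vertex of $L(B)$, so it never lies in an induced $P_4$, $C_4$ or $2K_2$. (The first clause of that bullet is also off: two edges at $v$ together with $e$ induce $K_2\cup K_1$, which is threshold.)

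For example, let $m\ge 3$ and $B=\{r_1c_1,\,r_1c_2,\,r_2c_1,\,r_3c_3,\dots,r_mc_m\}$. Then $L(B)\cong P_3\cup (m-2)K_1$, which is threshold and has $m+1$ vertices. So even the extremal configurations are not brooms. Your final count ``in either case $|E(B)|\le m+1$'' never accounts for such edges, so the plan as written does not prove the bound.

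The missing ingredient is the trade-off count the paper uses. Your fourth and fifth bullets, together with the maximality of $d$ and a $P_4$ check on edges at the row of the extra edge, do show the following: the unique component of $B$ with at least two edges is a star at $v$ on $d$ columns plus at most one edge at one of those columns. This is the paper's $\min\{|X'|,|Y'|\}\le 1$ for the dominating vertex $xy$.

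Every other edge of $B$ is vertex-disjoint from that component. Such edges are also pairwise disjoint, since two adjacent ones would form a $K_{1,2}$ disjoint from $v$'s star and hence an induced $2K_2$. So they form a matching on columns outside $v$'s $d$ columns, and there are at most $m-d$ of them. This gives $|E(B)|\le d+1+(m-d)=m+1$, which is the paper's bound $|T''|\le s-3-|Y'|$. You must charge these edges against the columns used by the star: counting the rows they avoid only gives $|E(B)|\le m+d-1$.
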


\noindent {\bf Proof.}
Let $G\cong K_{s-2}\Box K_{s-2}$ be the Cartesian product of $K_{s-2}$ and $K_{s-2}$.
Note that $|V(G)|=(s-2)^2$.
Let $T$ be an induced subgraph of $G$ such that $|V(T)|=\tau(G)$ and $T$ is a threshold graph.
It suffices to show that $|V(T)|\leq s-1$.

Note that $T$ is possibly disconnected, but $T$ contains at most one component with a nonempty edge set since $T$ contains no induced $2K_2$ by Lemma~\ref{le:threshold-equi}~(ii).
If $E(T)=\emptyset$, then $|V(T)|\leq \alpha(G)=\alpha(K_{s-2}\Box K_{s-2})= s-2$, and we are done.
Hence, we may assume that $T$ consists of one component $T'$ with $E(T')\neq \emptyset$ and a set $T''$ of isolated vertices.

Since $G\cong K_{s-2}\Box K_{s-2}$ is the line graph of the complete graph $K_{s-2,s-2}$, $V(T)$ corresponds to a subset of edges of $K_{s-2,s-2}$.
Let $X$ and $Y$ be the partite sets of $V(K_{s-2, s-2})$.
We first consider $|V(T')|$.
Let $E'$ be the set of edges of $K_{s-2,s-2}$ corresponding to $V(T')$.
Since $T'$ is a connected threshold graph, it contains a dominating vertex $v$.
Let $xy$ be the edge of $K_{s-2, s-2}$ corresponding to $v$, where $x\in X$ and $y\in Y$.
Since $v$ is dominating, every edge in $E'\setminus \{xy\}$ is adjacent to $xy$, and thus incident with either $x$ or $y$.
Let $X'\colonequals \{z\in X\setminus \{x\}\colon\, zy\in E'\}$ and $Y'\colonequals \{z\in Y\setminus \{y\}\colon\, zx\in E'\}$.
Then $|V(T')|=|E'|=1+|X'|+|Y'|$.
We claim that $\min\{|X'|, |Y'|\}\leq 1$.
Indeed, if $|X'|\geq 2$ and $|Y'|\geq 2$, say $x_1, x_2 \in X'$ and $y_1, y_2\in Y'$, then the four vertices of $T$ corresponding to the edges $xy_1, xy_2, yx_1, yx_2$ induce a $2K_2$, contradicting Lemma~\ref{le:threshold-equi}~(ii).
We next consider $|T''|$.
If $T''=\emptyset$, then the preceding argument implies that $|V(T)|=|V(T')|= 1+|X'|+|Y'|\leq 1+1+s-3=s-1$, and we are done.
If $T''\neq \emptyset$, then the edges corresponding to $T''$ form a matching in $K_{s-2, s-2}$ and these edges are not adjacent to any edge of $E'$.
Thus we have $|T''|\leq \min\{s-2-1-|X'|, s-2-1-|Y'|\}$.
Without loss of generality, we may assume that $|X'|\leq |Y'|$.
Then we have $|X'|\leq 1$ and $|T''|\leq s-3-|Y'|$, so $|V(T)|=|V(T')|+|T''|\leq 1+|X'|+|Y'|+s-3-|Y'|=|X'|+s-2\leq s-1$.
This completes the proof of Lemma~\ref{le:r2slower-construction-2}.
\hfill$\blacksquare$
%\vspace{0.2cm}

\begin{lemma}\label{le:r2slower-construction-3}
For any positive integers $s$ and $t$, we have $r'_2((s-1)(t-1)+1)\geq (r'_2(s)-1)(r'_2(t)-1)+1$.
\end{lemma}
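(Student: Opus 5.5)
The plan is to use the lexicographic product together with Fact~\ref{fa:tau}. Set $m=r'_2(s)-1$ and $n=r'_2(t)-1$. By Definition~\ref{def:r-threshold} there is a graph $G$ on $m$ vertices with no induced threshold graph of order $s$, so $\tau(G)\leq s-1$. Likewise there is a graph $H$ on $n$ vertices with $\tau(H)\leq t-1$. (If $s=1$ or $t=1$, then $m=0$ or $n=0$, and the inequality reads $r'_2(\cdot)\geq 1$, which is trivial; so we may assume both graphs are nonempty.)

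Next, form the lexicographic product $G\otimes H$, which has $mn=(r'_2(s)-1)(r'_2(t)-1)$ vertices. By Fact~\ref{fa:tau},
$$\tau(G\otimes H)\leq \tau(G)\tau(H)\leq (s-1)(t-1).$$
Hence every induced threshold subgraph of $G\otimes H$ has at most $(s-1)(t-1)$ vertices. In particular, $G\otimes H$ contains no induced threshold graph of order $(s-1)(t-1)+1$; here we use that induced subgraphs of threshold graphs are threshold, so "no induced threshold graph of order exactly $k$" follows from "$\tau<k$". It follows that
$$r'_2((s-1)(t-1)+1)>mn, \qquad\text{that is,}\qquad r'_2((s-1)(t-1)+1)\geq (r'_2(s)-1)(r'_2(t)-1)+1.$$

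There is no real obstacle, since Fact~\ref{fa:tau} does all the work. The only care needed is the degenerate cases in which $r'_2(s)-1=0$, which occurs only for $s=1$.
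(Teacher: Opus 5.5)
Your proposal is correct and follows the paper's proof essentially verbatim: take extremal induced $\mathcal{T}_s$-free and $\mathcal{T}_t$-free graphs on $r'_2(s)-1$ and $r'_2(t)-1$ vertices, form their lexicographic product, and apply Fact~\ref{fa:tau}. Your added remarks (why no induced threshold graph of order $s$ gives $\tau(G)\leq s-1$, and the trivial case $s=1$ or $t=1$) are fine and only make explicit what the paper leaves implicit.
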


\noindent {\bf Proof.}
By the definition of $r'_2(s)$, choose an induced $\mathcal{T}_s$-free graph $G$ on $r'_2(s)-1$ vertices, and an induced $\mathcal{T}_t$-free graph $H$ on $r'_2(t)-1$ vertices.
We consider the lexicographic product $G \otimes H$ of $G$ and $H$.
By Fact~\ref{fa:tau}, we have $\tau(G \otimes H)\leq \tau(G)\tau(H)\leq (s-1)(t-1).$
Hence, we have $r'_2((s-1)(t-1)+1)\geq |V(G \otimes H)|+1= (r'_2(s)-1)(r'_2(t)-1)+1$.
\hfill$\blacksquare$
%\vspace{0.2cm}

\begin{lemma}\label{le:r2slower}
For any integer $s\geq 3$, we have
$$r'_2(s)> \left\lfloor2^{\frac{s+1}{2}+\frac{1}{s-2}\left(\frac{\ln\ln 2}{\ln 2}(s+1)+2-\frac{1+4\ln s}{\ln 2}\right)}\right\rfloor=\left(\sqrt{2}\ln 2-o(1)\right)2^{\frac{s}{2}}.$$
\end{lemma}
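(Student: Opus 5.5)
We will prove the bound with the Lov\'{a}sz Local Lemma (Lemma~\ref{le:Local_Lemma}) applied to the random graph $G(n,1/2)$, with $n$ equal to the floor expression in the statement. For each $s$-subset $S\subseteq V$ let $A_S$ be the event that $G[S]$ is a threshold graph. Since $G[S]$ is uniform over the $2^{\binom{s}{2}}$ labeled graphs on $S$, we get
$$\pr[A_S]=p\colonequals \frac{t_s}{2^{\binom{s}{2}}},$$
where $t_s$ is the number of labeled threshold graphs on $s$ vertices. By Lemma~\ref{le:threshold}, $t_s\le s!\bigl((\tfrac{1}{\ln 2}-1)(\tfrac{1}{\ln 2})^s+\varepsilon_s\bigr)$. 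For $s\ge 3$ the error satisfies $|\varepsilon_s|\le 2\zeta(3)/(2\pi)^s$, which is negligible, so $t_s\le C\, s!\,(\ln 2)^{-s}$ for an absolute constant $C$ that we can make explicit; in fact a cruder bound such as $t_s\le s!\,(\ln 2)^{-s-1}$ should already suffice.

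The event $A_S$ is mutually independent of all $A_{S'}$ with $|S\cap S'|\le 1$, since such events depend on disjoint edge sets. Hence we may take
$$d=\binom{s}{2}\binom{n-2}{s-2}<\frac{s^2}{2}\cdot\frac{n^{s-2}}{(s-2)!}.$$
If $ep(d+1)\le 1$, Lemma~\ref{le:Local_Lemma} gives a graph on $n$ vertices with no induced threshold graph of order $s$, and therefore $r'_2(s)>n$. It thus suffices to check that
$$e\cdot \frac{s!\,(\ln 2)^{-s-1}}{2^{s(s-1)/2}}\cdot \frac{s^2 n^{s-2}}{(s-2)!}\le 1,$$
which simplifies to $n^{s-2}\le 2^{s(s-1)/2}(\ln 2)^{s+1}/(e s^4)$, using $s(s-1)s^2\le s^4$.

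Taking logarithms base $2$ and dividing by $s-2$ gives
$$\log_2 n\le \frac{s(s-1)}{2(s-2)}+\frac{1}{s-2}\Bigl((s+1)\tfrac{\ln\ln 2}{\ln 2}-\tfrac{1+4\ln s}{\ln 2}\Bigr),$$
where we used $\log_2 e=1/\ln 2$ and $\log_2 s^4=4\ln s/\ln 2$. Writing $\frac{s(s-1)}{2(s-2)}=\frac{s+1}{2}+\frac{1}{s-2}$ produces exactly the exponent in the statement, including the term $+2$ inside the bracket. The main obstacle is matching these constants exactly. The precise bookkeeping for $t_s$ (absorbing $\tfrac{1}{\ln 2}-1$ and $\varepsilon_s$ into one extra factor of $1/\ln 2$), together with the treatment of $d+1$ versus $d$, must line up; if the extra factor does not cover the slack, we would instead bound $d+1\le \binom{s}{2}\binom{n}{s-2}$ and adjust accordingly.

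For the asymptotic form, note that as $s\to\infty$ the exponent equals
$$\frac{s}{2}+\frac12+\frac{\ln\ln 2}{\ln 2}+o(1).$$
Then $2^{\ln\ln 2/\ln 2}=\ln 2$ and $2^{1/2}=\sqrt2$, so the floor is $(\sqrt2\ln 2-o(1))2^{s/2}$.
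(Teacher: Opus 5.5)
Your approach matches the paper's: the Local Lemma on $G(n,\frac12)$, with the Beissinger--Peled count giving $p=s!(\ln 2)^{-(s+1)}2^{-s(s-1)/2}$. However, the constant bookkeeping at the end is wrong, and as written it does not give the stated bound.

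In your sufficient condition you replaced $d+1$ by $\frac{s^2 n^{s-2}}{(s-2)!}$. That doubles your bound $d<\frac{s^2}{2}\cdot\frac{n^{s-2}}{(s-2)!}$ to absorb the $+1$, and so it throws away the factor $\frac12$ coming from $\binom{s}{2}$. Your condition $n^{s-2}\le 2^{s(s-1)/2}(\ln 2)^{s+1}/(e s^4)$ then combines with your own (correct) identity $\frac{s(s-1)}{2(s-2)}=\frac{s+1}{2}+\frac{1}{s-2}$. The result is a bracket containing $+1$, not $+2$: your claim that this ``produces exactly the exponent in the statement, including the term $+2$'' is an arithmetic slip. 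What you actually prove is the lemma with the exponent lowered by $\frac{1}{s-2}$. That is strictly weaker than the stated non-asymptotic inequality, although the asymptotic form $(\sqrt2\ln 2-o(1))2^{s/2}$ survives.

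The fix is to keep the $\frac12$. Count the pairs $(T,S')$ with $T\in\binom{S}{2}$ and $T\subseteq S'$; there are $\binom{s}{2}\binom{n-2}{s-2}$ of them, and this count already includes $S'=S$. So the number of dependent events is at most $\binom{s}{2}\binom{n-2}{s-2}-1$, which gives $d+1\le\binom{s}{2}\binom{n-2}{s-2}<\frac{s^2}{2}\cdot\frac{n^{s-2}}{(s-2)!}$. This is what the paper does. Your suggested fallback $d+1\le\binom{s}{2}\binom{n}{s-2}$ also works, provided you carry the $\frac12$ through rather than ``adjusting accordingly.''

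With the $\frac12$ kept, the sufficient condition becomes $n^{s-2}\le 2^{s(s-1)/2+1}(\ln 2)^{s+1}/(e s^4)$. Since $\frac{s(s-1)/2+1}{s-2}=\frac{s+1}{2}+\frac{2}{s-2}$, this yields exactly the $+2$.

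Two smaller points:
\begin{itemize}
\item The bound $t_s\le s!(\ln 2)^{-(s+1)}$ should be verified, not just described as ``should suffice.'' It reduces to $|\varepsilon_s|<(\ln 2)^{-s}$, which is immediate from $|\varepsilon_s|\le 2\zeta(s)/(2\pi)^s<1$.
\item The degenerate case $n<s$ should be dispatched separately, since then $r'_2(s)\ge s>n$ trivially. Do this before taking logarithms and using $\binom{n-2}{s-2}\le n^{s-2}/(s-2)!$.
\end{itemize}
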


\noindent {\bf Proof.}
%\begin{proof}
Let $n=\left\lfloor2^{\frac{s+1}{2}+\frac{1}{s-2}\left(\frac{\ln\ln 2}{\ln 2}(s+1)+2-\frac{1+4\ln s}{\ln 2}\right)}\right\rfloor.$
If $n<s$, then $r'_2(s)\geq s>n$, so the result is immediate.
Hence, assume $n\geq s$.
Consider a random graph $G\sim G(n, \frac{1}{2})$, that is, the graph on $n$ labeled vertices, obtained by selecting each pair of vertices to be an edge randomly and independently with probability $\frac{1}{2}$.
For any fixed set $S\in {V(G)\choose s}$, let $A_S$ be the event that $S$ induces a threshold graph on $s$ vertices.
By Lemma~\ref{le:threshold}, the number of labeled threshold graphs on $s$ vertices is at most
$s!\left(\left(\frac{1}{\ln 2}-1\right)\left(\frac{1}{\ln 2}\right)^{s}+\frac{2\zeta(s)}{(2\pi)^s}\right),$
where $\zeta(s)$ is the Riemann zeta function.
Since $s\geq 3>2$, we have $\zeta(s)<\zeta(2)=\frac{\pi^2}{6}$, so $-\left(\frac{1}{\ln 2}\right)^{s}+\frac{2\zeta(s)}{(2\pi)^s}< -\left(\frac{1}{\ln 2}\right)^{2}+\frac{2\zeta(2)}{(2\pi)^2}=-\left(\frac{1}{\ln 2}\right)^{2}+\frac{2\frac{\pi^2}{6}}{(2\pi)^2}=-\left(\frac{1}{\ln 2}\right)^{2}+\frac{1}{12}<0.$
Thus $s!\left(\left(\frac{1}{\ln 2}-1\right)\left(\frac{1}{\ln 2}\right)^{s}+\frac{2\zeta(s)}{(2\pi)^s}\right)<s!\left(\frac{1}{\ln 2}\right)^{s+1}.$
Therefore,
$$\pr[A_S]\leq \frac{s!\left(\frac{1}{\ln 2}\right)^{s+1}}{2^{{s\choose 2}}}=s!(\ln 2)^{-(s+1)}2^{-\frac{s(s-1)}{2}}.$$
Let $p\colonequals s!(\ln 2)^{-(s+1)}2^{-\frac{s(s-1)}{2}}.$

Note that each event $A_S$ is mutually independent of all the events $A_{S'}$ with $|S\cap S'|\leq 1$.
Let $d$ be the number of events $A_{S'}$ with $S\neq S'$ and $|S\cap S'|\geq 2$.
Then $d\leq {s\choose 2}{n-2 \choose s-2}-1.$
We next show that $ep(d+1)\leq 1$.
Since $$n=\left\lfloor2^{\frac{s+1}{2}+\frac{1}{s-2}\left(\frac{\ln\ln 2}{\ln 2}(s+1)+2-\frac{1+4\ln s}{\ln 2}\right)}\right\rfloor\leq 2^{\frac{s+1}{2}+\frac{1}{s-2}\big((s+1)\log_2(\ln 2)+2-\log_2 e-4\log_2 s\big)},$$
we have
\begin{align*}
  \log_2 (ep(d+1)) \leq &~\log_2 \left(es!(\ln 2)^{-(s+1)}2^{-\frac{s(s-1)}{2}}{s\choose 2}{n-2 \choose s-2}\right) \\
  < &~\log_2 \left(es!(\ln 2)^{-(s+1)}2^{-\frac{s(s-1)}{2}}\frac{s^2}{2}\frac{n^{s-2}}{(s-2)!}\right) \\
  < &~\log_2 \left(es^4(\ln 2)^{-(s+1)}2^{-\frac{s(s-1)}{2}-1}n^{s-2}\right) \\
  = &~\log_2 e + 4\log_2 s - (s+1)\log_2(\ln 2)-\frac{s(s-1)}{2}-1+(s-2)\log_2 n\\
  \leq &~\log_2 e + 4\log_2 s - (s+1)\log_2(\ln 2)-\frac{(s+1)(s-2)}{2}-2+ \\
  ~&~(s-2)\left(\frac{s+1}{2}+\frac{1}{s-2}\big((s+1)\log_2(\ln 2)+2-\log_2 e-4\log_2 s\big)\right) \\
  = &~0.
\end{align*}
Hence, $ep(d+1)\leq 1$.
By Lemma~\ref{le:Local_Lemma}, we have
$$\pr\Big[\bigwedge\nolimits_{S\in {V(G)\choose s}}\overline{A_S}\Big]>0,$$
that is, with positive probability, no event $A_S$ occurs.
Thus there exists an induced $\mathcal{T}_s$-free graph on $n$ vertices.
Therefore, we have $r'_2(s)> n= \left\lfloor2^{\frac{s+1}{2}+\frac{1}{s-2}\left(\frac{\ln\ln 2}{\ln 2}(s+1)+2-\frac{1+4\ln s}{\ln 2}\right)}\right\rfloor.$

Moreover, since
$$\frac{1}{s-2}\left(\frac{\ln\ln 2}{\ln 2}(s+1)+2-\frac{1+4\ln s}{\ln 2}\right)=\frac{s+1}{s-2}\frac{\ln\ln 2}{\ln 2}+ \frac{2}{s-2} -\frac{1+4\ln s}{(s-2)\ln 2}=\log_2(\ln 2)-o(1)$$
as $s\to \infty$, we have
$$r'_2(s)>\left\lfloor2^{\frac{s+1}{2}+\frac{1}{s-2}\left(\frac{\ln\ln 2}{\ln 2}(s+1)+2-\frac{1+4\ln s}{\ln 2}\right)}\right\rfloor=\left(\sqrt{2}\ln 2-o(1)\right)2^{\frac{s}{2}}.$$
The proof of Lemma~\ref{le:r2slower} is complete.
%\end{proof}
\hfill$\blacksquare$
%\vspace{0.2cm}

\subsection{Upper bounds on $r'_2(s)$}
\label{subsec:pf_complete_upper}

In this subsection, we present an upper bound on $r'_2(s)$.
We start with a recursive upper bound.

\begin{lemma}\label{le:r2supper-recurrence}
Let $s\geq m\geq 1$ be integers.
Then
$r'_2(s)\leq \Big\lceil \frac{\left(3r'_2(m)-\varepsilon\right)2^{s-m}}{3}\Big\rceil,$
where $\varepsilon=1$ if $r'_2(m)$ is odd, and $\varepsilon=2$ if $r'_2(m)$ is even.
\end{lemma}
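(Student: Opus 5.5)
We prove the stronger one-step recursion
$$r'_2(s)\leq 2r'_2(s-1)-\varepsilon_{s-1}$$
for all $s\geq 2$, where $\varepsilon_{s-1}\in\{1,2\}$ is defined from the parity of $r'_2(s-1)$. We then iterate it from $m$ to $s$ while tracking how the parity correction propagates; the $\tfrac{1}{3}$ and the ceiling in the statement should come from summing a geometric series of these corrections.

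\textbf{Basic step.} Let $G$ be a graph on $n$ vertices with no induced member of $\mathcal{T}_s$. By Fact~\ref{fa:threshold-1}~(ii), for every vertex $v$ both $G[N_G(v)]$ and $G[N_{\overline{G}}(v)]$ are induced $\mathcal{T}_{s-1}$-free. Hence $d_G(v)\leq r'_2(s-1)-1$ and $n-1-d_G(v)\leq r'_2(s-1)-1$, which gives $n\leq 2r'_2(s-1)-1$, that is, $r'_2(s)\leq 2r'_2(s-1)-1$. This is the case $\varepsilon=1$.

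\textbf{Parity refinement.} Suppose $r'_2(s-1)=R$ is even and $n=2R-2$. Then every vertex satisfies $d_G(v)\leq R-1$ and $n-1-d_G(v)\leq R-1$, so $d_G(v)\in\{R-2,R-1\}$.
\begin{itemize}
\item If some vertex has degree $R-2$, we pass to the complement. By Fact~\ref{fa:threshold-1}~(i), $\overline{G}$ is also induced $\mathcal{T}_s$-free, and that vertex has degree $R-1$ in $\overline{G}$.
\item The remaining configuration is $G$ being $(R-1)$-regular on $2R-2$ vertices, and this is the case that needs an argument.
\end{itemize}
The parity count alone is not enough here: $R-1$ is odd and $2R-2$ is even, so an $(R-1)$-regular graph on $2R-2$ vertices can exist. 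The intended argument is different: at $n=2R-1$, every vertex would need $d(v)=R-1$ exactly, giving a $(R-1)$-regular graph on $2R-1$ (odd) vertices with $R-1$ odd, which is impossible. So the parity gain applies to $n=2R-1$ and yields $r'_2(s)\leq 2R-1$ when $R$ is even. That is no better than the basic step, so the real gain must come from iterating two steps at once.

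\textbf{Iteration.} Write $R_k$ for the resulting bound on $r'_2(k)$. The sharper statement we aim for is: if $R_{k}$ is even, then $R_{k+1}=2R_k-1$ is odd, and the next step gives $R_{k+2}\leq 2R_{k+1}-1$; if $R_k$ is odd, then $R_{k+1}\leq 2R_k-2$. The second claim is the parity gain: at $n=2R_k-1$ we would need an $(R_k-1)$-regular graph on an odd number of vertices, and when $R_k$ is even, $R_k-1$ is odd, so this is impossible. I will recheck the parity case before committing to which $R_k$ it applies to.

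Alternating these two rules gives corrections $1,2,1,2,\dots$, and the recursion $x_{k+1}=2x_k-\varepsilon_k$ with alternating $\varepsilon_k$ has the closed form $(3x_m-\varepsilon)2^{s-m}/3$ up to rounding. The remaining steps are:
\begin{itemize}
\item verify by induction on $s-m$ that the ceiling formula in the statement dominates the iterates, checking both parities of $r'_2(m)$;
\item handle the base case $s=m$, where $\lceil (3R-\varepsilon)/3\rceil=R$ since $\varepsilon\leq 2$.
\end{itemize}

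\textbf{Main obstacle.} I expect the hardest part to be getting the parity bookkeeping exactly right: showing that the parity gain occurs at every second step, and that the ceiling absorbs the rounding in both parity cases.
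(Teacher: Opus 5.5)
There is a genuine gap, and it starts with an off-by-one error in your basic step. Write $R=r'_2(s-1)$. The bounds $d_G(v)\le R-1$ and $n-1-d_G(v)\le R-1$ show that every induced $\mathcal{T}_s$-free graph has at most $2R-1$ vertices. Since $r'_2(s)$ is one \emph{more} than the largest order of such a graph, this gives $r'_2(s)\le 2R$, not $r'_2(s)\le 2R-1$.

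Your ``stronger one-step recursion'' is in fact false. Since $r'_2(3)=3$ is odd, it would give $r'_2(4)\le 5$, whereas $C_5$ is induced $\mathcal{T}_4$-free and $r'_2(4)=6$. Your iteration rule ``if $R_k$ is odd then $R_{k+1}\le 2R_k-2$'' is worse still: it would give $r'_2(4)\le 4$. Moreover, the justification you attach to it is the parity argument, which requires $R_k$ \emph{even}. Even taken at face value, corrections $2,1,2,1,\dots$ or $1,2,1,2,\dots$ in $x_{k+1}=2x_k-\varepsilon_k$ do not produce $(3x_m-\varepsilon)2^{s-m}/3$ up to rounding; they fall short of it by an amount of order $2^{s-m}$. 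You also explicitly leave open which parity the refinement applies to, and that is the crux of the lemma.

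The fix is already in your parity paragraph, which you discarded only because you compared it with the misstated basic step. The correct one-step bound is $r'_2(s)\le 2R$ if $R$ is odd, and $r'_2(s)\le 2R-1$ if $R$ is even. In the even case, a free graph on $2R-1$ vertices would need $d_G(v)=R-1$ for every $v$, i.e.\ an odd-regular graph on an odd number of vertices, which is impossible. So the per-step corrections are $\delta\in\{0,1\}$, not $\{1,2\}$. They alternate automatically, because $2R$ is even and $2R-1$ is odd.

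To iterate, set $q_0=r'_2(m)$ and $q_{i+1}=2q_i-\delta_i$, where $\delta_i=1$ if and only if $q_i$ is even. Then $r'_2(m+i)\le q_i$. This uses the fact that $R\mapsto 2R-\mathbf{1}_{\{R\text{ even}\}}$ is increasing, which you need because you iterate on bounds rather than exact values. Summing $\sum_i 2^{s-m-1-i}\delta_i$ gives $q_{s-m}=\bigl((3r'_2(m)-\varepsilon)2^{s-m}+c\bigr)/3$ with $c\in\{1,2\}$. This is exactly the ceiling in the statement, because $2^{s-m}\equiv 1$ or $2\pmod 3$.

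Note that the statement's $\varepsilon$ is not a per-step correction. Rather, $\varepsilon/3=\sum_{i\ge 0}2^{-1-i}\delta_i$ is the total weighted correction.
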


\noindent {\bf Proof.}
For $s\geq 2$, let
$$n(s)\colonequals
\left\{
   \begin{aligned}
    &2\cdot r'_2(s-1) & & \mbox{if $r'_2(s-1)$ is odd},\\
    &2\cdot r'_2(s-1)-1 & & \mbox{if $r'_2(s-1)$ is even}.
   \end{aligned}
   \right.$$

\begin{claim}\label{cl:r2supper-1}
For $s\geq 2$, we have $r'_2(s)\leq n(s)$.
\end{claim}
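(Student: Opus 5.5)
We prove Claim~\ref{cl:r2supper-1} with the standard neighborhood-splitting argument, using Fact~\ref{fa:threshold-1}~(ii) together with a parity argument. Write $r\colonequals r'_2(s-1)$. Let $G$ be any graph on $n(s)$ vertices and suppose, for a contradiction, that $G$ is induced $\mathcal{T}_s$-free. Fix a vertex $v$. By Fact~\ref{fa:threshold-1}~(ii), both $G[N_G(v)]$ and $G[N_{\overline{G}}(v)]$ are induced $\mathcal{T}_{s-1}$-free. By the definition of $r'_2(s-1)$, each of them therefore has at most $r-1$ vertices. Hence $n(s)-1=d_G(v)+d_{\overline{G}}(v)\le 2r-2$, that is, $n(s)\le 2r-1$.

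If $r$ is odd, then $n(s)=2r$, and this inequality already gives a contradiction. Hence $r'_2(s)\le 2r=n(s)$.

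If $r$ is even, then $n(s)=2r-1$, so equality must hold above. Thus for every vertex $v$ we have $d_G(v)=d_{\overline{G}}(v)=r-1$, and $G$ is $(r-1)$-regular on $2r-1$ vertices. Since $r$ is even, $r-1$ is odd, and $2r-1$ is odd as well. But a graph with an odd number of vertices cannot have all degrees odd, by the handshake lemma. This contradiction shows $r'_2(s)\le 2r-1=n(s)$.

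The only delicate point is the even case. There one must notice that the extremal situation forces every vertex to split its remaining $2r-2$ vertices evenly, which makes $G$ regular, and then the handshake lemma rules this out. Everything else follows directly from Fact~\ref{fa:threshold-1}~(ii).

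(For the lemma itself, the plan is to iterate the claim from $m$ up to $s$. After the first step we have $r'_2(m+1)\le 2r'_2(m)-\varepsilon'$, where $\varepsilon'\in\{0,1\}$ is determined by the parity of $r'_2(m)$. We then track parities: whenever the current bound is even we subtract $1$ at the next step. This yields a sequence dominated by $\lceil(3r'_2(m)-\varepsilon)2^{s-m}/3\rceil$, because subtracting $1$ at every other doubling accumulates a total of $2^{k}/3$. We will check this by induction, which is the main bookkeeping obstacle.)
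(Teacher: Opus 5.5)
Your proof is correct and follows the same route as the paper. Both arguments bound $|N_G(v)|$ and $|N_{\overline{G}}(v)|$ by $r'_2(s-1)-1$ via Fact~\ref{fa:threshold-1}~(ii), settle the odd case immediately, and in the even case show that $G$ must be $(r'_2(s-1)-1)$-regular on $2r'_2(s-1)-1$ vertices, which is impossible by parity. Your closing sketch about iterating the claim is not needed for this statement; the paper handles that separately in the proof of Lemma~\ref{le:r2supper-recurrence}.
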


\begin{proof}
For a contradiction, suppose that $G$ is an induced $\mathcal{T}_s$-free graph on $n(s)$ vertices.
For an arbitrarily fixed vertex $v\in V(G)$, let $A\colonequals N_G(v)$ and $B\colonequals N_{\overline{G}}(v)$.
By Fact~\ref{fa:threshold-1}~(ii), we have $|A|\leq r'_2(s-1)-1$ and $|B|\leq r'_2(s-1)-1$.
Note that
$$\max\{|A|, |B|\}\geq \left\lceil\frac{n(s)-1}{2}\right\rceil=
\left\{
   \begin{aligned}
    &r'_2(s-1) & & \mbox{if $r'_2(s-1)$ is odd},\\
    &r'_2(s-1)-1 & & \mbox{if $r'_2(s-1)$ is even}.
   \end{aligned}
   \right.$$
Hence, we must have that $r'_2(s-1)$ is even and $|A|=|B|= r'_2(s-1)-1$.
Since the vertex $v$ was chosen arbitrarily, this implies that $G$ is a $(r'_2(s-1)-1)$-regular graph on $n(s)=2\cdot r'_2(s-1)-1$ vertices.
However, since both $r'_2(s-1)-1$ and $2\cdot r'_2(s-1)-1$ are odd, this is impossible.
This contradiction completes the proof of Claim~\ref{cl:r2supper-1}.
\end{proof}

\begin{claim}\label{cl:r2supper-2}
If $r'_2(m)$ is odd, then
$$\left\lceil \frac{\left(3r'_2(m)-\varepsilon\right)2^{s-m}}{3}\right\rceil=\left\lceil \frac{\left(3r'_2(m)-1\right)2^{s-m}}{3}\right\rceil=
\left\{
   \begin{aligned}
    &\frac{\left(3r'_2(m)-1\right)2^{s-m}+1}{3} & & \mbox{if $s-m$ is even},\\
    &\frac{\left(3r'_2(m)-1\right)2^{s-m}+2}{3} & & \mbox{if $s-m$ is odd}.
   \end{aligned}
   \right.$$
If $r'_2(m)$ is even, then
$$\left\lceil \frac{\left(3r'_2(m)-\varepsilon\right)2^{s-m}}{3}\right\rceil=\left\lceil \frac{\left(3r'_2(m)-2\right)2^{s-m}}{3}\right\rceil=
\left\{
   \begin{aligned}
    &\frac{\left(3r'_2(m)-2\right)2^{s-m}+2}{3} & & \mbox{if $s-m$ is even},\\
    &\frac{\left(3r'_2(m)-2\right)2^{s-m}+1}{3} & & \mbox{if $s-m$ is odd}.
   \end{aligned}
   \right.$$
\end{claim}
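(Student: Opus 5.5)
This claim is pure arithmetic: we compute a ceiling of a rational number whose numerator is an integer and whose denominator is $3$. The plan is to determine the residue of the numerator modulo $3$ in each of the four cases. Then $\lceil N/3\rceil = (N+r')/3$, where $r'\in\{0,1,2\}$ is the unique value with $N+r'\equiv 0 \pmod 3$.

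Write $k=s-m\ge 0$ and note that $2^{k}\equiv (-1)^{k}\pmod 3$. Suppose first that $r'_2(m)$ is odd, so $\varepsilon=1$ and $N=(3r'_2(m)-1)2^{k}$. Modulo $3$ we get $N\equiv -2^{k}\equiv -(-1)^{k}$.
\begin{itemize}
\item If $k$ is even, then $N\equiv -1\equiv 2\pmod 3$. Hence $N+1\equiv 0$, and the ceiling equals $(N+1)/3$.
\item If $k$ is odd, then $N\equiv 1\pmod 3$. Hence $N+2\equiv 0$, and the ceiling equals $(N+2)/3$.
\end{itemize}
Now suppose $r'_2(m)$ is even, so $\varepsilon=2$ and $N=(3r'_2(m)-2)2^{k}$. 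Modulo $3$ we get $N\equiv -2\cdot 2^{k}\equiv 2^{k}\cdot 1\equiv(-1)^{k}$.
\begin{itemize}
\item If $k$ is even, then $N\equiv 1$, so the ceiling is $(N+2)/3$.
\item If $k$ is odd, then $N\equiv 2$, so the ceiling is $(N+1)/3$.
\end{itemize}
These four values match the displayed formulas.

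The only point needing care is that $N$ is never divisible by $3$. This holds because $3r'_2(m)-\varepsilon\equiv -\varepsilon\not\equiv 0$ and $3\nmid 2^{k}$. It guarantees the added term $r'$ is $1$ or $2$ rather than $0$, so the ceiling formula above applies without a separate zero case. There is no real obstacle; the main risk is a sign slip when reducing $-2\cdot(-1)^{k}$ modulo $3$, so I would double-check each case numerically. For example, take $r'_2(m)=3$ with $k=0$: then $\lceil 8/3\rceil=3=(8+1)/3$. Take $r'_2(m)=6$ with $k=1$: then $\lceil 32/3\rceil=11=(32+1)/3$.
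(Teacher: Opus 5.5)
Your proposal is correct and follows essentially the same route as the paper: reduce $2^{s-m}$ modulo $3$ according to the parity of $s-m$, note that $3r'_2(m)-\varepsilon\equiv -\varepsilon \pmod 3$, and read off the ceiling from the residue of the product. Your residue computations in all four cases, including the reduction $-2\cdot 2^{k}\equiv 2^{k}\pmod 3$ in the even case, check out.
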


\begin{proof}
Note that $2^{s-m} \equiv 1 \pmod{3}$ if $s-m$ is even, and $2^{s-m} \equiv 2 \pmod{3}$ if $s-m$ is odd.
First, assume that $r'_2(m)$ is odd.
Note that $3r'_2(m)-1 \equiv 2 \pmod{3}$.
Thus $\left(3r'_2(m)-1\right)2^{s-m} \equiv 2 \pmod{3}$ when $s-m$ is even, and $\left(3r'_2(m)-1\right)2^{s-m} \equiv 1 \pmod{3}$ when $s-m$ is odd.
This proves the claim when $r'_2(m)$ is odd.
Second, assume that $r'_2(m)$ is even.
Note that $3r'_2(m)-2 \equiv 1 \pmod{3}$.
Thus $\left(3r'_2(m)-2\right)2^{s-m} \equiv 1 \pmod{3}$ when $s-m$ is even, and $\left(3r'_2(m)-2\right)2^{s-m} \equiv 2 \pmod{3}$ when $s-m$ is odd.
This proves the claim when $r'_2(m)$ is even.
\end{proof}

Define $q_0\colonequals r'_2(m)$ and
$$q_{i}=
\left\{
   \begin{aligned}
    &2q_{i-1} & & \mbox{if $q_{i-1}$ is odd},\\
    &2q_{i-1}-1 & & \mbox{if $q_{i-1}$ is even}
   \end{aligned}
   \right.$$
for $i\geq 1$.
By Claim~\ref{cl:r2supper-1}, we have $r'_{2}(m+i)\leq q_i$ for every $i\geq 0$.
Set $\delta_i\colonequals 0$ if $q_i$ is odd, and $\delta_i\colonequals 1$ if $q_i$ is even.
Then $q_{i+1}=2q_i-\delta_i$ and
\begin{equation}\label{eq:r2supper-1}
r'_{2}(s)\leq q_{s-m}= 2^{s-m}q_0-\sum\nolimits_{i=0}^{s-m-1}2^{s-m-1-i}\delta_i= 2^{s-m}r'_2(m)-\sum\nolimits_{i=0}^{s-m-1}2^{s-m-1-i}\delta_i.
\end{equation}

We divide the rest of the proof into two cases according to the parity of $r'_2(m)$.
\vspace{0.2cm}

{\bf Case~1.} $q_0=r'_2(m)$ is odd.
\vspace{0.2cm}

In this case, $q_0, q_2, q_4, \cdots$ are odd, and $q_1, q_3, q_5, \cdots$ are even, so $\delta_i= 0$ if $i$ is even, and $\delta_i= 1$ if $i$ is odd.
Thus $\sum\nolimits_{i=0}^{s-m-1}2^{s-m-1-i}\delta_i=\sum\nolimits_{\scriptsize\mbox{$0\leq i\leq s-m-1$, $i$ is odd}}2^{s-m-1-i}\delta_i.$
If $s-m=2\ell$ is even, then
$$\sum\nolimits_{i=0}^{s-m-1}2^{s-m-1-i}\delta_i=2^{2\ell-2}+2^{2\ell-4}+\cdots+1=1+4+\cdots+4^{\ell-1}=\frac{4^{\ell}-1}{3}=\frac{2^{s-m}-1}{3}.$$
Combining with Inequality~(\ref{eq:r2supper-1}), we have
$r'_{2}(s)\leq 2^{s-m}r'_2(m)-\frac{2^{s-m}-1}{3}=\frac{\left(3r'_2(m)-1\right)2^{s-m}+1}{3}.$
If $s-m=2\ell+1$ is odd, then
$$\sum\nolimits_{i=0}^{s-m-1}2^{s-m-1-i}\delta_i=2^{2\ell-1}+2^{2\ell-3}+\cdots+2=2(1+4+\cdots+4^{\ell-1})=\frac{2(4^{\ell}-1)}{3}=\frac{2^{s-m}-2}{3}.$$
Combining with Inequality~(\ref{eq:r2supper-1}), we have
$r'_{2}(s)\leq 2^{s-m}r'_2(m)-\frac{2^{s-m}-2}{3}=\frac{\left(3r'_2(m)-1\right)2^{s-m}+2}{3}.$
By Claim~\ref{cl:r2supper-2}, we further have $r'_2(s)\leq \Big\lceil \frac{\left(3r'_2(m)-\varepsilon\right)2^{s-m}}{3}\Big\rceil.$
\vspace{0.2cm}

{\bf Case~2.} $q_0=r'_2(m)$ is even.
\vspace{0.2cm}

In this case, $q_0, q_2, q_4, \cdots$ are even, and $q_1, q_3, q_5, \cdots$ are odd, so $\delta_i= 1$ if $i$ is even, and $\delta_i= 0$ if $i$ is odd.
Thus $\sum\nolimits_{i=0}^{s-m-1}2^{s-m-1-i}\delta_i=\sum\nolimits_{\scriptsize\mbox{$0\leq i\leq s-m-1$, $i$ is even}}2^{s-m-1-i}\delta_i.$
If $s-m=2\ell$ is even, then
$$\sum\nolimits_{i=0}^{s-m-1}2^{s-m-1-i}\delta_i=2^{2\ell-1}+2^{2\ell-3}+\cdots+2=2(1+4+\cdots+4^{\ell-1})=\frac{2(4^{\ell}-1)}{3}=\frac{2(2^{s-m}-1)}{3}.$$
Combining with Inequality~(\ref{eq:r2supper-1}), we have
$r'_{2}(s)\leq 2^{s-m}r'_2(m)-\frac{2(2^{s-m}-1)}{3}=\frac{\left(3r'_2(m)-2\right)2^{s-m}+2}{3}.$
If $s-m=2\ell+1$ is odd, then
$$\sum\nolimits_{i=0}^{s-m-1}2^{s-m-1-i}\delta_i=2^{2\ell}+2^{2\ell-2}+\cdots+1=1+4+\cdots+4^{\ell}=\frac{4^{\ell+1}-1}{3}=\frac{2^{s-m+1}-1}{3}.$$
Combining with Inequality~(\ref{eq:r2supper-1}), we have
$r'_{2}(s)\leq 2^{s-m}r'_2(m)-\frac{2^{s-m+1}-1}{3}=\frac{\left(3r'_2(m)-2\right)2^{s-m}+1}{3}.$
By Claim~\ref{cl:r2supper-2}, we further have $r'_2(s)\leq \Big\lceil \frac{\left(3r'_2(m)-\varepsilon\right)2^{s-m}}{3}\Big\rceil.$
\hfill$\blacksquare$
\vspace{0.2cm}

Using Theorem~\ref{thm:r6} below, Lemma~\ref{le:r2supper-recurrence} immediately yields an upper bound on $r'_2(s)$.

\begin{corollary}\label{cor:r2supper}
For any integer $s\geq 6$, we have $r'_2(s)\leq \left\lceil\frac{13}{48}2^s\right\rceil.$
\end{corollary}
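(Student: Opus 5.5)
We apply Lemma~\ref{le:r2supper-recurrence} with $m=6$ and use the value $r'_2(6)=18$ supplied by Theorem~\ref{thm:r6}. Since $r'_2(6)=18$ is even, the lemma is used with $\varepsilon=2$. For every $s\geq 6$ it then gives
$$r'_2(s)\leq \left\lceil \frac{(3\cdot 18-2)2^{s-6}}{3}\right\rceil=\left\lceil \frac{52\cdot 2^{s-6}}{3}\right\rceil.$$

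The remaining step is arithmetic. We have
$$\frac{52\cdot 2^{s-6}}{3}=\frac{52}{3\cdot 64}2^s=\frac{52}{192}2^s=\frac{13}{48}2^s,$$
so the bound reads $r'_2(s)\leq\left\lceil\frac{13}{48}2^s\right\rceil$, as claimed.

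The only nontrivial input is the exact value $r'_2(6)=18$ from Theorem~\ref{thm:r6}, proved later in the paper; of that theorem we need only the upper bound $r'_2(6)\leq 18$. That alone does not suffice, however. Lemma~\ref{le:r2supper-recurrence} is stated in terms of the exact value $r'_2(m)$, and $\varepsilon$ depends on its parity. If the true value were odd and smaller, say $17$, the lemma would give $\lceil 50\cdot 2^{s-6}/3\rceil$, which is still at most our bound.

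So monotonicity of the bound is worth recording. Consider the quantity $3q-\varepsilon(q)$, where $\varepsilon(q)=1$ if $q$ is odd and $\varepsilon(q)=2$ if $q$ is even. This quantity is nondecreasing in $q$, which covers the general case. Nevertheless, we simply cite the exact value $r'_2(6)=18$.
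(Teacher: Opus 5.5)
Your proof is correct and is the paper's own argument. You apply Lemma~\ref{le:r2supper-recurrence} with $m=6$ and $r'_2(6)=18$ (even, so $\varepsilon=2$), then simplify $\frac{52\cdot 2^{s-6}}{3}=\frac{13}{48}2^s$. Your monotonicity remark is also sound. Since $q\mapsto 3q-\varepsilon(q)$ is strictly increasing, the upper bound $r'_2(6)\leq 18$ alone would in fact suffice, so your sentence saying it ``does not suffice'' is misphrased, though this does not affect the argument.
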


\noindent {\bf Proof.}
By Theorem~\ref{thm:r6}, we have $r'_2(6)=18$.
Combining with Lemma~\ref{le:r2supper-recurrence}, we have
$r'_2(s)\leq \left\lceil \frac{\left(3r'_2(6)-2\right)2^{s-6}}{3}\right\rceil= \left\lceil\frac{\left(54-2\right)2^{s-6}}{3}\right\rceil=\left\lceil\frac{13}{48}2^s\right\rceil.$
\hfill$\blacksquare$
%\vspace{0.2cm}

\subsection{Exact values of $r'_2(s)$ for small values of $s$}
\label{subsec:pf_complete_small}

In this subsection, we show that the exact values of $r'_2(s)$ are $3, 6, 10, 18$ for $s=3, 4, 5, 6$, respectively.

\begin{theorem}\label{thm:r34}
$r'_2(3)=3$ and $r'_2(4)=6$.
\end{theorem}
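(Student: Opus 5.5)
For $r'_2(3)=3$, the lower bound is immediate: any graph on $2$ vertices has no induced subgraph of order $3$, so $r'_2(3)\ge 3$. For the upper bound I will take an arbitrary $3$-vertex graph and check that it is threshold. Every graph on $3$ vertices is $\overline{K_3}$, $K_2\cup K_1$, $P_3$ or $K_3$, and each of these can be built by adding vertices one at a time. Alternatively, Lemma~\ref{le:threshold-equi}~(ii) settles it at once, since the forbidden graphs $P_4$, $C_4$ and $2K_2$ all have four vertices. Hence $r'_2(3)=3$.

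For $r'_2(4)\ge 6$, I will use Lemma~\ref{le:r2slower-construction-1} with $a=b=3$ (so $a+b=s+2=6$), which gives $r'_2(4)\ge r(3,3)=6$. Concretely, $C_5$ is induced $\mathcal{T}_4$-free, because every $4$-vertex induced subgraph of $C_5$ is $P_4$.

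For $r'_2(4)\le 6$, I will argue by contradiction. Suppose $G$ is an induced $\mathcal{T}_4$-free graph on $6$ vertices. By Fact~\ref{fa:threshold-1}~(iv), every $5$-vertex induced subgraph $G-v$ is isomorphic to $C_5$, so $G-v$ has exactly $5$ edges for every $v$. Double counting edges over the six deletions gives $6\cdot 5=\sum_v |E(G-v)|=4|E(G)|$, so $|E(G)|=15/2$, which is impossible.

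A backup argument avoids the counting. Deleting any vertex leaves a $2$-regular graph, so every vertex has degree $|E(G)|-5$, meaning $G$ is regular. Degrees in $G-v$ are either $d$ or $d-1$ and must all equal $2$, which forces $d=2$ and $d-1=2$ simultaneously, a contradiction. Either way $r'_2(4)\le 6$.

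The only step with real content is Fact~\ref{fa:threshold-1}~(iv), and the paper has already proved it, so I expect no genuine obstacle. The argument then reduces to the parity/counting contradiction above.
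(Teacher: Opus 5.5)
Your proof is correct, but the upper bound $r'_2(4)\le 6$ takes a different route from the paper's.

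The paper fixes a vertex $v$ of a $6$-vertex graph $G$. One of $N_G(v)$, $N_{\overline{G}}(v)$ has at least $3$ vertices, and Fact~\ref{fa:threshold-1}~(ii) together with $r'_2(3)=3$ gives the contradiction. This is exactly the $s=4$ instance of the doubling recursion in Claim~\ref{cl:r2supper-1}, and it needs nothing beyond the recursive definition of a threshold graph. The paper also obtains $r'_2(3)=3$ from Corollary~\ref{cor:forest}~(i) rather than by listing the four $3$-vertex graphs, but that difference is cosmetic.

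You instead invoke Fact~\ref{fa:threshold-1}~(iv) to force every $G-v$ to be $C_5$, and then the count $\sum_v |E(G-v)| = 4|E(G)|$ gives $|E(G)|=15/2$. Fact~\ref{fa:threshold-1}~(iv) is proved before and independently of this theorem, so there is no circularity.

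What your route costs is a heavier input. It goes through the forbidden-induced-subgraph characterization in Lemma~\ref{le:threshold-equi}~(ii), which underlies Fact~\ref{fa:threshold-1}~(iii) and~(iv). It also does not iterate into a general recursion the way the paper's neighborhood argument does. What it buys is that it is precisely the ``classify the extremal graphs one size down, then double count'' strategy that the paper itself needs for $r'_2(5)=10$ and $r'_2(6)=18$. So your argument treats $s=4$ uniformly with those harder cases.

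One small point in your backup argument. The claim that both degrees $d$ and $d-1$ occur in $G-v$ requires $v$ to have both a neighbor and a non-neighbor, that is, $1\le d\le 4$. The cases $d\in\{0,5\}$ give $G-v\cong \overline{K_5}$ or $K_5$, which are not $C_5$, so this gap is only cosmetic.
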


\noindent {\bf Proof.}
By Corollary~\ref{cor:forest}~(i), we have $r'_2(3)=r'_2(K_3)=|V(K_3)|=3$.
In the following, we show that $r'_2(4)=6$.
For the lower bound, we have $r'_2(4)\geq r(3,3)=6$ by Lemma~\ref{le:r2slower-construction-1}.
For the upper bound, let $G$ be an arbitrary graph on six vertices.
Suppose for a contradiction that $G$ is induced $\mathcal{T}_4$-free.
Let $v\in V(G)$.
Then one of $|N_G(v)|\geq 3$ and $|N_{\overline{G}}(v)|\geq 3$ holds.
By Fact~\ref{fa:threshold-1}~(ii), both $G[N_G(v)]$ and $G[N_{\overline{G}}(v)]$ are induced $\mathcal{T}_{3}$-free.
This contradicts $r'_2(3)=3$, and thus completes the proof.
\hfill$\blacksquare$
\vspace{0.2cm}

We shall use the following concept in our proofs of $r'_2(5)=10$ and $r'_2(6)=18$.
Given a graph $G$, we call a 3-set $\{x,y,z\}\in {V(G)\choose 3}$ {\it homogeneous} if either $G[\{x,y,z\}]\cong K_3$ or $G[\{x,y,z\}]\cong \overline{K_3}$, and {\it nonhomogeneous} otherwise.
For each vertex $v\in V(G)$, let $h(v)$ be the number of homogeneous 3-sets containing $v$.
Let $h(G)$ be the number of homogeneous 3-sets of $G$, and $h'(G)$ be the number of nonhomogeneous 3-sets of $G$.
Note that
\begin{equation}\label{eq:hom-1}
h(G)+h'(G)={|V(G)|\choose 3},
\end{equation}
\begin{equation}\label{eq:hom-0}
h(v)=\left|E\left(G[N_G(v)]\right)\right|+\left|E\left(\overline{G}[N_{\overline{G}}(v)]\right)\right|,
\end{equation}
\begin{equation}\label{eq:hom-2}
h(G)= \frac{1}{3}\sum\nolimits_{v\in V(G)}h(v),
\end{equation}
\begin{equation}\label{eq:hom-3}
h'(G)= \frac{1}{2}\sum\nolimits_{v\in V(G)}|N_G(v)|\cdot |N_{\overline{G}}(v)|.
\end{equation}
We shall also use the following fact.

\begin{fact}\label{fa:r56}
Let $G$ be an induced $\mathcal{T}_5$-free graph.
Then the following statements hold.
\begin{itemize}
\item[{\rm (i)}] $G$ contains no five vertices $v_1, v_2, \ldots, v_5$ such that $v_1$ and $v_2$ are isolated vertices in $G[\{v_1, v_2, \ldots, v_5\}]$.
\item[{\rm (ii)}] $G$ contains no five vertices $v_1, v_2, \ldots, v_5$ with $v_1v_i\in E(G)$ for all $i\in \{2,3,4,5\}$ and $v_2v_j\notin E(G)$ for all $j\in \{3,4,5\}$.
\item[{\rm (iii)}] $G$ contains no five vertices $v_1, v_2, \ldots, v_5$ with $v_1v_i\notin E(G)$ for all $i\in \{2,3,4,5\}$ and $v_2v_j\in E(G)$ for all $j\in \{3,4,5\}$.
\item[{\rm (iv)}] For any vertex $v\in V(G)$, we have $|N_G(v)|\leq 5$ and $|N_{\overline{G}}(v)|\leq 5$.
\end{itemize}
\end{fact}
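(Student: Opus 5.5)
Each of (i)--(iii) is proved by exhibiting a threshold graph on the five given vertices, and (iv) follows from Fact~\ref{fa:threshold-1}~(ii) combined with $r'_2(4)=6$ from Theorem~\ref{thm:r34}.

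For (i), let $v_1,v_2$ be isolated in $G[\{v_1,\ldots,v_5\}]$. Fact~\ref{fa:threshold-1}~(iii) says a $4$-vertex non-threshold graph lies in $\{P_4,C_4,2K_2\}$, and each of these has minimum degree at least $1$. The graph $G[\{v_1,v_3,v_4,v_5\}]$ has the isolated vertex $v_1$, so it is a threshold graph. Starting from it, we add $v_2$ as an isolated vertex, which produces a threshold graph of order $5$. This contradicts the assumption that $G$ is induced $\mathcal{T}_5$-free.

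For (ii), in $H=G[\{v_1,\ldots,v_5\}]$ the vertex $v_1$ is dominating and $v_2$ is adjacent only to $v_1$. We build $H$ in the order $v_3,v_4,v_5$ (any induced graph on them), then $v_2$, then $v_1$. Any graph on three vertices is a threshold graph, since by Fact~\ref{fa:threshold-1}~(iii) the forbidden graphs have four vertices. Adding $v_2$ nonadjacent to $\{v_3,v_4,v_5\}$ keeps it threshold, and so does adding $v_1$ adjacent to everything. Hence $H$ is a threshold graph of order $5$, a contradiction. Part (iii) is the same argument applied to $\overline{G}$: by Fact~\ref{fa:threshold-1}~(i), $\overline{G}$ is induced $\mathcal{T}_5$-free, and the configuration in (iii) becomes the configuration in (ii) in $\overline{G}$.

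For (iv), Fact~\ref{fa:threshold-1}~(ii) shows that $G[N_G(v)]$ and $G[N_{\overline{G}}(v)]$ are induced $\mathcal{T}_4$-free. Since $r'_2(4)=6$, both sets have at most $5$ vertices. No step is a real obstacle; the only point to check is the claim that every graph on at most three vertices is a threshold graph, which holds.
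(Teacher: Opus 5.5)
Your proof is correct and follows essentially the paper's argument: in each of (i)--(iii) you exhibit a threshold construction order on the five vertices, and you deduce (iv) from Fact~\ref{fa:threshold-1}~(ii) together with $r'_2(4)=6$. The differences are cosmetic: in (i) you detour through a $4$-vertex graph, where the paper starts from the three remaining vertices and adds $v_1,v_2$ as isolated vertices, and in (iii) you pass to $\overline{G}$, where the paper adds $v_2$ as dominating and then $v_1$ as isolated.

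One small fix: the claim that every $3$-vertex graph is threshold does not follow from Fact~\ref{fa:threshold-1}~(iii), since that fact only concerns $4$-vertex graphs. Cite Lemma~\ref{le:threshold-equi}~(ii) instead, or simply check $K_3$, $P_3$, $K_2\cup K_1$ and $\overline{K_3}$ directly.
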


\noindent {\bf Proof.}
Note that every graph on three vertices is a threshold graph of order 3.
Since $G$ is induced $\mathcal{T}_5$-free, we must have (i), (ii) and (iii).
Otherwise, we can obtain a threshold graph of order 5 as follows: start with the remaining three vertices in the ordering satisfying the definition of the threshold graph,
and add $v_1, v_2$ as isolated vertices in (i);
add $v_2$ as an isolated vertex and then $v_1$ as a dominating vertex in (ii);
add $v_2$ as a dominating vertex and then $v_1$ as an isolated vertex in (iii).

Next, we prove (iv).
For any vertex $v\in V(G)$, both $G[N_G(v)]$ and $G[N_{\overline{G}}(v)]$ are induced $\mathcal{T}_{4}$-free by Fact~\ref{fa:threshold-1}~(ii).
Since $r'_2(4)=6$ by Theorem~\ref{thm:r34}, we have $|N_G(v)|\leq 5$ and $|N_{\overline{G}}(v)|\leq 5$.
\hfill$\blacksquare$
%\vspace{0.2cm}

\begin{theorem}\label{thm:r5}
$r'_2(5)=10$.
\end{theorem}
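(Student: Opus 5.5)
The lower bound $r'_2(5)\geq 10$ follows from Lemma~\ref{le:r2slower-construction-2} with $s=5$: the $3\times 3$ rook's graph $K_3\Box K_3$ has $9$ vertices and no induced threshold graph of order $5$, so $r'_2(5)\geq 3^2+1=10$. The real work is the upper bound, i.e., showing that no induced $\mathcal{T}_5$-free graph $G$ on $10$ vertices exists.

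Suppose $G$ is such a graph on $10$ vertices. By Fact~\ref{fa:r56}~(iv), every vertex $v$ satisfies $|N_G(v)|\leq 5$ and $|N_{\overline{G}}(v)|\leq 5$. Since these two numbers sum to $9$, each vertex has degree $4$ or $5$.

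The first step is to use the counting of homogeneous triples. Take a vertex $v$ and write $A=N_G(v)$ and $B=N_{\overline{G}}(v)$. Both $G[A]$ and $G[B]$ are induced $\mathcal{T}_4$-free by Fact~\ref{fa:threshold-1}~(ii). If $|A|=5$, Fact~\ref{fa:threshold-1}~(iv) forces $G[A]\cong C_5$, which has $5$ edges. If $|B|=5$, then $\overline{G}[B]\cong C_5$ likewise. For the $4$-vertex side, say $|A|=4$, Fact~\ref{fa:threshold-1}~(iii) gives $G[A]\in\{P_4,C_4,2K_2\}$, with $3$, $4$ or $2$ edges respectively. Hence by \eqref{eq:hom-0} we get $h(v)=|E(G[A])|+|E(\overline{G}[B])|\leq 5+4=9$.

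On the other side, \eqref{eq:hom-3} gives $h'(G)=\frac12\sum_v 20=100$, since $|A||B|=20$ for every $v$. Then \eqref{eq:hom-1} gives $h(G)=120-100=20$, so $\sum_v h(v)=60$ and the average of $h(v)$ is $6$. This counting alone is consistent and does not yet give a contradiction.

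The second step is to bring in Fact~\ref{fa:r56}~(ii),(iii) together with the forced local structure. Suppose $v$ has degree $5$, so $G[A]\cong C_5$ and $G[B]\in\{P_4,C_4,2K_2\}$. Any $b\in B$ that is nonadjacent to three vertices $a_1,a_2,a_3$ of $A$ gives a configuration forbidden by Fact~\ref{fa:r56}~(ii): take $v_1=v$, $v_2=b$, and note that $b$ is also nonadjacent to... wait, (ii) needs $v_1$ adjacent to $v_2$, so instead I apply (iii) with $v_1=b$, which is nonadjacent to $v$. Concretely, I would examine which edge patterns between $A$ and $B$ are allowed. For each $a\in A$, we have $a\sim v$ and $a$ has exactly two neighbours in $A$. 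Applying (ii) with $v_1=a$, the vertex $a$ cannot be adjacent to a vertex $x$ that is nonadjacent to three other neighbours of $a$. Applying (i), no two vertices can both be isolated within a $5$-set, which bounds common non-neighbourhoods. These restrictions should pin down the degree of each $b\in B$ into $A$: it must be exactly $2$ or $3$ in a pattern compatible with $C_5$. I would then double-count the edges between $A$ and $B$ using the degrees $4$ or $5$. By complementation (Fact~\ref{fa:threshold-1}~(i)), a degree-$4$ vertex is handled by the same analysis applied in $\overline{G}$.

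The main obstacle I expect is this case analysis on the $A$--$B$ bipartite structure. My plan is to show that every vertex must have $h(v)$ of a fixed value forcing $\sum_v h(v)\neq 60$, or alternatively to show that the configuration forces $G$ to contain an induced $C_5$ plus a vertex behaving like a threshold extension. If the analysis becomes too heavy, the fallback is to pick $v$ with maximal $h(v)\geq 6$. This forces $G[B]\cong C_4$ or the $C_5$ sides, which is a rigid situation, and there I would derive a contradiction directly using (i)--(iii).
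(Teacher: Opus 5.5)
\textbf{There is a genuine gap in the upper bound.} The lower bound via $K_3\Box K_3$ is fine. The problem is your statement that the homogeneous-triple count ``is consistent and does not yet give a contradiction.'' That is false: it does give a contradiction. You only bounded $h(v)$ from above ($h(v)\le 9$), but your own case split also gives $h(v)\ge 7$ for every vertex $v$.

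\begin{itemize}
\item The $5$-vertex side contributes exactly $5$: either $G[A]\cong C_5$, or $\overline{G}[B]\cong C_5$.
\item The $4$-vertex side contributes at least $2$. The graphs $P_4$, $C_4$, $2K_2$ have $3$, $4$, $2$ edges, and this family is closed under complementation. So $|E(G[A])|\ge 2$ when $|A|=4$, and $|E(\overline{G}[B])|\ge 2$ when $|B|=4$.
\end{itemize}

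Hence $\sum_v h(v)\ge 10\cdot 7=70$. Your computation $h'(G)=100$, $h(G)=120-100=20$ forces $\sum_v h(v)=3h(G)=60$. So every vertex lies above the average of $6$, which is impossible. This one-line observation is exactly the paper's proof, and it finishes the argument at the end of your first step.

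As written, the proposal therefore does not establish $r'_2(5)\le 10$. Your ``second step'' is not an argument:
\begin{itemize}
\item The application of Fact~\ref{fa:r56}~(ii)/(iii) is abandoned mid-sentence.
\item The claim that the $A$--$B$ degrees are ``pinned down'' to $2$ or $3$ is asserted without derivation.
\item The proposed double count is never carried out.
\item The fallback about a vertex with maximal $h(v)\ge 6$ forces nothing, since $h(v)\in\{7,8,9\}$ holds automatically for every vertex.
\end{itemize}

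Replace the whole second step with the lower bound $h(v)\ge 5+2$ and the contradiction $70>60$.
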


\noindent {\bf Proof.}
For the lower bound, we have $r'_2(5)\geq (5-2)^2+1=10$ by Lemma~\ref{le:r2slower-construction-2}.
For the upper bound, let $G$ be an arbitrary graph on ten vertices.
Suppose for a contradiction that $G$ is induced $\mathcal{T}_5$-free.

For any vertex $v\in V(G)$, we have $|N_G(v)|\leq 5$ and $|N_{\overline{G}}(v)|\leq 5$ by Fact~\ref{fa:r56}~(iv).
Note that $|N_G(v)|+|N_{\overline{G}}(v)|=9$.
Thus either $|N_G(v)|=5$ and $|N_{\overline{G}}(v)|=4$, or $|N_G(v)|=4$ and $|N_{\overline{G}}(v)|=5$.

\begin{claim}\label{cl:r5-1}
For every vertex $v\in V(G)$, we have $h(v)\geq 7$.
\end{claim}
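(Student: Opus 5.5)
The plan is to compute $h(v)$ directly from equation~(\ref{eq:hom-0}), namely $h(v)=|E(G[N_G(v)])|+|E(\overline{G}[N_{\overline{G}}(v)])|$. We then bound each of the two terms using the rigid structure that Fact~\ref{fa:threshold-1} forces on small induced $\mathcal{T}_4$-free graphs. Fix $v\in V(G)$ and set $A\colonequals N_G(v)$ and $B\colonequals N_{\overline{G}}(v)$. By the observation just before the claim, $\{|A|,|B|\}=\{4,5\}$. By Fact~\ref{fa:threshold-1}~(ii), both $G[A]$ and $G[B]$ are induced $\mathcal{T}_4$-free. By Fact~\ref{fa:threshold-1}~(i), the same holds for their complements, and in particular for $\overline{G}[B]$.

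\textbf{The $5$-vertex side.} Whichever of $A,B$ has five vertices induces a $C_5$ in $G$ by Fact~\ref{fa:threshold-1}~(iv). Since $\overline{C_5}\cong C_5$, it also induces a $C_5$ in $\overline{G}$. Hence this side contributes exactly $5$ edges to $h(v)$, whether the relevant graph is $G[A]$ or $\overline{G}[B]$.

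\textbf{The $4$-vertex side.} The remaining set $X$ induces a $4$-vertex graph that is not a threshold graph. By Fact~\ref{fa:threshold-1}~(iii), $G[X]\in\{P_4,C_4,2K_2\}$. The family $\{P_4,C_4,2K_2\}$ is closed under complementation, since $\overline{P_4}\cong P_4$ and $\overline{C_4}\cong 2K_2$. So the graph whose edges are counted on this side, either $G[A]$ or $\overline{G}[B]$, lies in this family. It therefore has $3$, $4$ or $2$ edges, so at least $2$.

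Adding the two contributions gives $h(v)\geq 5+2=7$. I do not expect a genuine obstacle. The only point needing care is to apply the classification to the correct graph ($G$ on $A$, $\overline{G}$ on $B$), and the complementation closure of both $\{C_5\}$ and $\{P_4,C_4,2K_2\}$ makes the two cases $|A|=5$ and $|A|=4$ completely symmetric.
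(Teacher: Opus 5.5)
Your proposal is correct and follows essentially the same argument as the paper. The paper likewise uses Fact~\ref{fa:threshold-1}~(ii)--(iv) to get a $C_5$ on the $5$-vertex side, contributing $5$, and a member of $\{P_4,C_4,2K_2\}$ on the $4$-vertex side, contributing at least $2$; it dismisses the case $|N_G(v)|=4$ ``by symmetry,'' which you make explicit through the complementation closure of $\{C_5\}$ and $\{P_4,C_4,2K_2\}$.
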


\begin{proof}
If $|N_G(v)|=5$ and $|N_{\overline{G}}(v)|=4$, then $G[N_G(v)]\cong C_5$ and $G[N_{\overline{G}}(v)]\in \{P_4, C_4, 2K_2\}$ by Fact~\ref{fa:threshold-1}~(ii), (iii) and (iv).
Hence, there are five homogeneous 3-sets containing $v$ and two vertices from $N_G(v)$,
and at least two homogeneous 3-sets containing $v$ and two vertices from $N_{\overline{G}}(v)$.
Hence, we have $h(v)\geq 7$.
If $|N_G(v)|=4$ and $|N_{\overline{G}}(v)|=5$, then we also have $h(v)\geq 7$ by symmetry.
\end{proof}

By Claim~\ref{cl:r5-1} and Equality~(\ref{eq:hom-2}), we have $h(G)= \frac{1}{3}\sum_{v\in V(G)}h(v)\geq \frac{1}{3}\cdot 10\cdot 7>23$.
Moreover, we have $|N_G(v)|\cdot |N_{\overline{G}}(v)|=20$, and thus $h'(G)= \frac{1}{2}\sum_{v\in V(G)}|N_G(v)|\cdot |N_{\overline{G}}(v)|= \frac{1}{2}\cdot 10\cdot 20=100$ by Equality~(\ref{eq:hom-3}).
Then $h(G)+h'(G)> 23+100=123$.
However, we have $h(G)+h'(G)={10\choose 3}=120<123$ by Equality~(\ref{eq:hom-1}), a contradiction.
This completes the proof of Theorem~\ref{thm:r5}.
\hfill$\blacksquare$
%\vspace{0.2cm}

\begin{theorem}\label{thm:r6}
$r'_2(6)=18$.
\end{theorem}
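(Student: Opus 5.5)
The lower bound comes from Lemma~\ref{le:r2slower-construction-1}, which gives $r'_2(6)\geq r(4,4)=18$ with $a=b=4$. So the work is the upper bound. Suppose for a contradiction that $G$ is an induced $\mathcal{T}_6$-free graph on $18$ vertices. For every vertex $v$, Fact~\ref{fa:threshold-1}~(ii) and Theorem~\ref{thm:r5} give $|N_G(v)|\leq 9$ and $|N_{\overline{G}}(v)|\leq 9$. Since these sum to $17$, every vertex has degree $8$ or $9$ in $G$.

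The structural core is to understand induced $\mathcal{T}_5$-free graphs on exactly $9$ vertices. I would show that the only such graph is $K_3\Box K_3$, i.e.\ the $SRG(9,4,1,2)$. The argument runs as follows.
\begin{itemize}
\item Let $H$ be such a graph and $u\in V(H)$. By Fact~\ref{fa:r56}~(iv), $|N_H(u)|,|N_{\overline{H}}(u)|\leq 5$, and they sum to $8$, so the degrees are $3,4,5$.
\item If $d_H(u)=5$, then Fact~\ref{fa:threshold-1}~(iv) forces $H[N_H(u)]\cong C_5$, while $N_{\overline H}(u)$ has $3$ vertices.
\item I would rule out degrees $3$ and $5$ using Fact~\ref{fa:r56}~(i)--(iii), together with the requirement that every $4$-subset of a neighbourhood lies in $\{P_4,C_4,2K_2\}$ (Fact~\ref{fa:threshold-1}~(iii)). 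This gives that $H$ is $4$-regular, and both $H[N_H(u)]$ and $\overline{H}[N_{\overline H}(u)]$ are among $P_4,C_4,2K_2$.
\item Then I would pin down $\lambda=1,\mu=2$, by counting homogeneous triples via Equalities~(\ref{eq:hom-1})--(\ref{eq:hom-3}) as in the proof of Theorem~\ref{thm:r5}, or by direct case analysis showing each neighbourhood must be $2K_2$ and each non-neighbourhood must be $C_4$ in $G$.
\item Lemma~\ref{le:grid}~(i) then identifies $H\cong K_3\Box K_3$.
\end{itemize}
This is where I expect the main obstacle: excluding the unbalanced degrees and the $P_4$/$C_4$ neighbourhood options needs careful but finite casework.

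Next I return to $G$. A double count, analogous to the proof of Theorem~\ref{thm:r5}, should exclude vertices of degree $8$ or $9$ whose $9$-side is not the grid. The intended computation is as follows.
\begin{itemize}
\item Each vertex with degree in $\{8,9\}$ contributes $|N_G(v)||N_{\overline G}(v)|=72$, so $h'(G)=648$ by Equality~(\ref{eq:hom-3}).
\item Since $\binom{18}{3}=816$, Equality~(\ref{eq:hom-1}) gives $h(G)=168$, and hence the average of $h(v)$ is $28$ by Equality~(\ref{eq:hom-2}).
\item If the $9$-vertex side of $v$ is $K_3\Box K_3$, then that side has $18$ edges, or in the complement $36-18=18$ homogeneous pairs, contributing $18$ to $h(v)$.
\item The $8$-vertex side must contribute about $10$. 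An induced $\mathcal{T}_5$-free graph on $8$ vertices has all its neighbourhoods induced $\mathcal{T}_4$-free of size at most $5$, so its degrees are $3$ or $4$. Bounding its homogeneous-pair count from below (I expect at least $12$ edges, or $12$ non-edges) gives $h(v)\geq 30>28$, a contradiction.
\end{itemize}

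If the numbers do not close this way, I would instead use the grid structure. With the $9$-side of every vertex being $K_3\Box K_3$, $G$ or $\overline G$ is locally $3\times3$ grid on the relevant vertices. I would then argue regularity, so that $G$ (say) is connected and locally $3\times3$ grid on $18$ vertices. This contradicts Lemma~\ref{le:grid}~(ii), which only allows $16$ or $20$ vertices. A mixed-degree configuration would be handled by the parity and counting constraints above.
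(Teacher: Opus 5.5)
There is a genuine gap. Your lower bound, the reduction to degrees $8$ and $9$, the count $h'(G)=648$, $h(G)=168$ (average $h(v)=28$), and the identification of the $9$-vertex side as $K_3\Box K_3$ all follow the paper's route. The argument breaks at the $8$-vertex side.

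\textbf{The $8$-vertex bound is wrong.} For an $8$-vertex induced $\mathcal{T}_5$-free graph $F$, Fact~\ref{fa:r56}~(iv) gives $d_F(u)\le 5$ and $7-d_F(u)\le 5$. So degrees lie in $\{2,3,4,5\}$, not $\{3,4\}$. The true minimum number of edges (or non-edges) is $10$, not $12$, and it is attained. Take $C_8$ on $0,1,\dots,7$ together with the chords $\{2,6\}$ and $\{3,7\}$.
\begin{itemize}
\item The degree-$2$ vertices $0,1,4,5$ have non-neighbourhoods inducing $C_5$.
\item The degree-$3$ vertices $2,3,6,7$ have non-neighbourhoods inducing $2K_2$.
\item No vertex has degree $\ge 4$, so no $5$-set has a dominating vertex.
\item A $5$-set with an isolated vertex $u$ has its other four vertices inducing $P_4$ or $2K_2$.
\end{itemize}
By Lemma~\ref{le:threshold-equi}~(iii), no $5$-set induces a threshold graph. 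Hence your double count gives only $h(v)\ge 18+10=28$, which equals the average. It forces equality at every vertex; it does not give a contradiction.

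\textbf{The fallback omits the step that rules out mixed degrees.} Everything therefore rests on your fallback, and it leaves out the essential step: excluding a mixture of degree-$8$ and degree-$9$ vertices. Parity does not help: ten vertices of degree $9$ and eight of degree $8$ is parity-consistent. The ``counting constraints above'' are the incorrect ones. The paper proceeds as follows.
\begin{itemize}
\item Lemma~\ref{le:r6-8-vtx} shows $|E(F)|\ge 10$, with equality only for degree sequence $(2,2,2,2,3,3,3,3)$.
\item Equality in the double count then forces $G[N_G(x)]$ to have exactly this degree sequence whenever $d_G(x)=8$.
\item Suppose such an $x$ is adjacent to some $y$ with $d_G(y)=9$. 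Then $|N_G(x)\cap N_G(y)|=d_{G[N_G(x)]}(y)\le 3$.
\item But $G[N_G(y)]\cong K_3\Box K_3$ is $4$-regular and contains $x$, so $|N_G(x)\cap N_G(y)|=4$. Hence there are no edges between the two degree classes.
\item If both classes were nonempty, they would need at least $9$ and $10$ vertices respectively, more than $18$.
\end{itemize}
Only then is $G$ or $\overline{G}$ $9$-regular, hence connected and locally $3\times 3$ grid, contradicting Lemma~\ref{le:grid}~(ii). You need to supply the sharp $8$-vertex bound with its equality characterisation (at least $\Delta\le 3$ in the equality case) and this adjacency argument.

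\textbf{A smaller point on the $9$-vertex lemma.} The local facts do not directly exclude degrees $3$ and $5$. They only give structure: a $C_5$ non-neighbourhood together with a nonempty neighbourhood. The paper eliminates these degrees with the same homogeneous-triple count, comparing $\sum_v h(v)=36+\tfrac{3x}{2}$ with the lower bound $36+2x$.
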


In order to prove Theorem~\ref{thm:r6}, we first state and prove two additional lemmas.

\begin{lemma}\label{le:r6-8-vtx}
Let $G$ be an $8$-vertex induced $\mathcal{T}_5$-free graph.
Then $|E(G)|\geq 10$, with equality if and only if the degree sequence of $G$ is $(2,2,2,2,3,3,3,3)$.
\end{lemma}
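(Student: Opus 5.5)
The plan is to bound the minimum degree first, and then analyse a vertex of minimum degree using the structure forced by Fact~\ref{fa:threshold-1}~(iv).

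\textbf{Step 1: every degree is at least $2$.} Let $v\in V(G)$. By Fact~\ref{fa:threshold-1}~(ii), $G[N_{\overline{G}}(v)]$ is induced $\mathcal{T}_4$-free. Since $r'_2(4)=6$ (Theorem~\ref{thm:r34}), this gives $|N_{\overline{G}}(v)|\le 5$, i.e.\ $d_G(v)\ge 2$.

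\textbf{Step 2: reduce to $\delta(G)=2$.} If $\delta(G)\ge 3$, then $|E(G)|\ge 12>10$ and there is nothing to prove. So assume some vertex $v$ has $d_G(v)=2$, say $N_G(v)=\{a,b\}$. Then $C\colonequals N_{\overline{G}}(v)$ has exactly five vertices and is induced $\mathcal{T}_4$-free, so $G[C]\cong C_5$ by Fact~\ref{fa:threshold-1}~(iv). Write $e$ for the number of edges between $\{a,b\}$ and $C$. Then
$$|E(G)|=5+2+e+\mathbf{1}_{ab\in E(G)},$$
and the task becomes showing $e+\mathbf{1}_{ab\in E(G)}\ge 3$, together with identifying the equality cases.

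\textbf{Step 3: the case analysis (main obstacle).} This is a finite but delicate check on how $a$ and $b$ attach to the pentagon $C=c_1c_2c_3c_4c_5$. The tools are Fact~\ref{fa:r56}~(i)--(iii) and the fact that every neighbourhood and non-neighbourhood is induced $\mathcal{T}_4$-free. Note that $N_{\overline{G}}(a)$ and $N_{\overline{G}}(b)$ are forced to have size at most $5$ by Step~1.

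I would first try Fact~\ref{fa:r56}~(i) with $v_1=a$ and $v_2=c_i$ for a pentagon vertex $c_i$ not adjacent to $a$. The three remaining vertices would be taken among $C\setminus N[c_i]$ (two vertices), together with $b$, or with a vertex of $C$ adjacent to neither $a$ nor $c_i$. This should show that $a$ must have many neighbours in $C$ unless $b$ compensates. For example, if $a$ has no neighbour in $C$, then $a$ is nonadjacent to every vertex of $C$, and so $N_{\overline{G}}(a)$ contains all five vertices of $C$; then $d_G(a)\le 2$ forces strong constraints. Similarly, Fact~\ref{fa:r56}~(ii), applied with $v_1=a$, $v_2=v$ and three neighbours of $a$ nonadjacent to $v$, limits how many neighbours $a$ can have in $C$ without violating the condition.

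I expect the outcome to be that each of $a,b$ has at least one neighbour in $C$, and that the configuration $e\le 2$ with $ab\notin E(G)$ produces an induced threshold graph on five vertices. This establishes $|E(G)|\ge 10$.

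\textbf{Step 4: the equality case.} For equality, the same analysis should show that $|E(G)|=10$ forces the following:
\begin{itemize}
\item exactly three extra edges are added to $v$, $C$ and the edges $va,vb$;
\item no vertex receives degree $4$, since a degree-$\ge 4$ vertex would create a forbidden configuration of type Fact~\ref{fa:r56}~(ii)/(iii) or a larger monochromatic neighbourhood;
\item consequently the degree sum $20$, with all degrees in $\{2,3\}$, yields four vertices of degree $2$ and four of degree $3$.
\end{itemize}
Conversely, if the degree sequence is $(2,2,2,2,3,3,3,3)$, the degree sum is $20$, so $|E(G)|=10$ trivially. The delicate part throughout is organising the attachments of $a$ and $b$ up to the dihedral symmetry of $C_5$ so that the case analysis stays short.
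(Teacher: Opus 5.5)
\textbf{There is a genuine gap: Step~3 and Step~4 are announced outcomes, not arguments.} Your skeleton is exactly the paper's: $\delta(G)\ge 2$, choose $v$ with $N_G(v)=\{a,b\}$ so that $G[C]\cong C_5$ for $C=N_{\overline{G}}(v)$, and count $|E(G)|=7+e+\mathbf{1}_{ab\in E(G)}$. But Step~3 is the entire content of the lower bound, and ``I expect the outcome to be\ldots'' is not an argument.

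The instantiation you suggest, Fact~\ref{fa:r56}~(i) with $v_1=a$ and $v_2=c_i$, does not lead to a contradiction directly. For example, if $a$ has no neighbour in $C$, then $N_G(a)=\{v,b\}$, which is perfectly consistent with $\delta(G)\ge 2$.

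The missing idea is short and needs no case split on $ab$ or on how $a$ and $b$ attach individually. Let $U$ be the set of vertices of $C$ adjacent to neither $a$ nor $b$. Suppose $|U|\ge 3$. Since $C_5$ is triangle-free, $U$ contains two nonadjacent vertices $c,c'$. These are isolated in $G[\{c,c',a,b,v\}]$; they miss $v$ because $C=N_{\overline{G}}(v)$. This contradicts Fact~\ref{fa:r56}~(i). Hence $|U|\le 2$, so $e\ge |C\setminus U|\ge 3$ and $|E(G)|\ge 10$, whether or not $ab\in E(G)$. This is precisely the paper's argument for $|E(G)|\ge 10$.

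Step~4 has the same problem. The claim that a vertex of degree $\ge 4$ ``would create a forbidden configuration of type (ii)/(iii) or a larger monochromatic neighbourhood'' is an assertion with no mechanism behind it; degree $4$ violates nothing in Fact~\ref{fa:r56}~(iv). The paper needs two separate claims here:
\begin{itemize}
\item $\Delta(G)\le 4$: if $d_G(u)=5$ then $G[N_G(u)]\cong C_5$, so $G[\{u\}\cup N_G(u)]$ already has $10$ edges and the remaining two vertices would be isolated, contradicting $\delta(G)\ge 2$.
\item $\Delta(G)\le 3$: if $d_G(u)=4$, a case analysis over $G[N_G(u)]\in\{P_4,C_4,2K_2\}$ gives a contradiction.
\end{itemize}

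Once you have the $|U|\le 2$ argument, your own framework settles the equality case more directly than the paper does. First note that a degree-$2$ vertex exists when $|E(G)|=10$: the degree sum is $20<24$ and $\delta(G)\ge 2$. Then $|E(G)|=10$ forces $e=3$ and $ab\notin E(G)$. Since $|C\setminus U|\ge 3$ and each of its vertices sends at least one edge to $\{a,b\}$, we get $|U|=2$, and each vertex of $C\setminus U$ has exactly one neighbour in $\{a,b\}$. Write $e_a,e_b$ for the numbers of neighbours of $a,b$ in $C$; then $e_a+e_b=3$, and $\delta(G)\ge 2$ forces $e_a,e_b\ge 1$, so $\{e_a,e_b\}=\{1,2\}$. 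Consequently $v$, the two vertices of $U$, and one of $a,b$ have degree $2$, while the other four vertices have degree $3$. This yields $(2,2,2,2,3,3,3,3)$ without any analysis of degree-$4$ or degree-$5$ vertices.
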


\noindent {\bf Proof.}
For any vertex $v\in V(G)$, we have $|N_G(v)|\leq 5$ and $|N_{\overline{G}}(v)|\leq 5$ by Fact~\ref{fa:r56}~(iv).
Note that $|N_G(v)|+|N_{\overline{G}}(v)|=7$.
Thus we further have $|N_G(v)|\geq 2$ and $|N_{\overline{G}}(v)|\geq 2$, so $2\leq \delta(G)\leq \Delta(G)\leq 5$.

\begin{claim}\label{cl:r6-8-vtx-1}
$|E(G)|\geq 10.$
\end{claim}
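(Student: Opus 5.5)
The plan is to argue by contradiction. Suppose $|E(G)|\leq 9$. We already know $2\leq \delta(G)\leq \Delta(G)\leq 5$, so $\sum_{v}d_G(v)\leq 18$ with every term at least $2$. Hence the total excess $\sum_v (d_G(v)-2)$ is at most $2$, and at least six vertices of $G$ have degree exactly $2$.

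The main structural tool will be a degree-$2$ vertex $v$ with $N_G(v)=\{x,y\}$. Its non-neighborhood $B\colonequals N_{\overline{G}}(v)$ has exactly $5$ vertices. By Fact~\ref{fa:threshold-1}~(ii), $G[B]$ is induced $\mathcal{T}_4$-free, so by Fact~\ref{fa:threshold-1}~(iv) we get $G[B]\cong C_5$, say on $b_1b_2b_3b_4b_5$ in cyclic order. We then write
\[
|E(G)| = 2 + 5 + \mathbf{1}_{xy\in E(G)} + |E(\{x,y\},B)|,
\]
so $|E(G)|\leq 9$ forces $\mathbf{1}_{xy\in E(G)}+|E(\{x,y\},B)|\leq 2$. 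Each of $x,y$ has degree at least $2$, and $v$ supplies only one edge to each. So either $xy\in E(G)$ and $|E(\{x,y\},B)|\leq 1$, or $xy\notin E(G)$ and each of $x,y$ has exactly one neighbor in $B$.

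I will rule out these configurations with Fact~\ref{fa:r56}~(i)--(iii), i.e.\ by exhibiting five vertices forming a threshold graph of order $5$. Two building blocks are available. First, $v$ has no neighbor in $B$, so any $5$-set consisting of $v$ and four vertices of $B$ makes $v$ isolated. Second, if $x$ has at most one neighbor in $B$, then $x$ has many non-neighbors among the $C_5$. For example, if $x$ has no neighbor in $B$, the set $\{x,b_1,b_3,b_4,b_5\}$ has $x$ isolated, and I expect a second isolated vertex or a dominating vertex to complete a threshold order. The cleanest route should be to find a vertex $w\in B$ nonadjacent to $x$ and to both $C_5$-neighbors of some third vertex, and then apply Fact~\ref{fa:r56}~(i) to a suitable $5$-set containing $x$ and $w$. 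In the case where $x$ and $y$ each have exactly one neighbor in $B$, I will instead apply Fact~\ref{fa:r56}~(iii) with $v$ (nonadjacent to all of $B$) against a vertex of $B$ that is adjacent to three of the chosen vertices.

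A useful extra lever is that we may repeat the argument from every degree-$2$ vertex, of which there are at least six. The constraint derived above must hold simultaneously for all of them, which severely restricts the graph. If the direct case analysis gets heavy, I will fall back on counting homogeneous triples. Using Equalities~(\ref{eq:hom-1})--(\ref{eq:hom-3}) with $\binom{8}{3}=56$, the bound $h(v)\geq 5$ holds for degree-$2$ vertices, via the $C_5$ on $B$ together with the pair $\{x,y\}$. With at least six degree-$2$ vertices, each contributing $d(7-d)=10$ to $2h'(G)$, the aim is an overcount exceeding $56$.

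The main obstacle is the case analysis on how $x$ and $y$ attach to the $C_5$, and in particular the case $xy\notin E(G)$ with each of $x,y$ having a single neighbor in $B$. There I expect to need the global information that most vertices of $B$ also have degree $2$, hence no neighbors outside their $C_5$-neighbors. That in turn should give $5$-sets with two isolated vertices, contradicting Fact~\ref{fa:r56}~(i).
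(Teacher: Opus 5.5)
\textbf{There is a genuine gap: the setup is right, but the contradiction is never reached.} Your setup matches the paper exactly: a degree-$2$ vertex $v$ with $N_G(v)=\{x,y\}$, $G[B]\cong C_5$ for $B=N_{\overline{G}}(v)$, and hence $|E(\{x,y\},B)|\leq 9-2-5=2$. However, the routes you sketch for finishing do not work as described.
\begin{itemize}
\item Fact~\ref{fa:r56}~(iii) with $v$ as the vertex nonadjacent to the other four forces those four vertices to lie in $B$. No vertex of $B$ can then be adjacent to three of them, since $G[B]\cong C_5$ has maximum degree $2$.
\item Your sample set $\{x,b_1,b_3,b_4,b_5\}$ induces $K_1\cup P_4$, which is not threshold. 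So there is no second isolated or dominating vertex to be found there.
\item The homogeneous-triple fallback with only $h(v)\geq 5$ gives no overcount. For example, if $G$ is $2$-regular then $h'(G)=40$ and your bound gives only $h(G)\geq \frac{40}{3}$. Thus $h(G)+h'(G)\geq 40+\frac{40}{3}$, which is below $56$.
\end{itemize}
The case you flag as the main obstacle is therefore left open, and the proposal as written does not prove the claim.

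\textbf{The missing step is a one-line pigeonhole that removes the case analysis.} Since at most two edges join $\{x,y\}$ to $B$, at least three vertices of $B$ have no neighbor in $\{x,y\}$. Among any three vertices of a $C_5$, two are nonadjacent; call them $b,b'$. In $G[\{b,b',x,y,v\}]$, both $b$ and $b'$ are isolated, because $v$ has no neighbor in $B$. This contradicts Fact~\ref{fa:r56}~(i).

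The key is to take the two isolated vertices from $B$ and complete the $5$-set with $x,y,v$, rather than trying to make $x$ or $v$ play the special role. This works whether or not $xy\in E(G)$, and it needs neither the other degree-$2$ vertices nor any counting. It is exactly the paper's proof.
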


\begin{proof}
For a contradiction, suppose that $|E(G)|\leq 9.$
This implies that $\delta(G)\leq 2$, which together with $\delta(G)\geq 2$ implies that $G$ contains a vertex $v$ of degree 2.
Let $N_G(v)=\{x, y\}$.
By Fact~\ref{fa:threshold-1}~(ii) and (iv), we have $G[N_{\overline{G}}(v)]\cong C_5$.
Now $\left|E\left(N_G(v), N_{\overline{G}}(v)\right)\right|\leq 9-2-5=2$.
Then there exist at least three vertices $v_1, v_2, v_3\in N_{\overline{G}}(v)$ such that $E(\{x, y\},\{v_1, v_2, v_3\})=\emptyset$.
Since $G[N_{\overline{G}}(v)]\cong C_5$, at least two vertices of $\{v_1, v_2, v_3\}$ are nonadjacent in $G$, say $v_1$ and $v_2$.
Then the five vertices $v_1, v_2, x, y, v$ violate Fact~\ref{fa:r56}~(i), a contradiction.
\end{proof}

Now assume that $|E(G)|= 10$, and we show that the degree sequence of $G$ is $(2,2,2,2,3,3,3,3)$.

\begin{claim}\label{cl:r6-8-vtx-2}
$\Delta(G)\leq 4.$
\end{claim}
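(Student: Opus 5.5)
Since the present setting already gives $2\leq \delta(G)\leq \Delta(G)\leq 5$ and $|E(G)|=10$, the plan for Claim~\ref{cl:r6-8-vtx-2} is to rule out a vertex of degree exactly $5$ by an edge count.

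Suppose for a contradiction that some vertex $v$ has $d_G(v)=5$. By Fact~\ref{fa:threshold-1}~(ii), $G[N_G(v)]$ is a $5$-vertex induced $\mathcal{T}_4$-free graph. Fact~\ref{fa:threshold-1}~(iv) then gives $G[N_G(v)]\cong C_5$, which contributes exactly $5$ edges. Together with the $5$ edges joining $v$ to $N_G(v)$, this already accounts for all $|E(G)|=10$ edges.

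Let $N_{\overline{G}}(v)=\{x,y\}$. No further edges are available, so $x$ and $y$ have no neighbours in $N_G(v)$ and $xy\notin E(G)$. Hence $d_G(x)=0$, which contradicts $\delta(G)\geq 2$. Therefore $\Delta(G)\leq 4$.

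There is no real obstacle here. The only step needing care is invoking Fact~\ref{fa:threshold-1}~(ii) and (iv) correctly to pin down the neighbourhood as $C_5$; after that the count is immediate. One could alternatively finish with Fact~\ref{fa:r56}~(i), but the degree contradiction is simpler.
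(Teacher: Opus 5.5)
Your proof is correct and is essentially the paper's own argument. A degree-$5$ vertex $v$ forces $G[N_G(v)]\cong C_5$ by Fact~\ref{fa:threshold-1}~(ii) and (iv), so $G[\{v\}\cup N_G(v)]$ already uses all ten edges, and the two vertices of $N_{\overline{G}}(v)$ are then isolated, contradicting $\delta(G)\geq 2$.
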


\begin{proof}
For a contradiction, suppose that $d_G(v)\geq 5$ for some $v\in V(G)$.
This together with $\Delta(G)\leq 5$ implies that $d_G(v)=5$.
By Fact~\ref{fa:threshold-1}~(ii) and (iv), we have $G[N_{G}(v)]\cong C_5$.
Now $G[\{v\}\cup N_{G}(v)]$ already has ten edges, so the two vertices of $N_{\overline{G}}(v)$ are isolated vertices in $G$, contradicting $\delta(G)\geq 2$.
\end{proof}

\begin{claim}\label{cl:r6-8-vtx-3}
$\Delta(G)\leq 3.$
\end{claim}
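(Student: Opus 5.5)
I will argue by contradiction. Suppose $|E(G)|=10$ and some vertex $v$ has $d_G(v)=4$; this is the only case left, since Claim~\ref{cl:r6-8-vtx-2} gives $\Delta(G)\le 4$. Write $A\colonequals N_G(v)$ with $|A|=4$, and $B\colonequals N_{\overline{G}}(v)$ with $|B|=3$. By Fact~\ref{fa:threshold-1}~(ii), $G[A]$ is induced $\mathcal{T}_4$-free, so Fact~\ref{fa:threshold-1}~(iii) gives $G[A]\in\{P_4,C_4,2K_2\}$. Hence $e(A)\colonequals|E(G[A])|\in\{3,4,2\}$. The plan is to pin down the remaining edges by counting, and then in each case exhibit five vertices that form a threshold graph or violate Fact~\ref{fa:r56}.

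\textbf{Counting step.} The edges at $v$ together with those inside $A$ number $4+e(A)$. So $e(B)+e(A,B)=6-e(A)$, where $e(B)=|E(G[B])|\le 3$ and $e(A,B)=|E(A,B)|$. Each vertex of $B$ has degree at least $2$, since $\delta(G)\ge 2$, and all its neighbours lie in $A\cup B$. Therefore $2e(B)+e(A,B)\ge 6$. Subtracting the previous identity yields $e(B)\ge e(A)$. This rules out $G[A]\cong C_4$, because it would force $e(B)\ge 4$.

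\textbf{Case $G[A]\cong P_4$.} Here $e(B)=3$ and $e(A,B)=0$, so $B$ is a triangle with no edges to $A$. The two endpoints of the path $G[A]$ are nonadjacent and have no neighbours in $B$. Together with the three vertices of $B$ they give five vertices in which two are isolated, contradicting Fact~\ref{fa:r56}~(i).

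\textbf{Case $G[A]\cong 2K_2$, with edges $a_1a_2$ and $a_3a_4$.} Here $e(B)\in\{2,3\}$ and $e(B)+e(A,B)=4$.
\begin{itemize}
\item If $e(B)=3$, then $B$ is a triangle with exactly one edge to $A$, say at $a_1$. The vertices $a_2,a_3$ are nonadjacent and isolated from $B$, so they again contradict Fact~\ref{fa:r56}~(i).
\item If $e(B)=2$, then $B$ is a path $x\,y\,z$, and the degree condition forces $x$ and $z$ each to have exactly one neighbour in $A$. Hence the middle vertex $y$ has no neighbour in $A\cup\{v\}$. Take $a_1$ together with the edge $a_3a_4$ (any choice of one vertex from one edge of $G[A]$ and both vertices of the other works). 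Then $G[\{a_1,a_3,a_4,v,y\}]$ is a threshold graph: start from the edge $a_3a_4$, add $a_1$ as an isolated vertex, add $v$ as a dominating vertex, and add $y$ as an isolated vertex. This contradicts the assumption that $G$ is induced $\mathcal{T}_5$-free.
\end{itemize}

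The step needing the most care is the last subcase. There Fact~\ref{fa:r56}~(i) does not apply directly, and the threshold ordering must be found by hand. The key observation that makes it work is that $y$ is anticomplete to $A\cup\{v\}$, which follows from the degree count on $B$.
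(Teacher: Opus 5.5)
Your proof is correct and follows the paper's argument step for step: the same identity $e(B)+e(A,B)=6-e(A)$ combined with the degree bound on $B$, and the same case split on $G[A]$ and then on $e(B)$. The only deviation is the final subcase ($G[A]\cong 2K_2$, $e(B)=2$). There the paper takes the two nonadjacent ends of the path $G[B]$, two vertices of $A$ with no neighbour in $B$, and $v$, and contradicts Fact~\ref{fa:r56}~(i); your five vertices $y, v, a_1, a_3, a_4$ are exactly a configuration excluded by Fact~\ref{fa:r56}~(iii) (with $y$ as $v_1$ and $v$ as $v_2$), so any three vertices of $A$ would do and you do not need to build the threshold ordering by hand.
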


\begin{proof}
For a contradiction, suppose that $d_G(v)\geq 4$ for some $v\in V(G)$.
Combining with Claim~\ref{cl:r6-8-vtx-2}, we have $d_G(v)=4$.
Let $A\colonequals N_{G}(v)$ and $B\colonequals N_{\overline{G}}(v)$.
Then $|A|=4$ and $|B|=3$.
By Fact~\ref{fa:threshold-1}~(ii) and (iii), we have $G[A]\in \{P_4, C_4, 2K_2\}.$
Moreover,
\begin{equation}\label{eq:r6-8-vtx-3-1}
|E(G[B])|+|E(A,B)|=|E(G)|-4-|E(G[A])|=10-4-|E(G[A])|=6-|E(G[A])|.
\end{equation}
Since $\delta(G)\geq 2$, we have
\begin{equation}\label{eq:r6-8-vtx-3-2}
2|E(G[B])|+|E(A,B)|=\sum\nolimits_{u\in B}d_G(u)\geq 2|B|=6.
\end{equation}

If $G[A]\cong P_4$, then $|E(G[B])|+|E(A,B)|=3$ by Equality~(\ref{eq:r6-8-vtx-3-1}).
This together with Inequality~(\ref{eq:r6-8-vtx-3-2}) implies that $|E(G[B])|=3$ and $|E(A,B)|=0$.
Let $v_1$ and $v_2$ be two nonadjacent vertices of $A$.
Then $v_1$, $v_2$ and the three vertices of $B$ violate Fact~\ref{fa:r56}~(i), a contradiction.

If $G[A]\cong C_4$, then $|E(G[B])|+|E(A,B)|=2$ by Equality~(\ref{eq:r6-8-vtx-3-1}).
This implies that $2|E(G[B])|+|E(A,B)|\leq 4<6,$ contradicting Inequality~(\ref{eq:r6-8-vtx-3-2}).

If $G[A]\cong 2K_2$, then $|E(G[B])|+|E(A,B)|=4$ by Equality~(\ref{eq:r6-8-vtx-3-1}).
This together with Inequality~(\ref{eq:r6-8-vtx-3-2}) implies that $|E(G[B])|\geq 2$.
First, suppose that $|E(G[B])|=3$.
Then $|E(A,B)|=4-|E(G[B])|=1$.
This implies that at least three vertices of $A$ are nonadjacent to any vertex of $B$.
Thus we can choose two nonadjacent vertices of $A$ that together with the three vertices of $B$ violate Fact~\ref{fa:r56}~(i), a contradiction.
Therefore, we have $|E(G[B])|=2$ and $|E(A,B)|=4-|E(G[B])|=2$.
Then there exist vertices $u_1, u_2\in A$ and $v_1, v_2\in B$ such that $E(\{u_1, u_2\}, B)=\emptyset$ and $v_1v_2\notin E(G)$.
Then the five vertices $v_1, v_2, u_1, u_2, v$ violate Fact~\ref{fa:r56}~(i), a contradiction.
\end{proof}

By Claim~\ref{cl:r6-8-vtx-3}, we have $\Delta(G)\leq 3$, so $2\leq \delta(G)\leq \Delta(G)\leq 3$.
Then since $|V(G)|=8$ and $\sum_{v\in V(G)}d_G(v)=2|E(G)|=20$, $G$ contain exactly four vertices of degree 2 and four vertices of degree 3.
Hence, the degree sequence of $G$ is $(2,2,2,2,3,3,3,3)$ when $|E(G)|=10$.
Conversely, this degree sequence has degree sum 20, and thus $|E(G)|=10$.
The proof of Lemma~\ref{le:r6-8-vtx} is complete.
\hfill$\blacksquare$
%\vspace{0.2cm}

\begin{lemma}\label{le:r6-9-vtx}
Up to isomorphism, the unique $9$-vertex induced $\mathcal{T}_5$-free graph is $K_{3}\Box K_{3}$.
\end{lemma}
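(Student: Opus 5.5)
Two things must be shown: $K_3\Box K_3$ is induced $\mathcal{T}_5$-free, and every $9$-vertex induced $\mathcal{T}_5$-free graph $G$ is isomorphic to it. The first is already contained in the proof of Lemma~\ref{le:r2slower-construction-2} with $s=5$, since $\tau(K_3\Box K_3)\le 4$. For uniqueness, the plan is to prove that $G$ is a strongly regular graph with parameters $(9,4,1,2)$ and then apply Lemma~\ref{le:grid}~(i).

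\textbf{Degrees.} For each $v$ we have $|N_G(v)|+|N_{\overline{G}}(v)|=8$, and both are at most $5$ by Fact~\ref{fa:r56}~(iv), so $d_G(v)\in\{3,4,5\}$. I will first rule out $d_G(v)=5$; the case $d_G(v)=3$ then follows by applying the same argument to $\overline{G}$, which is induced $\mathcal{T}_5$-free by Fact~\ref{fa:threshold-1}~(i). Suppose $d_G(v)=5$. Then $G[N_G(v)]\cong C_5$ by Fact~\ref{fa:threshold-1}~(ii),(iv). The $8$-vertex graph $G-v$ is induced $\mathcal{T}_5$-free, so by Lemma~\ref{le:r6-8-vtx} it has at least $10$ edges, and the same lemma applies to $\overline{G}-v$. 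The plan is to combine these counts with Fact~\ref{fa:r56}~(i)--(iii), applied to configurations made of two vertices $a,b$ of the $C_5$ and the three vertices of $N_{\overline{G}}(v)$. I expect this to force either a $2K_2$ pattern or a vertex with too many neighbours, giving a contradiction. This is the step I expect to be hardest, and it may need a short case analysis on $G[N_{\overline{G}}(v)]$, which has only $3$ vertices and hence $4$ isomorphism types.

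\textbf{Strong regularity.} Once $G$ is $4$-regular, every $v$ has $|N_G(v)|=|N_{\overline{G}}(v)|=4$. Fact~\ref{fa:threshold-1}~(ii),(iii) gives $G[N_G(v)]\in\{P_4,C_4,2K_2\}$, and the same holds for $\overline{G}[N_{\overline{G}}(v)]$. I will now use the counting identities~(\ref{eq:hom-1})--(\ref{eq:hom-3}). We get $h'(G)=\frac12\cdot 9\cdot 16=72$, so $h(G)=84-72=12$ and $\sum_v h(v)=36$. On the other hand, each $h(v)\ge 2+2=4$ because $P_4,C_4,2K_2$ each have at least $2$ edges, so $\sum_v h(v)\ge 36$. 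Equality must hold everywhere: $G[N_G(v)]\cong 2K_2$ and $\overline{G}[N_{\overline{G}}(v)]\cong 2K_2$ for every $v$.

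The first condition says each edge lies in exactly one triangle, so $\lambda=1$. The second, read in $G$, says $G[N_{\overline{G}}(v)]\cong C_4$. Take $u\not\sim v$. Then $u$ has exactly $2$ neighbours inside $N_{\overline{G}}(v)$, and since $d_G(u)=4$, the other $2$ neighbours of $u$ lie in $N_G(v)$. Hence $\mu=2$.

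Therefore $G$ is an $SRG(9,4,1,2)$, and Lemma~\ref{le:grid}~(i) gives $G\cong K_3\Box K_3$.
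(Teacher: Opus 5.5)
Your final step is correct and is the paper's: once $G$ is $4$-regular, $\sum_v h(v)=36$ with $h(v)\ge 4$ forces $G[N_G(v)]\cong 2K_2$ and $G[N_{\overline{G}}(v)]\cong C_4$ for every $v$, hence $SRG(9,4,1,2)$. The gap is the step before it, which is where the real work lies: you never actually exclude vertices of degree $5$ (and hence $3$). ``I expect this to force a $2K_2$ pattern or a vertex with too many neighbours'' is a plan, not an argument, and the ingredients you name do not deliver it on their own. With $A=N_G(v)\cong C_5$ and $B=N_{\overline{G}}(v)$, Lemma~\ref{le:r6-8-vtx} applied to $G-v$ and $\overline{G}-v$ only gives $5\le |E(A,B)|+|E(G[B])|\le 13$, which is no contradiction. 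The $5$-sets that do produce a contradiction are not ``two vertices of the $C_5$ plus all of $B$''; they contain $v$ itself and only two vertices of $B$. Without this step, $4$-regularity, and with it the identity $\sum_v h(v)=36$, is unjustified.

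The paper does not exclude degree $5$ locally. It proves only that a degree-$3$ vertex has $G[N_{\overline{G}}(v)]\cong C_5$ and a non-independent neighbourhood (dually for degree $5$), so $h(v)\ge 6$ for such vertices. Then, with $x$ the number of vertices of degree $3$ or $5$, the identities (\ref{eq:hom-1})--(\ref{eq:hom-3}) give $\sum_v h(v)=36+\frac{3x}{2}$, against the lower bound $36+2x$, forcing $x=0$.

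Your local route can nevertheless be completed, and fairly cheaply. For $B=\{b_1,b_2,b_3\}$ put $T_i=A\setminus N_G(b_i)$.
\begin{itemize}
\item Fact~\ref{fa:r56}~(iii) with $v_1=b_i$, $v_2=v$ and three vertices of $T_i$ gives $|T_i|\le 2$.
\item Fact~\ref{fa:r56}~(ii) with $v_1=a$, $v_2=v$, $\{v_3,v_4,v_5\}=B$ gives $T_1\cup T_2\cup T_3=A$.
\item If some $T_i=\{a,a'\}$ were a non-edge of the $C_5$, Fact~\ref{fa:r56}~(ii) would fail for $v_1=a''$ (their common $C_5$-neighbour), $v_2=b_i$, $\{v_3,v_4,v_5\}=\{v,a,a'\}$. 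So every $2$-element $T_i$ is a $C_5$-edge.
\end{itemize}
Since $\sum_i|T_i|\le 6$ while $|\bigcup_i T_i|=5$, some $T_i=\{a,a'\}$ of size $2$ is disjoint from $T_j\cup T_k$. Then $a$ and $a'$ are dominating vertices of $G[\{a,a',b_j,b_k,v\}]$, which is therefore a threshold graph of order $5$, a contradiction. Applying this to $\overline{G}$ excludes degree $3$ as well. As submitted, however, this step is missing, so the proof is incomplete.
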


\noindent {\bf Proof.}
By the proof of Lemma~\ref{le:r2slower-construction-2}, $K_{3}\Box K_{3}$ is a $9$-vertex induced $\mathcal{T}_5$-free graph.
It remains to prove the uniqueness.
Let $G$ be a $9$-vertex induced $\mathcal{T}_5$-free graph.
For any vertex $v\in V(G)$, we have $|N_G(v)|\leq 5$ and $|N_{\overline{G}}(v)|\leq 5$ by Fact~\ref{fa:r56}~(iv).
Note that $|N_G(v)|+|N_{\overline{G}}(v)|=8$.
Thus we further have $|N_G(v)|\geq 3$ and $|N_{\overline{G}}(v)|\geq 3$, so $3\leq \delta(G)\leq \Delta(G)\leq 5$.

\begin{claim}\label{cl:r6-9-vtx-1}
For any vertex $v\in V(G)$, if $d_G(v)=3$, then $E(G[N_G(v)])\neq \emptyset$ and $G[N_{\overline{G}}(v)]\cong C_5$; if $d_G(v)=5$, then $E(\overline{G}[N_{\overline{G}}(v)])\neq \emptyset$ and $G[N_G(v)]\cong C_5$.
\end{claim}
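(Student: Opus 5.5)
The two cases are complementary, so I will prove only the case $d_G(v)=3$ and obtain the other by passing to $\overline{G}$. Indeed, $\overline{G}$ is also a $9$-vertex induced $\mathcal{T}_5$-free graph by Fact~\ref{fa:threshold-1}~(i). A vertex of degree $5$ in $G$ has degree $3$ in $\overline{G}$, and the roles of $N_G(v)$ and $N_{\overline{G}}(v)$ swap. So applying the degree-$3$ statement to $\overline{G}$ gives $E(\overline{G}[N_{\overline{G}}(v)])\neq\emptyset$ and $\overline{G}[N_G(v)]\cong C_5$. Since $\overline{C_5}\cong C_5$, this means $G[N_G(v)]\cong C_5$, which is exactly the second statement.

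Now let $d_G(v)=3$. Set $A\colonequals N_G(v)$ with $|A|=3$ and $B\colonequals N_{\overline{G}}(v)$ with $|B|=5$. By Fact~\ref{fa:threshold-1}~(ii), $G[B]$ is induced $\mathcal{T}_4$-free, so Fact~\ref{fa:threshold-1}~(iv) gives $G[B]\cong C_5$ immediately.

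It remains to show that $A$ is not independent. Suppose instead that $A=\{x,y,z\}$ is independent.
\begin{itemize}
\item Each $a\in A$ has degree at least $\delta(G)\ge 3$. It is adjacent to $v$ and to nothing else in $A$, so it has at least two neighbours in $B$.
\item I want five vertices $v_1,\dots,v_5$ violating Fact~\ref{fa:r56}~(iii): $v_1$ nonadjacent to the other four, and $v_2$ adjacent to $v_3,v_4,v_5$. Take $v_1=v$ and $v_2=x\in A$. Then $v_3,v_4,v_5$ must lie in $N_G(x)\cap B$ (since $v$ is nonadjacent to all of $B$), so $x$ would need at least three neighbours in $B$.
\item A more direct route uses the set $\{v,y,z\}\cup(N_G(x)\cap B)$ in the threshold-graph structure.
\end{itemize}

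The simplest version seems to be this. Take two vertices $b,b'\in N_G(x)\cap B$ and consider $\{y,z,x,b,b'\}$. Both $y$ and $z$ are nonadjacent to $x$. If neither $y$ nor $z$ has a neighbour in $\{b,b'\}$, then $y$ and $z$ are isolated in this $5$-set, contradicting Fact~\ref{fa:r56}~(i). So I get constraints on how $N_G(y)\cap B$ and $N_G(z)\cap B$ meet $N_G(x)\cap B$.

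The other natural tool is Fact~\ref{fa:r56}~(i) with $v_1=v$ and $v_2=a$. This needs three vertices of $B$ nonadjacent to $a$, and that holds whenever $|N_G(a)\cap B|\le 2$, i.e.\ whenever $d_G(a)=3$. Using it, I plan a short case analysis on $d_G(a)\in\{3,4,5\}$ for $a\in A$, combined with the $C_5$ structure on $B$. Note that $v$ and $a$ are adjacent, so this choice does not directly fit Fact~\ref{fa:r56}~(i) as stated; I will adjust it to use Fact~\ref{fa:r56}~(i) or (iii) appropriately.

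The main obstacle is this last step, ruling out an independent $A$. First I will try a counting argument. Each vertex of $A$ sends $2$ to $4$ edges into $B$, and $B$ induces a $C_5$. For two nonadjacent vertices $a,a'\in A$, the $5$-set $\{a,a'\}\cup S$ with $S\subseteq B\setminus(N(a)\cup N(a'))$ and $|S|=3$ violates Fact~\ref{fa:r56}~(i). Hence $|N(a)\cup N(a')|\cap B\ge 3$ for every pair. I will then look for a $5$-set isomorphic to a threshold graph, using a vertex of $B$ adjacent to two or three vertices of $A$ as a dominating vertex over an independent part.

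If that fails, I will use $h$-counting as in Theorem~\ref{thm:r5}. That means bounding $h(v)$ from below via $C_5$ on $B$ (which gives $5$ homogeneous triples) and via $A$ independent (which gives $3$), and comparing with Equalities~(\ref{eq:hom-1})--(\ref{eq:hom-3}).
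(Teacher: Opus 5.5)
Your reduction of the degree-$5$ case to the degree-$3$ case via $\overline{G}$, and your derivation of $G[B]\cong C_5$, are correct. However, the heart of the claim, that $A=N_G(v)$ is not independent, is never proved: you give a list of possible tools, and the ones you actually try misfire.

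\begin{itemize}
\item Your use of Fact~\ref{fa:r56}~(iii) with $v_1=v$, $v_2=x$ is invalid, because $v$ is adjacent to $x$. You notice this yourself for the analogous use of (i).
\item The constraints you do derive correctly are all of lower-bound type: each $a\in A$ has at least two neighbours in $B$; $|(N_G(a)\cup N_G(a'))\cap B|\ge 3$; and, from $\{x,y,z,b,b'\}$, $N_G(x)\cap B$ meets $N_G(y)\cup N_G(z)$. All of these are satisfied when the vertices of $A$ have many neighbours in $B$, so they cannot finish the argument on their own.
\item The $h$-counting fallback fails. If $A$ is independent, no triple $\{v,a,a'\}$ is homogeneous, so $A$ contributes $0$, not $3$, and $h(v)=5$. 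The global identity $\sum_v h(v)=36+\frac{3x}{2}$ gives a contradiction only if every vertex of degree $3$ or $5$ has $h(v)\ge 6$. That is exactly what this claim is needed to supply, so the route is circular.
\end{itemize}

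What is missing is an upper bound and a covering statement.

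\begin{itemize}
\item \emph{Upper bound.} Use Fact~\ref{fa:r56}~(ii) with the roles you had swapped: $v_1=a\in A$, $v_2=v$. If $a$ had three neighbours $b_1,b_2,b_3\in B$, then $a$ is adjacent to $v,b_1,b_2,b_3$ while $v$ is adjacent to none of $b_1,b_2,b_3$, a contradiction. With $\delta(G)\ge 3$ this gives $|S_a|=2$ for $S_a\colonequals N_G(a)\cap B$.
\item \emph{Covering.} Apply Fact~\ref{fa:r56}~(iii) with $v_1=b\in B$, $v_2=v$. This shows every $b\in B$ has a neighbour in $A$, so $S_x\cup S_y\cup S_z=B$.
\end{itemize}

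Three $2$-subsets covering $5$ points must include one disjoint from the other two. That contradicts your own observation from $\{x,y,z,b,b'\}$, applied symmetrically to each vertex of $A$. The paper instead first shows that the two vertices of each $S_a$ are nonadjacent, using Fact~\ref{fa:r56}~(iii) with $v_1$ the vertex of the $C_5$ nonadjacent to both. The disjoint $S_a$, together with $v$ and the other two vertices of $A$, then violates Fact~\ref{fa:r56}~(i).
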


\begin{proof}
We first show that if $d_G(v)=3$, then $E(G[N_G(v)])\neq \emptyset$ and $G[N_{\overline{G}}(v)]\cong C_5$.
Let $N_G(v)=\{a_1, a_2, a_3\}$ and $B\colonequals N_{\overline{G}}(v)$.
By Fact~\ref{fa:threshold-1}~(ii) and (iv), we have $G[N_{\overline{G}}(v)]=G[B]\cong C_5$.
Now suppose for a contradiction that $E(G[N_G(v)])= \emptyset$.
For each $i\in [3]$, let $S_i\colonequals N_G(a_i)\cap B$.

Since $\delta(G)\geq 3$ and $E(G[N_G(v)])= \emptyset$, we have $|S_i|\geq 2$ for all $i\in [3]$.
On the other hand, we also have $|S_i|\leq 2$ for all $i\in [3]$.
To see this, if $a_i$ has three neighbors $b_1, b_2, b_3$ in $B$, then the five vertices $a_i, v, b_1, b_2, b_3$ violate Fact~\ref{fa:r56}~(ii), a contradiction.
Therefore, we have $|S_i|= 2$ for all $i\in [3]$.
Moreover, we claim that the two vertices in $S_i$ are nonadjacent.
Indeed, if the two vertices in $S_i$, say $b_1, b_2$, are adjacent, then since $G[B]\cong C_5$, there exists a vertex $b_3\in B\setminus S_i$ such that $b_3b_1, b_3b_2 \notin E(G)$.
Now the five vertices $b_3, a_i, v, b_1, b_2$ violate Fact~\ref{fa:r56}~(iii), a contradiction.

Furthermore, we claim that $S_1\cup S_2\cup S_3=B$.
Indeed, if there exists a vertex $b_1\in B\setminus (S_1\cup S_2\cup S_3)$, then the five vertices $b_1, v, a_1, a_2, a_3$ violate Fact~\ref{fa:r56}~(iii), a contradiction.
Therefore, we have $|S_1\cup S_2\cup S_3|=5$.
This together with $|S_1|=|S_2|=|S_3|=2$ implies that two of $S_1, S_2, S_3$, say $S_1, S_2$, satisfy $|S_1\cap S_2|=1$ and $(S_1\cup S_2)\cap S_3=\emptyset$.
Recall that the two vertices in $S_3$, say $b_1, b_2$, are nonadjacent.
Then the five vertices $b_1, b_2, a_1, a_2, v$ violate Fact~\ref{fa:r56}~(i), a contradiction.
Therefore, we have $E(G[N_G(v)])\neq \emptyset$.

Note that $\overline{G}$ is also a $9$-vertex induced $\mathcal{T}_5$-free graph, and moreover, $\overline{C_5}\cong C_5$.
Since $d_G(v)+d_{\overline{G}}(v)=8$, we have $d_{\overline{G}}(v)=3$ when $d_G(v)=5$.
Hence, applying the same argument to $\overline{G}$, we have that if $d_G(v)=5$, then $E(\overline{G}[N_{\overline{G}}(v)])\neq \emptyset$ and $G[N_G(v)]\cong C_5$.
\end{proof}

Let $x$ be the number of vertices of degree 3 or 5 in $G$.
By Equality~(\ref{eq:hom-3}) and since $3\leq \delta(G)\leq \Delta(G)\leq 5$, we have
\begin{align*}
h'(G)= &~\frac{1}{2}\sum\nolimits_{v\in V(G)}|N_G(v)|\cdot |N_{\overline{G}}(v)| =\frac{1}{2}\sum\nolimits_{v\in V(G)} d_G(v)(8-d_G(v))\\
= &~\frac{1}{2}(x\cdot 3\cdot 5 + (9-x)\cdot 4\cdot 4) = 72-\frac{x}{2}.
\end{align*}
Combining with Equalities~(\ref{eq:hom-1}) and (\ref{eq:hom-2}), we further have
\begin{equation}\label{eq:r6-9-vtx-1}
\sum\nolimits_{v\in V(G)}h(v)=3h(G)=3\left({9\choose 3}-h'(G)\right) = 3\left(84-\left(72-\frac{x}{2}\right)\right)=36+\frac{3 x}{2}.
\end{equation}

\begin{claim}\label{cl:r6-9-vtx-2}
If $d_G(v)\in \{3,5\}$, then $h(v)\geq 6$.
If $d_G(v)=4$, then $h(v)\geq 4$.
Moreover, if $d_G(v)=4$ and $h(v)= 4$, then $G[N_G(v)]\cong 2K_2$ and $G[N_{\overline{G}}(v)]\cong C_4$.
\end{claim}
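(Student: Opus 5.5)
We compute $h(v)$ via Equality~(\ref{eq:hom-0}), $h(v)=|E(G[N_G(v)])|+|E(\overline{G}[N_{\overline{G}}(v)])|$, and bound each term using Fact~\ref{fa:threshold-1}~(ii)--(iv) together with Claim~\ref{cl:r6-9-vtx-1}.

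\emph{Case $d_G(v)=3$.} Claim~\ref{cl:r6-9-vtx-1} gives $G[N_{\overline{G}}(v)]\cong C_5$. Since $\overline{C_5}\cong C_5$, the graph $\overline{G}[N_{\overline{G}}(v)]$ has exactly $5$ edges. The same claim gives $E(G[N_G(v)])\neq\emptyset$, so $G[N_G(v)]$ contributes at least $1$. Hence $h(v)\geq 6$.

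\emph{Case $d_G(v)=5$.} Claim~\ref{cl:r6-9-vtx-1} gives $G[N_G(v)]\cong C_5$, contributing $5$ edges, and $E(\overline{G}[N_{\overline{G}}(v)])\neq\emptyset$ contributes at least $1$. Again $h(v)\geq 6$.

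\emph{Case $d_G(v)=4$.} Here $|N_G(v)|=|N_{\overline{G}}(v)|=4$. By Fact~\ref{fa:threshold-1}~(ii), both $G[N_G(v)]$ and $G[N_{\overline{G}}(v)]$ are induced $\mathcal{T}_4$-free. By Fact~\ref{fa:threshold-1}~(iii), each lies in $\{P_4,C_4,2K_2\}$.

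Within a $4$-set $S$, a subgraph $G[S]$ has $3$, $4$ or $2$ edges according as it is $P_4$, $C_4$ or $2K_2$. The complement $\overline{G}[S]$ has $6$ minus that number, namely $3$, $2$ or $4$ edges respectively.

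So $|E(G[N_G(v)])|\geq 2$, with equality only when $G[N_G(v)]\cong 2K_2$. Likewise $|E(\overline{G}[N_{\overline{G}}(v)])|\geq 2$, with equality only when $G[N_{\overline{G}}(v)]\cong C_4$.

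Adding these gives $h(v)\geq 4$. Equality holds exactly when both minima are attained, that is, when $G[N_G(v)]\cong 2K_2$ and $G[N_{\overline{G}}(v)]\cong C_4$.

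All of this is direct bookkeeping. The only point needing care is to count edges of $\overline{G}$ on the non-neighbourhood and not edges of $G$; once that is done the claim follows immediately from the earlier facts.
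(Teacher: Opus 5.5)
Your proof is correct and follows the paper's argument: for $d_G(v)\in\{3,5\}$ you combine the two conclusions of Claim~\ref{cl:r6-9-vtx-1} with Equality~(\ref{eq:hom-0}) to get $h(v)\geq 1+5=6$. For $d_G(v)=4$ you use Fact~\ref{fa:threshold-1}~(ii) and (iii) together with the edge counts of $P_4$, $C_4$, $2K_2$ and their complements, which you spell out more explicitly than the paper does.
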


\begin{proof}
By Claim~\ref{cl:r6-9-vtx-1}, if $d_G(v)\in \{3,5\}$, then either $E(G[N_G(v)])\neq \emptyset$ and $G[N_{\overline{G}}(v)]\cong C_5$, or $E(\overline{G}[N_{\overline{G}}(v)])\neq \emptyset$ and $G[N_G(v)]\cong C_5$. Thus $h(v)\geq 1+5=6$ by Equality~(\ref{eq:hom-0}).

If $d_G(v)=4$, then $G[N_G(v)], G[N_{\overline{G}}(v)]\in \{P_4, C_4, 2K_2\}$ by Fact~\ref{fa:threshold-1}~(ii) and (iii).
Thus $h(v)\geq 2+2=4$ by Equality~(\ref{eq:hom-0}), with equality if and only if $G[N_G(v)]\cong 2K_2$ and $G[N_{\overline{G}}(v)]\cong C_4$.
\end{proof}

By Claim~\ref{cl:r6-9-vtx-2}, we have $\sum\nolimits_{v\in V(G)}h(v)\geq 6x+4(9-x)=36+2x$.
Combining with Equality~(\ref{eq:r6-9-vtx-1}), we have $36+2x\leq \sum\nolimits_{v\in V(G)}h(v) =36+\frac{3 x}{2},$ so $x=0$ and $\sum\nolimits_{v\in V(G)}h(v)=36$.
In particular, $G$ is a $4$-regular graph, and for all vertices $v\in V(G)$, we have $h(v)=4$, $G[N_G(v)]\cong 2K_2$ and $G[N_{\overline{G}}(v)]\cong C_4$ by Claim~\ref{cl:r6-9-vtx-2}.
It follows that for every vertex $u\in N_G(v)$, we have $|N_G(u)\cap N_G(v)|=1$.
For every vertex $u\in N_{\overline{G}}(v)$, it has two neighbors inside the $C_4\cong G[N_{\overline{G}}(v)]$, and since $d_G(u)=4$, its other two neighbors lie in $N_G(v)$. Hence, we have $|N_G(u)\cap N_G(v)|=2$.
Therefore, $G$ is a strongly regular graph with parameters $(9,4,1,2)$.
By Lemma~\ref{le:grid}~(i), we have $G\cong K_{3}\Box K_{3}$.
\hfill$\blacksquare$
\vspace{0.2cm}

Now we have all the ingredients to present our proof of Theorem~\ref{thm:r6}.
\vspace{0.2cm}

\noindent {\bf Proof of Theorem~\ref{thm:r6}.}
For the lower bound, we have $r'_2(6)\geq r(4,4)=18$ (see~\cite{GrGl}) by Lemma~\ref{le:r2slower-construction-1}.
For the upper bound, let $G$ be an arbitrary graph on 18 vertices.
Suppose for a contradiction that $G$ is induced $\mathcal{T}_6$-free.

For any vertex $v\in V(G)$, we have $|N_G(v)|\leq 9$ and $|N_{\overline{G}}(v)|\leq 9$ by Fact~\ref{fa:threshold-1}~(ii) and Theorem~\ref{thm:r5}.
Note that $|N_G(v)|+|N_{\overline{G}}(v)|=17$.
Thus either $|N_G(v)|=8$ and $|N_{\overline{G}}(v)|=9$, or $|N_G(v)|=9$ and $|N_{\overline{G}}(v)|=8$.
In particular, we have $d_G(v)\in \{8,9\}$ for every vertex $v\in V(G)$.
Thus by Equality~(\ref{eq:hom-3}), we have
$$h'(G)= \frac{1}{2}\sum\nolimits_{v\in V(G)}|N_G(v)|\cdot |N_{\overline{G}}(v)| =\frac{1}{2}\cdot 18 \cdot 8 \cdot 9=648.$$
Combining with Equalities~(\ref{eq:hom-1}) and (\ref{eq:hom-2}), we further have
\begin{equation}\label{eq:r6-1}
\sum\nolimits_{v\in V(G)}h(v)=3h(G)=3\left({18\choose 3}-h'(G)\right) = 3\left(816-648\right)=3\cdot 168=18\cdot 28.
\end{equation}

%\begin{claim}\label{cl:r6--1}
%If $d_G(v)=8$, then the degree sequence of $G[N_G(v)]$ is $(2,2,2,2,3,3,3,3)$.
%If $d_G(v)=9$, then $G[N_G(v)]\cong K_{3}\Box K_{3}$.
%\end{claim}

\begingroup
\renewcommand{\theclaim}{\ref{thm:r6}.1}
\begin{claim}\label{cl:r6--1}
If $d_G(v)=8$, then the degree sequence of $G[N_G(v)]$ is $(2,2,2,2,3,3,3,3)$.
If $d_G(v)=9$, then $G[N_G(v)]\cong K_{3}\Box K_{3}$.
\end{claim}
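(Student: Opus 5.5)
The claim will follow from a double-counting argument. I will use the total $\sum_{v}h(v)=18\cdot 28$ from Equality~(\ref{eq:r6-1}) together with lower bounds on each $h(v)$ coming from Lemmas~\ref{le:r6-8-vtx} and \ref{le:r6-9-vtx}. The goal is to show that every vertex attains its lower bound, which forces the extremal structures described in the claim.

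First I compute the lower bound on $h(v)$. By Equality~(\ref{eq:hom-0}), $h(v)=|E(G[N_G(v)])|+|E(\overline{G}[N_{\overline{G}}(v)])|$. Both neighbourhood graphs $G[N_G(v)]$ and $G[N_{\overline{G}}(v)]$ are induced $\mathcal{T}_5$-free by Fact~\ref{fa:threshold-1}~(ii). By Fact~\ref{fa:threshold-1}~(i), the same holds for their complements, in particular for $\overline{G}[N_{\overline{G}}(v)]$.

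Suppose $d_G(v)=8$, so $|N_G(v)|=8$ and $|N_{\overline{G}}(v)|=9$. Lemma~\ref{le:r6-8-vtx} gives $|E(G[N_G(v)])|\ge 10$. Lemma~\ref{le:r6-9-vtx} gives $\overline{G}[N_{\overline{G}}(v)]\cong K_3\Box K_3$, since it is a $9$-vertex induced $\mathcal{T}_5$-free graph; hence it has exactly $18$ edges. Therefore $h(v)\ge 28$, with equality if and only if $|E(G[N_G(v)])|=10$.

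Suppose instead $d_G(v)=9$. Then $G[N_G(v)]$ is a $9$-vertex induced $\mathcal{T}_5$-free graph, so Lemma~\ref{le:r6-9-vtx} gives $G[N_G(v)]\cong K_3\Box K_3$ directly, with $18$ edges. Also $\overline{G}[N_{\overline{G}}(v)]$ is an $8$-vertex induced $\mathcal{T}_5$-free graph, so it has at least $10$ edges. Again $h(v)\ge 28$.

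Summing over all vertices gives $\sum_v h(v)\ge 18\cdot 28$. This matches Equality~(\ref{eq:r6-1}), so equality must hold at every vertex. For vertices with $d_G(v)=8$, equality means $|E(G[N_G(v)])|=10$, and the equality case of Lemma~\ref{le:r6-8-vtx} then gives degree sequence $(2,2,2,2,3,3,3,3)$. For vertices with $d_G(v)=9$, the conclusion $G[N_G(v)]\cong K_3\Box K_3$ was already obtained unconditionally from Lemma~\ref{le:r6-9-vtx}.

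The main point needing care is the transfer to complements: I must check that Fact~\ref{fa:threshold-1}~(i) lets Lemmas~\ref{le:r6-8-vtx} and \ref{le:r6-9-vtx} be applied to $\overline{G}[N_{\overline{G}}(v)]$, and that $K_3\Box K_3$ has $9\cdot 4/2=18$ edges. Beyond that, the argument is straightforward bookkeeping.
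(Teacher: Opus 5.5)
Your proposal is correct and follows the paper's proof essentially step for step. You bound each $h(v)$ below by $28$ via Lemmas~\ref{le:r6-8-vtx} and \ref{le:r6-9-vtx}, transferring to complements with Fact~\ref{fa:threshold-1}~(i), then use Equality~(\ref{eq:r6-1}) to force equality at every vertex and invoke the equality case of Lemma~\ref{le:r6-8-vtx}. Your observation that the $d_G(v)=9$ conclusion follows from Lemma~\ref{le:r6-9-vtx} alone, without the counting, is accurate (the paper lists it among the consequences of equality); and the sum over all vertices implicitly uses the fact, established just before the claim, that every vertex has degree $8$ or $9$.
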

\endgroup

\begin{proof}
By Fact~\ref{fa:threshold-1}~(ii), both $G[N_G(v)]$ and $G[N_{\overline{G}}(v)]$ are induced $\mathcal{T}_5$-free.
Then, by Fact~\ref{fa:threshold-1}~(i), both $\overline{G}[N_G(v)]$ and $\overline{G}[N_{\overline{G}}(v)]$ are also induced $\mathcal{T}_5$-free.
Recall that we have either $|N_G(v)|=8$ and $|N_{\overline{G}}(v)|=9$, or $|N_G(v)|=9$ and $|N_{\overline{G}}(v)|=8$.
By Lemmas~\ref{le:r6-8-vtx} and \ref{le:r6-9-vtx}, we have $|E(G[N_G(v)])|\geq 10$ and $|E(\overline{G}[N_{\overline{G}}(v)])|=|E(K_{3}\Box K_{3})|=18$ in the former case,
and $|E(G[N_G(v)])|=|E(K_{3}\Box K_{3})|=18$ and $|E(\overline{G}[N_{\overline{G}}(v)])|\geq 10$ in the latter case.
Thus for every vertex $v\in V(G)$, we have $h(v)=|E(G[N_G(v)])|+|E(\overline{G}[N_{\overline{G}}(v)])|\geq 28$ by Equality~(\ref{eq:hom-0}).
Hence, $\sum\nolimits_{v\in V(G)}h(v)\geq 18\cdot 28$.
Combining with Equality~(\ref{eq:r6-1}), we can deduce that for every vertex $v\in V(G)$, $h(v)=28$, $|E(G[N_G(v)])|= 10$ when $d_G(v)=8$, and $G[N_G(v)]\cong K_{3}\Box K_{3}$ when $d_G(v)=9$.
Moreover, by Lemma~\ref{le:r6-8-vtx}, the degree sequence of $G[N_G(v)]$ is $(2,2,2,2,3,3,3,3)$ when $d_G(v)=8$.
\end{proof}

Let $X\colonequals \{v\in V(G)\colon\, d_G(v)=8\}$ and $Y\colonequals \{v\in V(G)\colon\, d_G(v)=9\}$.

%\begin{claim}\label{cl:r6--2}
%We have that either $X=\emptyset$ or $Y=\emptyset$.
%\end{claim}

\begingroup
\renewcommand{\theclaim}{\ref{thm:r6}.2}
\begin{claim}\label{cl:r6--2}
We have that either $X=\emptyset$ or $Y=\emptyset$.
\end{claim}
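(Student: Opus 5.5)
The plan is to compare the number of common neighbours of an adjacent pair computed from each endpoint, and then to repeat the argument in the complement. Suppose for a contradiction that $X\neq\emptyset$ and $Y\neq\emptyset$. The aim is to show that there is no edge of $G$ between $X$ and $Y$, and also no edge of $\overline{G}$ between $X$ and $Y$. These two statements are incompatible as soon as some pair $x\in X$, $y\in Y$ exists.

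\emph{Step 1: no $G$-edges between $X$ and $Y$.} Take $x\in X$ and $y\in Y$ with $xy\in E(G)$. By Claim~\ref{cl:r6--1}, $G[N_G(y)]\cong K_3\Box K_3$, which is $4$-regular. Since $x\in N_G(y)$, this gives $|N_G(x)\cap N_G(y)|=4$. On the other hand, again by Claim~\ref{cl:r6--1}, $G[N_G(x)]$ has degree sequence $(2,2,2,2,3,3,3,3)$. Since $y\in N_G(x)$, this gives $|N_G(x)\cap N_G(y)|\in\{2,3\}$. The two counts contradict each other, so no such edge exists.

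\emph{Step 2: no $\overline{G}$-edges between $X$ and $Y$.} I apply the same reasoning to $\overline{G}$, which is an $18$-vertex induced $\mathcal{T}_6$-free graph by Fact~\ref{fa:threshold-1}~(i). Passing to $\overline{G}$ swaps degrees $8$ and $9$, so the roles of $X$ and $Y$ are exchanged. Claim~\ref{cl:r6--1} applied to $\overline{G}$ then gives the following:
\begin{itemize}
\item for $x\in X$, $\overline{G}[N_{\overline{G}}(x)]\cong K_3\Box K_3$;
\item for $y\in Y$, $\overline{G}[N_{\overline{G}}(y)]$ has degree sequence $(2,2,2,2,3,3,3,3)$.
\end{itemize}
This is also visible directly from the equality analysis in the proof of Claim~\ref{cl:r6--1}, which forces both summands in $h(v)=28$ to be extremal. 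The common-neighbour comparison from Step~1, now carried out in $\overline{G}$, shows that no $\overline{G}$-edge joins $X$ and $Y$.

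\emph{Conclusion.} Steps~1 and~2 together say that a pair $x\in X$, $y\in Y$ is neither an edge of $G$ nor an edge of $\overline{G}$, which is impossible. Hence $X=\emptyset$ or $Y=\emptyset$.

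The only delicate point is justifying that Claim~\ref{cl:r6--1} transfers to $\overline{G}$ with $X$ and $Y$ swapped. Since the hypotheses are complement-invariant, this amounts to rerunning the same argument, so I expect no real obstacle.
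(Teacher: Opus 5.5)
Your proof is correct. Step~1 is exactly the paper's argument that $E(X,Y)=\emptyset$; the difference is in how you finish.

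The paper does not pass to the complement. Once there are no $G$-edges between $X$ and $Y$, every vertex of $X$ has all eight of its $G$-neighbours in $X$, and every vertex of $Y$ has all nine of its $G$-neighbours in $Y$. So if both sets were nonempty, then $|X|\ge 9$ and $|Y|\ge 10$, contradicting $|V(G)|=18$.

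Your Step~2 instead exploits the complement symmetry of the setting, and the complementary form of Claim~\ref{cl:r6--1} does hold:
\begin{itemize}
\item for $x\in X$, $\overline{G}[N_{\overline{G}}(x)]$ is a $9$-vertex induced $\mathcal{T}_5$-free graph, hence $\cong K_3\Box K_3$ by Lemma~\ref{le:r6-9-vtx};
\item for $y\in Y$, $h(y)=28$ together with $|E(G[N_G(y)])|=18$ forces $|E(\overline{G}[N_{\overline{G}}(y)])|=10$, and Lemma~\ref{le:r6-8-vtx} then gives the degree sequence.
\end{itemize}
So the same common-neighbour comparison rules out $\overline{G}$-edges between $X$ and $Y$, and no cross pair can exist at all.

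The paper's finish is slightly more economical. It needs only the information about $G$ already recorded in Claim~\ref{cl:r6--1} plus a trivial degree count. Yours needs the extra structural information about $\overline{G}$. In return it is self-dual under complementation and does not invoke the vertex count $18$ at this final step; that count enters only through Claim~\ref{cl:r6--1}.
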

\endgroup

\begin{proof}
We first show that $E(X, Y)=\emptyset$.
Suppose that there exist two vertices $x\in X$ and $y\in Y$ with $xy\in E(G)$.
Then $y\in N_G(x)$ and $x\in N_G(y)$.
On the one hand,
since $d_G(x)=8$, the degree sequence of $G[N_G(x)]$ is $(2,2,2,2,3,3,3,3)$ by Claim~\ref{cl:r6--1}.
Since $y\in N_G(x)$, we further have $|N_G(y)\cap N_G(x)|\in \{2,3\}$.
On the other hand,
since $d_G(y)=9$, we have $G[N_G(y)]\cong K_{3}\Box K_{3}$ by Claim~\ref{cl:r6--1}.
Since $x\in N_G(y)$ and $K_{3}\Box K_{3}$ is 4-regular, we further have $|N_G(x)\cap N_G(y)|=4$.
This contradiction implies that $E(X, Y)=\emptyset$.

Moreover, note that $d_G(v)=8$ for every vertex $v\in X$ and $d_G(v)=9$ for every vertex $v\in Y$.
Since $E(X, Y)=\emptyset$, if $X\neq \emptyset$ and $Y\neq \emptyset$, then
every vertex of $X$ has all its eight neighbors in $X$, so $|X|\geq 9$; every vertex of $Y$ has all its nine neighbors in $Y$, so $|Y|\geq 10$.
Since $X\cap Y=\emptyset$, we further have $|V(G)|=|X|+|Y|\geq 19>18$, a contradiction.
\end{proof}

By Claim~\ref{cl:r6--2}, $G$ is either 9-regular or 8-regular.
If $G$ is 9-regular, then $G[N_G(v)]\cong K_{3}\Box K_{3}$ for every $v\in V(G)$ by Claim~\ref{cl:r6--1}, so $G$ is a locally $3\times 3$ grid graph.
If $G$ is 8-regular, then $\overline{G}$ is 9-regular.
By Fact~\ref{fa:threshold-1}~(i) and (ii), $\overline{G}$ is induced $\mathcal{T}_6$-free and $\overline{G}[N_{\overline{G}}(v)]$ is induced $\mathcal{T}_5$-free.
Then, by Lemma~\ref{le:r6-9-vtx}, $\overline{G}[N_{\overline{G}}(v)]\cong K_{3}\Box K_{3}$ for every $v\in V(G)$, so $\overline{G}$ is a locally $3\times 3$ grid graph.
Moreover, an 18-vertex 9-regular graph must be connected since every component of a 9-regular graph has at least ten vertices.
Hence, either $G$ or $\overline{G}$ is a connected locally $3\times 3$ grid graph.
By Lemma~\ref{le:grid}~(ii), there are precisely two connected locally $3\times 3$ grid graphs, one on $16$ vertices and the other one on $20$ vertices.
However, both $G$ and $\overline{G}$ have 18 vertices.
This contradiction completes the proof of Theorem~\ref{thm:r6}.
\hfill$\blacksquare$
%\vspace{0.2cm}

\subsection{Exact values of $CR(s,3)$}
\label{subsec:pf_complete_CR}

We now present our proof of Theorem~\ref{thm:CRs3}, which follows from Lemma~\ref{le:CRs3lower} (lower bound) and Lemma~\ref{le:CRs3upper} (upper bound) below.
For $s\geq 3$, let
$$n_s\colonequals
\left\{
   \begin{aligned}
    &2\cdot 5^{\frac{s-3}{2}} & & \mbox{if $s$ is odd},\\
    &5^{\frac{s-2}{2}} & & \mbox{if $s$ is even}.
   \end{aligned}
   \right.$$
For small values of $s$, the exact values of $CR(s,3)$ were obtained by Brosch et al.~\cite{BLMP}.
They showed that the values of $CR(s,3)$ are $3, 6, 11, 26$ for $s=3, 4, 5, 6$, respectively.
Thus $CR(s,3)= n_s+1$ for $3\leq s\leq 6$.
Hence, it remains to consider the case $s\geq 7$.

\begin{lemma}\label{le:CRs3lower}
For any integer $s\geq 3$, we have $CR(s,3)\geq n_s+1$.
\end{lemma}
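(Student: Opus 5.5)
We prove the lower bound by an explicit recursive construction: for every $s\ge 3$ we build a Gallai-coloring $\chi_s$ of $K_{n_s}$ that contains no orderable $K_s$. This shows $CR(s,3)\ge n_s+1$. Since $n_{s+2}=5n_s$, it is natural to go from $s$ to $s+2$ by a lexicographic product with a $2$-colored $K_5$, with two base cases.

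\emph{Base cases.} For $s=3$ take $K_2=K_{n_3}$ with one color; it has no $K_3$ at all. For $s=4$ take $K_5=K_{n_4}$ with the two colors forming two complementary $5$-cycles. It has no rainbow triangle because only two colors are used. Any $4$ vertices of it induce a red $P_4$, which is not a threshold graph. Hence, by the equivalence $r'_2(K_s)=r'_2(s)$ shown in the introduction, it contains no orderable $K_4$. Call this coloring $\psi$.

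\emph{Recursive step.} Given $\chi_s$ on $K_{n_s}$, recolor $\psi$ with two colors not used in $\chi_s$ and set $\chi_{s+2}=\psi\otimes\chi_s$ (lexicographic product coloring). This is a coloring of $K_{5n_s}=K_{n_{s+2}}$ with parts $V_1,\dots,V_5$, each a copy of $\chi_s$.
\begin{itemize}
\item It is a Gallai-coloring. A triangle inside a part is not rainbow by induction. A triangle meeting three parts uses only the two colors of $\psi$. A triangle with two vertices in one part has two edges of the same outer color.
\item It has no orderable $K_{s+2}$. Suppose $(v_1,\dots,v_m)$ orders an orderable clique. Let $i$ be the least index such that $v_i,\dots,v_m$ all lie in one part $P$; note $m-i+1\le s-1$ by induction.
\end{itemize}

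The key step is to show that the first $i-1$ vertices lie outside $P$, in pairwise distinct parts, and that $i-1\le 2$. Three observations give this.
\begin{itemize}
\item By minimality of $i$, if $i\ge2$ then $v_{i-1}\notin P$.
\item For $j<i-1$ with $v_j\in P$, the later vertices include $v_{i-1}$ (joined to $v_j$ in a $\psi$-color) and $v_m\in P$ (joined to $v_j$ in a $\chi_s$-color). These colors differ because the color sets are disjoint, a contradiction. So every $v_j$ with $j<i$ lies outside $P$.
\item If $j<k<i$ and $v_j,v_k$ lie in the same part $Q\ne P$, then $v_j$ sees $v_k$ in a $\chi_s$-color and $v_m$ in a $\psi$-color, again a contradiction.
\end{itemize}
Thus $v_1,\dots,v_{i-1}$ come from distinct parts other than $P$. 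Choosing one representative of $P$, these $i$ vertices give an orderable $K_i$ in $\psi$. Since $\psi$ has no orderable $K_4$, we get $i\le 3$. Therefore $m\le 2+(s-1)=s+1$.

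This step is the main point of the argument; the remaining work is the routine bookkeeping that $n_s$ satisfies $n_{s+2}=5n_s$ from the bases $n_3=2$ and $n_4=5$.
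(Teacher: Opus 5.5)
Your proof is correct. Your construction is the one the paper uses: iterated lexicographic products of the two-colored $K_5$ whose color classes are $5$-cycles, with fresh colors at each level, plus a one-colored $K_2$ factor for odd $s$. You place the $K_2$ innermost and the paper places it outermost, which makes no difference.

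The difference is in how you rule out an orderable $K_s$.

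\emph{The paper's route} is a global color count. The product has no monochromatic triangle and uses only $s-2$ colors. In an orderable $K_s$ with list $(c_1,\dots,c_{s-1})$, a repetition $c_i=c_j$ with $i<j$ would make $v_iv_jv_{j+1}$ monochromatic. So all $s-1$ colors would have to be distinct, which is impossible.

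\emph{Your route} is a structural induction on the product. The maximal final segment lying in one part has at most $s-1$ vertices. Because the palettes are disjoint, every earlier vertex lies in a distinct other part. Those earlier vertices, together with one representative of the final part, form an orderable clique in $\psi$, so there are at most $2$ of them.

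\emph{What each buys.} The paper's argument is shorter. Yours is self-contained for all $s\ge 3$, whereas the paper cites Brosch et al.\ for $s\le 6$. Your argument also proves more. The same reasoning shows the following: if $\psi$ and $\chi$ are Gallai-colorings on disjoint palettes, with no orderable $K_a$ and no orderable $K_b$ respectively, then $\psi\otimes\chi$ is a Gallai-coloring with no orderable $K_{a+b-2}$. This gives the supermultiplicativity bound $CR(a+b-2,3)-1\ge (CR(a,3)-1)(CR(b,3)-1)$, a Gallai-coloring analogue of Lemma~\ref{le:r2slower-construction-3}. The paper's counting argument does not yield this, since it relies on the absence of monochromatic triangles and on the exact number of colors.
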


\noindent {\bf Proof.}
%\begin{proof}
We may assume that $s\geq 7$ by the above-mentioned results for $3\leq s\leq 6$.
To give a lower bound construction, we use lexicographic product colorings.

Let $\chi_2$ be a 2-edge-coloring of $K_5$ using colors 1 and 2 in which each color induces a $C_5$.
Note that there is no rainbow $K_3$ or monochromatic $K_3$ in this edge-coloring.
Let $\chi'_4$ be a 2-edge-coloring of $K_5$ using colors 3 and 4 in which each color induces a $C_5$,
and let $\chi_4$ be the lexicographic product coloring $\chi'_4 \otimes \chi_2$ of $K_{25}$.
Assume that we have constructed $\chi_2, \chi_4, \ldots, \chi_i$ for some even number $i\leq s-4$, and we shall construct $\chi_{i+2}$ in an analogous way:
let $\chi'_{i+2}$ be a 2-edge-coloring of $K_5$ using colors $i+1$ and $i+2$ in which each color induces a $C_5$,
and let $\chi_{i+2}$ be the lexicographic product coloring $\chi'_{i+2} \otimes \chi_i$ of $K_{5^{(i+2)/2}}$.
If $s$ is even, then let $G$ be the $(s-2)$-edge-colored $K_{5^{(s-2)/2}}$ under the coloring $\chi_{s-2}$.
If $s$ is odd, then let $\chi'$ be an edge-coloring of $K_2$ using color $s-2$, and let $G$ be the $(s-2)$-edge-colored $K_{2\cdot 5^{(s-3)/2}}$ under the coloring $\chi' \otimes \chi_{s-3}$.

From the construction process, we know that $G$ contains neither a rainbow $K_3$ nor a monochromatic $K_3$.
Indeed, if $s$ is even, then a triangle either lies in one of the five copies of the $(s-4)$-edge-colored $K_{5^{(s-4)/2}}$ under the coloring $\chi_{s-4}$, or meets at least two of them.
In the former case, the assertion follows inductively.
In the latter case, if a triangle meets three copies of the $K_{5^{(s-4)/2}}$, then it cannot be rainbow or monochromatic since the cross-edges form a blow-up of a 2-edge-colored $K_5$ without monochromatic $K_3$; if a triangle meets exactly two copies of the $K_{5^{(s-4)/2}}$, then it cannot be rainbow since it contains two edges of the same color, and it cannot be monochromatic since the colors of the cross-edges and those of the edges within one copy of the $K_{5^{(s-4)/2}}$ are distinct.
The case when $s$ is odd is analogous.

We next show that $G$ contains no orderable $K_s$.
Suppose that $G$ contains an orderable $K_s$ under the ordering $(v_1, v_2, \ldots, v_s)$ and the corresponding colors $(c_1, c_2, \ldots, c_{s-1})$.
If $c_i=c_j$ for some $i< j$, then $G[\{v_i, v_j, v_{j+1}\}]$ is a monochromatic $K_3$.
But $G$ contains no monochromatic $K_3$ by the preceding argument, so the $s-1$ colors are pairwise distinct.
This contradicts the fact that $G$ uses only $s-2$ colors.
Hence, $G$ contains no orderable $K_s$.
This implies that $CR(s,3)\geq |V(G)|+1=n_s+1$, and the proof is complete.
%\end{proof}
\hfill$\blacksquare$
%\vspace{0.2cm}

\begin{lemma}\label{le:CRs3upper}
For any integer $s\geq 3$, we have $CR(s,3)\leq n_s+1$.
\end{lemma}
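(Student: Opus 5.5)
We argue by induction on $s$, using the base values $CR(s,3)=n_s+1$ for $3\le s\le 6$ from Brosch et al. The recurrence we aim for is $n_{s+2}=5n_s$, together with $n_4=5=\frac{5}{2}n_3$ and $n_5=10=2n_4$. So the target inequality is
$$CR(s,3)-1\le 5\,(CR(s-2,3)-1)\qquad (s\ge 7),$$
and it suffices to show that every Gallai-colored $K_n$ with no orderable $K_s$ satisfies $n\le 5\,n_{s-2}$.

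The first step is to take a Gallai partition $V_1,\dots,V_m$ from Theorem~\ref{thm:Gallai}, with cross colors red and blue. We choose it with $m$ minimal, so that either $m=2$, or $m\ge 4$ and the reduced 2-colored $K_m$ has both colors connected. This is the standard refinement of Gallai's theorem.

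The second step handles a single part $V_i$ that is joined to all other parts in one color, say red. Then an orderable $K_{s-1}$ inside $V_i$ extends by one vertex of $V(K_n)\setminus V_i$ placed first, and an orderable $K_{s-1}$ outside $V_i$ extends by one vertex of $V_i$ placed first. Hence both $V_i$ and its complement have at most $CR(s-1,3)-1=n_{s-1}$ vertices, giving $n\le 2n_{s-1}$. For odd $s$ this is $2n_{s-1}=n_s$ exactly. For even $s$, however, $2n_{s-1}=\frac{4}{5}n_s<n_s$, which is also fine. This settles $m=2$, and also any $m$ in which some part is monochromatically joined to the rest.

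The third step handles $m\ge 4$ with both reduced colors connected. Pick a vertex $v$. Among the parts other than its own, $v$ sees them in red and blue. The key tool is a two-step extension: if two vertices $u,w$ in different parts satisfy $\chi(u,X)=c_1$ and $\chi(w,X)=c_2$ for a set $X$, and $uw$ is arbitrary, then an orderable $K_{s-2}$ inside $X$ extends to an orderable $K_s$ via the ordering $(u,w,\dots)$. This requires $\chi(u,w)=c_1$ as well, so we choose $u$ and $w$ carefully. This bounds any union of parts that is commonly colored toward two other parts by $n_{s-2}$. We then argue that the parts can be grouped so that the vertex set is covered by at most five such ``$(s-2)$-bounded'' sets. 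The motivating picture is the 5-cycle blow-up of the extremal construction, where every part sees, for each color, two parts. If the reduced graph has $m\ge 6$ parts, Ramsey's $R(3,3)=6$ gives a monochromatic triangle of parts $V_a,V_b,V_c$. Then a vertex from each of $V_a$ and $V_b$ together with an orderable $K_{s-2}$ inside $V_c$ gives an orderable $K_s$, so every part is small. We would then need a weighted counting over the reduced coloring: a vertex $v$ of a part lying in a monochromatic red triangle has its red neighbourhood structured enough to recurse.

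The main obstacle is this third step: converting the reduced structure into the clean bound $5n_{s-2}$ rather than something weaker like $(m-1)n_{s-2}$. My first attempt would be induction on $m$ by merging parts. If some part $V_a$ has two parts $V_b,V_c$ joined to it by the same color $c$ with $\chi(V_b,V_c)=c$, then $|V_a|+|V_b|+|V_c|$ is constrained. Otherwise the reduced coloring on $m$ parts has no monochromatic triangle, forcing $m\le 5$, and the $C_5$ structure yields at most five parts each of size at most $n_{s-2}$. Making the first alternative contract cleanly without losing the bound is where I expect the real work.
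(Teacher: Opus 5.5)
\textbf{Verdict: genuine gap.} The case $m\ge 6$ is the entire difficulty, and your proposal leaves it open.

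Your first two steps are sound and match the paper. A part joined monochromatically to the rest gives $n\le 2n_{s-1}\le n_s$. Otherwise every part sees both cross colors, so each $|V_i|\le n_{s-2}$, and $m\le 5$ closes with $5n_{s-2}=n_s$. Neither the minimal choice of $m$ nor any $C_5$ structure is needed for this.

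Since $R(3,3)=6$, your ``first alternative'' (a monochromatic triangle of parts) is exactly the case $m\ge 6$, and you explicitly defer it. The one observation you make there does not help. Extending an orderable $K_{s-2}$ in $V_c$ by one vertex from each of $V_a,V_b$ only yields $|V_c|\le n_{s-2}$, which you already have for every part. So nothing you state improves the trivial bound $m\,n_{s-2}$, and no mechanism is given that produces $n\le 5n_{s-2}$.

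The missing idea in the paper is to work from a smallest part $V_m$.
\begin{itemize}
\item Split the other parts into $A$ (red to $V_m$) and $B$ (blue to $V_m$). Placing a vertex of $V_m$ first shows $|V_A|,|V_B|\le n_{s-1}$, hence $n\le n_{s-2}+2n_{s-1}$.
\item This equals $n_s$ when $s$ is even, so your outline can be completed for even $s$ with this one line. For odd $s$ it exceeds $n_s$ by exactly $n_{s-2}$, and recovering that slack is the real work.
\item The paper repeats the split inside $A$ and $B$ relative to their own smallest parts $V_{m-1},V_{m-2}$. This gives classes $X,Y,U,W$, each of total size at most $n_{s-2}$.
\item It then uses the minimality of $|V_m|$ and $|V_{m-1}|$, together with extensions by three or four preceding vertices. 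These give bounds $n_{s-3}\le n_s/10$ and $n_{s-4}=n_s/25$ on suitable parts, forcing $x=y=u=w=2$, $a=b=5$, $m=11$, and every part of size at most $n_{s-3}$.
\item Suppose some $i\in A$ were red to three parts of $B$. Then two of those parts, and $V_m$, have size at most $n_{s-4}$, and $3n_{s-4}+8n_{s-3}<n_s$ gives a contradiction.
\item So each part of $A$ is red to at most two parts of $B$, and symmetrically each part of $B$ is blue to at most two parts of $A$. Then the $25$ edges of the reduced $A$--$B$ bipartite graph would number at most $10+10$, a contradiction.
\end{itemize}
Without some quantitative argument of this kind for $m\ge 6$ and odd $s$, your proposal does not prove the lemma.
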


\noindent {\bf Proof.}
%\begin{proof}
It suffices to show that for every edge-colored complete graph $G$ with neither an orderable $K_s$ nor a rainbow $K_3$, we have $|V(G)|\leq n_s$.
We prove this by induction on $s$.
The base case $3\leq s\leq 6$ follows from the above-mentioned results of Brosch et al.~\cite{BLMP}.
Assume that the statement holds for all values from $3$ to $s-1$, and we shall prove it for $s\geq 7$.
Note that
\begin{equation}\label{eq:ns-1}
n_{s-1}\leq \frac{1}{2}n_s, ~~~~~~ n_{s-2}= \frac{1}{5}n_s, ~~~~~~ n_{s-3}\leq \frac{1}{10}n_s, ~~~~~~ n_{s-4}= \frac{1}{25}n_s
\end{equation}
for any $s\geq 7$.

Let $G$ be an edge-colored complete graph with neither an orderable $K_s$ nor a rainbow $K_3$.
For a contradiction, suppose that $|V(G)|>n_s$.
By Theorem~\ref{thm:Gallai}, $V(G)$ admits a Gallai partition $V_1, V_2, \ldots, V_m$ with $m\geq 2$.
Without loss of generality, we may assume that all edges between distinct parts have colors in $\{1,2\}$.
If there exists a part $V_i$ such that all edges between $V_i$ and $V(G)\setminus V_i$ are of the same color, then to avoid an orderable $K_s$, neither $G[V_i]$ nor $G[V(G)\setminus V_i]$ contains an orderable $K_{s-1}$.
By the induction hypothesis and Inequality~(\ref{eq:ns-1}), we have $|V(G)|=|V_i|+|V(G)\setminus V_i|\leq n_{s-1}+n_{s-1}\leq n_s$, a contradiction.
Hence, for every part $V_i$, there exist two parts $V_{j_1}$ and $V_{j_2}$ with $\chi(V_i, V_{j_1})=1$ and $\chi(V_i, V_{j_2})=2$.
This implies that
\begin{equation}\label{eq:ns-2}
|V_i|\leq n_{s-2}
\end{equation}
for every $i\in [m]$.
To see this, we may assume that $\chi(V_{j_1}, V_{j_2})=1$ without loss of generality.
Now for any vertices $v_1\in V_{j_1}$ and $v_2\in V_{j_2}$, we have $\chi(v_1, V_i\cup \{v_2\})=1$ and $\chi(v_2, V_i)=2$.
If $G[V_i]$ contains an orderable $K_{s-2}$, say with ordering $(u_1, u_2, \ldots, u_{s-2})$, then $G[\{v_1, v_2, u_1, u_2, \ldots, u_{s-2}\}]$ is an orderable $K_s$ under the ordering $(v_1, v_2, u_1, u_2, \ldots, u_{s-2})$, a contradiction.
Thus $G[V_i]$ contains no orderable $K_{s-2}$, and thus $|V_i|\leq n_{s-2}$ by the induction hypothesis.

If $m\leq 5$, then $|V(G)|\leq 5n_{s-2}=n_s$ by Inequalities~(\ref{eq:ns-1}) and (\ref{eq:ns-2}), a contradiction.
Hence, we have $m\geq 6$.
Without loss of generality, we may assume that $|V_m|=\min_{i\in [m]}|V_i|$.
Let
\begin{align*}
  A &\colonequals \{i\in [m]\setminus \{m\}\colon\, \chi(V_m, V_i)=1\},\hspace{-1.5cm} & &a\colonequals |A|,\hspace{-1.5cm} & &V_A\colonequals \bigcup\nolimits_{i\in A}V_i, \\
  B &\colonequals \{i\in [m]\setminus \{m\}\colon\, \chi(V_m, V_i)=2\},\hspace{-1.5cm} & &b\colonequals |B|,\hspace{-1.5cm} & &V_B\colonequals \bigcup\nolimits_{i\in B}V_i.
\end{align*}
Then $a\geq 1$, $b\geq 1$, $a+b=m-1$ and $V_A\cup V_B=V(G)\setminus V_m=\bigcup_{i\in [m-1]}V_i$.
In order to avoid an orderable $K_s$ with the first vertex in $V_m$, there is no orderable $K_{s-1}$ in $G[V_A]$ or $G[V_B]$.
Thus
\begin{equation}\label{eq:ns-3}
|V_A|\leq n_{s-1} \mbox{~~~~~and~~~~~} |V_B|\leq n_{s-1}
\end{equation}
by the induction hypothesis.
Then we further have that $a\geq 2$ and $b\geq 2$; otherwise if $a\leq 1$ or $b\leq 1$, then $|V(G)|=|V_m|+|V_A|+|V_B|\leq 2n_{s-2}+n_{s-1}< n_s$ by Inequalities~(\ref{eq:ns-1}), (\ref{eq:ns-2}) and (\ref{eq:ns-3}), a contradiction.

Without loss of generality, we may assume that $m-1\in A$, $|V_{m-1}|=\min_{i\in A}|V_i|$, $m-2\in B$ and $|V_{m-2}|=\min_{i\in B}|V_i|$.
Let
\begin{align*}
  X &\colonequals \{i\in A\setminus \{m-1\}\colon\, \chi(V_{m-1}, V_i)=1\},\hspace{-1.5cm} & &x\colonequals |X|,\hspace{-1.5cm} & &V_X\colonequals \bigcup\nolimits_{i\in X}V_i, \\
  Y &\colonequals \{i\in A\setminus \{m-1\}\colon\, \chi(V_{m-1}, V_i)=2\},\hspace{-1.5cm} & &y\colonequals |Y|,\hspace{-1.5cm} & &V_Y\colonequals \bigcup\nolimits_{i\in Y}V_i, \\
  U &\colonequals \{i\in B\setminus \{m-2\}\colon\, \chi(V_{m-2}, V_i)=1\},\hspace{-1.5cm} & &u\colonequals |U|,\hspace{-1.5cm} & &V_U\colonequals \bigcup\nolimits_{i\in U}V_i, \\
  W &\colonequals \{i\in B\setminus \{m-2\}\colon\, \chi(V_{m-2}, V_i)=2\},\hspace{-1.5cm} & &w\colonequals |W|,\hspace{-1.5cm} & &V_W\colonequals \bigcup\nolimits_{i\in W}V_i.
\end{align*}
Then $a=x+y+1$, $b=u+w+1$ and $V(G)=V_m\cup V_{m-1}\cup V_{m-2}\cup V_X\cup V_Y\cup V_U\cup V_W.$
To avoid an orderable $K_s$, none of $G[V_X]$, $G[V_Y]$, $G[V_U]$, $G[V_W]$ contains an orderable $K_{s-2}$.
Thus, by the induction hypothesis, we have
\begin{equation}\label{eq:ns-4}
|V_X|\leq n_{s-2}, ~~~~~~ |V_Y|\leq n_{s-2}, ~~~~~~ |V_U|\leq n_{s-2}, ~~~~~~ |V_W|\leq n_{s-2}.
\end{equation}

\begin{claim}\label{cl:CRs3upper-1}
$x, y, u, w\leq 3.$
\end{claim}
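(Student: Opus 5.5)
The plan is a counting argument. It uses only the minimality choices of $V_m$, $V_{m-1}$, $V_{m-2}$, the bounds (\ref{eq:ns-1}), (\ref{eq:ns-3}), (\ref{eq:ns-4}), and the standing assumption $|V(G)|>n_s$. I treat $x$ in detail. The case of $y$ is identical, and $u,w$ follow by swapping the roles of $A$ and $B$ (using $V_{m-2}$ in place of $V_{m-1}$).

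Suppose for a contradiction that $x\geq 4$. By the choice of $V_{m-1}$ as a smallest part among those indexed by $A$, every part $V_i$ with $i\in X\subseteq A$ satisfies $|V_i|\geq |V_{m-1}|$. Combining this with (\ref{eq:ns-4}) gives
\[
4|V_{m-1}|\leq x|V_{m-1}|\leq |V_X|\leq n_{s-2},
\]
so $|V_{m-1}|\leq \frac{1}{4}n_{s-2}$. Since $V_m$ is a smallest part overall, we also get $|V_m|\leq |V_{m-1}|\leq \frac14 n_{s-2}$.

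Next I bound the pieces of the partition $V(G)=V_m\cup V_A\cup V_B$. We have $V_A=V_{m-1}\cup V_X\cup V_Y$. By (\ref{eq:ns-4}) applied to $V_X$ and $V_Y$,
\[
|V_A|\leq \tfrac14 n_{s-2}+n_{s-2}+n_{s-2}=\tfrac94 n_{s-2}.
\]
For $V_B$, (\ref{eq:ns-3}) gives $|V_B|\leq n_{s-1}$. Using (\ref{eq:ns-1}), namely $n_{s-2}=\frac15 n_s$ and $n_{s-1}\leq \frac12 n_s$, we obtain
\[
|V(G)|=|V_m|+|V_A|+|V_B|\leq \tfrac14\cdot\tfrac15 n_s+\tfrac94\cdot\tfrac15 n_s+\tfrac12 n_s=\left(\tfrac1{20}+\tfrac9{20}+\tfrac{10}{20}\right)n_s=n_s.
\]
This contradicts $|V(G)|>n_s$. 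Hence $x\leq 3$.

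For $y$, the same computation applies verbatim with $|V_Y|\geq y|V_{m-1}|$ in place of $|V_X|\geq x|V_{m-1}|$. For $u$ and $w$, one uses that $V_{m-2}$ is a smallest part among those indexed by $B$. This gives $|V_m|\leq|V_{m-2}|\leq \frac14 n_{s-2}$, hence $|V_B|\leq \frac94 n_{s-2}$, while (\ref{eq:ns-3}) gives $|V_A|\leq n_{s-1}$.

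The only delicate point is that the estimate is exactly tight ($\frac1{20}+\frac9{20}+\frac12=1$). So one must use the sharp relations in (\ref{eq:ns-1}) and the strict inequality $|V(G)|>n_s$, rather than any cruder bound.
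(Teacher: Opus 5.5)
Your proof is correct and takes essentially the same approach as the paper. Both arguments use the minimality of $V_m$ and $V_{m-1}$ together with $x\geq 4$ to get $|V_m|+|V_{m-1}|\leq \frac{1}{2}n_{s-2}$ (the paper writes $\leq \frac{1}{2}|V_X|$). Both then reach the bound $|V(G)|\leq \frac{5}{2}n_{s-2}+n_{s-1}\leq n_s$, which contradicts $|V(G)|>n_s$; the cases $y,u,w$ follow by the symmetry you describe.
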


\begin{proof}
By symmetry, it suffices to prove that $x\leq 3.$
For a contradiction, suppose that $x\geq 4$.
Then since $|V_m|=\min_{i\in [m]}|V_i|$ and $|V_{m-1}|=\min_{i\in A}|V_i|$, we have $|V_m|+|V_{m-1}|\leq \frac{1}{2}|V_X|$.
Combining with Inequalities~(\ref{eq:ns-1}), (\ref{eq:ns-3}) and (\ref{eq:ns-4}), we have
\begin{align*}
  |V(G)| = &~|V_m|+|V_{m-1}|+|V_X|+|V_Y|+|V_B| \leq \frac{1}{2}|V_X|+|V_X|+|V_Y|+|V_B|\\
  \leq &~\frac{5}{2}n_{s-2}+n_{s-1} \leq \frac{5}{2}\cdot \frac{1}{5}n_{s}+\frac{1}{2}n_{s} = n_s,
\end{align*}
a contradiction.
\end{proof}

\begin{claim}\label{cl:CRs3upper-2}
$x, y, u, w\leq 2$, so $a,b\leq 5$ and $m\leq 11$.
\end{claim}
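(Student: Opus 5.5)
We argue by symmetry and show only that $x\le 2$; the same argument applied to $Y$, $U$ and $W$ gives $y,u,w\le 2$. The consequences then follow directly: $a=x+y+1\le 5$, $b=u+w+1\le 5$, and $m=a+b+1\le 11$. By Claim~\ref{cl:CRs3upper-1} it suffices to rule out $x=3$. So suppose $x=3$, and let $X=\{i_1,i_2,i_3\}$.

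\textbf{Main estimate.} We sharpen the averaging from Claim~\ref{cl:CRs3upper-1}. Since $X\subseteq A\subseteq[m]$, both $|V_m|$ and $|V_{m-1}|$ are at most $\min_{i\in X}|V_i|\le \frac13|V_X|$. Hence
$$|V(G)|\le \tfrac53|V_X|+|V_Y|+|V_B|\le \tfrac83 n_{s-2}+n_{s-1},$$
using (\ref{eq:ns-3}) and (\ref{eq:ns-4}). For even $s$ we have $n_{s-1}=\frac25 n_s$ and $n_{s-2}=\frac15n_s$, so the right-hand side is $\frac{14}{15}n_s<n_s$, a contradiction. So the even case is immediate.

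\textbf{Odd $s$.} Here $n_{s-1}=\frac12 n_s$, so the main estimate is too weak and we need more structure inside $V_X$. Look at the reduced $2$-colored triangle on the parts $V_{i_1},V_{i_2},V_{i_3}$. Some part, say $V_{i_1}$, sends a single color $c$ to both other parts. Fix $v_m\in V_m$, $v_{m-1}\in V_{m-1}$ and $w\in V_{i_1}$. Then $v_m$ sends color $1$ to $\{v_{m-1},w\}\cup V_X$, $v_{m-1}$ sends color $1$ to $V_X$, and $w$ sends color $c$ to $V_{i_2}\cup V_{i_3}$. Therefore $G[V_{i_2}\cup V_{i_3}]$ has no orderable $K_{s-3}$, and the induction hypothesis gives $|V_{i_2}\cup V_{i_3}|\le n_{s-3}=\frac1{10}n_s$. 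Since $V_m$ and $V_{m-1}$ are no larger than the smallest part of $X$, we get $|V_m|,|V_{m-1}|\le \frac1{20}n_s$.

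\textbf{Completing the odd case.} We now split $V(G)=V_m\cup V_A\cup V_B$. The plan is to bound $|V_A|$ through its pieces $V_{m-1}$, $V_{i_1}$, $V_{i_2}\cup V_{i_3}$ and $V_Y$, so that $|V_A|$ falls strictly below $\frac12 n_s$ by a margin exceeding $|V_m|$. Independently of that, $|V_B|\le n_{s-1}$.

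\textbf{Main obstacle.} The crude bounds $|V_{i_1}|\le n_{s-2}$ and $|V_Y|\le n_{s-2}$ are still too weak, and this is where the argument has to work. I would first try to exploit that $V_m$ is a globally smallest part, so that $|V_m|\le |V_{i_2}|$, together with the fact that $V_Y$ is joined to $V_m$ by color $1$ and to $V_{m-1}$ by color $2$. This lets $v_m$, then $v_{m-1}$, then a vertex $y\in V_Y$ play the role of $v_1,v_2$ in (\ref{eq:ns-2}), which should force $G[V_Y\cup V_{i_1}]$ to avoid an orderable $K_{s-2}$ jointly, not just each piece separately.

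If that does not close the gap, the fallback is to run the same analysis on the $B$ side around $V_{m-2}$. That side also forces $u,w\le 3$, and together with the minimality of $|V_m|$ and $|V_{m-2}|$ this should trim $|V_B|$ below $n_{s-1}$ by at least $|V_m|$.
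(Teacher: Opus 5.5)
\textbf{There is a genuine gap: the odd-$s$ case is never completed.} Your even-$s$ argument is correct, since there $n_{s-1}=\frac25 n_s$ and the sharpened averaging gives $\frac{14}{15}n_s$. Your first odd-case step is exactly the paper's: $|V_{i_2}\cup V_{i_3}|\le n_{s-3}$, hence $|V_m|+|V_{m-1}|\le n_{s-3}$. After that you give a plan, not an argument, and the route you propose does not work.

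Nothing justifies the claim that $G[V_Y\cup V_{i_1}]$ has no orderable $K_{s-2}$. After $v_m$ you would need a second vertex sending one color to all of $V_Y\cup V_{i_1}$. But $v_{m-1}$ sends color $1$ to $V_{i_1}$ and color $2$ to $V_Y$, and no other vertex is known to be monochromatic to both sets. So an orderable $K_{s-2}$ straddling $V_Y$ and $V_{i_1}$ is not excluded. The fallback on the $B$ side is not an argument either. Nothing you state turns $u,w\le 3$ and the minimality of $|V_{m-2}|$ into a bound on $|V_B|$ below $n_{s-1}$.

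\textbf{The missing idea is elementary: bound the single part $V_{i_1}$ by $n_{s-3}$ rather than $n_{s-2}$.} Suppose $G[V_{i_1}]$ contained an orderable $K_{s-3}$. Put $v_m\in V_m$, $v_{m-1}\in V_{m-1}$ and any $v\in V_{i_2}$ in front of it.
\begin{itemize}
\item $v_m$ sends color $1$ to everything after it, since all of it lies in $V_A$.
\item $v_{m-1}$ sends color $1$ to $v$ and to $V_{i_1}$, since both lie in $V_X$.
\item $v$ sends the single color $\chi(V_{i_2},V_{i_1})$ into $V_{i_1}$, because $V_{i_1}$ is a Gallai part.
\end{itemize}
This is an orderable $K_s$, a contradiction. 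So the induction hypothesis gives $|V_{i_1}|\le n_{s-3}$.

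With this bound the count closes immediately, with no parity split:
\[
|V(G)|\le \bigl(|V_m|+|V_{m-1}|\bigr)+|V_{i_2}\cup V_{i_3}|+|V_{i_1}|+|V_Y|+|V_B|\le 3n_{s-3}+n_{s-2}+n_{s-1}\le \tfrac{3}{10}n_s+\tfrac15 n_s+\tfrac12 n_s=n_s,
\]
which contradicts $|V(G)|>n_s$. This is precisely the paper's proof; its $V_3$ is your $V_{i_1}$. The crude bound $|V_Y|\le n_{s-2}$ is never the problem.
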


\begin{proof}
By symmetry, it suffices to prove that $x\leq 2.$
For a contradiction, suppose that $x\geq 3$, so $x=3$ by Claim~\ref{cl:CRs3upper-1}.
Note that in every 2-edge-coloring of a triangle, some vertex is incident with two edges of the same color.
Without loss of generality, we may assume that $X=\{1,2,3\}$ and $\chi(V_1, V_3)=\chi(V_2, V_3)$.
Then there is no orderable $K_{s-3}$ in $G[V_1\cup V_2]$;
otherwise we can choose vertices $v_1\in V_m$, $v_2\in V_{m-1}$ and $v_3\in V_3$ such that $\{v_1, v_2, v_3\}$ together with an orderable $K_{s-3}$ in $G[V_1\cup V_2]$ forms an orderable $K_{s}$, a contradiction.
Similarly, there is no orderable $K_{s-3}$ in $G[V_3]$;
otherwise we can choose vertices $v_1\in V_m$, $v_2\in V_{m-1}$ and $v_3\in V_1\cup V_2$ such that $\{v_1, v_2, v_3\}$ together with an orderable $K_{s-3}$ in $G[V_3]$ forms an orderable $K_{s}$, a contradiction.
By the induction hypothesis, we now have $|V_1\cup V_2|\leq n_{s-3}$ and $|V_3|\leq n_{s-3}$.
Moreover, since $|V_m|=\min_{i\in [m]}|V_i|$ and $|V_{m-1}|=\min_{i\in A}|V_i|$, we also have $|V_m|+|V_{m-1}|\leq |V_1|+|V_2|\leq n_{s-3}$.
Combining with Inequalities~(\ref{eq:ns-1}), (\ref{eq:ns-3}) and (\ref{eq:ns-4}), we have
\begin{align*}
  |V(G)| = &~|V_m|+|V_{m-1}|+|V_1|+|V_2|+|V_3|+|V_Y|+|V_B| \\
  \leq &~3n_{s-3}+n_{s-2}+n_{s-1}\leq \frac{3}{10}n_{s}+\frac{1}{5}n_{s}+\frac{1}{2}n_{s} = n_s,
\end{align*}
a contradiction.
Hence, $x, y, u, w\leq 2$, and thus $a=x+y+1\leq 5$, $b=u+w+1\leq 5$ and $m=a+b+1\leq 11$.
\end{proof}

\begin{claim}\label{cl:CRs3upper-3}
$a,b\geq 3.$
\end{claim}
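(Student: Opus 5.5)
The plan is to argue by contradiction, exploiting that Claim~\ref{cl:CRs3upper-2} already gives $x+y=a-1\leq 4$. By symmetry between colors~$1$ and~$2$ (and between $A$ and $B$), it suffices to prove $a\geq 3$. We already know $a\geq 2$, so we suppose $a=2$. Then $A=\{m-1,i\}$ for a single index~$i$, with $|V_m|\leq |V_{m-1}|\leq |V_i|\leq n_{s-2}$, and $(x,y)\in\{(1,0),(0,1)\}$ according to whether $\chi(V_{m-1},V_i)=1$ or~$2$. Writing
$$|V(G)|=|V_m|+|V_{m-1}|+|V_i|+|V_B|,$$
and using $|V_B|\leq n_{s-1}\leq \frac12 n_s$ from~(\ref{eq:ns-3}), the goal is to show
$$|V_m|+|V_{m-1}|+|V_i|\leq \tfrac12 n_s=\tfrac52 n_{s-2}.$$
This contradicts $|V(G)|>n_s$.

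The crude bound $3n_{s-2}$ is too weak, so the extra saving has to come from the fact that $V_{m-1}$ and $V_m$ are small. The first step is to exploit the structure of the three parts $V_m,V_{m-1},V_i$. The edges from $V_m$ to both other parts have color~$1$.

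\emph{Case $\chi(V_{m-1},V_i)=1$.} All three cross-pairs have color~$1$. Any vertex $v\in V_m$ followed by any $w\in V_{m-1}$ sends color $1$ to everything after it, so $G[V_i]$ has no orderable $K_{s-2}$; this is already known. The real gain should come from choosing the first vertex outside $V_m\cup V_A$. I would take a vertex $z$ in a part $V_j$ with $j\in B$, which sees $V_m$ in color~$2$. I would then split by the colors $\chi(V_j,V_{m-1})$ and $\chi(V_j,V_i)$. Whenever $z$ sees two of the three parts in the same color, this should yield a bound of the type $|V_m\cup V_{m-1}|\leq n_{s-3}$ or $|V_{m-1}\cup V_i|\leq n_{s-3}$, as in the proof of Claim~\ref{cl:CRs3upper-2}. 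Since $b\geq 2$ and every $B$-part sees $V_m$ in color~$2$, I would pick the $B$-part whose colors toward $V_{m-1},V_i$ are most favourable.

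\emph{Case $\chi(V_{m-1},V_i)=2$.} Here $V_i$ is joined to $V_m$ in color~$1$ and to $V_{m-1}$ in color~$2$. The argument proving~(\ref{eq:ns-2}), applied with the roles reversed, shows that $G[V_{m-1}]$ and $G[V_m]$ contain no orderable $K_{s-2}$. To get a sharper bound, I would again bring in vertices of $V_B$ to build prefixes of length~$3$. The target is to force $|V_m|+|V_{m-1}|\leq n_{s-3}\leq \frac12 n_{s-2}$, which gives the total $\frac32 n_{s-2}+n_{s-2}=\frac52 n_{s-2}$ exactly as needed.

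In both cases the numerics then close via~(\ref{eq:ns-1}), namely $3n_{s-3}\leq \frac{3}{10}n_s$ and $n_{s-2}=\frac15 n_s$, mirroring the computations in Claims~\ref{cl:CRs3upper-1} and~\ref{cl:CRs3upper-2}. The main obstacle is the first case. There the three parts form a monochromatic triangle in the reduced coloring, so no internal vertex provides a color switch, and one must find suitable external vertices in $V_B$ whose colors toward $V_{m-1}$ and $V_i$ are not controlled a priori. If no single $B$-part works, I would combine two $B$-parts, using $b\geq 2$ and the fact that their mutual color together with their colors to $V_{m-1},V_i$ must produce some vertex joined monochromatically to a large subset. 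Failing that, I would fall back on minimality of $|V_m|$ over all $m\geq 6$ parts, together with $m\leq 11$ from Claim~\ref{cl:CRs3upper-2}, to bound $|V_m|$ directly.
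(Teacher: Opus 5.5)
Your reduction is sound. With $a=2$, $A=\{m-1,i\}$ and $|V_B|\le n_{s-1}\le \frac12 n_s$, it suffices to show $|V_m|+|V_{m-1}|+|V_i|\le \frac52 n_{s-2}$. Since $n_{s-3}\le \frac12 n_{s-2}$, the single bound $|V_m|\le n_{s-3}$ already gives this. However, the proposal never establishes such a bound in all configurations, and several bounds you announce are not available.

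\begin{itemize}
\item In your first case, suppose a $B$-part $V_j$ sees $V_{m-1}$ and $V_i$ in color $1$. The configuration of these four parts does not force $|V_m\cup V_{m-1}|\le n_{s-3}$ or $|V_{m-1}\cup V_i|\le n_{s-3}$. What it does give is $|V_m|\le n_{s-3}$, via the prefix $w\in V_{m-1}$, $w'\in V_i$, $z\in V_j$. Note that this prefix starts inside $V_A$, not outside it as you planned.
\item In your second case, $\chi(V_{m-1},V_i)=2$. Take a single $B$-part whose colors toward $(V_{m-1},V_i)$ are $(1,1)$, $(1,2)$ or $(2,1)$. It yields no length-$3$ prefix from three of the four parts $V_m,V_{m-1},V_i,V_j$, because in the reduced $K_4$ on $\{m,m-1,i,j\}$ every vertex then sees both colors.
\item Your target $|V_m|+|V_{m-1}|\le n_{s-3}$ is stronger than needed, and no mechanism is given for it.
\item The fallbacks (``combine two $B$-parts'', minimality of $|V_m|$ together with $m\le 11$) are not arguments.
\end{itemize}
So the decisive step is missing.

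The missing idea is a pigeonhole argument on the reduced bipartite coloring between $A$ and $B$, using $b=m-1-a\ge 3$. You only invoke $b\ge 2$. There are two possibilities.

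\begin{itemize}
\item Some $i'\in A$ sends color $1$ to two $B$-parts $V_{j_1},V_{j_2}$. Take a vertex of $V_{i'}$, then a vertex of $V_m$, then a vertex of $V_{j_2}$. This prefix shows that $G[V_{j_1}]$ has no orderable $K_{s-3}$, so $|V_m|\le |V_{j_1}|\le n_{s-3}$ by minimality of $|V_m|$. Here one bounds a $B$-part and transfers the bound to $V_m$. Your plan, which only tries to bound $V_m,V_{m-1},V_i$ directly, does not exploit this.
\item Each $A$-part sends color $1$ to at most one $B$-part. Since $b\ge 3$, some $j\in B$ sends color $2$ to both $V_{m-1}$ and $V_i$. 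A vertex of $V_j$, then a vertex of $V_m$, then a vertex of one $A$-part form a prefix that bounds the other $A$-part. Hence $|V_m|\le |V_{m-1}|\le n_{s-3}$ and $|V_i|\le n_{s-3}$, giving $|V(G)|\le 3n_{s-3}+n_{s-1}<n_s$.
\end{itemize}

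This dichotomy does not depend on $\chi(V_{m-1},V_i)$, since the third prefix vertex absorbs that color (respectively $\chi(V_{j_1},V_{j_2})$). Your case split is therefore unnecessary.
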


\begin{proof}
Suppose for a contradiction that $a\leq 2$ or $b\leq 2$, say $a\leq 2$.
Recall that $a\geq 2$ and $a+b+1=m\geq 6$.
Thus $a=2$ and $b\geq 3$, say $A=\{m-1, 1\}$.

For any $i\in A$, if there exist two distinct $j_1, j_2\in B$ with $\chi(V_i, V_{j_1})=\chi(V_i, V_{j_2})=1$, then regardless of the color $\chi(V_{j_1}, V_{j_2})$, there is no orderable $K_{s-3}$ in $G[V_{j_1}]$.
Indeed, if there is an orderable $K_{s-3}$ in $G[V_{j_1}]$, then we can choose three vertices $v_1\in V_i$, $v_2\in V_{m}$ and $v_3\in V_{j_2}$ that together with this $K_{s-3}$ form an orderable $K_{s}$, a contradiction.
By the induction hypothesis, we have $|V_{j_1}|\leq n_{s-3}$.
Moreover, since $|V_m|=\min_{i\in [m]}|V_i|$, we also have $|V_m|\leq |V_{j_1}|\leq n_{s-3}$.
Combining with Inequalities~(\ref{eq:ns-1}), (\ref{eq:ns-2}) and (\ref{eq:ns-3}), we have
$$|V(G)|=|V_m|+|V_{m-1}|+|V_1|+|V_B|\leq n_{s-3}+n_{s-2}+n_{s-2}+n_{s-1}\leq \frac{1}{10}n_{s}+\frac{1}{5}n_{s}+\frac{1}{5}n_{s}+\frac{1}{2}n_{s}= n_s,$$ a contradiction.
Therefore, for any $i\in A$, there exists at most one $j\in B$ with $\chi(V_i, V_{j})=1$.
Since $b\geq 3$, there exists a $j\in B$ such that $\chi(V_j, V_{m-1})=\chi(V_j, V_1)=2$.
By an analogous argument as above, there is no orderable $K_{s-3}$ in $G[V_{m-1}]$ or $G[V_1]$, so $|V_{m-1}|\leq n_{s-3}$, $|V_1|\leq n_{s-3}$ and $|V_m|\leq n_{s-3}$.
Combining with Inequalities~(\ref{eq:ns-1}) and (\ref{eq:ns-3}), we have
$|V(G)|=|V_m|+|V_{m-1}|+|V_1|+|V_B|\leq 3n_{s-3}+n_{s-1}\leq \frac{3}{10}n_{s}+\frac{1}{2}n_{s}< n_s$, a contradiction.
\end{proof}

\begin{claim}\label{cl:CRs3upper-4}
$a=b=5$ and $m=11$.
\end{claim}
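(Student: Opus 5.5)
We first record what the earlier claims give: $a,b\geq 3$ by Claim~\ref{cl:CRs3upper-3}, and $a,b\leq 5$ by Claim~\ref{cl:CRs3upper-2}. So $a,b\in\{3,4,5\}$, and it suffices to rule out $a\in\{3,4\}$; the case of $b$ is symmetric, with colors $1$ and $2$ exchanged. Throughout, we write
$$|V(G)|=|V_m|+|V_A|+|V_B|\leq |V_m|+|V_A|+n_{s-1}\leq |V_m|+|V_A|+\tfrac12 n_s$$
by (\ref{eq:ns-1}) and (\ref{eq:ns-3}). Hence it is enough to show $|V_m|+|V_A|\leq \tfrac12 n_s=\tfrac52 n_{s-2}$, which contradicts $|V(G)|>n_s$.

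\emph{Key observation.} Let $P,Q,R$ be three distinct parts indexed by $A$ with $\chi(V_R,V_P)=\chi(V_R,V_Q)=c$. Pick $v\in V_m$ and $v'\in V_R$. Then $\chi(v, V_P\cup V_Q\cup\{v'\})=1$ and $\chi(v', V_P\cup V_Q)=c$. So if $G[V_P\cup V_Q]$ contained an orderable $K_{s-2}$, then placing $v,v'$ in front of it would give an orderable $K_s$. By the induction hypothesis, therefore, $|V_P|+|V_Q|\leq n_{s-2}$. This is the same mechanism as in Claim~\ref{cl:CRs3upper-2}. Since every vertex of a $2$-edge-colored triangle has two edges of equal color at some vertex, such a triple $P,Q,R$ always exists among any three parts of $A$.

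\emph{Case $a=3$.} Apply the observation to the three parts of $A$, taking $R$ to be the vertex of the reduced $2$-colored triangle that sees two equal colors. This gives $|V_P|+|V_Q|\leq n_{s-2}$ and, by (\ref{eq:ns-2}), $|V_R|\leq n_{s-2}$. Since $|V_m|$ is the minimum part size, $|V_m|\leq \min\{|V_P|,|V_Q|\}\leq \tfrac12 n_{s-2}$. Altogether $|V_m|+|V_A|\leq \tfrac52 n_{s-2}$, as required.

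\emph{Case $a=4$.} This is where I expect the main work. Now the parts of $A$ form a reduced $2$-edge-colored $K_4$, in which every vertex has degree $3$ and so sees two edges of equal color. I would try to find two disjoint pairs $\{P,Q\}$ and $\{P',Q'\}$ covering $A$ such that each pair has a ``witness'' third part $R$ (respectively $R'$) sending a single color to both members of the pair. The observation then gives $|V_A|\leq 2n_{s-2}$, and together with $|V_m|\leq \tfrac14|V_A|\leq \tfrac12 n_{s-2}$ this yields the bound.

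The witnessed pairs are exactly the monochromatic paths of length two. I would check the possible $2$-colorings of $K_4$ up to symmetry: a monochromatic $K_4$, a monochromatic triangle plus a star, a perfect matching in one color, a $P_4$ versus its complement $P_4$, and a $C_4$ versus a matching. In each I would find a perfect matching of the four parts with both pairs witnessed. The only configuration where this might fail is the self-complementary $P_4$ coloring. If it does fail there, I would fall back on the sharper argument of Claim~\ref{cl:CRs3upper-2}, using a vertex $v\in V_{m-1}$ as an additional leading vertex. This gives bounds of the form $|V_P|\leq n_{s-3}\leq \tfrac1{10}n_s$ for some part, and I would then recompute the total using $x,y\leq 2$. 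Having excluded $a\in\{3,4\}$, and symmetrically $b\in\{3,4\}$, we obtain $a=b=5$ and $m=a+b+1=11$.
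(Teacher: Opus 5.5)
\textbf{The case $a=4$ has a genuine gap.} Your case $a=3$ is correct. The problem is in $a=4$, and it is not the configuration you flagged.

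Take the reduced coloring on $A$ in which three parts $P,Q,Q'$ span a triangle of one color $c$, and the fourth part $R_0$ sends the other color $c'$ to all three. This is your ``monochromatic triangle plus a star''. A pair $\{R_0,P\}$ is witnessed only if some $R\in A\setminus\{R_0,P\}$ has $\chi(V_R,V_{R_0})=\chi(V_R,V_P)$. For both candidates $R\in\{Q,Q'\}$ the left side is $c'$ and the right side is $c$. So no pair containing $R_0$ is witnessed, and no perfect matching of witnessed pairs exists.

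Your fallback does not repair this.
\begin{itemize}
\item If $m-1=R_0$, Claim~\ref{cl:CRs3upper-2} already excludes it, since $x=3$ or $y=3$.
\item If $m-1$ is a triangle part, the Claim~\ref{cl:CRs3upper-2}-type argument bounds only the other two triangle parts by $n_{s-3}$. Nothing there improves $|V_{R_0}|\le n_{s-2}$, because $V_{m-1}$ sends $c'$ only to $R_0$.
\end{itemize}
Indeed, take odd $s$, so that $n_{s-3}=\frac12 n_{s-2}$. Set $|V_m|$ and the three triangle parts to $\frac12 n_{s-2}$ each, and $|V_{R_0}|=n_{s-2}$. This assignment satisfies all your pairwise bounds and all the $n_{s-3}$ bounds, yet $|V_m|+|V_A|=3n_{s-2}=\frac35 n_s$. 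With $|V_B|\le n_{s-1}=\frac12 n_s$ there is no contradiction.

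Conversely, the $P_4$ coloring you worried about is fine. With the red path on $1,2,3,4$ in this order, the pairs $\{1,3\}$ and $\{2,4\}$ are witnessed by $2$ and $3$. Your list of colorings also counts $2K_2$ versus $C_4$ twice and omits the single-edge and $P_3\cup K_1$ colorings. Those two do admit witnessed matchings, so this is harmless.

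\textbf{The fix.} The missing idea is to apply your witness to a whole set rather than a pair. If $R\in A$ sends one color to every part indexed by $S\subseteq A\setminus\{R\}$, the same leading vertices $v\in V_m$, $v'\in V_R$ show that $G[\bigcup_{i\in S}V_i]$ has no orderable $K_{s-2}$. Hence $\sum_{i\in S}|V_i|\le n_{s-2}$. With $R=R_0$ and $S=A\setminus\{R_0\}$ this gives $|V_A|\le 2n_{s-2}$ and $|V_m|\le \frac13 n_{s-2}$, which closes the case.

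This is exactly the paper's first subcase: some $i\in A$ sends a single color to $V_A\setminus V_i$. The paper handles the remaining colorings differently: every part of $A$ then sees both colors inside $A$, so each has size at most $n_{s-3}$, giving the total $5n_{s-3}+n_{s-1}\le n_s$. For those colorings ($2K_2$ versus $C_4$, and $P_4$ versus $P_4$) your witnessed-matching bound $|V_A|\le 2n_{s-2}$ is a valid alternative. With the set version added, your route goes through.
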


\begin{proof}
It suffices to prove $a=b=5$, since then $m=11$ follows from $m=a+b+1$.
Suppose for a contradiction that $a\neq 5$ or $b\neq 5$, say $a\neq 5$.
Now by Claims~\ref{cl:CRs3upper-2} and \ref{cl:CRs3upper-3}, we have $3\leq a\leq 4$.

If there exists an $i\in A$ such that all edges between $V_i$ and $V_A\setminus V_i$ are of the same color, then to avoid an orderable $K_{s}$, neither $G[V_i]$ nor $G[V_A\setminus V_i]$ contains an orderable $K_{s-2}$.
By the induction hypothesis, we have $|V_i|\leq n_{s-2}$ and $|V_A\setminus V_i|\leq n_{s-2}$.
Moreover, since $a\geq 3$, we have $|A\setminus \{i\}|\geq 2$.
Combining with $|V_m|=\min_{i\in [m]}|V_i|$, we have $|V_m|\leq \frac{1}{2}|V_A\setminus V_i|\leq \frac{1}{2}n_{s-2}$.
By Inequalities~(\ref{eq:ns-1}) and (\ref{eq:ns-3}), we have
$|V(G)|=|V_m|+|V_i|+|V_A\setminus V_i|+|V_B|\leq \frac{1}{2}n_{s-2}+n_{s-2}+n_{s-2}+n_{s-1}\leq \frac{5}{2}\cdot \frac{1}{5}n_{s}+\frac{1}{2}n_{s}=n_s$, a contradiction.

Therefore, for any $i\in A$, there exist $j_1, j_2\in A\setminus \{i\}$ such that $\chi(V_i, V_{j_1})=1$ and $\chi(V_i, V_{j_2})=2$.
This implies that $G[V_i]$ contains no orderable $K_{s-3}$ for every $i\in A$.
Indeed, suppose for a contradiction that $G[V_i]$ contains an orderable $K_{s-3}$ for some $i\in A$.
If $\chi(V_{j_1}, V_{j_2})=1$, then we can choose three vertices $v_1\in V_m$, $v_2\in V_{j_1}$ and $v_3\in V_{j_2}$ that together with this $K_{s-3}$ form an orderable $K_{s}$;
if $\chi(V_{j_1}, V_{j_2})=2$, then we can choose three vertices $v_1\in V_m$, $v_2\in V_{j_2}$ and $v_3\in V_{j_1}$ that together with this $K_{s-3}$ form an orderable $K_{s}$.
In both cases, we deduce a contradiction.
Now by the induction hypothesis, we have $|V_i|\leq n_{s-3}$ for every $i\in A$.
Moreover, since $|V_m|=\min_{i\in [m]}|V_i|$, we also have $|V_m|\leq n_{s-3}$.
By Inequalities~(\ref{eq:ns-1}), (\ref{eq:ns-3}) and since $a\leq 4$, we have
$|V(G)|=|V_m|+|V_A|+|V_B|\leq n_{s-3}+4n_{s-3}+n_{s-1}\leq 5\cdot\frac{1}{10}n_{s}+\frac{1}{2}n_{s}=n_s$, a contradiction.
\end{proof}

Combining with Claims~\ref{cl:CRs3upper-2} and \ref{cl:CRs3upper-4}, we have $a=b=5$, $x=y=u=w=2$ and $m=11$.

\begin{claim}\label{cl:CRs3upper-5}
For every $i\in [m]$, we have $|V_i|\leq n_{s-3}.$
\end{claim}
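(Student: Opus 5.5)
The plan is to work in the reduced $2$-edge-colored complete graph $R$ on vertex set $[m]=[11]$, where $\chi_R(ij)\colonequals\chi(V_i,V_j)\in\{1,2\}$. The key reduction is this. Suppose that for a given $i$ there are three distinct indices $j_1,j_2,j_3\in[m]\setminus\{i\}$ such that $(j_1,j_2,j_3,i)$ is an orderable ordering of the $K_4$ on $\{j_1,j_2,j_3,i\}$ in $R$. That means $\chi_R(j_1j_2)=\chi_R(j_1j_3)=\chi_R(j_1i)$ and $\chi_R(j_2j_3)=\chi_R(j_2i)$. Then $G[V_i]$ contains no orderable $K_{s-3}$. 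Indeed, choosing $v_k\in V_{j_k}$ and appending an orderable $K_{s-3}$ of $G[V_i]$ after $(v_1,v_2,v_3)$ gives an orderable $K_s$, exactly as in Claims~\ref{cl:CRs3upper-2}--\ref{cl:CRs3upper-4}. The induction hypothesis then yields $|V_i|\le n_{s-3}$. So I will show that every vertex $i$ of $R$ is the last vertex of some orderable $K_4$ in $R$; whenever this fails, I will instead derive a direct size contradiction with $|V(G)|>n_s$.

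For the structural step, fix $i$ and let $N_1,N_2$ be the sets of indices joined to $i$ in colors $1,2$, with $|N_1|+|N_2|=10$. For each part, there are parts of both colors, so both sets are nonempty. By pigeonhole some $N_c$ has at least $5$ elements, and I first look inside $N_c$.

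\emph{Case (a).} Some $j_1\in N_c$ has two $c$-neighbours $j_2,j_3\in N_c$ with $\chi_R(j_2j_3)=c$. Then $(j_1,j_2,j_3,i)$ works.

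\emph{Case (b).} Otherwise $R[N_c]$ has no triangle of color $c$. I then want $j_2,j_3$ with $\chi_R(j_2j_3)=\chi_R(j_2i)$, together with some $j_1$ (possibly in $N_{c'}$, where $c'$ is the other color) that is joined in one color to $i,j_2,j_3$. Since $|N_c|\ge5$ and $R[N_c]$ has no $c$-triangle, $R(3,3)=6$ forces $R[N_c]$ to contain many $c'$-edges (for $|N_c|=5$ it is a $C_5$ in each color, otherwise a $c'$-triangle). I would try $j_2,j_3\in N_c$ joined in color $c$; these exist unless $R[N_c]$ is a complete $c'$-graph, which would contain a $c'$-triangle $T$. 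In that subcase, a vertex $j_1\in T$ cannot serve directly because $\chi_R(j_1i)=c\ne c'$. So I would look for $j_1\in N_{c'}$ that is $c'$-joined to two vertices of a $c'$-edge. Failing that, I would use the explicit information $a=b=5$ and $x=y=u=w=2$ obtained so far, which pins down the neighbourhoods of $m$, $m-1$ and $m-2$ quite rigidly.

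I expect the main obstacle to be exactly this case analysis: a general $2$-colored $K_{11}$ need not have every vertex last in an orderable $K_4$. The fallback, as in Claims~\ref{cl:CRs3upper-3} and \ref{cl:CRs3upper-4}, is that in any configuration where $i$ fails, the local colour pattern around $i$ creates a monochromatic star. This in turn forces several other parts to have no orderable $K_{s-3}$, or $V_i$ to have no orderable $K_{s-2}$ against a small set. Summing as before with \eqref{eq:ns-1}, \eqref{eq:ns-3} and \eqref{eq:ns-4}, using also $|V_m|=\min_j|V_j|$, should then give $|V(G)|\le n_s$, a contradiction. Either way $|V_i|\le n_{s-3}$ for every $i\in[m]$.
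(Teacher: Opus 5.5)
Your reduction is correct, and it is exactly the mechanism the paper uses: if $(j_1,j_2,j_3,i)$ is an orderable ordering of a $K_4$ in the reduced graph, then $G[V_i]$ has no orderable $K_{s-3}$, so $|V_i|\le n_{s-3}$. The gap is that you never produce such a $K_4$ for each $i$.

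Case (b) is a list of things you ``would try'', and the fallback is not an argument. You say the local colour pattern creates a monochromatic star and that this \dots\ should then give $|V(G)|\le n_s$, but you never show which bound applies. As you note yourself, in a general $2$-colored $K_{11}$ some vertex can fail to be last in an orderable $K_4$. So a case analysis on $N_1(i),N_2(i)$ alone cannot close the proof, and you do not say what size bound replaces it in the failing configurations.

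The missing idea is that the structure already established makes the $K_4$ explicit, with no case analysis needed. Write $X=\{i_1,i_2\}$.
\begin{itemize}
\item Since $m-1,i_1,i_2\in A$, all edges from $V_m$ to $V_{m-1}\cup V_{i_1}\cup V_{i_2}$ have color $1$.
\item By the definition of $X$, all edges from $V_{m-1}$ to $V_{i_1}\cup V_{i_2}$ also have color $1$.
\item Hence $(m,m-1,i_2,i_1)$ is an orderable $K_4$ ending at $i_1$, so $|V_{i_1}|\le n_{s-3}$, and symmetrically $|V_{i_2}|\le n_{s-3}$.
\item The same quadruple works for $Y$, where $m-1$ now sends color $2$. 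The quadruple $(m,m-2,\cdot,\cdot)$ works for $U$ and $W$.
\end{itemize}
This bounds the eight parts indexed by $X\cup Y\cup U\cup W$.

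The three remaining parts are handled not by an orderable $K_4$ but by the minimality choices made earlier:
\begin{itemize}
\item $|V_m|=\min_{j\in[m]}|V_j|$.
\item $|V_{m-1}|=\min_{j\in A}|V_j|\le |V_{i_1}|$, since $X\subseteq A$.
\item $|V_{m-2}|=\min_{j\in B}|V_j|\le |V_u|$ for any $u\in U\subseteq B$.
\end{itemize}
Your plan uses neither the sets $X,Y,U,W$ (with $x=y=u=w=2$ from Claims~\ref{cl:CRs3upper-2} and~\ref{cl:CRs3upper-4}) nor these minimality conditions. Without them it remains incomplete.
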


\begin{proof}
Let $X=\{i_1, i_2\}$.
In order to avoid an orderable $K_s$, neither $G[V_{i_1}]$ nor $G[V_{i_2}]$ contains an orderable $K_{s-3}$;
otherwise if there is an orderable $K_{s-3}$ in $G[V_{i_1}]$ or $G[V_{i_2}]$, say $G[V_{i_1}]$, then this $K_{s-3}$ together with vertices $v_1\in V_m$, $v_2\in V_{m-1}$ and $v_3\in V_{i_2}$ forms an orderable $K_s$, a contradiction.
Thus by the induction hypothesis, we have $|V_{i_1}|\leq n_{s-3}$ and $|V_{i_2}|\leq n_{s-3}$.
By symmetry, we also have $|V_i|\leq n_{s-3}$ for any $i\in Y\cup U\cup W$.
Moreover, since $|V_m|=\min_{i\in [m]}|V_i|$, $|V_{m-1}|=\min_{i\in A}|V_i|$ and $|V_{m-2}|=\min_{i\in B}|V_i|$, we further have $|V_i|\leq n_{s-3}$ for $i\in \{m-2, m-1, m\}$.
\end{proof}

We now complete the proof of Lemma~\ref{le:CRs3upper}.
Suppose that for some $i\in A$, there exist $j_1, j_2, j_3\in B$ such that $\chi(V_i, V_{j_1})=\chi(V_i, V_{j_2})=\chi(V_i, V_{j_3})=1$.
Note that at least two of $\chi(V_{j_1}, V_{j_2})$, $\chi(V_{j_1}, V_{j_3})$ and $\chi(V_{j_2}, V_{j_3})$ are of the same color, say $\chi(V_{j_1}, V_{j_2})=\chi(V_{j_1}, V_{j_3}).$
In order to avoid an orderable $K_s$, there is no orderable $K_{s-4}$ in $G[V_{j_2}]$ or $G[V_{j_3}]$;
otherwise if there is an orderable $K_{s-4}$ in $G[V_{j_2}]$ or $G[V_{j_3}]$, say $G[V_{j_2}]$, then this $K_{s-4}$ together with vertices $v_1\in V_i$, $v_2\in V_{m}$, $v_3\in V_{j_1}$ and $v_4\in V_{j_3}$ forms an orderable $K_s$, a contradiction.
Thus by the induction hypothesis, we have $|V_{j_2}|\leq n_{s-4}$ and $|V_{j_3}|\leq n_{s-4}$.
Since $|V_m|=\min_{i\in [m]}|V_i|$, we also have $|V_m|\leq n_{s-4}$.
Then, by Inequality~(\ref{eq:ns-1}) and Claim~\ref{cl:CRs3upper-5}, we have
\begin{align*}
  |V(G)| = &~|V_m|+|V_{j_2}|+|V_{j_3}|+\sum\nolimits_{j\in [m]\setminus\{m, j_2, j_3\}}|V_j| \\
  \leq &~n_{s-4}+n_{s-4}+n_{s-4}+8n_{s-3}\leq \frac{3}{25}n_{s}+\frac{8}{10}n_{s}< n_s,
\end{align*}
a contradiction.
Hence, for every $i\in A$, there exist at most two indices $j\in B$ such that $\chi(V_i, V_{j})=1$.
By symmetry, for every $j\in B$, there exist at most two indices $i\in A$ such that $\chi(V_j, V_{i})=2$.
Now we define an auxiliary 2-edge-colored complete bipartite graph $H$ as follows: $V(H)=A\cup B$, and color edge $ij$ with $\chi(V_i, V_j)$ for every $i\in A$ and $j\in B$.
Then the preceding argument implies that $H$ has at most $10$ edges of color 1 and $10$ edges of color 2, so $25=|A|\cdot |B|=|E(H)|\leq 10+10=20$, a contradiction.
This completes the proof of Lemma~\ref{le:CRs3upper}.
%\end{proof}
\hfill$\blacksquare$
%\vspace{0.2cm}

\section{Results on orderable complete bipartite graphs}
\label{sec:pf_complete_bipar}

In this section, we present our proofs of Theorems~\ref{thm:CRKst}, \ref{thm:CRKstUpper}, \ref{thm:CRK2t3t} and \ref{thm:CRK2t+}.
Since $r'_2(K_{s,t})\leq CR(K_{s,t}, K_3)$, it suffices to provide upper bounds on $CR(K_{s,t}, K_3)$ and lower bounds on $r'_2(K_{s,t})$.
Section~\ref{subsec:pf_complete_bipar_upper} provides upper bounds on $CR(K_{s,t}, K_3)$ (and thus proves Theorem~\ref{thm:CRKstUpper}), Section~\ref{subsec:pf_complete_bipar_lower} gives several lower bounds on $r'_2(K_{s,t})$, and
Section~\ref{subsec:pf_complete_bipar_equal} proves that $r'_2(K_{s,t})=CR(K_{s,t}, K_3)$ for sufficiently large $t$.
Then Theorem~\ref{thm:CRKst} follows from Theorem~\ref{thm:CRKstUpper}, Lemma~\ref{le:CRKstlower} and Theorem~\ref{thm:CRKst-equal} below.
The proofs of Theorems~\ref{thm:CRK2t3t} and \ref{thm:CRK2t+} are given in Section~\ref{subsec:pf_complete_bipar_lower}.

We first collect several auxiliary results that will be used in this section.
To study orderable $K_{s,t}$, we use properties of lonesum matrices.
A matrix $A$ with entries in $\{0, 1\}$ is called a {\it lonesum matrix} if $A$ is uniquely determined by its row and column sum vectors.
Ryser~\cite{Ryser957CJM} obtained several equivalent conditions for a matrix to be lonesum.
For a 2-edge-coloring $\chi$ of $K_{s,t}$ with bipartition $\{x_1, \ldots, x_s\}\cup \{y_1, \ldots, y_t\}$ using red and blue, we define an $s\times t$ matrix $M(\chi)=(m_{ij})$ with
$$m_{ij}\colonequals
\left\{
   \begin{aligned}
    &1 & & \mbox{if $\chi(x_iy_j)$ is red},\\
    &0 & & \mbox{if $\chi(x_iy_j)$ is blue}.
   \end{aligned}
   \right.$$
For $0\leq i\leq s$, let $1^{i}0^{s-i}$ be the vector with $1$ as the first $i$ entries and $0$ as the last $s-i$ entries.
Let $$\mathcal{C}_s\colonequals \left\{1^{i}0^{s-i}\colon\, 0\leq i\leq s\right\}=\left\{1^{0}0^{s}, 1^{1}0^{s-1}, \ldots, 1^{s}0^{0}\right\}.$$
Then we have the following equivalent conditions for a 2-edge-coloring of $K_{s,t}$ to be orderable.

\begin{lemma}\label{le:lonesum}
For any 2-edge-coloring $\chi$ of $K_{s,t}$, the following statements are equivalent.
\begin{itemize}
\item[{\rm (i)}] The 2-edge-coloring $\chi$ of $K_{s,t}$ is orderable.
\item[{\rm (ii)}] $M(\chi)$ is a lonesum matrix.
\item[{\rm (iii)}] $M(\chi)$ does not contain
$\left(
\begin{array}{cc}
1 & 0 \\
0 & 1 \\
\end{array}
\right)$
or
$\left(
\begin{array}{cc}
0 & 1 \\
1 & 0 \\
\end{array}
\right)$
as a $2\times 2$ submatrix.
\item[{\rm (iv)}] After permuting the rows, every column of $M(\chi)$ belongs to $\mathcal{C}_s$.
\end{itemize}
\end{lemma}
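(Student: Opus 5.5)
I will prove the equivalences by the cycle (i)$\Rightarrow$(iii)$\Rightarrow$(iv)$\Rightarrow$(i), and handle (ii) separately by showing (ii)$\Leftrightarrow$(iii). For (ii)$\Leftrightarrow$(iii) I will use Ryser's classical result~\cite{Ryser957CJM}: a $(0,1)$-matrix is uniquely determined by its row and column sums if and only if it contains no $2\times 2$ submatrix equal to $\left(\begin{smallmatrix}1&0\\0&1\end{smallmatrix}\right)$ or $\left(\begin{smallmatrix}0&1\\1&0\end{smallmatrix}\right)$. The point is that swapping such an ``interchange'' submatrix keeps all line sums, and Ryser showed that any two matrices with the same line sums are connected by interchanges. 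So I only need to cite this.

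For (i)$\Rightarrow$(iii), suppose $\chi$ is orderable with ordering $\sigma$ of $\{x_1,\dots,x_s,y_1,\dots,y_t\}$. Assume for contradiction that rows $i,i'$ and columns $j,j'$ form a forbidden submatrix. Then the $4$-cycle $x_iy_jx_{i'}y_{j'}$ is alternately colored, i.e., a properly $2$-colored $C_4$. Let $v$ be the earliest of these four vertices in $\sigma$. Both of its cycle-neighbors come after it, so the two cycle edges at $v$ must share the color $c_v$. But in an alternating $C_4$ the two edges at every vertex have different colors, which is a contradiction.

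For (iii)$\Rightarrow$(iv), I will show the column supports (the sets of red rows) form a chain under inclusion. If two columns $j,j'$ were incomparable, there would be rows $i,i'$ with $m_{ij}=1$, $m_{ij'}=0$ and $m_{i'j}=0$, $m_{i'j'}=1$, which gives a forbidden submatrix. Since the supports form a chain, I order the rows by decreasing row sum. Every support is then an initial segment: in a chain, a row in a larger support is in at least as many supports, and ties can be broken consistently. Hence every column lies in $\mathcal{C}_s$.

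For (iv)$\Rightarrow$(i), after permuting, column $j$ is $1^{k_j}0^{s-k_j}$, and I order the columns so that $k_1\le\dots\le k_t$. I then build the ordering greedily by peeling vertices whose remaining edges are monochromatic:
\begin{itemize}
\item while $x_s$ remains, if some remaining column $y_j$ has all remaining entries $0$ (this happens exactly for the smallest $k_j$ once the rows above $k_j$ have been removed), output $y_j$ with color blue;
\item otherwise the last remaining row $x_i$ is all $1$ on the remaining columns, since every remaining $k_j\ge i$, so output $x_i$ with color red.
\end{itemize}
Concretely, I interleave the columns with $k_j=0$ first, then row $x_s$ if $s>\max k$, and so on. The simplest formal version is induction on $s+t$: either some column is all zeros, in which case I delete it with color blue, or every $k_j\ge1$ and then row $x_s$ or $x_1$ is constant. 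More precisely, if $k_j\ge1$ for all $j$, then row $x_1$ is all ones, so I delete $x_1$ with color red. Deleting a vertex preserves property (iv), so the induction goes through, with base case a single vertex class. The main subtlety is making sure this induction handles the case of a single remaining side, but that is trivial because edgeless graphs are orderable.
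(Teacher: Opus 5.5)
Your proof is correct and follows the paper's route: (i)$\Rightarrow$(iii) via the earliest vertex of an alternating $C_4$, (iv)$\Rightarrow$(i) by induction on $s+t$ (delete an all-blue column, or else the all-red first row, and put it first in the ordering), and Ryser's theorem for the remaining equivalences. The one difference is that you prove (iii)$\Rightarrow$(iv) directly from the chain of column supports rather than citing Ryser; also, your preliminary bulleted greedy (declaring the \emph{last} remaining row all red) is wrong as stated, e.g.\ for a column $1^1 0^{s-1}$ with $s\ge 2$, but the induction on $x_1$ that you settle on is right.
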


\noindent {\bf Proof.}
The equivalence of (ii), (iii), (iv) was proved by Ryser~\cite{Ryser957CJM}; see also \cite{Kam}.
In order to prove the lemma, it suffices to show that (i) $\Rightarrow$ (iii) and (iv) $\Rightarrow$ (i).

(i) $\Rightarrow$ (iii):
Assume that the 2-edge-coloring $\chi$ of $K_{s,t}$ is orderable.
Note that every subgraph of this $K_{s,t}$ is orderable under the inherited ordering.
If $M(\chi)$ contains
$\left(
\begin{array}{cc}
1 & 0 \\
0 & 1 \\
\end{array}
\right)$
or
$\left(
\begin{array}{cc}
0 & 1 \\
1 & 0 \\
\end{array}
\right)$
as a submatrix, then $K_{s,t}$ contains an alternating $C_4$ (that is, a $C_4$ whose edges are alternately red and blue).
However, the alternating $C_4$ is not an orderable $C_4$.
Indeed, in any ordering of an alternating $C_4$, the first vertex is incident with two edges of distinct colors.
Hence, (iii) holds.

(iv) $\Rightarrow$ (i):
We prove the result by induction on $s+t$.
If $\min\{s,t\}=1$, then $K_{s,t}$ is a star and is orderable by Proposition~\ref{prop:forest}, so the assertion holds.
Now assume that the result holds for $s+t-1$, and we shall prove it for $s+t$ with $\min\{s,t\}\geq 2$.
Assume (iv) holds, that is, after permuting the rows, we assume that every column of $M(\chi)$ belongs to $\mathcal{C}_s$.
If some column of $M(\chi)$ is $1^{0}0^{s}$, then the corresponding vertex $y$ is incident with only blue edges.
We delete this column from $M(\chi)$ and delete $y$ from $K_{s,t}$.
By the induction hypothesis, the resulting graph $K_{s,t-1}$ is orderable.
Then the original graph $K_{s,t}$ is also orderable by putting $y$ as the first vertex.
If every column of $M(\chi)$ has the form $1^{j}0^{s-j}$ with $j\neq 0$, then the first row of $M(\chi)$ is the all-ones vector.
Thus the vertex $x$ corresponding to the first row is incident with only red edges.
We delete the first row from $M(\chi)$ and delete $x$ from $K_{s,t}$.
By the induction hypothesis, the resulting graph $K_{s-1,t}$ is orderable.
Then the original graph $K_{s,t}$ is also orderable by putting $x$ as the first vertex.
Hence, (i) holds.
\hfill$\blacksquare$
\vspace{0.2cm}

Let $\chi$ be a 2-edge-coloring of $K_{n}$ using red and blue.
For an ordered $s$-tuple $\mathbf{x}=(x_1, \ldots, x_s)$ of $s$ distinct vertices of $K_n$, define
$$Y_{\chi}(\mathbf{x})\colonequals \left\{y\in V(K_n)\setminus \{x_1, \ldots, x_s\}\colon\, \left(\mathbf{1}_{\scriptsize\{\mbox{$\chi(x_1y)$ is red}\}}, \ldots, \mathbf{1}_{\scriptsize\{\mbox{$\chi(x_sy)$ is red}\}}\right)\in \mathcal{C}_s\right\}.$$
The vertices in $Y_{\chi}(\mathbf{x})$ are called {\it compatible} with $\mathbf{x}$.
In other words, a vertex $y\in V(K_n)\setminus \{x_1, \ldots, x_s\}$ is compatible with $\mathbf{x}$ if there exists an integer $j$ with $0\leq j\leq s$ such that $x_iy$ is red for all $1\leq i\leq j$, and $x_iy$ is blue for all $j+1\leq i\leq s$.

\begin{lemma}\label{le:Y}
A 2-edge-coloring $\chi$ of $K_{n}$ contains an orderable $K_{s,t}$ if and only if there exists an ordered $s$-tuple $\mathbf{x}=(x_1, \ldots, x_s)$ of $s$ distinct vertices such that $\left|Y_{\chi}(\mathbf{x})\right|\geq t.$
In particular, we can choose the orderable $K_{s,t}$ with partite sets $X, Y$ such that $X=\{x_1, \ldots, x_s\}$ and $Y\subseteq Y_{\chi}(\mathbf{x}).$
\end{lemma}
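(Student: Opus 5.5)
The plan is to reduce both directions of Lemma~\ref{le:Y} to Lemma~\ref{le:lonesum}, specifically to the equivalence of (i) and (iv). The only point to watch is that "orderable copy of $K_{s,t}$" means a copy of $K_{s,t}$ in $K_n$, with partite sets $X$ of size $s$ and $Y$ of size $t$, whose edges (only the $st$ cross edges) carry an orderable coloring. The edges inside $X$ or inside $Y$ are irrelevant, since they are not edges of the copy.

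For the direction ``if'', let $\mathbf{x}=(x_1,\ldots,x_s)$ satisfy $|Y_\chi(\mathbf{x})|\geq t$, and pick any $t$ vertices $y_1,\ldots,y_t\in Y_\chi(\mathbf{x})$. Consider the copy of $K_{s,t}$ with $X=\{x_1,\ldots,x_s\}$ and $Y=\{y_1,\ldots,y_t\}$, with rows indexed by $x_1,\ldots,x_s$ in this order. By the definition of compatibility, every column of $M(\chi|_{K_{s,t}})$ has the form $1^{j}0^{s-j}$, i.e.\ lies in $\mathcal{C}_s$, already without permuting the rows. Hence condition (iv) of Lemma~\ref{le:lonesum} holds, so this copy is orderable by (iv) $\Rightarrow$ (i). This also yields the ``in particular'' clause, since $X=\{x_1,\ldots,x_s\}$ and $Y\subseteq Y_\chi(\mathbf{x})$.

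For the direction ``only if'', suppose some copy of $K_{s,t}$ with partite sets $X$ ($|X|=s$) and $Y$ ($|Y|=t$) is orderable. By (i) $\Rightarrow$ (iv) of Lemma~\ref{le:lonesum}, there is a permutation of the rows, i.e.\ a labelling $X=\{x_1,\ldots,x_s\}$, such that every column lies in $\mathcal{C}_s$. This says exactly that each $y\in Y$ is compatible with $\mathbf{x}=(x_1,\ldots,x_s)$, and $y\notin X$ automatically. So $Y\subseteq Y_\chi(\mathbf{x})$ and $|Y_\chi(\mathbf{x})|\geq t$.

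There is no real obstacle. The one subtlety is the case $s>t$ in the ``only if'' direction: the orderable copy might have its $s$-side playing the role we want, but the lemma concerns $K_{s,t}$ with a designated $s$-side. Since a copy of $K_{s,t}$ comes with partite sets of sizes $s$ and $t$, we simply take $X$ to be the side of size $s$; when $s=t$ either side works. The matrix $M(\chi)$ in Lemma~\ref{le:lonesum} is defined with rows indexed by the $s$-side, so the argument applies verbatim.
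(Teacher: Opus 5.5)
Your proposal is correct and follows essentially the same route as the paper: both directions reduce to the equivalence (i)$\Leftrightarrow$(iv) of Lemma~\ref{le:lonesum}, and in the ``only if'' direction the row permutation supplies the ordering of $\mathbf{x}$. Your extra remark about taking $X$ to be the side of size $s$ is a harmless clarification that the paper leaves implicit.
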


\noindent {\bf Proof.}
Assume that there exists an $\mathbf{x}=(x_1, \ldots, x_s)$ with $\left|Y_{\chi}(\mathbf{x})\right|\geq t.$
Choose distinct vertices $y_1, \ldots, y_t \in Y_{\chi}(\mathbf{x})$.
Let $K$ be the copy of $K_{s,t}$ with partite sets $\{x_1, \ldots, x_s\}$ and $\{y_1, \ldots, y_t\}$, and let $\chi'$ be the edge-coloring of $K$ inherited from $\chi$.
Then every column of $M(\chi')$ belongs to $\mathcal{C}_s$.
By the implication (iv)~$\Rightarrow$~(i) in Lemma~\ref{le:lonesum}, the edge-coloring $\chi'$ of $K$ is orderable, so $K_{n}$ contains an orderable $K_{s,t}$.

Assume that the 2-edge-coloring $\chi$ of $K_{n}$ contains an orderable $K_{s,t}$.
Let $\{x_1, \ldots, x_s\}$ and $\{y_1, \ldots, y_t\}$ be the partite sets of such a $K_{s,t}$.
By the implication (i)~$\Rightarrow$~(iv) in Lemma~\ref{le:lonesum}, there exists a permutation $\pi$ of the vertices $x_1, \ldots, x_s$
such that the vector $\big(\mathbf{1}_{\scriptsize\{\mbox{$\chi(\pi(x_1)y_j)$ is red}\}}, \ldots, \mathbf{1}_{\scriptsize\{\mbox{$\chi(\pi(x_s)y_j)$ is red}\}}\big)$ belongs to $\mathcal{C}_s$ for each $j\in [t]$.
Thus for $\mathbf{x}=(\pi(x_1), \ldots, \pi(x_s))$, we have $y_1, \ldots, y_t\in Y_{\chi}(\mathbf{x})$, so $\left|Y_{\chi}(\mathbf{x})\right|\geq t.$
\hfill$\blacksquare$
\vspace{0.2cm}

A set $X$ of vertices in an edge-colored complete graph $K_n$ is called {\it color-homogeneous} if for each vertex $v\in V(K_n)\setminus X$, all edges between $v$ and $X$ are of the same color (the color may depend on $v$).
For example, every part in a Gallai partition is a color-homogeneous set.
The reader should not confuse this with the homogeneous 3-set from Section~\ref{subsec:pf_complete_small}.
We will use the following fact.

\begin{fact}\label{fa:homo}
Let $G$ be an edge-colored complete graph $K_n$, and let $X$ be a color-homogeneous set.
\begin{itemize}
\item[{\rm (i)}] If $|X|\geq s$ and $|V(G)\setminus X|\geq t$, or $|X|\geq t$ and $|V(G)\setminus X|\geq s$, then $G$ contains an orderable $K_{s,t}$.
\item[{\rm (ii)}] If $G[X]$ contains an orderable $K_{a,b}$, then $G$ contains an orderable $K_{a+|V(G)\setminus X|,b}$ and an orderable $K_{a,b+|V(G)\setminus X|}$.
\end{itemize}
\end{fact}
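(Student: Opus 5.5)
The plan is to prove both parts directly from the definition of an orderable coloring, working in the ambient edge-colored $K_n$. We use only that $X$ is color-homogeneous: every vertex outside $X$ sends edges of a single color to all of $X$.

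For (i), suppose first that $|X|\geq s$ and $|V(G)\setminus X|\geq t$. We pick any $s$ vertices $x_1,\ldots,x_s\in X$ and any $t$ vertices $y_1,\ldots,y_t\in V(G)\setminus X$, and consider the copy of $K_{s,t}$ with these partite sets. Order its vertices as $(y_1,\ldots,y_t,x_1,\ldots,x_s)$. Each $y_j$ is adjacent in $K_{s,t}$ only to vertices of $\{x_1,\ldots,x_s\}\subseteq X$, and all of these come later in the ordering. By color-homogeneity, all these edges receive the single color $\chi(y_j,X)$. Each $x_i$ has no neighbors in $K_{s,t}$ appearing after it, since the $x$'s are pairwise nonadjacent in $K_{s,t}$. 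Hence this copy is orderable. The case $|X|\geq t$ and $|V(G)\setminus X|\geq s$ is identical with the roles of $s$ and $t$ exchanged, so $X$ supplies the side of size $t$.

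Alternatively, we can invoke Lemma~\ref{le:lonesum}: the matrix with rows indexed by $V(G)\setminus X$ and columns by $X$ has constant rows, so it contains no $2\times 2$ permutation submatrix.

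For (ii), let $K$ be an orderable $K_{a,b}$ in $G[X]$ with partite sets $A$ and $B$, where $|A|=a$ and $|B|=b$, and ordering $(w_1,\ldots,w_{a+b})$. Let $Z=V(G)\setminus X=\{z_1,\ldots,z_k\}$. Consider the complete bipartite graph with partite sets $A\cup Z$ and $B$; it is a $K_{a+k,b}$. Use the ordering $(z_1,\ldots,z_k,w_1,\ldots,w_{a+b})$.
\begin{itemize}
\item Each $z_j$ is adjacent in this graph exactly to the vertices of $B\subseteq X$, all of which appear later. These edges all have color $\chi(z_j,X)$.
\item Each $w_i$ has exactly the same later neighbors as in $K$, because the new vertices $z_j$ all precede it. So the condition inherited from $K$ still holds.
\end{itemize}
Hence the $K_{a+k,b}$ is orderable. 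The $K_{a,b+k}$ is obtained symmetrically, by adding $Z$ to the side $B$ and again placing the vertices of $Z$ first.

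There is no real obstacle here. The only point needing care is that the new vertices are placed at the front of the ordering. Then their later neighbors all lie in $X$, and color-homogeneity applies. Also, no $w_i$ gains a later neighbor outside $K$, so the orderability of $K$ is preserved.
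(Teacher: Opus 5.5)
Your proof is correct and is essentially the paper's argument: it uses the same orderings, with the vertices outside $X$ placed first, in both (i) and (ii). One small caveat is that your alternative appeal to Lemma~\ref{le:lonesum} is only valid when the edges between $X$ and $V(G)\setminus X$ use two colors, since that lemma is stated for 2-edge-colorings, whereas a color-homogeneous set may receive more than two colors from outside (as happens in Lemma~\ref{le:CRKst-equal-2}); your direct argument needs no such restriction.
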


\noindent {\bf Proof.}
(i) If $|X|\geq s$ and $|V(G)\setminus X|\geq t$, then let $X'=\{u_1, \ldots, u_{s}\}\subseteq X$ and $Y=\{v_1, \ldots, v_{t}\}\subseteq V(G)\setminus X$.
Note that the edges between $X'$ and $Y$ induce a $K_{s,t}$.
Since $X$ is a color-homogeneous set, for each $v_i\in Y$, all edges between $v_i$ and $X$ are of the same color.
Thus this $K_{s,t}$ is orderable under the ordering $(v_1, \ldots, v_{t}, u_1, \ldots, u_{s})$.
The proof of the case $|X|\geq t$ and $|V(G)\setminus X|\geq s$ is similar.

(ii) Assume that $G[X]$ contains an orderable $K_{a,b}$ with an ordering $(u_1, \ldots, u_{a+b})$, and let $A, B$ be its partite sets.
Let $V(G)\setminus X = \{v_1, \ldots, v_{|V(G)\setminus X|}\}$.
We can obtain a copy of $K_{a+|V(G)\setminus X|, b}$ from this $K_{a,b}$ by adding the vertices of $V(G)\setminus X$ to $A$ and adding the edges between $V(G)\setminus X$ and $B$.
Since $X$ is a color-homogeneous set, this $K_{a+|V(G)\setminus X|, b}$ is orderable under the ordering $(v_1, \ldots, v_{|V(G)\setminus X|}, u_1, \ldots, u_{a+b})$.
Similarly, we can obtain an orderable $K_{a,b+|V(G)\setminus X|}$.
\hfill$\blacksquare$
\vspace{0.2cm}

We shall use the following version of Hoeffding's inequality~\cite{Hof}.

\begin{lemma}{\normalfont (\cite{Hof})}\label{le:Chernoff}
Let $X_1, X_2, \ldots, X_n$ be independent Bernoulli random variables such that for each $i\in [n]$, $X_i\sim \bernoulli(p)$, where $0 < p < 1$.
Then for any $a\geq 0$, we have $\pr\left[\sum_{i\in [n]}X_i\geq pn+a\right]\leq e^{-2a^2/n}.$
\end{lemma}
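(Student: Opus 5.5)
The statement to prove is Hoeffding's inequality in the Bernoulli case: for i.i.d.\ $X_i\sim\bernoulli(p)$ and $a\ge 0$, $\pr\left[\sum_{i\in[n]}X_i\ge pn+a\right]\le e^{-2a^2/n}$. I will use the standard exponential-moment method. First I center the variables, setting $Z_i\colonequals X_i-p$, so that $\mathbb{E}Z_i=0$ and $Z_i\in[-p,1-p]$, an interval of length $1$. For any $\lambda>0$, Markov's inequality applied to $e^{\lambda\sum_i Z_i}$, together with independence, gives
$$\pr\left[\sum\nolimits_{i\in[n]}Z_i\ge a\right]\le e^{-\lambda a}\prod\nolimits_{i\in[n]}\mathbb{E}\left[e^{\lambda Z_i}\right].$$

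The key step is Hoeffding's lemma: $\mathbb{E}[e^{\lambda Z}]\le e^{\lambda^2/8}$ for a mean-zero variable $Z$ supported in an interval of length $1$. In the Bernoulli case this can be checked directly. Write $\phi(\lambda)\colonequals\ln\mathbb{E}[e^{\lambda Z}]=-\lambda p+\ln(1-p+pe^{\lambda})$. Then $\phi(0)=0$ and $\phi'(0)=0$. Moreover $\phi''(\lambda)=q(1-q)$, where $q=\frac{pe^\lambda}{1-p+pe^\lambda}\in(0,1)$, so $\phi''(\lambda)\le\frac14$. Taylor's theorem with remainder then gives $\phi(\lambda)\le\lambda^2/8$.

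Combining the two steps yields the bound $\exp\left(-\lambda a+n\lambda^2/8\right)$. Choosing $\lambda=4a/n$ minimizes the exponent and gives exactly $e^{-2a^2/n}$. The case $a=0$ is trivial, since the bound is then $1$.

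The only real obstacle is the curvature bound $\phi''\le 1/4$. This is handled by recognizing $\phi''$ as the variance of a Bernoulli$(q)$ variable under a tilted measure, and such a variance is at most $1/4$. Everything else is routine. Alternatively, since the statement is cited from \cite{Hof}, one could simply invoke it.
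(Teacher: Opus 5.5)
Your argument is correct and complete. It is the standard exponential-moment (Chernoff) proof, with Hoeffding's lemma checked directly through $\phi''(\lambda)=q(1-q)\le \frac14$, which is essentially Hoeffding's own argument specialized to Bernoulli variables. The paper gives no proof at all and simply cites Hoeffding's paper, so either citing it or including your short derivation is fine.
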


\subsection{Upper bounds on $CR(K_{s,t}, K_3)$}
\label{subsec:pf_complete_bipar_upper}

In this subsection, we prove Theorem~\ref{thm:CRKstUpper}.
We start with some additional terminology.
Let $x$ be a real number and $n$ be a nonnegative integer.
We view ${x\choose n}$ as a function on the reals by defining
$${x\choose n}\colonequals
\left\{
   \begin{aligned}
    &x(x-1)\cdots(x-n+1)/n! & & \mbox{if $x\geq n-1$},\\
    &0 & & \mbox{if $x<n-1$}.
   \end{aligned}
\right.$$
As usual, $0!\colonequals 1$ and ${x\choose 0}\colonequals 1$.
We will also use the falling factorial that is defined as
$(x)_0\colonequals 1$ and
\begin{equation*}
(x)_n\colonequals
\left\{
   \begin{aligned}
    &x(x-1)\cdots (x-n+1) & & \mbox{if $x\geq n-1$},\\
    &0 & & \mbox{if $x<n-1$}.
   \end{aligned}
\right.
\end{equation*}
for $n\geq 1$.
For convenience, we further set $(x)_{n'}\colonequals 0$ for any negative integer $n'$.
For nonnegative integers $x, m, n$, we shall use the following identities:
\begin{equation}\label{eq:falling-1}
(x)_n={x\choose n}n!,
\end{equation}
\begin{equation}\label{eq:falling-2}
(x)_{m+n}=(x)_m(x-m)_n=(x)_n(x-n)_m,
\end{equation}
\begin{equation}\label{eq:falling-3}
(x+1)_n=(x)_n+n(x)_{n-1}.
\end{equation}
For convenience, we introduce some notation that will be used throughout this section.
Let
\begin{equation}\label{eq:caD}
c_s\colonequals \frac{2^s}{s+1}, ~~~~~~~~ \alpha_s\colonequals \frac{1}{c_s}=\frac{s+1}{2^s}, ~~~~~~~~ D_s\colonequals \frac{(s-1)(s+6)}{6}.
\end{equation}
For $s\geq 1$ and $t\geq 1$, we define $N_s(t)$ as follows:
$$N_1(t)\colonequals t+1, ~~~~~~ N_2(t)\colonequals \left\lfloor \frac{4t}{3}\right\rfloor+2, ~~~~~~ N_s(t)\colonequals \left\lceil c_s(t-1)+D_s\right\rceil+1~~\mbox{for $s\geq 3$.}$$
Now Theorem~\ref{thm:CRKstUpper} is equivalent to $CR(K_{s,t}, K_3)\leq N_s(t)$ for $t\geq s\geq 2$.
For integers $q\geq s\geq 1$ and $0\leq d\leq q$, define
$$F_{q,s}(d)\colonequals \sum\nolimits_{j=0}^{s}(d)_j(q-d)_{s-j}, ~~~~~~~~ A_s(q)\colonequals \frac{F_{q,s}\left(\left\lfloor q/2\right\rfloor\right)}{(q)_{s-1}}.$$

The following lemma provides a lower bound on $F_{q,s}(d)$.
Moreover, Lemma~\ref{le:CRKstUpper-2colored} below provides a condition that forces a 2-edge-colored complete graph to contain an orderable $K_{s,t}$.

\begin{lemma}\label{le:CRKstUpper-balancing}
For integers $q\geq s\geq 1$ and $0\leq d\leq q$, we have $F_{q,s}(d)\geq F_{q,s}\left(\left\lfloor q/2\right\rfloor\right).$
\end{lemma}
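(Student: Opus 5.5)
The plan is to show that $F_{q,s}$ is unimodal in $d$: nonincreasing up to about $q/2$ and nondecreasing afterwards. Then its minimum over $0\leq d\leq q$ is attained at $\lfloor q/2\rfloor$. I will compute the forward difference $F_{q,s}(d+1)-F_{q,s}(d)$ for $0\leq d\leq q-1$ explicitly using the falling-factorial identities (\ref{eq:falling-2}) and (\ref{eq:falling-3}), and then read off its sign.

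\textbf{Step 1 (difference formula).} Expand $(d+1)_j=(d)_j+j(d)_{j-1}$ and $(q-d)_{s-j}=(q-d-1)_{s-j}+(s-j)(q-d-1)_{s-j-1}$, both instances of (\ref{eq:falling-3}). The terms $(d)_j(q-d-1)_{s-j}$ then cancel. In the remaining negative sum, shift the index $j\mapsto j+1$. Writing $a\colonequals d$, $b\colonequals q-d-1$, $n\colonequals s-1$ and $k\colonequals j-1$, I expect to obtain
$$F_{q,s}(d+1)-F_{q,s}(d)=\sum\nolimits_{k=0}^{n}(2k-n)(a)_k(b)_{n-k}.$$
Boundary terms vanish because of the convention $(x)_{n'}=0$ for negative $n'$.

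\textbf{Step 2 (sign).} Pair the index $k$ with $n-k$ for $k>n/2$; the middle term $k=n/2$, if present, has coefficient $0$. Each pair contributes
$$(2k-n)\big[(a)_k(b)_{n-k}-(a)_{n-k}(b)_k\big].$$
Apply (\ref{eq:falling-2}) in the form $(a)_k=(a)_{n-k}(a-n+k)_{2k-n}$, and similarly for $b$. The bracket then becomes
$$(a)_{n-k}(b)_{n-k}\big[(a-n+k)_{2k-n}-(b-n+k)_{2k-n}\big].$$
Because $x\mapsto (x)_m$ is nondecreasing on the integers $x\geq -1$ (it is $0$ until $x\geq m$ and increasing afterwards, given the paper's convention), each pair is $\geq 0$ when $a\geq b$ and $\leq 0$ when $a\leq b$. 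This monotonicity check, including the degenerate cases where $a-n+k$ or $b-n+k$ is small or negative, is the only delicate point. I would verify it directly from the definition of $(x)_m$ with the zero convention.

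\textbf{Step 3 (conclusion).} The condition $a\geq b$ means $d\geq (q-1)/2$, which holds whenever $d\geq\lfloor q/2\rfloor$. In that range $F_{q,s}(d+1)\geq F_{q,s}(d)$, so $F_{q,s}(d)\geq F_{q,s}(\lfloor q/2\rfloor)$ for all $d\geq \lfloor q/2\rfloor$. Similarly, $a\leq b$ means $d\leq (q-1)/2$, which holds for all $d\leq \lfloor q/2\rfloor-1$. There $F_{q,s}$ is nonincreasing, so $F_{q,s}(d)\geq F_{q,s}(\lfloor q/2\rfloor)$ for $d\leq\lfloor q/2\rfloor$ as well. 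Alternatively, one can note the symmetry $F_{q,s}(d)=F_{q,s}(q-d)$, obtained by substituting $j\mapsto s-j$, and use only the first half. This proves Lemma~\ref{le:CRKstUpper-balancing}.
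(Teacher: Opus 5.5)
Your proposal is correct and follows essentially the same route as the paper: the same forward-difference formula obtained from (\ref{eq:falling-3}), the same pairing of index $k$ with $n-k$, and the same comparison via (\ref{eq:falling-2}). The only difference is presentational, since the paper compares the two paired products through a ratio of falling factorials where you factor out $(a)_{n-k}(b)_{n-k}$ and use monotonicity of $(x)_m$; the degenerate cases you flag are harmless, because $a-n+k<0$ or $b-n+k<0$ forces $(a)_{n-k}(b)_{n-k}=0$, so only nonnegative arguments ever matter.
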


\noindent {\bf Proof.}
%\begin{proof}
For nonnegative integers $a$ and $b$, define $\Phi_s(a,b)\colonequals \sum\nolimits_{j=0}^{s}(a)_j(b)_{s-j}.$
Note that $F_{q,s}(d)=\Phi_s(d,q-d)$, and $\Phi_s(a,b)$ is symmetric in the sense $\Phi_s(a,b)=\Phi_s(b,a)$.
Hence, it suffices to show that $\Phi_s(a+1,b-1)\leq \Phi_s(a,b)$ whenever $a\leq b-1$.
By Equality~(\ref{eq:falling-3}), we have
\begin{align*}
  ~ &~\Phi_s(a+1,b-1)-\Phi_s(a,b) = \sum\nolimits_{j=0}^{s}\big((a+1)_j(b-1)_{s-j}-(a)_j(b)_{s-j}\big) \\
  = &~\sum\nolimits_{j=0}^{s}\Big(\big((a)_j+j(a)_{j-1}\big)(b-1)_{s-j}-(a)_j\big((b-1)_{s-j}+(s-j)(b-1)_{s-j-1}\big)\Big) \\
  = &~\sum\nolimits_{j=0}^{s}\big(j(a)_{j-1}(b-1)_{s-j}-(s-j)(a)_j(b-1)_{s-j-1}\big) \\
  = &~\sum\nolimits_{j=1}^{s}j(a)_{j-1}(b-1)_{s-j} - \sum\nolimits_{j=0}^{s-1}(s-j)(a)_j(b-1)_{s-j-1} \\
  = &~\sum\nolimits_{i=0}^{s-1}(i+1)(a)_{i}(b-1)_{s-i-1} + \sum\nolimits_{i=0}^{s-1}(i-s)(a)_i(b-1)_{s-i-1} \\
  = &~\sum\nolimits_{i=0}^{s-1}(2i+1-s)(a)_{i}(b-1)_{s-i-1}.
\end{align*}
When $s-1$ is even, the central term (i.e. the term with $i=\frac{s-1}{2}$) has coefficient $0$.
Hence, we further have the following equality regardless of the parity of $s$:
\begin{align}\label{eq:CRKstUpper-balancing-1}
  ~ &~\Phi_s(a+1,b-1)-\Phi_s(a,b) \nonumber\\
  = &~\sum\nolimits_{i=0}^{\left\lfloor \frac{s-1}{2}\right\rfloor}(2i+1-s)(a)_{i}(b-1)_{s-i-1}
   + \sum\nolimits_{i=\left\lceil \frac{s-1}{2}\right\rceil}^{s-1}(2i+1-s)(a)_{i}(b-1)_{s-i-1} \nonumber\\
  = &~\sum\nolimits_{\ell=0}^{\left\lfloor \frac{s-1}{2}\right\rfloor}(2\ell+1-s)(a)_{\ell}(b-1)_{s-\ell-1}
   + \sum\nolimits_{\ell=0}^{\left\lfloor \frac{s-1}{2}\right\rfloor}(2(s-1-\ell)+1-s)(a)_{s-1-\ell}(b-1)_{s-(s-\ell-1)-1} \nonumber\\
  = &~\sum\nolimits_{\ell=0}^{\left\lfloor \frac{s-1}{2}\right\rfloor}(2\ell+1-s)\big((a)_{\ell}(b-1)_{s-\ell-1}-(a)_{s-1-\ell}(b-1)_{\ell}\big).
\end{align}
Now since $2\ell+1-s\leq 0$ for $0\leq \ell\leq \left\lfloor \frac{s-1}{2}\right\rfloor$, it suffices to show that $(a)_{\ell}(b-1)_{s-\ell-1}\geq (a)_{s-1-\ell}(b-1)_{\ell}$ for any $0\leq \ell \leq \left\lfloor \frac{s-1}{2}\right\rfloor$.
This holds immediately if $(a)_{s-1-\ell}(b-1)_{\ell}=0$.
If $(a)_{s-1-\ell}(b-1)_{\ell}\neq 0$, then $(a)_{s-1-\ell}(b-1)_{\ell}> 0$, so by Equality~(\ref{eq:falling-2}) we have
$$\frac{(a)_{\ell}(b-1)_{s-\ell-1}}{(a)_{s-1-\ell}(b-1)_{\ell}} = \frac{(b-1-\ell)_{s-1-2\ell}}{(a-\ell)_{s-1-2\ell}} \geq 1,$$
so $(a)_{\ell}(b-1)_{s-\ell-1}\geq (a)_{s-1-\ell}(b-1)_{\ell}$.
This completes the proof.
%\end{proof}
\hfill$\blacksquare$
%\vspace{0.2cm}

\begin{lemma}\label{le:CRKstUpper-2colored}
For integers $q\geq s\geq 1$ and $t\geq 1$, if $A_s(q)>t-1$, then every 2-edge-coloring of $K_{q+1}$ contains an orderable $K_{s,t}$.
\end{lemma}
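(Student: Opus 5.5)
We argue by double counting pairs $(\mathbf{x}, y)$, where $\mathbf{x}$ is an ordered $s$-tuple of distinct vertices and $y\in Y_{\chi}(\mathbf{x})$. Once we show that some $\mathbf{x}$ satisfies $|Y_{\chi}(\mathbf{x})|\geq t$, Lemma~\ref{le:Y} gives an orderable $K_{s,t}$.

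Fix a 2-edge-coloring $\chi$ of $K_{q+1}$ with red and blue, and fix a vertex $y$. Write $d=d_r(y)$ for the number of red edges at $y$, so $y$ has $q-d$ blue edges and $0\leq d\leq q$. We count the ordered $s$-tuples $\mathbf{x}$ of vertices in $V\setminus\{y\}$ for which $y\in Y_{\chi}(\mathbf{x})$. Such a tuple has the form $(x_1,\ldots,x_j,x_{j+1},\ldots,x_s)$ for some $0\leq j\leq s$, where $x_1,\ldots,x_j$ are distinct red neighbours of $y$ and $x_{j+1},\ldots,x_s$ are distinct blue neighbours of $y$. For a given $j$ there are $(d)_j(q-d)_{s-j}$ such tuples, and tuples arising from different values of $j$ are distinct because the color pattern determines $j$. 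Hence the number of valid $\mathbf{x}$ for $y$ is exactly $F_{q,s}(d)$, and by Lemma~\ref{le:CRKstUpper-balancing} this is at least $F_{q,s}(\lfloor q/2\rfloor)$. The falling factorial convention, which sets $(x)_n=0$ when $x<n-1$, handles the degenerate cases; one should check that $(x)_n$ equals the true count $x(x-1)\cdots$ whenever $x\geq 0$, and this holds because for integer $x\geq 0$ the product already vanishes at $x=n-1$.

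Summing over all $q+1$ choices of $y$, the total number of pairs is at least $(q+1)F_{q,s}(\lfloor q/2\rfloor)$. The number of ordered $s$-tuples of distinct vertices is $(q+1)_s=(q+1)(q)_{s-1}$. By averaging, some $\mathbf{x}$ satisfies
$$|Y_{\chi}(\mathbf{x})|\geq \frac{(q+1)F_{q,s}(\lfloor q/2\rfloor)}{(q+1)(q)_{s-1}}=A_s(q)>t-1.$$
Since $|Y_{\chi}(\mathbf{x})|$ is an integer, $|Y_{\chi}(\mathbf{x})|\geq t$, and Lemma~\ref{le:Y} finishes the proof. Note that $q\geq s$ ensures $(q)_{s-1}>0$, so $A_s(q)$ is well defined.

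There is no serious obstacle here. The one delicate point is verifying that the count is exactly $F_{q,s}(d)$: the tuples must avoid $y$, which holds because we only choose neighbours of $y$, and distinct values of $j$ must give disjoint sets of tuples, which holds because the position where the colors switch from red to blue is determined by the colors.
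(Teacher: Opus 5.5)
Your proposal is correct and follows the paper's proof essentially step for step. It double counts the pairs $(\mathbf{x},y)$, shows each $y$ contributes exactly $F_{q,s}(d_r(y))\geq F_{q,s}(\lfloor q/2\rfloor)$ by Lemma~\ref{le:CRKstUpper-balancing}, averages over the $(q+1)_s=(q+1)(q)_{s-1}$ ordered $s$-tuples to get $A_s(q)$, rounds up using integrality, and concludes with Lemma~\ref{le:Y}; your extra checks on the falling-factorial convention and on the disjointness over $j$ are sound.
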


\noindent {\bf Proof.}
%\begin{proof}
Consider a 2-edge-coloring of $K_{q+1}$ using red and blue.
Let $X$ denote the number of pairs $(\mathbf{x},y)$, where $\mathbf{x}$ is an ordered $s$-tuple of distinct vertices and $y$ is a vertex compatible with $\mathbf{x}$.
On the one hand, we have
\begin{equation}\label{eq:CRKstUpper-2colored-1}
X=\sum\nolimits_{\mathbf{x}}\left|Y_{\chi}(\mathbf{x})\right|,
\end{equation}
where the sum is taken over all ordered $s$-tuples of distinct vertices.
On the other hand, for any vertex $y$, if $y$ is compatible with an ordered $s$-tuple $\mathbf{x}=(x_1, \ldots, x_s)$, then there exists an integer $j$ with $0\leq j\leq s$ such that $x_iy$ is red for all $1\leq i\leq j$, and $x_iy$ is blue for all $j+1\leq i\leq s$.
Let $d_r(y)$ be the number of red edges incident with $y$.
Then $q-d_r(y)$ is the number of blue edges incident with $y$,
and $y$ is compatible with $\sum\nolimits_{j=0}^{s}(d_r(y))_j(q-d_r(y))_{s-j}=F_{q,s}(d_r(y))$ ordered $s$-tuples.
Thus, combining with Lemma~\ref{le:CRKstUpper-balancing}, we have
\begin{equation}\label{eq:CRKstUpper-2colored-2}
X=\sum\nolimits_{y\in V(K_{q+1})}F_{q,s}(d_r(y)) \geq \sum\nolimits_{y\in V(K_{q+1})}F_{q,s}\left(\left\lfloor q/2\right\rfloor\right) = (q+1)F_{q,s}\left(\left\lfloor q/2\right\rfloor\right).
\end{equation}
Note that there are $(q+1)_s$ ordered $s$-tuples of distinct vertices.
Combining with Equality~(\ref{eq:CRKstUpper-2colored-1}) and Inequality~(\ref{eq:CRKstUpper-2colored-2}),
there exists an $\mathbf{x}$ such that
$$\left|Y_{\chi}(\mathbf{x})\right|\geq \frac{1}{(q+1)_s}(q+1)F_{q,s}\left(\left\lfloor q/2\right\rfloor\right) = \frac{F_{q,s}\left(\left\lfloor q/2\right\rfloor\right)}{(q)_{s-1}} = A_s(q).$$
Since $A_s(q)>t-1$ and $\left|Y_{\chi}(\mathbf{x})\right|$ is an integer, we have $\left|Y_{\chi}(\mathbf{x})\right|\geq t$.
By Lemma~\ref{le:Y}, there is an orderable $K_{s,t}$.
%\end{proof}
\hfill$\blacksquare$
\vspace{0.2cm}

The following two technical lemmas provide expansions of $(2m)_r$ and $\frac{F_{2m,s}(m)}{s+1}$ that will be used in our proof of Lemma~\ref{le:CRKstUpper-Asq} below.
For a polynomial $f(z)$, we use $[z^n]f(z)$ to denote the coefficient of $z^n$ in $f(z)$.
We will also use the following identity related to the beta function (see, for example, \cite[Section~8.38]{GrRy}): for positive integers $m$ and $n$,
\begin{equation}\label{eq:beta-integral}
\int_{0}^{1} x^{m-1}(1-x)^{n-1}dx = \frac{(m-1)!(n-1)!}{(m+n-1)!}.
\end{equation}

\begin{lemma}\label{le:CRKstUpper-Eq-1}
For integers $m$ and $r$ with $2m\geq r\geq 0$, we have $$(2m)_r=\sum\nolimits_{k=0}^{\left\lfloor r/2 \right\rfloor}\frac{r!2^{r-2k}}{k!(r-2k)!}(m)_{r-k}.$$
\end{lemma}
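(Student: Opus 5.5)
We prove the identity $(2m)_r=\sum_{k=0}^{\lfloor r/2\rfloor}\frac{r!\,2^{r-2k}}{k!\,(r-2k)!}(m)_{r-k}$ by a double-counting argument, and check the range conventions at the end. Both sides are integers, so it suffices to count one set in two ways. The left side $(2m)_r$ counts injective sequences $(z_1,\ldots,z_r)$ of length $r$ drawn from a $2m$-element set. We take this set to be $[m]\times\{0,1\}$, that is, $m$ ``pairs'' of two elements each.

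For the right side, we classify an injective sequence by which pairs it uses. Each pair contributes either one element or both elements to the sequence. Let $k$ be the number of pairs used twice. Then $r-2k$ pairs are used once, so $r-k$ distinct pairs occur and $0\le k\le\lfloor r/2\rfloor$.

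Now count sequences with a fixed $k$ in three steps.
\begin{enumerate}
\item Partition the $r$ positions into $k$ unordered $2$-blocks and $r-2k$ singletons. There are $\frac{r!}{k!\,2^k\,(r-2k)!}$ ways.
\item Assign distinct pairs injectively to these $r-k$ blocks. There are $(m)_{r-k}$ ways.
\item Fill in the elements. A doubled pair can be placed on its two positions in $2$ orders, and a singleton pair can use either of its $2$ elements. This gives $2^{k}\cdot 2^{r-2k}$ ways.
\end{enumerate}
Multiplying the three counts gives
\[
\frac{r!}{k!\,2^k\,(r-2k)!}\,(m)_{r-k}\,2^{k}\,2^{r-2k}=\frac{r!\,2^{r-2k}}{k!\,(r-2k)!}\,(m)_{r-k},
\]
which is exactly the $k$-th summand. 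Summing over $k$ gives the identity.

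The only delicate point is the paper's convention for $(x)_n$ when $x<n-1$. Here $m$ and $r$ are nonnegative integers, so $(m)_{r-k}$ is the usual falling factorial. It equals $0$ whenever $r-k>m$, which matches the fact that no sequence can then use $r-k$ distinct pairs. In the convention's edge case $m=r-k-1$, the product form $m(m-1)\cdots(m-r+k+1)$ contains the factor $0$, so it also gives $0$. The hypothesis $2m\ge r$ guarantees $(2m)_r$ is given by the ordinary product, so both sides are the genuine counts.

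Alternatively, we could compare coefficients of $z^r/r!$ in $(1+z)^{2m}=\bigl((1+z)^2\bigr)^m=(1+2z+z^2)^m$. This is a multinomial computation with the same bookkeeping, but the counting argument is cleaner.
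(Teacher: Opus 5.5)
Your proof is correct. It differs from the paper's in that it is a direct bijective count, whereas the paper computes with generating functions.

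The paper compares $[z^r]$ in $(1+z)^{2m}=(1+2z+z^2)^m$:
\begin{itemize}
\item one side gives $\binom{2m}{r}=(2m)_r/r!$;
\item the other gives $\sum_k\binom{m}{k}\binom{m-k}{r-2k}2^{r-2k}$, by choosing $k$ factors contributing $z^2$ and $r-2k$ factors contributing $2z$.
\end{itemize}
It then rewrites the second sum as $\sum_k (m)_{r-k}2^{r-2k}/(k!\,(r-2k)!)$, using $(x)_n=\binom{x}{n}n!$ and $(m)_k(m-k)_{r-2k}=(m)_{r-k}$.

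Read combinatorially, the paper counts unordered $r$-subsets of $m$ pairs by the number of fully used pairs, and then multiplies by $r!$. You count ordered injective sequences directly. As a result, the coefficient $r!\,2^{r-2k}/(k!\,(r-2k)!)$ comes straight out of your position-partition and filling steps, with no falling-factorial manipulations. Your explicit check of the paper's nonstandard convention for $(x)_n$ also makes the edge cases transparent.

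The paper's route buys uniformity with the next lemma (Lemma~\ref{le:CRKstUpper-Eq-2}). There the same coefficient-extraction template is applied to $(1+z+x(1-x)z^2)^m$ and combined with a beta integral, and a pure counting argument is less natural in that setting.
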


\noindent {\bf Proof.}
%\begin{proof}
We compute the coefficient of $z^r$ in the polynomial $(1+2z+z^2)^m$.
On the one hand, we have
\begin{equation}\label{eq:CRKstUpper-Eq-1-1}
[z^r](1+2z+z^2)^m = [z^r](1+z)^{2m} = {2m \choose r} =\frac{(2m)_r}{r!},
\end{equation}
by Equality~(\ref{eq:falling-1}).
On the other hand, to obtain $z^r$ in $(1+2z+z^2)^m$, we choose $k$ factors contributing $z^2$ and $r-2k$ factors contributing $2z$.
By Equalities~(\ref{eq:falling-1}) and (\ref{eq:falling-2}), we have
\begin{align}\label{eq:CRKstUpper-Eq-1-2}
[z^r](1+2z+z^2)^m = &~\sum\nolimits_{k=0}^{\left\lfloor r/2 \right\rfloor} {m\choose k}{m-k \choose r-2k}2^{r-2k} = \sum\nolimits_{k=0}^{\left\lfloor r/2 \right\rfloor} \frac{(m)_k}{k!}\frac{(m-k)_{r-2k}}{(r-2k)!}2^{r-2k} \nonumber\\
= &~\sum\nolimits_{k=0}^{\left\lfloor r/2 \right\rfloor} \frac{(m)_{r-k}2^{r-2k}}{k!(r-2k)!}.
\end{align}
Combining with Equalities~(\ref{eq:CRKstUpper-Eq-1-1}) and (\ref{eq:CRKstUpper-Eq-1-2}), we have
$(2m)_r=\sum\nolimits_{k=0}^{\left\lfloor r/2 \right\rfloor}\frac{r!2^{r-2k}}{k!(r-2k)!}(m)_{r-k}.$
%\end{proof}
\hfill$\blacksquare$
%\vspace{0.2cm}

\begin{lemma}\label{le:CRKstUpper-Eq-2}
For integers $m$ and $s$ with $2m\geq s\geq 1$, we have $$\frac{F_{2m,s}(m)}{s+1}=\sum\nolimits_{k=0}^{\left\lfloor s/2 \right\rfloor}\frac{s!k!}{(s-2k)!(2k+1)!}(m)_{s-k}.$$
\end{lemma}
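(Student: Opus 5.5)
We need to prove $F_{2m,s}(m)=\sum_{j=0}^s (m)_j(m)_{s-j}$ divided by $s+1$ equals $\sum_k \frac{s!k!}{(s-2k)!(2k+1)!}(m)_{s-k}$.

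The plan is to follow the template of Lemma~\ref{le:CRKstUpper-Eq-1}: write each product $(m)_j(m)_{s-j}$ in the basis of falling factorials $(m)_{s-k}$, and then collect coefficients. Two copies of an $m$-set give the standard identity (Vandermonde-type, counting pairs of injections into the same $m$-set by the size $k$ of their common image)
$$(m)_j(m)_{s-j}=\sum\nolimits_{k\ge 0}\binom{j}{k}\binom{s-j}{k}k!\,(m)_{s-k}.$$
This holds for all integers $m\ge 0$, since both sides count pairs of injections $[j]\to[m]$, $[s-j]\to[m]$: choose which $k$ elements of $[j]$ collide with which $k$ elements of $[s-j]$ (with a bijection between them), and then the combined image is an injection from a set of size $s-k$. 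The convention $(x)_n=0$ for $x<n-1$ is consistent with this, since the identity is polynomial in $m$ and both sides vanish appropriately for small $m$. I will verify it as a polynomial identity, or combinatorially for nonnegative integer $m$, which is all the lemma needs.

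Summing over $j$ gives
$$F_{2m,s}(m)=\sum\nolimits_k k!\,(m)_{s-k}\sum\nolimits_{j=0}^s\binom{j}{k}\binom{s-j}{k}.$$
The inner sum is the classical $\sum_j\binom{j}{k}\binom{s-j}{k}=\binom{s+1}{2k+1}$ (choose $2k+1$ elements from $\{0,\dots,s\}$ with the middle one equal to $j$). Then
$$\frac{k!}{s+1}\binom{s+1}{2k+1}=\frac{k!\,s!}{(2k+1)!\,(s-2k)!},$$
which is exactly the claimed coefficient. Terms with $2k>s$ vanish, giving the range $0\le k\le\lfloor s/2\rfloor$.

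The only delicate point is justifying the first identity rigorously in the paper's truncated falling-factorial convention. Since $m\ge s/2$ may be smaller than $s$, some $(m)_j$ are zero, but the combinatorial injection count handles this automatically for integer $m\ge0$. Alternatively, the beta-integral identity~(\ref{eq:beta-integral}) hinted at in the paper could replace the hockey-stick step: $\binom{s+1}{2k+1}^{-1}$-type factors arise from $\int_0^1 x^{k}(1-x)^{k}\,dx$ against generating functions. I would use the direct combinatorial route first.
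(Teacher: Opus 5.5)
Your proof is correct, and it takes a different route from the paper's.

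\textbf{The paper's route.} It writes $(m)_j(m)_{s-j}=\binom{m}{j}\binom{m}{s-j}\,j!\,(s-j)!$ and converts $j!\,(s-j)!$ into $(s+1)!\int_0^1x^j(1-x)^{s-j}\,dx$ via~(\ref{eq:beta-integral}). It then recognizes the integrand as $[z^s]\bigl(1+z+x(1-x)z^2\bigr)^m$ and expands it as in Lemma~\ref{le:CRKstUpper-Eq-1} into $\sum_k\frac{(m)_{s-k}}{k!(s-2k)!}x^k(1-x)^k$. Finally it integrates term by term using $\int_0^1x^k(1-x)^k\,dx=\frac{(k!)^2}{(2k+1)!}$.

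\textbf{Your route.} You linearize each product $(m)_j(m)_{s-j}$ by counting collisions between two injections into $[m]$. You then do the $j$-sum first, using $\sum_j\binom{j}{k}\binom{s-j}{k}=\binom{s+1}{2k+1}$. Both identities are right, and so is the simplification $\frac{k!}{s+1}\binom{s+1}{2k+1}=\frac{s!\,k!}{(2k+1)!(s-2k)!}$. The point you flag as delicate is harmless: $m$ is a positive integer, and for nonnegative integers the paper's truncated falling factorial coincides with the ordinary one.

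\textbf{What each buys.} Your argument is purely finite and combinatorial. It absorbs the factor $\frac{1}{s+1}$ exactly into $\binom{s+1}{2k+1}$, and it makes visible that the hypothesis $2m\ge s$ is not needed. The paper's argument buys a structural parallel with Lemma~\ref{le:CRKstUpper-Eq-1}. Since $2^{-s}(2m)_s=s!\,[z^s]\bigl(1+z+\tfrac{1}{4}z^2\bigr)^m$, the paper's computation presents $\frac{F_{2m,s}(m)}{s+1}$ as the average over $x\in[0,1]$ of the same expression with $\tfrac14$ replaced by $x(1-x)$. This explains why the two coefficient families compared in Lemma~\ref{le:CRKstUpper-Asq} differ exactly by the factor $\rho_k=\int_0^1\bigl(4x(1-x)\bigr)^k\,dx$.

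Your closing remark about the beta integral is too vague to count as an alternative proof, but nothing in your argument depends on it.
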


\noindent {\bf Proof.}
%\begin{proof}
By Equalities~(\ref{eq:falling-1}) and (\ref{eq:beta-integral}), we have
\begin{align}\label{eq:CRKstUpper-Eq-2-1}
\frac{F_{2m,s}(m)}{s+1}= &~\frac{1}{s+1}\sum\nolimits_{j=0}^{s}(m)_j(m)_{s-j} = \frac{1}{s+1}\sum\nolimits_{j=0}^{s}{m\choose j}j!{m\choose s-j}(s-j)! \nonumber\\
= &~\frac{1}{s+1}\sum\nolimits_{j=0}^{s}{m\choose j}{m\choose s-j}(s+1)!\int_{0}^{1} x^{j}(1-x)^{s-j}dx \nonumber\\
= &~s!\int_{0}^{1} \left(\sum\nolimits_{j=0}^{s}{m\choose j}{m\choose s-j}x^{j}(1-x)^{s-j}\right) dx.
\end{align}
Note that
$$\sum\nolimits_{j=0}^{s}{m\choose j}{m\choose s-j}x^{j}(1-x)^{s-j} = [z^s]\Big(\big(1+xz\big)^m\big(1+(1-x)z\big)^m\Big) = [z^s]\big(1+z+x(1-x)z^2\big)^m.$$
On the other hand, to obtain $z^s$ in $(1+z+x(1-x)z^2)^m$, we choose $k$ factors contributing $x(1-x)z^2$ and $s-2k$ factors contributing $z$.
By Equalities~(\ref{eq:falling-1}) and (\ref{eq:falling-2}), we have
\begin{align*}
~ &~[z^s]\big(1+z+x(1-x)z^2\big)^m = \sum\nolimits_{k=0}^{\left\lfloor s/2 \right\rfloor} {m\choose k}{m-k \choose s-2k}(x(1-x))^{k} \nonumber\\
= &~\sum\nolimits_{k=0}^{\left\lfloor s/2 \right\rfloor} \frac{(m)_k}{k!}\frac{(m-k)_{s-2k}}{(s-2k)!}(x(1-x))^{k} = \sum\nolimits_{k=0}^{\left\lfloor s/2 \right\rfloor} \frac{(m)_{s-k}}{k!(s-2k)!}x^k(1-x)^k.
\end{align*}
Hence, we have
\begin{equation}\label{eq:CRKstUpper-Eq-2-2}
\sum\nolimits_{j=0}^{s}{m\choose j}{m\choose s-j}x^{j}(1-x)^{s-j} = \sum\nolimits_{k=0}^{\left\lfloor s/2 \right\rfloor} \frac{(m)_{s-k}}{k!(s-2k)!}x^k(1-x)^k.
\end{equation}
By Equalities~(\ref{eq:beta-integral}), (\ref{eq:CRKstUpper-Eq-2-1}) and (\ref{eq:CRKstUpper-Eq-2-2}), we have
\begin{align*}
\frac{F_{2m,s}(m)}{s+1}= &~s!\int_{0}^{1} \left(\sum\nolimits_{k=0}^{\left\lfloor s/2 \right\rfloor} \frac{(m)_{s-k}}{k!(s-2k)!}x^k(1-x)^k\right) dx  = \sum\nolimits_{k=0}^{\left\lfloor s/2 \right\rfloor} \frac{s!(m)_{s-k}}{k!(s-2k)!}\int_{0}^{1} x^k(1-x)^k dx \nonumber\\
= &~ \sum\nolimits_{k=0}^{\left\lfloor s/2 \right\rfloor} \frac{s!(m)_{s-k}}{k!(s-2k)!} \frac{k!k!}{(2k+1)!}  = \sum\nolimits_{k=0}^{\left\lfloor s/2 \right\rfloor}\frac{s!k!}{(s-2k)!(2k+1)!}(m)_{s-k}.
\end{align*}
The proof of Lemma~\ref{le:CRKstUpper-Eq-2} is complete.
%\end{proof}
\hfill$\blacksquare$
\vspace{0.2cm}

The following lemma provides a lower bound on $A_s(q)$.
In our proof of Theorem~\ref{thm:CRKstUpper}, we will use Lemmas~\ref{le:CRKstUpper-2colored} and \ref{le:CRKstUpper-Asq} to find an orderable $K_{s,t}$.

\begin{lemma}\label{le:CRKstUpper-Asq}
For integers $q> s\geq 2$, we have $A_s(q)\geq \alpha_s(q-D_s)$, with equality if and only if $s=2$ and $q$ is even.
In particular, for $q> s\geq 3$, we have $A_s(q)> \alpha_s(q-D_s)$.
\end{lemma}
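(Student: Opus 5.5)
The plan is to treat $q$ even and $q$ odd separately. In each case I would expand both sides of
$$F_{q,s}(\lfloor q/2\rfloor)\ \ge\ \alpha_s(q-D_s)(q)_{s-1}$$
in the falling-factorial basis $\{(m)_{s-k}\}$, where $m=\lfloor q/2\rfloor$, and then compare coefficients.

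\emph{Even case, $q=2m$.} Lemma~\ref{le:CRKstUpper-Eq-2} gives
$$\frac{F_{2m,s}(m)}{s+1}=\sum_{k}\frac{s!\,k!}{(s-2k)!(2k+1)!}(m)_{s-k}.$$
Since $\alpha_s/(s+1)=2^{-s}$, the right-hand side divided by $s+1$ is $2^{-s}(2m-D_s)(2m)_{s-1}$. I would expand $(2m)_{s-1}$ by Lemma~\ref{le:CRKstUpper-Eq-1} with $r=s-1$. Then I would rewrite each product $2m\,(m)_{s-1-k}$ via Equality~(\ref{eq:falling-3}) as
$$2m\,(m)_{s-1-k}=2\big[(m)_{s-k}+(s-1-k)(m)_{s-1-k}\big].$$
This turns the right-hand side into an explicit combination $\sum_k R_k (m)_{s-k}$, to be compared with the left-hand coefficients $L_k=\frac{s!\,k!}{(s-2k)!(2k+1)!}$.

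A quick computation shows that the top two coefficients agree. For $k=0$ we get $L_0=R_0=1$. For $k=1$ we get
$$R_1=\frac{(s-1)(s-2)}{4}+(s-1)-\frac{D_s}{2}=\frac{s(s-1)}{6}=L_1,$$
which is exactly how $D_s=\frac{(s-1)(s+6)}{6}$ arises. It therefore suffices to show $L_k\ge R_k$ for all $2\le k\le\lfloor s/2\rfloor$; since every $(m)_j\ge0$, the inequality then follows. The coefficient $R_k$ has three contributions: the $k$-th term of Lemma~\ref{le:CRKstUpper-Eq-1}, the shifted $(k-1)$-th term multiplied by $(s-k)$, and the term $-D_s\cdot 2^{-s}\cdot(\text{$(k-1)$-th term})$, which is negative. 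I would write $R_k/L_k$ as an explicit rational function of $s$ and $k$ and prove that it is at most $1$, with strict inequality for $k\ge2$.

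\emph{Main obstacle.} This coefficient inequality for general $k\ge2$ is the main obstacle. My first attempt would be to factor out the common factor $\frac{(s-1)!\,2^{-2k}}{(k-1)!(s-2k)!}$ and reduce the claim to a polynomial inequality in $k$ and $s$. If that fails for small $k$, the fallback is to pair the $(m)_{s-k}$ term with the $(m)_{s-k+1}$ term, using $(m)_{s-k+1}\ge (m-s+k)(m)_{s-k}$.

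\emph{Equality in the even case.} For $s=2$ only $k\le1$ occurs, so equality holds for every even $q$. For $s\ge3$ I would take the top index $k=\lfloor s/2\rfloor\ge1$. The hypothesis $q>s$ gives $m\ge\lceil s/2\rceil$, hence $(m)_{\lceil s/2\rceil}>0$. Thus one positive gap $L_k-R_k$ is multiplied by a positive falling factorial, giving strict inequality.

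\emph{Odd case, $q=2m+1$.} Here $F_{2m+1,s}(m)=\sum_j (m)_j(m+1)_{s-j}$. Using $(m+1)_n=(m)_n+n(m)_{n-1}$, this equals $F_{2m,s}(m)+\sum_j (s-j)(m)_j(m)_{s-j-1}$. On the other side,
$$(2m+1-D_s)(2m+1)_{s-1}=(2m-D_s)(2m)_{s-1}+\big[(2m)_{s-1}+(s-1)(2m-D_s)(2m)_{s-2}\big].$$
The even-case inequality handles the first pieces. The extra sum equals $\frac{s+1}{2}F_{2m,s-1}(m)$ by symmetry, so I would expand it with Lemma~\ref{le:CRKstUpper-Eq-2} applied to $s-1$. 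I would expand the bracket with Lemma~\ref{le:CRKstUpper-Eq-1} and then run the same coefficient comparison. I expect the leading coefficients again to match, with a strictly positive gap appearing in the next coefficient. That gap gives strict inequality for odd $q$, including when $s=2$, which is consistent with the stated equality case.
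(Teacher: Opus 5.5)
You are following the paper's strategy: expand both sides in the basis $(m)_{s-k}$ using Lemmas~\ref{le:CRKstUpper-Eq-1} and~\ref{le:CRKstUpper-Eq-2}, compare coefficients, and for odd $q$ use $F_{2m+1,s}(m)=F_{2m,s}(m)+\frac{s+1}{2}F_{2m,s-1}(m)$. Your $k=0,1$ check is correct, but the proposal does not yet prove the lemma.

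\textbf{The $k\ge 2$ comparison.} The step you call the main obstacle is left open, and the proposed reduction to a polynomial inequality cannot work as stated, because $L_k$ contains $\rho_k=\frac{4^k(k!)^2}{(2k+1)!}$, which is not rational in $k$. The step is in fact easy. Put $\beta_{s,k}=\frac{s!}{4^kk!(s-2k)!}$. Your three contributions simplify to $R_k=\beta_{s,k}\bigl(1-\frac{k(s-1)}{3(s-2k+1)}\bigr)$, while $L_k=\beta_{s,k}\rho_k$. Since $\frac{k(s-1)}{3(s-2k+1)}\ge\frac k3$, you get $R_k\le 0<L_k$ for $k\ge 3$, and $R_2\le\frac13\beta_{s,2}<\frac{8}{15}\beta_{s,2}=L_2$.

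\textbf{Strictness for even $q$ at $s=3$.} Your argument fails here: $\lfloor s/2\rfloor=1$ and $L_1=R_1$, so the top index has zero gap. What you missed is that, for odd $s$, the expansion of $(2m-D_s)(2m)_{s-1}$ has one more term than your index range $0\le k\le\lfloor s/2\rfloor$. It sits at $(m)_{(s-1)/2}$, with coefficient
\[
2^{-s}\frac{(s-1)!}{((s-1)/2)!}(s-1-D_s)=-\frac{s(s-1)}{6}\cdot\frac{(s-1)!}{2^s((s-1)/2)!}<0,
\]
and there is nothing matching it on the left. For $s=3$ this term is the entire difference, namely $\frac m4$. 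This strict inequality at $s=3$ is exactly what the proof of Theorem~\ref{thm:CRKstUpper} needs.

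\textbf{The odd-$q$ decomposition.} Your identity is wrong. The correct form is
\[
(2m+1-D_s)(2m+1)_{s-1}=(2m-D_s)(2m)_{s-1}+(2m)_{s-1}+(s-1)(2m+1-D_s)(2m)_{s-2},
\]
with $2m+1-D_s$ rather than $2m-D_s$ in the last term. For $s=2$, $m=1$ the true value is $5$, while your expression gives $4$. The slip removes $(s-1)(2m)_{s-2}$ from the side you must dominate, so the inequality you would verify is strictly weaker than the lemma.

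Beyond that, the odd case is stated only as an expectation, and the comparison has to be carried out. Done correctly (the paper does it in one combined expansion rather than by splitting off the even case), it gives:
\begin{itemize}
\item zero gaps at $k=0,1$;
\item nonnegative gaps $\beta_{s,k}\frac{s+2}{s-2k+1}\bigl(\rho_k-\frac{3(s+1)-k(s+3)}{3(s-2k+2)}\bigr)$ for $k\ge 2$;
\item a positive tail term at $(m)_{s/2-1}$ for even $s$, or at $(m)_{(s-1)/2}$ for odd $s$.
\end{itemize}
The tail term is what yields strictness for $s=2$ and $s=3$ when $q$ is odd.
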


\noindent {\bf Proof.}
%\begin{proof}
For $s\geq 2$ and $0\leq k\leq \left\lfloor s/2 \right\rfloor$, let
$$\kappa_s\colonequals \frac{s(s-1)}{6}, ~~~~~~ \beta_{s,k}\colonequals \frac{s!}{4^kk!(s-2k)!}, ~~~~~~ \rho_k\colonequals \frac{4^k(k!)^2}{(2k+1)!}.$$
Note that $\kappa_s, \beta_{s,k}, \rho_k$ are positive.
Moreover, $D_s=s-1+\kappa_s$ and $(q-D_s)(q)_{s-1}=(q-s+1-\kappa_s)(q)_{s-1} = (q)_s-\kappa_s(q)_{s-1}$.
Thus the inequality $A_s(q)\geq \alpha_s(q-D_s)$ is equivalent to $\frac{F_{q,s}(\lfloor q/2\rfloor)}{s+1} \geq \frac{1}{2^s} \left((q)_s-\kappa_s(q)_{s-1}\right).$
It suffices to show that
\begin{equation}\label{eq:CRKstUpper-Asq-1}
 \frac{F_{q,s}(\lfloor q/2\rfloor)}{s+1}- \frac{1}{2^s} \big((q)_s-\kappa_s(q)_{s-1}\big) \geq 0,
\end{equation}
with equality if and only if $s=2$ and $q$ is even.
We consider the even and odd cases separately.
\vspace{0.2cm}

{\bf Case~1.} $q$ is even.
\vspace{0.2cm}

In this case, we may assume that $q=2m$ for some positive integer $m$.
Let $$\Delta^{\mathrm e}_{s,m} \colonequals \frac{F_{2m,s}(m)}{s+1} - \frac{1}{2^s} \big((2m)_s-\kappa_s(2m)_{s-1}\big).$$
Then Inequality~(\ref{eq:CRKstUpper-Asq-1}) is equivalent to $\Delta^{\mathrm e}_{s,m}\geq 0$.
We now show that $\Delta^{\mathrm e}_{s,m}\geq 0$, with equality if and only if $s=2$.

By Lemma~\ref{le:CRKstUpper-Eq-2}, for $0\leq k\leq \left\lfloor s/2 \right\rfloor$, the coefficient of $(m)_{s-k}$ in the expansion of $\frac{F_{2m,s}(m)}{s+1}$ is
$$\frac{s!\,k!}{(s-2k)!(2k+1)!}= \frac{s!}{4^kk!(s-2k)!} \frac{4^k(k!)^2}{(2k+1)!}= \beta_{s,k}\rho_k.$$
By Lemma~\ref{le:CRKstUpper-Eq-1}, for $0\leq k\leq \left\lfloor s/2 \right\rfloor$, the coefficient of $(m)_{s-k}$ in the expansion of $\frac{1}{2^s} (2m)_s$ is
$$\frac{1}{2^s} \frac{s! 2^{s-2k}}{k!(s-2k)!} = \frac{s!}{4^kk!(s-2k)!} = \beta_{s,k}.$$
We next consider $\frac{1}{2^s} \kappa_s(2m)_{s-1}$.
Applying Lemma~\ref{le:CRKstUpper-Eq-1}, the terms with nonzero coefficients in the expansion of $(2m)_{s-1}$ are
$$(m)_{s-1}, (m)_{s-2}, \ldots, (m)_{s-\left\lfloor (s-1)/2 \right\rfloor}, (m)_{s-1-\left\lfloor (s-1)/2 \right\rfloor}.$$
In particular, there is no $(m)_{s}$, and for odd $s$, the term $(m)_{s-1-\left\lfloor (s-1)/2 \right\rfloor}=(m)_{(s-1)/2}=(m)_{s-(s+1)/2}$ does not appear in the expansion of $\frac{F_{2m,s}(m)}{s+1}$ or $\frac{1}{2^s} (2m)_s$.
For $1\leq k\leq \left\lfloor (s-1)/2 \right\rfloor+1$, the coefficient of $(m)_{s-k}=(m)_{(s-1)-(k-1)}$ in the expansion of $\frac{1}{2^s} \kappa_s(2m)_{s-1}$ is $\frac{\kappa_s}{2^s} \frac{(s-1)! 2^{(s-1)-2(k-1)}}{(k-1)!(s-1-2(k-1))!}.$
When $s$ is even, or $s$ is odd and $1\leq k\leq \left\lfloor (s-1)/2 \right\rfloor$, we have
\begin{align*}
 ~ &~\frac{\kappa_s}{2^s} \frac{(s-1)! 2^{(s-1)-2(k-1)}}{(k-1)!(s-1-2(k-1))!} = \frac{\kappa_s}{2^s} \frac{(s-1)! 2^{s-2k+1}}{(k-1)!(s-2k+1)!} = \frac{\kappa_s}{2^s} \frac{(s-1)! 2^{s-2k+1}}{(k-1)!(s-2k+1)!}\frac{1}{\beta_{s,k}}\beta_{s,k} \\
 = &~\frac{1}{2^s} \frac{s(s-1)}{6} \frac{(s-1)! 2^{s-2k+1}}{(k-1)!(s-2k+1)!}\frac{4^kk!(s-2k)!}{s!}\beta_{s,k} = \frac{k(s-1)}{3(s-2k+1)}\beta_{s,k}.
\end{align*}
Since $\frac{k(s-1)}{3(s-2k+1)}\beta_{s,k}=0$ when $k=0$, we can also view the coefficient of $(m)_{s}$ as $\frac{k(s-1)}{3(s-2k+1)}\beta_{s,k}$.
Therefore, for even $s$, we have
\begin{equation}\label{eq:CRKstUpper-Asq-2}
\Delta^{\mathrm e}_{s,m}=\sum\nolimits_{k=0}^{\left\lfloor s/2 \right\rfloor} \beta_{s,k} \left(\rho_k-1+\frac{k(s-1)}{3(s-2k+1)}\right)(m)_{s-k},
\end{equation}
and for odd $s$, we have
\begin{equation}\label{eq:CRKstUpper-Asq-3}
\Delta^{\mathrm e}_{s,m}=\sum\nolimits_{k=0}^{\left\lfloor s/2 \right\rfloor} \beta_{s,k} \left(\rho_k-1+\frac{k(s-1)}{3(s-2k+1)}\right)(m)_{s-k} + \frac{\kappa_s}{2^s} \frac{(s-1)!}{((s-1)/2)!}(m)_{(s-1)/2}.
\end{equation}

\begin{claim}\label{cl:CRKstUpper-Asq-1}
For $k\in \{0,1\}$, $\rho_k-1+\frac{k(s-1)}{3(s-2k+1)}=0$; for $2\leq k\leq \left\lfloor s/2 \right\rfloor$, $\rho_k-1+\frac{k(s-1)}{3(s-2k+1)}>0$.
\end{claim}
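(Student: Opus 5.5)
The proof is a direct computation. I will treat $k\in\{0,1\}$ by evaluating $\rho_k$ explicitly, and for $k\geq 2$ I will bound the rational term $\frac{k(s-1)}{3(s-2k+1)}$ from below by $\frac{k}{3}$. This reduces the claim to the elementary inequality $\rho_k>1-\frac{k}{3}$, which involves only $k$.

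\emph{Cases $k\in\{0,1\}$.} For $k=0$ we have $\rho_0=\frac{4^0(0!)^2}{1!}=1$, and the extra term vanishes because of the factor $k$. Hence the expression equals $1-1+0=0$. For $k=1$ we have $\rho_1=\frac{4\cdot 1}{3!}=\frac{2}{3}$ and $\frac{1\cdot(s-1)}{3(s-2+1)}=\frac{1}{3}$, valid since $s\geq 2$ gives $s-1\neq 0$. So the expression is $\frac{2}{3}-1+\frac{1}{3}=0$.

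\emph{Case $2\leq k\leq\lfloor s/2\rfloor$.} Here $s\geq 2k$, so $s-2k+1\geq 1>0$. I then compare numerator and denominator: $(s-1)-(s-2k+1)=2k-2>0$. This gives $\frac{s-1}{s-2k+1}>1$, and therefore
$$\frac{k(s-1)}{3(s-2k+1)}>\frac{k}{3}.$$
Consequently it suffices to prove $\rho_k-1+\frac{k}{3}\geq 0$.

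For $k\geq 3$ this is immediate: $1-\frac{k}{3}\leq 0<\rho_k$, since $\rho_k$ is positive. The only remaining value is $k=2$, where $\rho_2=\frac{16\cdot 4}{5!}=\frac{8}{15}$. This gives $\rho_2-1+\frac{2}{3}=\frac{8}{15}-\frac{1}{3}=\frac{1}{5}>0$. In both cases the strict inequality from the displayed bound yields $\rho_k-1+\frac{k(s-1)}{3(s-2k+1)}>0$, as required.

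There is no real obstacle. The only point needing care is the monotonicity/comparison step, which requires $s-2k+1>0$; this is guaranteed by $k\leq\lfloor s/2\rfloor$.
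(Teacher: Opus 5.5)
Your proof is correct and follows essentially the same route as the paper: you evaluate $\rho_0=1$ and $\rho_1=\frac{2}{3}$ directly, then use $\frac{k(s-1)}{3(s-2k+1)}\geq\frac{k}{3}$ to reduce to $\rho_2-1+\frac{2}{3}=\frac{1}{5}>0$ and to $\rho_k>0\geq 1-\frac{k}{3}$ for $k\geq 3$. Making the comparison step strict (via $2k-2>0$) is a harmless refinement of the paper's non-strict version.
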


\begin{proof}
For $k=0$, we have $\rho_k-1+\frac{k(s-1)}{3(s-2k+1)}=1-1+0=0$.
For $k=1$, we have $\rho_k-1+\frac{k(s-1)}{3(s-2k+1)}=\frac{2}{3}-1+\frac{1}{3}=0$.
For $2\leq k\leq \left\lfloor s/2 \right\rfloor$, we have $\frac{k(s-1)}{3(s-2k+1)}\geq \frac{k}{3}$, so $\rho_k-1+\frac{k(s-1)}{3(s-2k+1)}\geq \rho_k-1+\frac{k}{3}$.
If $k=2$, then $\rho_k-1+\frac{k}{3}=\frac{8}{15}-1+\frac{2}{3}>0$;
if $k\geq 3$, then $\rho_k-1+\frac{k}{3}\geq \rho_k>0$.
\end{proof}

For $s=2$, we have $\Delta^{\mathrm e}_{s,m}=0$ by Equality~(\ref{eq:CRKstUpper-Asq-2}) and Claim~\ref{cl:CRKstUpper-Asq-1}.
For even $s\geq 4$, we have $\beta_{s,k}>0$, $(m)_{s-k}\geq 0$ when $2\leq k\leq \left\lfloor s/2 \right\rfloor$, and $(m)_{s/2}> 0$.
Then, by Equality~(\ref{eq:CRKstUpper-Asq-2}) and Claim~\ref{cl:CRKstUpper-Asq-1}, we have $\Delta^{\mathrm e}_{s,m}>0$.
For odd $s\geq 3$, we have $\beta_{s,k}>0$, $(m)_{s-k}\geq 0$ when $2\leq k\leq \left\lfloor s/2 \right\rfloor$, and $\frac{\kappa_s}{2^s} \frac{(s-1)!}{((s-1)/2)!}(m)_{(s-1)/2}> 0$.
Then, by Equality~(\ref{eq:CRKstUpper-Asq-3}) and Claim~\ref{cl:CRKstUpper-Asq-1}, we have $\Delta^{\mathrm e}_{s,m}>0$.
\vspace{0.2cm}

{\bf Case~2.} $q$ is odd.
\vspace{0.2cm}

In this case, we may assume that $q=2m+1$ for some positive integer $m$.
Note that $q>s$ implies $2m\geq s$ since $q$ and $s$ are integers.
Let $$\Delta^{\mathrm o}_{s,m} \colonequals \frac{F_{2m+1,s}(m)}{s+1} - \frac{1}{2^s} \big((2m+1)_s-\kappa_s(2m+1)_{s-1}\big).$$
Then Inequality~(\ref{eq:CRKstUpper-Asq-1}) is equivalent to $\Delta^{\mathrm o}_{s,m}\geq 0$.
We now show that $\Delta^{\mathrm o}_{s,m}> 0$.
In order to apply Lemmas~\ref{le:CRKstUpper-Eq-1} and \ref{le:CRKstUpper-Eq-2}, we need the following claim.

\begin{claim}\label{cl:CRKstUpper-Asq-2}
$\Delta^{\mathrm o}_{s,m} = \frac{F_{2m,s}(m)}{s+1} + \frac{1}{2}F_{2m,s-1}(m) - \frac{1}{2^s}(2m)_s-\frac{s-\kappa_s}{2^s}(2m)_{s-1}+\frac{\kappa_s(s-1)}{2^s}(2m)_{s-2}.$
\end{claim}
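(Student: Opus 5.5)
The identity is a bookkeeping consequence of the shift rule~(\ref{eq:falling-3}), $(x+1)_n=(x)_n+n(x)_{n-1}$, applied separately to the two parts of $\Delta^{\mathrm o}_{s,m}$. Throughout, $m\geq 1$ and $2m\geq s$, so every argument is a nonnegative integer and (\ref{eq:falling-3}) is available. Any falling factorial of the form $(m)_{-1}$ is $0$ by convention, which is consistent with the rule.

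\emph{Subtracted part.} By~(\ref{eq:falling-3}) we have $(2m+1)_s=(2m)_s+s(2m)_{s-1}$ and $(2m+1)_{s-1}=(2m)_{s-1}+(s-1)(2m)_{s-2}$. Substituting these gives
$$-\frac{1}{2^s}\big((2m+1)_s-\kappa_s(2m+1)_{s-1}\big)=-\frac{1}{2^s}(2m)_s-\frac{s-\kappa_s}{2^s}(2m)_{s-1}+\frac{\kappa_s(s-1)}{2^s}(2m)_{s-2}.$$
These are exactly the last three terms of the claimed expression.

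\emph{Main part.} It remains to show
$$F_{2m+1,s}(m)=F_{2m,s}(m)+\frac{s+1}{2}F_{2m,s-1}(m).$$
By definition, $F_{2m+1,s}(m)=\sum_{j=0}^{s}(m)_j(m+1)_{s-j}$. Expanding $(m+1)_{s-j}=(m)_{s-j}+(s-j)(m)_{s-j-1}$ yields
$$F_{2m+1,s}(m)=F_{2m,s}(m)+S,\qquad S:=\sum_{j=0}^{s-1}(s-j)\,a_j,\quad a_j:=(m)_j(m)_{s-1-j}.$$
The $j=s$ term contributes nothing to $S$ because its coefficient $s-j$ vanishes.

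\emph{Evaluating $S$ by symmetry.} This is the only step needing an idea. The weights $a_j$ are symmetric under the reindexing $j\mapsto s-1-j$, which sends the coefficient $s-j$ to $j+1$. Averaging the two expressions for $S$ gives
$$S=\frac12\sum_{j=0}^{s-1}\big((s-j)+(j+1)\big)a_j=\frac{s+1}{2}\sum_{j=0}^{s-1}(m)_j(m)_{s-1-j}=\frac{s+1}{2}F_{2m,s-1}(m).$$

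Dividing the main-part identity by $s+1$ and adding the expression obtained for the subtracted part yields the formula in Claim~\ref{cl:CRKstUpper-Asq-2}.
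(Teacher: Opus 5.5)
Your proof is correct and is essentially the paper's argument. You use the shift rule $(x+1)_n=(x)_n+n(x)_{n-1}$ for the three subtracted terms, expand $(m+1)_{s-j}$ to get $F_{2m+1,s}(m)-F_{2m,s}(m)=\sum_{j=0}^{s-1}(s-j)(m)_j(m)_{s-1-j}$, and use the symmetry $j\mapsto s-1-j$ of the summands to obtain $\frac{s+1}{2}F_{2m,s-1}(m)$. The only cosmetic difference is that you average the two expressions for $S$ directly, whereas the paper first derives $\sum_j jT_j=\frac{s-1}{2}\sum_j T_j$; this is the same computation.
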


\begin{proof}
By Equality~(\ref{eq:falling-3}), we have
\begin{align*}
~ &~(2m+1)_s-\kappa_s(2m+1)_{s-1} = (2m)_s+s(2m)_{s-1} - \kappa_s \big((2m)_{s-1}+(s-1)(2m)_{s-2}\big) \\
= &~(2m)_s +(s-\kappa_s)(2m)_{s-1} -\kappa_s(s-1)(2m)_{s-2}.
\end{align*}
Now it suffices to show that $\frac{F_{2m+1,s}(m)}{s+1}= \frac{F_{2m,s}(m)}{s+1} + \frac{1}{2}F_{2m,s-1}(m).$
To prove this, let $T_j\colonequals (m)_j(m)_{s-1-j}$.
Then
\begin{equation}\label{eq:CRKstUpper-Asq-4}
F_{2m,s-1}(m)=\sum\nolimits_{j=0}^{s-1}(m)_j(m)_{s-1-j}=\sum\nolimits_{j=0}^{s-1}T_j,
\end{equation}
and
\begin{align}\label{eq:CRKstUpper-Asq-5}
~ &~F_{2m+1,s}(m)- F_{2m,s}(m) =\sum\nolimits_{j=0}^{s} (m)_j(m+1)_{s-j} - \sum\nolimits_{j=0}^{s} (m)_j(m)_{s-j}  \nonumber\\
= &~\sum\nolimits_{j=0}^{s} (m)_j\big((m+1)_{s-j}-(m)_{s-j}\big) = \sum\nolimits_{j=0}^{s} (m)_j(s-j)(m)_{s-j-1} \nonumber\\
= &~\sum\nolimits_{j=0}^{s-1} (s-j)(m)_j(m)_{s-j-1} = \sum\nolimits_{j=0}^{s-1} (s-j)T_j = s\sum\nolimits_{j=0}^{s-1} T_j-\sum\nolimits_{j=0}^{s-1} jT_j,
\end{align}
where the second line follows from Equality~(\ref{eq:falling-3}).
Moreover, we have
$$\sum\nolimits_{j=0}^{s-1}jT_j=\sum\nolimits_{j=0}^{s-1}(s-1-j)T_{s-1-j}=\sum\nolimits_{j=0}^{s-1}(s-1-j)T_j=(s-1)\sum\nolimits_{j=0}^{s-1}T_j - \sum\nolimits_{j=0}^{s-1}jT_j,$$
where the second equality holds since $T_j=T_{s-1-j}$.
It follows that $\sum\nolimits_{j=0}^{s-1}jT_j = \frac{s-1}{2}\sum\nolimits_{j=0}^{s-1}T_j$.
Combining with Equalities~(\ref{eq:CRKstUpper-Asq-4}) and (\ref{eq:CRKstUpper-Asq-5}), we have
\begin{align*}
~ &~F_{2m+1,s}(m)- F_{2m,s}(m) = s\sum\nolimits_{j=0}^{s-1} T_j-\sum\nolimits_{j=0}^{s-1} jT_j = s\sum\nolimits_{j=0}^{s-1} T_j-\frac{s-1}{2}\sum\nolimits_{j=0}^{s-1}T_j\\
= &~\frac{s+1}{2}\sum\nolimits_{j=0}^{s-1}T_j = \frac{s+1}{2}F_{2m,s-1}(m).
\end{align*}
Hence, we have $\frac{F_{2m+1,s}(m)}{s+1}= \frac{F_{2m,s}(m)}{s+1} + \frac{1}{2}F_{2m,s-1}(m).$
\end{proof}

In Claim~\ref{cl:CRKstUpper-Asq-3} below, we use Lemma~\ref{le:CRKstUpper-Eq-2} to analyze the first two terms in the expression of $\Delta^{\mathrm o}_{s,m}$ given by Claim~\ref{cl:CRKstUpper-Asq-2}.
In Claim~\ref{cl:CRKstUpper-Asq-4} below, we use Lemma~\ref{le:CRKstUpper-Eq-1} to analyze the last three terms in the expression of $\Delta^{\mathrm o}_{s,m}$ given by Claim~\ref{cl:CRKstUpper-Asq-2}.

\begin{claim}\label{cl:CRKstUpper-Asq-3}
For even $s$, we have $$\frac{F_{2m,s}(m)}{s+1} + \frac{1}{2}F_{2m,s-1}(m)=(m)_s+\sum\nolimits_{k=1}^{s/2} \beta_{s,k} \rho_k \frac{s+2}{s-2k+1}(m)_{s-k}.$$
For odd $s$, we have $$\frac{F_{2m,s}(m)}{s+1} + \frac{1}{2}F_{2m,s-1}(m)=(m)_s+\sum\nolimits_{k=1}^{(s-1)/2} \beta_{s,k} \rho_k \frac{s+2}{s-2k+1}(m)_{s-k} + \frac{((s-1)/2)!}{2}(m)_{(s-1)/2}.$$
\end{claim}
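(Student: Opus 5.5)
The plan is to expand both summands on the left-hand side in the basis $(m)_{s-k}$ using Lemma~\ref{le:CRKstUpper-Eq-2}, and then match coefficients term by term. Throughout, $2m\geq s\geq 2$, so Lemma~\ref{le:CRKstUpper-Eq-2} applies both with parameter $s$ and with parameter $s-1\geq 1$.

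\emph{First summand.} Lemma~\ref{le:CRKstUpper-Eq-2} gives directly
$$\frac{F_{2m,s}(m)}{s+1}=\sum\nolimits_{k=0}^{\lfloor s/2\rfloor}\beta_{s,k}\rho_k(m)_{s-k},$$
since, as already computed in Case~1, $\frac{s!\,k!}{(s-2k)!(2k+1)!}=\beta_{s,k}\rho_k$. The $k=0$ term is $(m)_s$, because $\beta_{s,0}=\rho_0=1$.

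\emph{Second summand.} Applying Lemma~\ref{le:CRKstUpper-Eq-2} with $s-1$ in place of $s$ gives
$$\frac12F_{2m,s-1}(m)=\frac{s}{2}\sum\nolimits_{j=0}^{\lfloor (s-1)/2\rfloor}\frac{(s-1)!\,j!}{(s-1-2j)!(2j+1)!}(m)_{s-1-j}.$$
Reindexing with $k=j+1$ turns this into a sum of terms $(m)_{s-k}$ over $1\leq k\leq \lfloor (s-1)/2\rfloor+1$, with coefficient
$$\gamma_k\colonequals\frac{s!\,(k-1)!}{2(s-2k+1)!(2k-1)!}.$$

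\emph{Comparing coefficients.} For every $k$ that appears in both expansions, a direct computation of the ratio gives
$$\frac{\gamma_k}{\beta_{s,k}\rho_k}=\frac{(k-1)!\,(s-2k)!\,(2k+1)!}{2\,k!\,(s-2k+1)!\,(2k-1)!}=\frac{2k(2k+1)}{2k(s-2k+1)}=\frac{2k+1}{s-2k+1}.$$
Hence the combined coefficient of $(m)_{s-k}$ is
$$\beta_{s,k}\rho_k\Big(1+\frac{2k+1}{s-2k+1}\Big)=\beta_{s,k}\rho_k\frac{s+2}{s-2k+1}.$$

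\emph{Checking the index ranges.} This bookkeeping is the only real obstacle.
\begin{itemize}
\item For even $s$, we have $\lfloor (s-1)/2\rfloor+1=s/2$. So both sums run over $1\leq k\leq s/2$, and the $k=0$ term $(m)_s$ comes only from the first summand. This gives the stated formula.
\item For odd $s$, the first sum stops at $k=(s-1)/2$, but the second sum has one extra term at $k=(s+1)/2$. There $s-2k+1=0$ and $2k-1=s$, so this term is $\gamma_{(s+1)/2}(m)_{(s-1)/2}=\frac{((s-1)/2)!}{2}(m)_{(s-1)/2}$. This is exactly the extra term in the statement.
\end{itemize}
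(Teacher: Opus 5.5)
Your proposal is correct and follows essentially the same route as the paper. Both arguments expand $\frac{F_{2m,s}(m)}{s+1}$ and $\frac{s}{2}\cdot\frac{F_{2m,s-1}(m)}{s}$ via Lemma~\ref{le:CRKstUpper-Eq-2}, shift the index by one, show that the second coefficient equals $\frac{2k+1}{s-2k+1}\beta_{s,k}\rho_k$, and isolate the extra term at $k=(s+1)/2$ when $s$ is odd; you also make explicit, via $2m\geq s\geq 2$, that the lemma applies with parameter $s-1\geq 1$, which the paper leaves implicit.
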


\begin{proof}
By the same calculation as in Case~1, for $0\leq k\leq \left\lfloor s/2 \right\rfloor$, the coefficient of $(m)_{s-k}$ in the expansion of $\frac{F_{2m,s}(m)}{s+1}$ is $\beta_{s,k}\rho_k.$
Now we apply Lemma~\ref{le:CRKstUpper-Eq-2} to $\frac{1}{2}F_{2m,s-1}(m)=\frac{s}{2}\frac{F_{2m,s-1}(m)}{(s-1)+1}$.
The terms with nonzero coefficients in the expansion of $\frac{F_{2m,s-1}(m)}{(s-1)+1}$ are
$$(m)_{s-1}, (m)_{s-2}, \ldots, (m)_{s-\left\lfloor (s-1)/2 \right\rfloor}, (m)_{s-1-\left\lfloor (s-1)/2 \right\rfloor}.$$
In particular, there is no $(m)_{s}$, and for odd $s$, the term $(m)_{s-1-\left\lfloor (s-1)/2 \right\rfloor}=(m)_{(s-1)/2}=(m)_{s-(s+1)/2}$ does not appear in the expansions of $\frac{F_{2m,s}(m)}{s+1}$.
For $1\leq k\leq \left\lfloor (s-1)/2 \right\rfloor+1$, the coefficient of $(m)_{s-k}=(m)_{(s-1)-(k-1)}$ in $\frac{s}{2}\frac{F_{2m,s-1}(m)}{(s-1)+1}$ is
$$\frac{s}{2}\frac{(s-1)!(k-1)!}{(s-1-2(k-1))!(2(k-1)+1)!}=\frac{s!(k-1)!}{2(s-2k+1)!(2k-1)!}.$$
When $s$ is even, or $s$ is odd and $1\leq k\leq \left\lfloor (s-1)/2 \right\rfloor$, we have
\begin{align*}
 ~ &~\frac{s!(k-1)!}{2(s-2k+1)!(2k-1)!} = \frac{s!(k-1)!}{2(s-2k+1)!(2k-1)!}\frac{1}{\beta_{s,k}\rho_k}\beta_{s,k}\rho_k \\
 = &~\frac{s!(k-1)!}{2(s-2k+1)!(2k-1)!} \frac{4^kk!(s-2k)!}{s!} \frac{(2k+1)!}{4^k(k!)^2} \beta_{s,k}\rho_k = \frac{2k+1}{s-2k+1}\beta_{s,k}\rho_k.
\end{align*}
Therefore, for even $s$, we have
\begin{align*}
\frac{F_{2m,s}(m)}{s+1} + \frac{1}{2}F_{2m,s-1}(m)= &~\beta_{s,0} \rho_0 (m)_s+\sum\nolimits_{k=1}^{\left\lfloor s/2 \right\rfloor} \beta_{s,k} \rho_k \left(1+\frac{2k+1}{s-2k+1}\right)(m)_{s-k} \\
= &~(m)_s+\sum\nolimits_{k=1}^{s/2} \beta_{s,k} \rho_k \frac{s+2}{s-2k+1}(m)_{s-k},
\end{align*}
and for odd $s$, we have
\begin{align*}
~ &~\frac{F_{2m,s}(m)}{s+1} + \frac{1}{2}F_{2m,s-1}(m) \\
= &~\beta_{s,0} \rho_0(m)_s+\sum\nolimits_{k=1}^{\left\lfloor s/2 \right\rfloor} \beta_{s,k} \rho_k \left(1+\frac{2k+1}{s-2k+1}\right)(m)_{s-k} \\
~ &~+ \frac{s!\Big(\big((s-1)/2+1\big)-1\Big)!}{2\Big(s-2\big((s-1)/2+1\big)+1\Big)!\Big(2\big((s-1)/2+1\big)-1\Big)!}(m)_{s-(s+1)/2} \\
= &~(m)_s+\sum\nolimits_{k=1}^{(s-1)/2} \beta_{s,k} \rho_k \frac{s+2}{s-2k+1}(m)_{s-k} + \frac{((s-1)/2)!}{2}(m)_{(s-1)/2}.
\end{align*}
The proof of Claim~\ref{cl:CRKstUpper-Asq-3} is complete.
\end{proof}

\begin{claim}\label{cl:CRKstUpper-Asq-4}
For even $s$, we have
\begin{align*}
~ &~\frac{1}{2^s}(2m)_s+\frac{s-\kappa_s}{2^s}(2m)_{s-1}-\frac{\kappa_s(s-1)}{2^s}(2m)_{s-2} \\
= &~\sum\nolimits_{k=0}^{s/2} \beta_{s,k} \frac{s+2}{s-2k+1} \frac{3(s+1)-k(s+3)}{3(s-2k+2)}(m)_{s-k} - \frac{(s-1)s!}{6\cdot 2^s(s/2-1)!}(m)_{s/2-1}.
\end{align*}
For odd $s$, we have
\begin{align*}
~ &~\frac{1}{2^s}(2m)_s+\frac{s-\kappa_s}{2^s}(2m)_{s-1}-\frac{\kappa_s(s-1)}{2^s}(2m)_{s-2} \\
= &~\sum\nolimits_{k=0}^{(s-1)/2} \beta_{s,k} \frac{s+2}{s-2k+1} \frac{3(s+1)-k(s+3)}{3(s-2k+2)}(m)_{s-k} + \frac{s!(-s^2+s+6)}{6\cdot 2^s ((s-1)/2)!}(m)_{(s-1)/2}.
\end{align*}
\end{claim}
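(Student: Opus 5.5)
The plan is to expand each of the three falling factorials $(2m)_s$, $(2m)_{s-1}$ and $(2m)_{s-2}$ in the basis $\{(m)_j\}$ using Lemma~\ref{le:CRKstUpper-Eq-1}. This is legitimate because $2m\ge s$. Then I collect, for each $k$, the total coefficient of $(m)_{s-k}$.

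By Lemma~\ref{le:CRKstUpper-Eq-1} with $r=s$, the coefficient of $(m)_{s-k}$ in $\frac{1}{2^s}(2m)_s$ is $\beta_{s,k}$, as already noted in Case~1.

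With $r=s-1$, the term $(m)_{s-k}$ arises from index $k-1$. Its coefficient is $\frac{(s-1)!\,2^{s-2k+1}}{(k-1)!(s-2k+1)!}$, which equals $\beta_{s,k}\,2^s\cdot\frac{2k}{s(s-2k+1)}$.

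With $r=s-2$, the term $(m)_{s-k}$ arises from index $k-2$. Its coefficient is $\frac{(s-2)!\,2^{s-2k+2}}{(k-2)!(s-2k+2)!}$, which equals $\beta_{s,k}\,2^s\cdot\frac{4k(k-1)}{s(s-1)(s-2k+1)(s-2k+2)}$.

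These three formulas are valid for every $k$ in the common range, provided we read the factors $k$ and $k(k-1)$ as killing the terms with $k=0$ and $k\le 1$ respectively. Using $\kappa_s=s(s-1)/6$ and $s-\kappa_s=s(7-s)/6$, the total coefficient of $(m)_{s-k}$ becomes
$$\beta_{s,k}\left(1+\frac{k(7-s)}{3(s-2k+1)}-\frac{2k(k-1)}{3(s-2k+1)(s-2k+2)}\right).$$
I then verify by direct polynomial algebra that the bracket equals $\frac{(s+2)\left(3(s+1)-k(s+3)\right)}{3(s-2k+1)(s-2k+2)}$. Both sides have the common denominator $3(s-2k+1)(s-2k+2)$, so this reduces to comparing two polynomials in $s$ and $k$. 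This gives the main sum in both parity cases, over $0\le k\le\lfloor s/2\rfloor$.

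The step I expect to need the most care is the bookkeeping of the boundary terms, where the ranges of the three expansions differ.

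For even $s$, the expansion of $(2m)_{s-1}$ stops at index $(s-2)/2$, that is at $(m)_{s/2}$, which is already inside the main sum. The expansion of $(2m)_{s-2}$, however, reaches index $(s-2)/2$, that is $(m)_{s/2-1}$, corresponding to $k=s/2+1$. Its coefficient is $-\frac{\kappa_s(s-1)}{2^s}\cdot\frac{(s-2)!}{(s/2-1)!}$, which simplifies to $-\frac{(s-1)s!}{6\cdot 2^s(s/2-1)!}$, as claimed.

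For odd $s$, the extra term is $(m)_{(s-1)/2}$, corresponding to $k=(s+1)/2$. It receives two contributions:
\begin{itemize}
\item from $(2m)_{s-1}$ at index $(s-1)/2$, the contribution $\frac{s-\kappa_s}{2^s}\cdot\frac{(s-1)!}{((s-1)/2)!}$;
\item from $(2m)_{s-2}$ at index $(s-3)/2$, the contribution $-\frac{\kappa_s(s-1)}{2^s}\cdot\frac{2\,(s-2)!}{((s-3)/2)!}$.
\end{itemize}
Factoring out $\frac{s!}{2^s((s-1)/2)!}$ leaves $\frac{s-\kappa_s}{s}-\frac{\kappa_s(s-1)}{s}=\frac{7-s}{6}-\frac{(s-1)^2}{6}=\frac{-s^2+s+6}{6}$. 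This matches the stated last term.

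Finally, I will check that no other indices occur, so that the two displayed identities hold exactly.
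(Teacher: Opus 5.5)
\paragraph{Verdict.} Your strategy is the paper's own: expand each falling factorial with Lemma~\ref{le:CRKstUpper-Eq-1}, write every coefficient of $(m)_{s-k}$ as a multiple of $\beta_{s,k}$, and treat the leftover index ($k=s/2+1$ for even $s$, $k=(s+1)/2$ for odd $s$) separately. Your individual coefficients are correct, namely
\[
\beta_{s,k}2^s\tfrac{2k}{s(s-2k+1)}
\quad\text{and}\quad
\beta_{s,k}2^s\tfrac{4k(k-1)}{s(s-1)(s-2k+1)(s-2k+2)},
\]
and so are both boundary computations, including the $\frac{-s^2+s+6}{6}$ factor. However, the combined bracket you display is wrong, and the polynomial verification you announce would fail with it.

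\paragraph{The error.} Multiplying your coefficient $\beta_{s,k}2^s\frac{4k(k-1)}{s(s-1)(s-2k+1)(s-2k+2)}$ by $-\frac{\kappa_s(s-1)}{2^s}$, with $\kappa_s=\frac{s(s-1)}{6}$, gives
\[
-\beta_{s,k}\frac{2k(k-1)(s-1)}{3(s-2k+1)(s-2k+2)},
\]
not $-\beta_{s,k}\frac{2k(k-1)}{3(s-2k+1)(s-2k+2)}$. You dropped the factor $s-1$.

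A concrete check: for $s=4$, $k=2$, your bracket equals $1+2-\frac{2}{3}=\frac{7}{3}$. The target $\frac{(s+2)(3(s+1)-k(s+3))}{3(s-2k+1)(s-2k+2)}$ equals $1$. A direct expansion confirms the target: the left-hand side is $(m)_4+4(m)_3+\frac34(m)_2-\frac34(m)_1$, so the coefficient of $(m)_2$ is $\beta_{4,2}\cdot 1=\frac34$.

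\paragraph{The fix.} Once the factor $s-1$ is restored, the identity you need is
\[
3(s-2k+1)(s-2k+2)+k(7-s)(s-2k+2)-2k(k-1)(s-1)=(s+2)\bigl(3(s+1)-k(s+3)\bigr).
\]
Both sides expand to $3s^2+9s+6-ks^2-5ks-6k$; the $k^2$ and $k^2s$ terms cancel only because of the $(s-1)$ factor. With this correction the argument goes through in both parity cases and coincides with the paper's proof.
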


\begin{proof}
By the same argument as in Case~1, for $0\leq k\leq \left\lfloor s/2 \right\rfloor$, the coefficient of $(m)_{s-k}$ in the expansion of $\frac{1}{2^s} (2m)_s$ is $\beta_{s,k}.$
Now we apply Lemma~\ref{le:CRKstUpper-Eq-1} to $\frac{s-\kappa_s}{2^s}(2m)_{s-1}$.
The terms with nonzero coefficients in the expansion of $(2m)_{s-1}$ are
$$(m)_{s-1}, (m)_{s-2}, \ldots, (m)_{s-\left\lfloor (s-1)/2 \right\rfloor}, (m)_{s-1-\left\lfloor (s-1)/2 \right\rfloor}.$$
In particular, there is no $(m)_{s}$, and for odd $s$, the term $(m)_{s-1-\left\lfloor (s-1)/2 \right\rfloor}=(m)_{(s-1)/2}=(m)_{s-(s+1)/2}$ does not appear in the expansions of $\frac{1}{2^s} (2m)_s$.
For $1\leq k\leq \left\lfloor (s-1)/2 \right\rfloor+1$, the coefficient of $(m)_{s-k}=(m)_{(s-1)-(k-1)}$ in $\frac{s-\kappa_s}{2^s}(2m)_{s-1}$ is $\frac{s-\kappa_s}{2^s} \frac{(s-1)! 2^{(s-1)-2(k-1)}}{(k-1)!(s-1-2(k-1))!}=\frac{s(7-s)}{6\cdot 2^s} \frac{(s-1)! 2^{s-2k+1}}{(k-1)!(s-2k+1)!}.$
When $s$ is even, or $s$ is odd and $1\leq k\leq \left\lfloor (s-1)/2 \right\rfloor$, we have
\begin{align*}
 ~ &~\frac{s(7-s)}{6\cdot 2^s} \frac{(s-1)! 2^{s-2k+1}}{(k-1)!(s-2k+1)!} = \frac{s(7-s)}{6\cdot 2^s} \frac{(s-1)! 2^{s-2k+1}}{(k-1)!(s-2k+1)!}\frac{1}{\beta_{s,k}}\beta_{s,k} \\
 = &~\frac{s(7-s)}{6\cdot 2^s} \frac{(s-1)! 2^{s-2k+1}}{(k-1)!(s-2k+1)!}\frac{4^kk!(s-2k)!}{s!}\beta_{s,k} = \frac{k(7-s)}{3(s-2k+1)}\beta_{s,k}.
\end{align*}
Since $\frac{k(7-s)}{3(s-2k+1)}\beta_{s,k}=0$ when $k=0$, we can also view the coefficient of $(m)_{s}$ as $\frac{k(7-s)}{3(s-2k+1)}\beta_{s,k}$.

Next, we apply Lemma~\ref{le:CRKstUpper-Eq-1} to $\frac{\kappa_s(s-1)}{2^s}(2m)_{s-2}$.
The terms with nonzero coefficients in the expansion of $(2m)_{s-2}$ are
$$(m)_{s-2}, (m)_{s-3}, \ldots, (m)_{s-\left\lfloor (s-2)/2 \right\rfloor}, (m)_{s-1-\left\lfloor (s-2)/2 \right\rfloor}, (m)_{s-2-\left\lfloor (s-2)/2 \right\rfloor}.$$
In particular, there is no $(m)_{s}$ or $(m)_{s-1}$.
Moreover, for odd $s$, the term $(m)_{s-2-\left\lfloor (s-2)/2 \right\rfloor}=(m)_{s-2-(s-3)/2}=(m)_{(s-1)/2}$ does not appear in the expansion of $\frac{1}{2^s} (2m)_s$;
for even $s$, the term $(m)_{s-2-\left\lfloor (s-2)/2 \right\rfloor}=(m)_{s-2-(s-2)/2}=(m)_{s/2-1}$ does not appear in the expansion of $\frac{1}{2^s} (2m)_s$ or $\frac{s-\kappa_s}{2^s}(2m)_{s-1}$.
For $2\leq k\leq \left\lfloor (s-2)/2 \right\rfloor+2$, the coefficient of $(m)_{s-k}=(m)_{(s-2)-(k-2)}$ in $\frac{\kappa_s(s-1)}{2^s}(2m)_{s-2}$ is $\frac{\kappa_s(s-1)}{2^s} \frac{(s-2)!2^{(s-2)-2(k-2)}}{(k-2)!(s-2-2(k-2))!}=\frac{s(s-1)^2}{6\cdot 2^s} \frac{(s-2)!2^{s-2k+2}}{(k-2)!(s-2k+2)!}.$
When $2\leq k\leq \left\lfloor (s-2)/2 \right\rfloor+1$, we have
\begin{align*}
 ~ &~\frac{s(s-1)^2}{6\cdot 2^s} \frac{(s-2)!2^{s-2k+2}}{(k-2)!(s-2k+2)!} = \frac{s(s-1)^2}{6\cdot 2^s} \frac{(s-2)!2^{s-2k+2}}{(k-2)!(s-2k+2)!}\frac{1}{\beta_{s,k}}\beta_{s,k} \\
 = &~\frac{s(s-1)^2}{6\cdot 2^s} \frac{(s-2)!2^{s-2k+2}}{(k-2)!(s-2k+2)!}\frac{4^kk!(s-2k)!}{s!}\beta_{s,k}  = \frac{2k(k-1)(s-1)}{3(s-2k+1)(s-2k+2)}\beta_{s,k}.
\end{align*}
Since $\frac{2k(k-1)(s-1)}{3(s-2k+1)(s-2k+2)}\beta_{s,k}=0$ when $k\in \{0,1\}$, we can also view the coefficients of $(m)_{s}$ and $(m)_{s-1}$ as $\frac{2k(k-1)(s-1)}{3(s-2k+1)(s-2k+2)}\beta_{s,k}$.

In summary, for $0\leq k\leq \left\lfloor (s-2)/2 \right\rfloor+1=\left\lfloor s/2 \right\rfloor$, the coefficient of $(m)_{s-k}$ in $\frac{1}{2^s}(2m)_s+\frac{s-\kappa_s}{2^s}(2m)_{s-1}-\frac{\kappa_s(s-1)}{2^s}(2m)_{s-2}$ is $\left(1+\frac{k(7-s)}{3(s-2k+1)}-\frac{2k(k-1)(s-1)}{3(s-2k+1)(s-2k+2)}\right)\beta_{s,k}.$
Since
\begin{align*}
 ~ &~3(s-2k+1)(s-2k+2)+k(7-s)(s-2k+2)-2k(k-1)(s-1) \\
 = &~3s^2+9s+6-ks^2-5ks-6k = (s+2)\big(3(s+1)-k(s+3)\big),
\end{align*}
we have
\begin{align*}
 ~ &~1+\frac{k(7-s)}{3(s-2k+1)}-\frac{2k(k-1)(s-1)}{3(s-2k+1)(s-2k+2)} \\
 = &~\frac{(s+2)\big(3(s+1)-k(s+3)\big)}{3(s-2k+1)(s-2k+2)} =\frac{s+2}{s-2k+1}\frac{3(s+1)-k(s+3)}{3(s-2k+2)}.
\end{align*}
Therefore, for even $s$,
we have
\begin{align*}
~ &~\frac{1}{2^s}(2m)_s+\frac{s-\kappa_s}{2^s}(2m)_{s-1}-\frac{\kappa_s(s-1)}{2^s}(2m)_{s-2} \\
= &~\sum\nolimits_{k=0}^{\left\lfloor s/2 \right\rfloor} \beta_{s,k} \frac{s+2}{s-2k+1} \frac{3(s+1)-k(s+3)}{3(s-2k+2)}(m)_{s-k} \\
~ &~- \frac{s(s-1)^2}{6\cdot 2^s} \frac{(s-2)!2^{s-2((s-2)/2+2)+2}}{\big(((s-2)/2+2)-2\big)!\big(s-2((s-2)/2+2)+2\big)!}(m)_{s/2-1} \\
= &~\sum\nolimits_{k=0}^{s/2} \beta_{s,k} \frac{s+2}{s-2k+1} \frac{3(s+1)-k(s+3)}{3(s-2k+2)}(m)_{s-k}-\frac{(s-1)s!}{6\cdot 2^s(s/2-1)!}(m)_{s/2-1},
\end{align*}
and for odd $s$, we have
\begin{align*}
~ &~\frac{1}{2^s}(2m)_s+\frac{s-\kappa_s}{2^s}(2m)_{s-1}-\frac{\kappa_s(s-1)}{2^s}(2m)_{s-2} \\
= &~\sum\nolimits_{k=0}^{\left\lfloor s/2 \right\rfloor} \beta_{s,k} \frac{s+2}{s-2k+1} \frac{3(s+1)-k(s+3)}{3(s-2k+2)}(m)_{s-k} \\
~ &~+ \frac{s(7-s)}{6\cdot 2^s} \frac{(s-1)! 2^{s-2((s-1)/2+1)+1}}{(((s-1)/2+1)-1)!(s-2((s-1)/2+1)+1)!}(m)_{(s-1)/2} \\
~ &~- \frac{s(s-1)^2}{6\cdot 2^s} \frac{(s-2)!2^{s-2(\left\lfloor (s-2)/2 \right\rfloor+2)+2}}{((\left\lfloor (s-2)/2 \right\rfloor+2)-2)!(s-2(\left\lfloor (s-2)/2 \right\rfloor+2)+2)!}(m)_{(s-1)/2} \\
= &~\sum\nolimits_{k=0}^{(s-1)/2} \beta_{s,k} \frac{s+2}{s-2k+1} \frac{3(s+1)-k(s+3)}{3(s-2k+2)}(m)_{s-k} \\
~ &~+ \left(\frac{s(7-s)}{6\cdot 2^s} \frac{(s-1)!}{((s-1)/2)!}-\frac{s(s-1)^2}{6\cdot 2^s}\frac{2(s-2)!}{((s-3)/2)!}\right)(m)_{(s-1)/2} \\
= &~\sum\nolimits_{k=0}^{(s-1)/2} \beta_{s,k} \frac{s+2}{s-2k+1} \frac{3(s+1)-k(s+3)}{3(s-2k+2)}(m)_{s-k} + \frac{s!(-s^2+s+6)}{6\cdot 2^s ((s-1)/2)!}(m)_{(s-1)/2}.
\end{align*}
The proof of Claim~\ref{cl:CRKstUpper-Asq-4} is complete.
\end{proof}

By Claims~\ref{cl:CRKstUpper-Asq-2}, \ref{cl:CRKstUpper-Asq-3} and \ref{cl:CRKstUpper-Asq-4}, we can derive that
for even $s$,
\begin{align}\label{eq:CRKstUpper-Asq-6}
\Delta^{\mathrm o}_{s,m}= &~\left(1-\beta_{s,0} \frac{s+2}{s-2\cdot 0+1} \frac{3(s+1)-0\cdot(s+3)}{3(s-2\cdot 0+2)}\right)(m)_s \nonumber\\
~ &~+ \sum\nolimits_{k=1}^{s/2} \beta_{s,k}\frac{s+2}{s-2k+1} \left(\rho_k-\frac{3(s+1)-k(s+3)}{3(s-2k+2)}\right)(m)_{s-k}+\frac{(s-1)s!}{6\cdot 2^s(s/2-1)!}(m)_{s/2-1} \nonumber\\
= &~\sum\nolimits_{k=1}^{s/2} \beta_{s,k}\frac{s+2}{s-2k+1} \left(\rho_k-\frac{3(s+1)-k(s+3)}{3(s-2k+2)}\right)(m)_{s-k}+\frac{(s-1)s!}{6\cdot 2^s(s/2-1)!}(m)_{s/2-1},
\end{align}
and for odd $s$,
\begin{align}\label{eq:CRKstUpper-Asq-7}
\Delta^{\mathrm o}_{s,m}=&~\left(1-\beta_{s,0} \frac{s+2}{s-2\cdot 0+1} \frac{3(s+1)-0\cdot(s+3)}{3(s-2\cdot 0+2)}\right)(m)_s \nonumber\\
~ &~+ \sum\nolimits_{k=1}^{(s-1)/2} \beta_{s,k}\frac{s+2}{s-2k+1} \left(\rho_k-\frac{3(s+1)-k(s+3)}{3(s-2k+2)}\right)(m)_{s-k} \nonumber \\
~ &~+\left(\frac{((s-1)/2)!}{2}-\frac{s!(-s^2+s+6)}{6\cdot 2^s ((s-1)/2)!}\right)(m)_{(s-1)/2} \nonumber\\
= &~\sum\nolimits_{k=1}^{(s-1)/2} \beta_{s,k}\frac{s+2}{s-2k+1} \left(\rho_k-\frac{3(s+1)-k(s+3)}{3(s-2k+2)}\right)(m)_{s-k} \nonumber\\
~&~+\left(\frac{((s-1)/2)!}{2}-\frac{s!(-s^2+s+6)}{6\cdot 2^s ((s-1)/2)!}\right)(m)_{(s-1)/2}.
\end{align}

\begin{claim}\label{cl:CRKstUpper-Asq-5}
For $1\leq k\leq \left\lfloor s/2 \right\rfloor$, we have $\beta_{s,k}\frac{s+2}{s-2k+1} \left(\rho_k-\frac{3(s+1)-k(s+3)}{3(s-2k+2)}\right)(m)_{s-k}\geq 0$.
\end{claim}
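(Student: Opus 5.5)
The plan is to reduce the claim to a sign statement about the bracket $\rho_k-\frac{3(s+1)-k(s+3)}{3(s-2k+2)}$. For $1\leq k\leq \lfloor s/2\rfloor$ the other factors are harmless: $\beta_{s,k}>0$ by definition, and $\frac{s+2}{s-2k+1}>0$ since $s-2k+1\geq 1$. Also $(m)_{s-k}\geq 0$, because $m$ is a positive integer and the falling factorial of a nonnegative integer is either a product of nonnegative integers or set to $0$. It therefore suffices to show
$$\rho_k\geq \frac{3(s+1)-k(s+3)}{3(s-2k+2)}\qquad\text{for } 1\leq k\leq \lfloor s/2\rfloor.$$
Note that $s-2k+2\geq 2$, so the denominator is positive. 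I would split into the three cases $k=1$, $k=2$ and $k\geq 3$.

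\emph{Case $k=1$.} Here $\rho_1=\frac{4\cdot 1}{3!}=\frac{2}{3}$. The right-hand side is $\frac{3s+3-s-3}{3s}=\frac{2s}{3s}=\frac{2}{3}$. So the bracket vanishes and the term is exactly $0$.

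\emph{Case $k=2$.} This case requires $s\geq 4$. Here $\rho_2=\frac{16\cdot 4}{5!}=\frac{8}{15}$. The right-hand side is $\frac{3s+3-2s-6}{3(s-2)}=\frac{s-3}{3(s-2)}$, which is less than $\frac{1}{3}$. Since $\frac{1}{3}<\frac{8}{15}$, the bracket is strictly positive.

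\emph{Case $k\geq 3$.} The numerator satisfies $3(s+1)-k(s+3)\leq 3s+3-3s-9=-6<0$, while the denominator is positive. So the right-hand side is negative, and the bracket exceeds $\rho_k>0$.

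Combining the three cases with the nonnegativity of the remaining factors gives the claim. There is no real obstacle here. The only point needing care is the case $k=1$, where equality holds exactly; it has to be checked exactly rather than by a crude bound, since it is what produces the sharpness of the leading-order behaviour of $\Delta^{\mathrm o}_{s,m}$.
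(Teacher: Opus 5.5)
Your proposal is correct and follows the paper's proof essentially verbatim. You reduce to the sign of the bracket using the nonnegativity of $\beta_{s,k}\frac{s+2}{s-2k+1}(m)_{s-k}$, and then split into the same three cases: $k=1$ gives exact equality $\frac{2}{3}-\frac{2}{3}=0$, $k=2$ uses $\frac{8}{15}>\frac{1}{3}>\frac{s-3}{3(s-2)}$, and $k\geq 3$ has a negative numerator.
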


\begin{proof}
Since $\beta_{s,k}\frac{s+2}{s-2k+1}(m)_{s-k}\geq 0$, it suffices to show that $\rho_k-\frac{3(s+1)-k(s+3)}{3(s-2k+2)}\geq 0$.
If $k=1$, then $\rho_1-\frac{3(s+1)-(s+3)}{3(s-2+2)}=\frac{2}{3}-\frac{2}{3}=0$.
If $k=2$, then $\rho_2-\frac{3(s+1)-2(s+3)}{3(s-2\cdot 2+2)}=\frac{8}{15}-\frac{s-3}{3(s-2)}\geq \frac{8}{15}-\frac{1}{3}> 0$.
If $3\leq k\leq \left\lfloor s/2 \right\rfloor$, then $3(s+1)-k(s+3)<0$ and $s-2k+2>0$, so $\rho_k-\frac{3(s+1)-k(s+3)}{3(s-2k+2)}> \rho_k > 0$.
\end{proof}

For even $s\geq 2$, we have $\frac{(s-1)s!}{6\cdot 2^s(s/2-1)!}(m)_{s/2-1}>0$.
Then, by Equality~(\ref{eq:CRKstUpper-Asq-6}) and Claim~\ref{cl:CRKstUpper-Asq-5}, we have $\Delta^{\mathrm o}_{s,m}>0$.
For odd $s\geq 3$, we have $-s^2+s+6\leq 0$, so $\left(\frac{((s-1)/2)!}{2}-\frac{s!(-s^2+s+6)}{6\cdot 2^s ((s-1)/2)!}\right)(m)_{(s-1)/2}\geq \frac{((s-1)/2)!}{2}(m)_{(s-1)/2}>0$.
Then, by Equality~(\ref{eq:CRKstUpper-Asq-7}) and Claim~\ref{cl:CRKstUpper-Asq-5}, we have $\Delta^{\mathrm o}_{s,m}>0$.
This completes the proof of Lemma~\ref{le:CRKstUpper-Asq}.
%\end{proof}
\hfill$\blacksquare$
\vspace{0.2cm}

Recall that for $s\geq 1$ and $t\geq 1$, $N_s(t)$ is defined as $N_1(t)\colonequals t+1$, $N_2(t)\colonequals \left\lfloor \frac{4t}{3}\right\rfloor+2$, and $N_s(t)\colonequals \left\lceil c_s(t-1)+D_s\right\rceil+1=\left\lceil \frac{2^s}{s+1}(t-1)+\frac{(s-1)(s+6)}{6}\right\rceil+1$ for $s\geq 3$.
We next state and prove two recursive results for $N_s(t)$ that will be used when we prove Theorem~\ref{thm:CRKstUpper} by induction on $s$ and $t$.
In the proofs of Lemmas~\ref{le:CRKstUpper-Nst-1} and \ref{le:CRKstUpper-Nst-2}, we shall use the fact that $c_s$ is increasing (since $\frac{c_{s+1}}{c_s}=\frac{2^{s+1}}{s+2}/\frac{2^s}{s+1}=\frac{2(s+1)}{s+2}>1$).

\begin{lemma}\label{le:CRKstUpper-Nst-1}
For integers $s\geq 2$, $t\geq 2$ and $1\leq h\leq t-1$, we have $N_s(t)-h\geq N_s(t-h).$
\end{lemma}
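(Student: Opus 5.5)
The plan is to split into the two cases $s=2$ and $s\geq 3$, since $N_s(t)$ has different closed forms there. In each case I will reduce the inequality to a comparison of floor/ceiling expressions, using the fact that the coefficient of $t$ in $N_s(t)$ is at least $1$.

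\emph{Case $s\geq 3$.} Write $N_s(t)-h=\lceil c_s(t-1)+D_s-h\rceil+1$; this holds because $h$ is an integer, so it passes inside the ceiling. Then
$$c_s(t-1)+D_s-h = c_s(t-h-1)+D_s+(c_s-1)h.$$
Since $c_s=\frac{2^s}{s+1}\geq c_3=2>1$ (as $c_s$ is increasing), we have $(c_s-1)h\geq 0$. Monotonicity of the ceiling then gives $N_s(t)-h\geq \lceil c_s(t-h-1)+D_s\rceil+1=N_s(t-h)$. The hypothesis $t-h\geq 1$ ensures that $N_s(t-h)$ is defined.

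\emph{Case $s=2$.} Here $N_2(t)-h=\lfloor \frac{4t}{3}-h\rfloor+2=\lfloor \frac{4(t-h)}{3}+\frac{h}{3}\rfloor+2$. Since $\frac{h}{3}\geq 0$ and the floor is monotone, this is at least $\lfloor\frac{4(t-h)}{3}\rfloor+2=N_2(t-h)$.

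I expect no real obstacle. The only point needing care is moving the integer $h$ inside the floor and ceiling, which is valid precisely because $h$ is an integer. The argument also shows the inequality can be strict whenever $c_s>1$ pushes the argument across an integer boundary, though strictness is not needed.
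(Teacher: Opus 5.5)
Your proposal is correct and follows essentially the same route as the paper: split into $s=2$ and $s\geq 3$, move the integer $h$ inside the floor/ceiling, and use that the coefficient of $t$ is at least $1$ (the paper uses $c_s\geq 2$ for $s\geq 3$) together with monotonicity. Your explicit rewritings $c_s(t-h-1)+D_s+(c_s-1)h$ and $\frac{4(t-h)}{3}+\frac{h}{3}$ just spell out the single inequality step the paper writes in one line.
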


\noindent {\bf Proof.}
%\begin{proof}
For $s=2$, we have $N_2(t)-h= \left\lfloor \frac{4t}{3}\right\rfloor+2-h=\left\lfloor \frac{4t}{3}-h\right\rfloor+2 \geq \left\lfloor \frac{4(t-h)}{3}\right\rfloor+2 =N_2(t-h).$
For $s\geq 3$, we have $c_s=\frac{2^s}{s+1}\geq 2$, and thus
$N_s(t)-h=\left\lceil c_s(t-1)+D_s\right\rceil+1-h=\left\lceil c_s(t-1)+D_s-h\right\rceil+1 \geq \left\lceil c_s(t-h-1)+D_s\right\rceil+1 =N_s(t-h).$
%\end{proof}
\hfill$\blacksquare$
%\vspace{0.2cm}

\begin{lemma}\label{le:CRKstUpper-Nst-2}
For integers $s\geq 3$, $t\geq 1$ and $2\leq h\leq s-1$, we have $N_s(t)-h\geq N_{s-h}(2t-1).$
\end{lemma}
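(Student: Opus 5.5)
We set $r\colonequals s-h$, so $1\leq r\leq s-2$, and split into the three cases $r\geq 3$, $r=2$ and $r=1$, according to the three formulas defining $N_r$. In every case the argument has two parts. First, since $h$ is an integer, $\lceil c_s(t-1)+D_s\rceil-h=\lceil c_s(t-1)+D_s-h\rceil$, so $h$ can be moved inside the ceiling exactly as in the proof of Lemma~\ref{le:CRKstUpper-Nst-1}. Second, we compare the linear terms, using the monotonicity of $c_s$, and then the constant terms.

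\textbf{Linear terms.} The key inequality is $c_s\geq 2c_{s-2}\geq 2c_r$. Indeed,
\[
\frac{c_s}{c_{s-2}}=\frac{2s}{s+1}\cdot\frac{2(s-1)}{s}=\frac{4(s-1)}{s+1}\geq 2 \quad\text{if and only if } s\geq 3,
\]
and $c_r\leq c_{s-2}$ because $c$ is increasing. Therefore
\[
c_r\,(2t-1-1)=2c_r(t-1)\leq c_s(t-1).
\]
For $t=1$ both sides vanish, so $t=1$ needs no separate treatment.

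\textbf{Case $r\geq 3$.} It remains to show $D_s-h\geq D_r$. A direct computation gives
\[
D_s-D_r=\frac{(s^2-r^2)+5(s-r)}{6}=\frac{h(s+r+5)}{6}\geq h,
\]
since $s+r+5\geq 6$. Combining this with the linear-term inequality and monotonicity of the ceiling yields $N_s(t)-h\geq N_r(2t-1)$.

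\textbf{Case $r=2$.} Here $h=s-2\geq 2$, so $s\geq 4$ and $c_s\geq 2c_2=8/3$. Then
\[
N_s(t)-h\geq \frac{8(t-1)}{3}+D_s+1-(s-2),
\qquad
N_2(2t-1)\leq \frac{4(2t-1)}{3}+2.
\]
It therefore suffices that $D_s-s+2\geq 7/3$. We have $D_s-s+2=(s^2-s+6)/6$, which is at least $3$ for $s\geq 4$.

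\textbf{Case $r=1$.} Here $h=s-1$ and $N_1(2t-1)=2t$. Using $c_s\geq c_3=2$, we get
\[
N_s(t)-h\geq 2(t-1)+D_s+1-(s-1)=2t+D_s-s,
\]
and $D_s-s=(s-3)(s+2)/6\geq 0$.

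The only genuinely delicate point is the linear-term comparison $c_s\geq 2c_r$. This is exactly where the hypotheses $h\geq 2$ and $s\geq 3$ enter: for $h=1$ the ratio $c_s/c_{s-1}=2s/(s+1)$ is strictly less than $2$, so the argument would fail. Everything else is a routine check of the constants $D_s$.
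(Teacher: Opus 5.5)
Your proof is correct and follows essentially the same route as the paper. The paper uses the same three-way split according to whether $s-h$ equals $1$, equals $2$, or is at least $3$, with the same checks on the constants $D_s$. The only cosmetic difference is that you obtain $c_s\geq 2c_{s-h}$ from $c_s\geq 2c_{s-2}$ together with monotonicity of $c$, whereas the paper bounds the ratio $c_s/(2c_{s-h})=2^{h-1}(s-h+1)/(s+1)$ directly.
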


\noindent {\bf Proof.}
%\begin{proof}
If $h=s-1$, then $s-h=1$ and
\begin{align*}
N_s(t)-h=&~\left\lceil c_s(t-1)+D_s\right\rceil+1-h \geq \left\lceil c_3(t-1)+\frac{(s-1)(s+6)}{6}+1-h\right\rceil \\
= &~\left\lceil 2(t-1)+\frac{(s-1)(s+6)}{6}+1-(s-1)\right\rceil = \left\lceil 2t+\frac{(s-3)(s+2)}{6}\right\rceil \geq 2t = N_{1}(2t-1).
\end{align*}
If $h=s-2$, then $s-h=2$ and $s=h+2\geq 4$, so
\begin{align*}
N_s(t)-h=&~\left\lceil c_s(t-1)+D_s\right\rceil+1-h \geq \left\lceil c_4(t-1)+\frac{(s-1)(s+6)}{6}+1-h\right\rceil \\
= &~\left\lceil \frac{16}{5}(t-1)+\frac{(s-1)(s+6)}{6}+1-(s-2)\right\rceil \geq \left\lceil \frac{8}{3}(t-1)+\frac{s^2-s+12}{6}\right\rceil \\
\geq &~\left\lceil \frac{8}{3}(t-1)+4\right\rceil > \left\lceil \frac{8t}{3}\right\rceil = \left\lfloor \frac{8t+2}{3}\right\rfloor = \left\lfloor \frac{4(2t-1)}{3}\right\rfloor+2 = N_{2}(2t-1).
\end{align*}

Now we assume that $2\leq h\leq s-3$.
We first show that $c_s> 2c_{s-h}$ and $D_s-h> D_{s-h}$.
For the first inequality, we have $\frac{c_{s}}{2c_{s-h}}=\frac{2^{s}}{s+1}/\left(2\frac{2^{s-h}}{s-h+1}\right)=\frac{2^{h-1}(s-h+1)}{s+1}.$
Since $2\leq h\leq s-3$, we have $2^{h-1}(s-h+1)-(s+1)\geq h(s-h+1)-(s+1)=(h-1)(s-h)-1> 0$, so $c_s> 2c_{s-h}$.
For the second inequality, we have
$6(D_s-h- D_{s-h})=(s-1)(s+6)-6h-(s-h-1)(s-h+6)=h(2s-h-1)>0$,
so $D_s-h> D_{s-h}$.
Now
\begin{align*}
N_s(t)-h= &~\left\lceil c_s(t-1)+D_s\right\rceil+1-h \geq \left\lceil 2c_{s-h}(t-1)+D_{s-h}\right\rceil+1 \\
= &~\left\lceil c_{s-h}(2t-1-1)+D_{s-h}\right\rceil+1 = N_{s-h}(2t-1).
\end{align*}
The proof of Lemma~\ref{le:CRKstUpper-Nst-2} is complete.
%\end{proof}
\hfill$\blacksquare$
\vspace{0.2cm}

Now we have all the ingredients to present our proof of Theorem~\ref{thm:CRKstUpper}.
\vspace{0.2cm}

\noindent {\bf Proof of Theorem~\ref{thm:CRKstUpper}.}
We will prove the following slightly stronger form:
$$CR(K_{s,t}, K_3)\leq N_s(t) \mbox{~~~for $s\geq 1$ and $t\geq 1$}.$$
We prove the statement by induction first on $s$, and then for each fixed $s$, by induction on $t$.
For the base case $s=1$, we have $r'_2(K_{1,t})= CR(K_{1,t}, K_3)=t+1=N_1(t)$ by Proposition~\ref{prop:forest}.
Now assume the result holds for all values smaller than $s$, and we shall prove it for $s\geq 2$.
We first consider the case $s=2$.

%\begin{claim}\label{cl:CRKstUpper-K2t}
%For $t\geq 1$, we have $CR(K_{2,t}, K_3)\leq N_2(t)= \left\lfloor \frac{4t}{3}\right\rfloor+2.$
%\end{claim}

\begingroup
\renewcommand{\theclaim}{\ref{thm:CRKstUpper}.1}
\begin{claim}\label{cl:CRKstUpper-K2t}
For $t\geq 1$, we have $CR(K_{2,t}, K_3)\leq N_2(t)= \left\lfloor \frac{4t}{3}\right\rfloor+2.$
\end{claim}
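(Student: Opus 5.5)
The plan is to prove Claim~\ref{thm:CRKstUpper}.1 by induction on $t$. Set $n\colonequals N_2(t)=\lfloor 4t/3\rfloor+2$ and fix an edge-coloring of $K_n$ with no rainbow $K_3$. The goal is to find an orderable $K_{2,t}$. The strategy is to reduce, using a Gallai partition, either to a smaller instance inside one part or to a genuinely $2$-edge-colored $K_n$. The latter is handled by the averaging bound of Lemma~\ref{le:CRKstUpper-2colored} together with Lemma~\ref{le:CRKstUpper-Asq}.

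\textbf{Base case $t=1$.} Here $K_{2,1}=P_3$ is a forest on $3=N_2(1)$ vertices, so Proposition~\ref{prop:forest} gives the claim.

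\textbf{Inductive step: the Gallai partition.} Let $t\geq 2$. Apply Theorem~\ref{thm:Gallai} to get a Gallai partition $V_1,\dots,V_m$ with $m\ge 2$ whose cross edges use only the colors red and blue. Each part $V_i$ is color-homogeneous. I will distinguish three cases according to the part sizes.

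\emph{Case 1: some part is large.} Suppose some part $X=V_i$ satisfies $h\colonequals n-|X|\le t-1$. Since $m\ge2$, we also have $h\ge 1$. Lemma~\ref{le:CRKstUpper-Nst-1} gives
\[
|X|=N_2(t)-h\ge N_2(t-h).
\]
By the induction hypothesis (note $1\le t-h<t$), $G[X]$ contains an orderable $K_{2,t-h}$ or a rainbow $K_3$. A rainbow $K_3$ is excluded, so we get an orderable $K_{2,t-h}$. Fact~\ref{fa:homo}~(ii) then enlarges it to an orderable $K_{2,t-h+h}=K_{2,t}$.

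\emph{Case 2: some part is non-singleton but small.} Otherwise every part $X$ has $|V(G)\setminus X|\ge t$. If some part has $|X|\ge 2$, Fact~\ref{fa:homo}~(i) immediately yields an orderable $K_{2,t}$.

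\emph{Case 3: all parts are singletons.} Then every edge is a cross edge. Hence the coloring of $K_n$ uses only red and blue, and it suffices to show that every $2$-edge-coloring of $K_{q+1}$, with $q=n-1=\lfloor 4t/3\rfloor+1$, contains an orderable $K_{2,t}$.

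\textbf{The two-colored case.} By Lemma~\ref{le:CRKstUpper-2colored}, it is enough to verify $A_2(q)>t-1$. When $q>2$, Lemma~\ref{le:CRKstUpper-Asq} gives
\[
A_2(q)\ge \alpha_2(q-D_2)=\tfrac34\left(q-\tfrac43\right)=\tfrac{3q-4}{4}.
\]
Using $q\ge \tfrac{4t}{3}-\tfrac23+1$, we get $3q-4\ge 4t-3>4t-4$, so $A_2(q)>t-1$ as required.

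\textbf{Expected obstacle.} The main step to check is the boundary of this arithmetic. First, the hypothesis $q>s=2$ requires $t\ge 2$; for $t=2$ we have $q=3$, so it is fine. Second, one should confirm that $\lfloor 4t/3\rfloor\ge (4t-2)/3$ holds in every residue class of $t$ modulo $3$, which is what makes the strict inequality survive the floor. The reduction in Cases 1 and 2 is routine once Lemma~\ref{le:CRKstUpper-Nst-1} and Fact~\ref{fa:homo} are in hand.
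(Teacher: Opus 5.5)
Your proof is correct and follows the paper's argument: take a Gallai partition, handle the all-singleton case with Lemma~\ref{le:CRKstUpper-2colored} and Lemma~\ref{le:CRKstUpper-Asq} (the same estimate $A_2(q)\ge t-\frac34$), and otherwise use either Fact~\ref{fa:homo}~(i) or the induction hypothesis via Lemma~\ref{le:CRKstUpper-Nst-1} and Fact~\ref{fa:homo}~(ii). The only difference is that the paper takes $t\in\{1,2\}$ as base cases (using Corollary~\ref{cor:forest} for $t=2$), whereas you fold $t=2$ into the inductive step; this is legitimate because $q=3>2$ there.
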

\endgroup

\begin{proof}
We prove the claim by induction on $t$.
For the base case $t\in \{1,2\}$, we have $CR(K_{2,1}, K_3)=3=\left\lfloor \frac{4}{3}\right\rfloor+2$ and $CR(K_{2,2}, K_3)=4=\left\lfloor \frac{8}{3}\right\rfloor+2$ by Proposition~\ref{prop:forest} and Corollary~\ref{cor:forest}.
Now assume that the claim holds for all values smaller than $t$, and we shall prove it for $t\geq 3$.

For a contradiction, suppose that there exists an edge-colored complete graph $K_n$ with neither an orderable $K_{2,t}$ nor a rainbow $K_3$, where $n=N_2(t)$.
Let $V_1, V_2, \ldots, V_m$ ($m\geq 2$) be a Gallai partition of $V(K_n)$ given by Theorem~\ref{thm:Gallai}.
If $|V_i|=1$ for all $i\in [m]$, then the edge-coloring uses at most two colors.
Moreover, we have $n-1=\left\lfloor \frac{4t}{3}\right\rfloor+1> 2$ for $t\geq 3$.
By Lemma~\ref{le:CRKstUpper-Asq}, we have
$$A_2(n-1)\geq \alpha_2(n-1-D_2) = \frac{3}{4}\left(\left\lfloor \frac{4t}{3}\right\rfloor+1 - \frac{4}{3}\right) \geq \frac{3}{4}\left(\frac{4t-2}{3} - \frac{1}{3}\right) =t-\frac{3}{4} >t-1.$$
Then, by Lemma~\ref{le:CRKstUpper-2colored}, there exists an orderable $K_{2,t}$, a contradiction.
Thus there exists a part, say $V_1$, with $|V_1|\geq 2$.
Note that $V_1$ is a color-homogeneous set.
If $|V(K_n)\setminus V_1|\geq t$, then there exists an orderable $K_{2,t}$ by Fact~\ref{fa:homo}~(i), a contradiction.
Thus $|V(K_n)\setminus V_1|\leq t-1$.
Then, by Lemma~\ref{le:CRKstUpper-Nst-1}, we have
$$|V_1|=n-|V(K_n)\setminus V_1|=N_2(t)-|V(K_n)\setminus V_1|\geq N_2\left(t-|V(K_n)\setminus V_1|\right).$$
By the induction hypothesis, there exists an orderable $K_{2, t-|V(K_n)\setminus V_1|}$ within $V_1$.
Then there exists an orderable $K_{2,t}$ in $K_n$ by Fact~\ref{fa:homo}~(ii), a contradiction.
The proof of Claim~\ref{cl:CRKstUpper-K2t} is complete.
\end{proof}

In the following, we consider the case $s\geq 3$.
The proof is an extension of the case $s=2$.
We prove by induction on $t$.
For the base case $t=1$, we have $CR(K_{s,1}, K_3)=s+1\leq \left\lceil\frac{(s-1)(s+6)}{6}\right\rceil+1=N_s(1)$ by Proposition~\ref{prop:forest}.
Now assume that the claim holds for all values smaller than $t$, and we shall prove it for $t\geq 2$.
For a contradiction, suppose that there exists an edge-colored complete graph $K_n$ with neither an orderable $K_{s,t}$ nor a rainbow $K_3$, where $n=N_s(t)$.
Let $V_1, V_2, \ldots, V_m$ ($m\geq 2$) be a Gallai partition of $V(K_n)$ given by Theorem~\ref{thm:Gallai}.

%\begin{claim}\label{cl:CRKstUpper-Kst}
%There exists a part $V_i$ with $2\leq |V_i|\leq s-1$.
%\end{claim}

\begingroup
\renewcommand{\theclaim}{\ref{thm:CRKstUpper}.2}
\begin{claim}\label{cl:CRKstUpper-Kst}
There exists a part $V_i$ with $2\leq |V_i|\leq s-1$.
\end{claim}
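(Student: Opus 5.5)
We argue by contradiction, assuming every part $V_i$ of the Gallai partition has $|V_i|=1$ or $|V_i|\geq s$, and rule out the two resulting situations separately. Throughout we use $n=N_s(t)$ and the standing assumption that $K_n$ has neither an orderable $K_{s,t}$ nor a rainbow $K_3$.

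\emph{Case 1: all parts are singletons.} Then every edge is an edge between parts, so the coloring uses at most two colors. Since $s\geq 3$ and $t\geq 2$, we have $q:=n-1=\lceil c_s(t-1)+D_s\rceil\geq c_s+D_s>s$. Lemma~\ref{le:CRKstUpper-Asq} then applies in its strict form and gives
$$A_s(q)>\alpha_s(q-D_s)\geq \alpha_s\, c_s(t-1)=t-1.$$
By Lemma~\ref{le:CRKstUpper-2colored}, $K_{q+1}=K_n$ contains an orderable $K_{s,t}$, a contradiction.

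\emph{Case 2: some part $V_1$ has $|V_1|\geq s$.} This part is color-homogeneous. If $|V(K_n)\setminus V_1|\geq t$, Fact~\ref{fa:homo}~(i) gives an orderable $K_{s,t}$. So we may assume $h:=|V(K_n)\setminus V_1|\leq t-1$, and $h\geq 1$ because $m\geq 2$. Lemma~\ref{le:CRKstUpper-Nst-1} now yields
$$|V_1|=N_s(t)-h\geq N_s(t-h).$$
The inner induction on $t$ therefore gives an orderable $K_{s,t-h}$ inside $G[V_1]$; note that $G[V_1]$ still has no rainbow $K_3$. Fact~\ref{fa:homo}~(ii) extends this to an orderable $K_{s,t}$ in $K_n$, again a contradiction.

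Since both cases are impossible, some part satisfies $2\leq |V_i|\leq s-1$. The main point requiring care is the numerical check in Case 1: we need $q>s$ so that Lemma~\ref{le:CRKstUpper-Asq} applies, and we need the ceiling in $N_s(t)$ to give $q-D_s\geq c_s(t-1)$. Both follow directly from the definition of $N_s(t)$ together with $c_3=2$ and $D_3=3$.
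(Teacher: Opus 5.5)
Your proof is correct and follows the paper's argument essentially step for step: the all-singletons case goes through the strict form of Lemma~\ref{le:CRKstUpper-Asq} and Lemma~\ref{le:CRKstUpper-2colored}, and a part of size at least $s$ is ruled out by Fact~\ref{fa:homo}, Lemma~\ref{le:CRKstUpper-Nst-1} and the inner induction on $t$. One small point: for general $s\geq 3$, the bound $q>s$ follows from $c_s\geq 2$ and $D_s-(s-1)=s(s-1)/6>0$, which give $q\geq c_s+D_s>s+1$; the values $c_3=2$ and $D_3=3$ alone only give $q\geq 5$.
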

\endgroup

\begin{proof}
If $|V_i|=1$ for all $i\in [m]$, then the edge-coloring uses at most two colors.
Moreover, we have $n-1=\left\lceil \frac{2^s}{s+1}(t-1)+\frac{(s-1)(s+6)}{6}\right\rceil> s$ for $s\geq 3$ and $t\geq 2$.
By Lemma~\ref{le:CRKstUpper-Asq}, we have
$$A_s(n-1)> \alpha_s(n-1-D_s) \geq \alpha_s\left(c_s(t-1)+D_s+1-1-D_s\right)=t-1.$$
Then, by Lemma~\ref{le:CRKstUpper-2colored}, there exists an orderable $K_{s,t}$, a contradiction.
Thus there exists a part $V_i$ with $|V_i|\geq 2$.

Suppose that $|V_i|\geq s$.
Note that $V_i$ is a color-homogeneous set.
If $|V(K_n)\setminus V_i|\geq t$, then there exists an orderable $K_{s,t}$ by Fact~\ref{fa:homo}~(i), a contradiction.
Thus $|V(K_n)\setminus V_i|\leq t-1$.
Then, by Lemma~\ref{le:CRKstUpper-Nst-1}, we have
$$|V_i|=n-|V(K_n)\setminus V_i|=N_s(t)-|V(K_n)\setminus V_i|\geq N_s\left(t-|V(K_n)\setminus V_i|\right).$$
By the induction hypothesis, there exists an orderable $K_{s, t-|V(K_n)\setminus V_i|}$ within $V_i$.
Then there exists an orderable $K_{s,t}$ in $K_n$ by Fact~\ref{fa:homo}~(ii), a contradiction.
Thus $|V_i|\leq s-1$.
The proof is complete.
\end{proof}

By Claim~\ref{cl:CRKstUpper-Kst}, we may assume that $2\leq |V_1|\leq s-1$ without loss of generality.
By Lemma~\ref{le:CRKstUpper-Nst-2}, we have
$$|V(K_n)\setminus V_1|=N_s(t)-|V_1|\geq N_{s-|V_1|}(2t-1).$$
By the induction hypothesis, there exists an orderable $K_{s-|V_1|, 2t-1}$ within $V(K_n)\setminus V_1$.
Let $X$ be its partite set of size $s-|V_1|$ and $Y$ be its partite set of size $2t-1$.
Note that for each vertex $v\in Y\subseteq V(K_n)\setminus V_1$, all edges between $v$ and $V_1$ are of the same color.
Moreover, there are at most two colors on edges between $V(K_n)\setminus V_1$ and $V_1$.
Thus, by the pigeonhole principle, there exists a subset $Y'\subseteq Y$ with $|Y'|\geq \left\lceil \frac{|Y|}{2}\right\rceil= \left\lceil \frac{2t-1}{2}\right\rceil =t$ such that all edges between $Y'$ and $V_1$ are of the same color.
Note that the edges between $X$ and $Y'$ form an orderable $K_{s-|V_1|, t}$.
Then we can obtain an orderable $K_{s,t}$ from this $K_{s-|V_1|, t}$ by putting the vertices of $V_1$ as the first $|V_1|$ vertices.
This contradiction completes the proof of Theorem~\ref{thm:CRKstUpper}.
\hfill$\blacksquare$

\subsection{Lower bounds on $r'_2(K_{s,t})$}
\label{subsec:pf_complete_bipar_lower}

In this subsection, we first provide a probabilistic lower bound on $r'_2(K_{s,t})$ for general $K_{s,t}$ using Hoeffding's inequality; see Lemma~\ref{le:CRKstlower}.
Then we present several constructive lower bounds on $r'_2(K_{2,t})$ and $r'_2(K_{3,t})$ using properties of strongly regular graphs, Hadamard matrices and conference matrices; see Lemmas~\ref{le:CRK2tlower-construction-SRG-1}, \ref{le:CRK2tlower-construction-Hadamard-1}, \ref{le:CRK2tlower-construction-conference-1} and \ref{le:CRK3tlower}.
These four lemmas constitute the proofs of Theorems~\ref{thm:CRK2t3t} and \ref{thm:CRK2t+}.

\begin{lemma}\label{le:CRKstlower}
For any fixed integer $s \geq 2$ and sufficiently large $t$, we have
$$r'_2(K_{s,t})>\left\lfloor\frac{2^s}{s+1}t-\frac{2^s\sqrt{2^s}}{s+1}\sqrt{t \ln t}\right\rfloor=\frac{2^s}{s+1}t-O_s\left(\sqrt{t \ln t}\right).$$
\end{lemma}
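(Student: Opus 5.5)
We take a uniformly random 2-edge-coloring of $K_n$ with $n=\left\lfloor c_st-c_s\sqrt{2^s}\sqrt{t\ln t}\right\rfloor$, where $c_s=\frac{2^s}{s+1}$, and show that with positive probability it has no orderable $K_{s,t}$. By Lemma~\ref{le:Y}, this is equivalent to showing that $|Y_\chi(\mathbf{x})|\le t-1$ for every ordered $s$-tuple $\mathbf{x}$ of distinct vertices. We will bound each of these probabilities separately and then take a union bound over the $(n)_s\le n^s$ tuples.

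Fix $\mathbf{x}=(x_1,\ldots,x_s)$. For each vertex $y\notin\{x_1,\ldots,x_s\}$, the vector $(\mathbf{1}_{\{\chi(x_iy)\text{ red}\}})_{i\in[s]}$ is uniform on $\{0,1\}^s$, because it depends only on the $s$ edges $x_1y,\ldots,x_sy$. Since $|\mathcal{C}_s|=s+1$, the vertex $y$ is compatible with $\mathbf{x}$ with probability exactly $p=\frac{s+1}{2^s}=\alpha_s$. For distinct $y$ these events involve disjoint edge sets, so they are independent. Hence $|Y_\chi(\mathbf{x})|\sim\binomial(n-s,\alpha_s)$, which is a sum of $N=n-s$ independent $\bernoulli(\alpha_s)$ variables. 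By Lemma~\ref{le:Chernoff} with $a=t-\alpha_sN$, we get
$$\pr[|Y_\chi(\mathbf{x})|\ge t]\le \exp(-2(t-\alpha_s N)^2/N).$$
Now $\alpha_sN\le\alpha_s n\le t-\sqrt{2^s}\sqrt{t\ln t}$, so $a\ge\sqrt{2^s t\ln t}$. Together with $N\le n\le c_st$, this gives the exponent bound
$$\frac{2a^2}{N}\ge\frac{2\cdot 2^s t\ln t}{c_s t}=2(s+1)\ln t.$$

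For the union bound, the expected number of bad tuples is at most $n^s t^{-2(s+1)}\le (c_st)^s t^{-2s-2}$, which is less than $1$ for large $t$. So some coloring has $|Y_\chi(\mathbf{x})|\le t-1$ for all $\mathbf{x}$, and therefore $r'_2(K_{s,t})>n$. The identification with $\frac{2^s}{s+1}t-O_s(\sqrt{t\ln t})$ is immediate, since $c_s\sqrt{2^s}=\frac{2^s\sqrt{2^s}}{s+1}$ is a constant depending only on $s$.

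The main point to check carefully is the bookkeeping in the Hoeffding step. We must ensure $a\ge0$ and track the floor in $n$ and the shift $N=n-s$; both only help, since decreasing $N$ increases $a/\sqrt{N}$. The constant $\sqrt{2^s}$ was chosen precisely to give the exponent $2(s+1)\ln t$, which beats the $n^s$ factor. There is in fact plenty of slack, so no delicate estimate is needed. The step where an error would be most likely is verifying that the compatibility events for different $y$ are truly independent. They are, because every $y$ uses only its own edges to $\mathbf{x}$, and the edges inside $\{x_1,\ldots,x_s\}$ play no role.
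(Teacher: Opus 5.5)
Your proof is correct and follows the paper's: a uniformly random 2-edge-coloring of $K_n$, the reduction via Lemma~\ref{le:Y} to $|Y_\chi(\mathbf{x})|\sim\binomial(n-s,\frac{s+1}{2^s})$, Hoeffding, and a union bound over the at most $n^s$ ordered $s$-tuples. The only difference is bookkeeping. You set $a=t-\alpha_s(n-s)$ directly and get a tail bound of $t^{-2(s+1)}$. The paper instead takes $a=\sqrt{\frac{s+1}{2}(n-s)\ln n}$ to get a tail bound of $n^{-(s+1)}$, and then checks that $p(n-s)+a<t$ using $\ln(c_st)\le 2\ln t$.
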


\noindent {\bf Proof.}
%\begin{proof}
Let $n=\Big\lfloor\frac{2^s}{s+1}t-\frac{2^s\sqrt{2^s}}{s+1}\sqrt{t \ln t}\Big\rfloor.$
Consider a random $2$-edge-coloring $\chi$ of $K_n$ where each edge is colored red or blue independently, each with probability $\frac{1}{2}$.
Fix an arbitrary ordered $s$-tuple $\mathbf{x}=(x_1, \ldots, x_s)$ of $s$ distinct vertices.
For each vertex $y\in V(K_n)\setminus \{x_1, \ldots, x_s\}$,
the vector $\big(\mathbf{1}_{\scriptsize\{\mbox{$\chi(x_1y)$ is red}\}}, \ldots, \mathbf{1}_{\scriptsize\{\mbox{$\chi(x_sy)$ is red}\}}\big)$ is uniformly distributed in $\{0,1\}^{s}$.
Since exactly $s+1$ of the $2^s$ vectors in $\{0,1\}^{s}$ belong to $\mathcal{C}_s$, we have $|Y_{\chi}(\mathbf{x})|\sim \binomial(n-s, p)$, where $p=\frac{s+1}{2^s}$.
By Lemma~\ref{le:Chernoff}, we have
$$\pr\left[\left|Y_{\chi}(\mathbf{x})\right|\geq p(n-s)+\sqrt{\frac{s+1}{2}(n-s)\ln n}\right]\leq \exp\left(-\frac{2}{n-s}\frac{s+1}{2}(n-s)\ln n\right)=n^{-(s+1)}.$$
Note that there are $(n)_s$ ordered $s$-tuples $\mathbf{x}$ of $s$ distinct vertices.
Applying the union bound, the probability that at least one ordered $s$-tuple $\mathbf{x}$ satisfies $|Y_{\chi}(\mathbf{x})|\geq p(n-s)+\sqrt{\frac{s+1}{2}(n-s)\ln n}$ is at most $(n)_s\cdot n^{-(s+1)}<n^s\cdot n^{-(s+1)}=\frac{1}{n}<1$.
Thus, with positive probability, no ordered $s$-tuple $\mathbf{x}$ satisfies $|Y_{\chi}(\mathbf{x})|\geq p(n-s)+\sqrt{\frac{s+1}{2}(n-s)\ln n}$.
Since $n\leq \frac{2^s}{s+1}t-\frac{2^s\sqrt{2^s}}{s+1}\sqrt{t \ln t}<\frac{2^s}{s+1}t$ and $t$ is sufficiently large, we have $\ln \left(\frac{2^s}{s+1}t\right)\leq 2\ln t$ and
\begin{align*}
  ~ &~p(n-s)+\sqrt{\frac{s+1}{2}(n-s)\ln n} < pn+\sqrt{\frac{s+1}{2}n\ln n} \\
  < &~\frac{s+1}{2^s}\left(\frac{2^s}{s+1}t-\frac{2^s\sqrt{2^s}}{s+1}\sqrt{t \ln t}\right)+\sqrt{\frac{s+1}{2}\frac{2^s}{s+1}t\ln \left(\frac{2^s}{s+1}t\right)} \\
  \leq &~t-\sqrt{2^st\ln t}+\sqrt{2^{s-1}t\cdot 2\ln t} = t.
\end{align*}
Thus, with positive probability, no ordered $s$-tuple $\mathbf{x}$ satisfies $|Y_{\chi}(\mathbf{x})|\geq t$.
By Lemma~\ref{le:Y}, there exists a 2-edge-coloring of $K_n$ without orderable $K_{s,t}$.
Therefore, we have $r'_2(K_{s,t})> n = \left\lfloor\frac{2^s}{s+1}t-\frac{2^s\sqrt{2^s}}{s+1}\sqrt{t \ln t}\right\rfloor=\frac{2^s}{s+1}t-O_s\left(\sqrt{t \ln t}\right).$
The proof of Lemma~\ref{le:CRKstlower} is complete.
%\end{proof}
\hfill$\blacksquare$
\vspace{0.2cm}

Before providing a lower bound on $r'_2(K_{2,t})$, we first present a necessary and sufficient condition for the existence of an orderable $K_{2,t}$ in a 2-edge-colored graph.
Let $G$ be a 2-edge-colored $K_n$ using red and blue.
For any ordered pair $(x_1, x_2)$ of distinct vertices in $G$, let
\begin{align*}
  V_{rr}(x_1, x_2) &\colonequals \{y\in V(G)\setminus \{x_1, x_2\}\colon\, \mbox{$\chi(x_1y)$ is red, $\chi(x_2y)$ is red}\},\hspace{-0.5cm} & & n_{rr}(x_1, x_2)\colonequals |V_{rr}|, \\
  V_{rb}(x_1, x_2) &\colonequals \{y\in V(G)\setminus \{x_1, x_2\}\colon\, \mbox{$\chi(x_1y)$ is red, $\chi(x_2y)$ is blue}\},\hspace{-0.5cm} & & n_{rb}(x_1, x_2)\colonequals |V_{rb}|, \\
  V_{br}(x_1, x_2) &\colonequals \{y\in V(G)\setminus \{x_1, x_2\}\colon\, \mbox{$\chi(x_1y)$ is blue, $\chi(x_2y)$ is red}\},\hspace{-0.5cm} & & n_{br}(x_1, x_2)\colonequals |V_{br}|, \\
  V_{bb}(x_1, x_2) &\colonequals \{y\in V(G)\setminus \{x_1, x_2\}\colon\, \mbox{$\chi(x_1y)$ is blue, $\chi(x_2y)$ is blue}\},\hspace{-0.5cm} & & n_{bb}(x_1, x_2)\colonequals |V_{bb}|.
\end{align*}
For brevity, when the context is clear, we omit $(x_1, x_2)$ from this notation.
Note that $n_{rr}+n_{rb}+n_{br}+n_{bb}=n-2$.

\begin{lemma}\label{le:r2K2tcondition}
Let $G$ be a 2-edge-colored $K_n$ using red and blue.
Then $G$ contains an orderable $K_{2,t}$ if and only if there exists an ordered pair $(x_1, x_2)$ of distinct vertices in $G$ such that $n-2-\min\{n_{rb}, n_{br}\}\geq t$.
\end{lemma}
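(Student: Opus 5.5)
This is a direct application of Lemma~\ref{le:Y} with $s=2$. The plan is to compute $|Y_{\chi}(\mathbf{x})|$ explicitly for an ordered pair $\mathbf{x}=(x_1,x_2)$ and then optimize over the ordering of the pair.

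By definition, $y$ is compatible with $(x_1,x_2)$ exactly when its indicator vector lies in $\mathcal{C}_2=\{00,10,11\}$. That is, $y$ lies in $V_{bb}\cup V_{rb}\cup V_{rr}$. Hence
$$|Y_{\chi}(x_1,x_2)|=n_{rr}+n_{rb}+n_{bb}=n-2-n_{br}(x_1,x_2).$$
Swapping the order exchanges the roles of $rb$ and $br$: $n_{br}(x_2,x_1)=n_{rb}(x_1,x_2)$. So $|Y_{\chi}(x_2,x_1)|=n-2-n_{rb}(x_1,x_2)$. Therefore
$$\max\{|Y_{\chi}(x_1,x_2)|,|Y_{\chi}(x_2,x_1)|\}=n-2-\min\{n_{rb},n_{br}\}.$$

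For the forward direction, suppose $G$ contains an orderable $K_{2,t}$. Lemma~\ref{le:Y} gives an ordered pair $\mathbf{x}$ with $|Y_{\chi}(\mathbf{x})|\ge t$. Then $n-2-\min\{n_{rb},n_{br}\}\ge t$ for that unordered pair; the quantity is symmetric, so it holds for either ordering. For the converse, if $n-2-\min\{n_{rb},n_{br}\}\ge t$, choose the ordering whose excluded class is the one of minimum size. Then $|Y_{\chi}|\ge t$, and Lemma~\ref{le:Y} yields an orderable $K_{2,t}$.

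There is no real obstacle. The only point needing care is checking that $\mathcal{C}_2$ excludes exactly the pattern $01$ (red to $x_2$, blue to $x_1$, i.e.\ $V_{br}$), and that reversing the pair swaps $V_{rb}$ and $V_{br}$ while fixing $V_{rr}$ and $V_{bb}$.
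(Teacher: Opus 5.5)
Your proof is correct, but it is organized differently from the paper's. You derive the lemma as the $s=2$ case of Lemma~\ref{le:Y}. You observe that $|Y_{\chi}(x_1,x_2)|=n-2-n_{br}(x_1,x_2)$ and that reversing the pair swaps $n_{rb}$ and $n_{br}$. Hence $n-2-\min\{n_{rb},n_{br}\}$ is exactly the larger of $|Y_{\chi}(x_1,x_2)|$ and $|Y_{\chi}(x_2,x_1)|$, and both directions follow from Lemma~\ref{le:Y}.

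The paper instead argues from scratch. For one direction, it assumes $n_{rb}\le n_{br}$, picks $Y\subseteq V_{rr}\cup V_{br}\cup V_{bb}$, and writes down the explicit ordering $(Y\cap(V_{rr}\cup V_{bb}),x_1,Y\cap V_{br},x_2)$. For the other, it notes that any two $y_i,y_j$ in an orderable $K_{2,t}$ span, together with $x_1,x_2$, an orderable $C_4$. Such a $C_4$ cannot be alternating, so $Y$ misses $V_{rb}$ or $V_{br}$.

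Your route is shorter and shows that the lemma is just the general compatibility criterion, optimized over the two orderings of the pair. The paper's route is self-contained and elementary. Yours inherits, through the direction (i)$\Rightarrow$(iv) of Lemma~\ref{le:lonesum}, a dependence on Ryser's lonesum characterization. This is harmless here: for a $2\times t$ matrix, the absence of an alternating $2\times 2$ submatrix trivially forces every column into $\mathcal{C}_2$ after possibly swapping the two rows.
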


\noindent {\bf Proof.}
%\begin{proof}
First, we assume that there exists an ordered pair $(x_1, x_2)$ of distinct vertices in $G$ such that $n-2-\min\{n_{rb}, n_{br}\}\geq t$.
Without loss of generality, we may assume that $n_{rb}\leq n_{br}$.
Then $n_{rr}+n_{br}+n_{bb}=n-2-n_{rb}\geq t$.
Let $Y\subseteq V_{rr}\cup V_{br}\cup V_{bb}$ with $|Y|=t$.
Then the edges between $\{x_1, x_2\}$ and $Y$ induce an orderable $K_{2,t}$ under the ordering $(Y\cap (V_{rr}\cup V_{bb}), x_1, Y\cap V_{br}, x_2)$,
where the vertices inside each of $Y\cap (V_{rr}\cup V_{bb})$ and $Y\cap V_{br}$ are ordered arbitrarily.

Next, we assume that $G$ contains an orderable $K_{2,t}$ with bipartition $X\cup Y$, where $X=\{x_1, x_2\}$ and $Y=\{y_1, y_2, \ldots, y_t\}$.
Then for any pair $(y_i, y_j)$ of distinct vertices in $Y$, the edges between $\{x_1, x_2\}$ and $\{y_i, y_j\}$ induce an orderable $K_{2,2}$ (i.e., an orderable $C_4$) that inherits the ordering from the orderable $K_{2,t}$.
Since the alternating $C_4$ is not an orderable $C_4$, we have either $Y\cap V_{rb}=\emptyset$ or $Y\cap V_{br}=\emptyset$.
Thus $t\leq n_{rr}+n_{bb}+\max\{n_{rb}, n_{br}\}=n-2-\min\{n_{rb}, n_{br}\}$.
%\end{proof}
\hfill$\blacksquare$
%\vspace{0.2cm}

\begin{corollary}\label{co:CRK2tlower-construction-SRG}
If there exists an $SRG\left(n, k, \lambda, \mu\right)$ with $\min\{k-1-\lambda, k-\mu\}\geq n-1-t$, then
$r'_2(K_{2,t})\geq n+1.$
\end{corollary}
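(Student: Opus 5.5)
The plan is to use the strongly regular graph as the red graph of a $2$-edge-coloring of $K_n$ and to invoke Lemma~\ref{le:r2K2tcondition}. Let $\Gamma$ be an $SRG(n,k,\lambda,\mu)$. Color an edge of $K_n$ on $V(\Gamma)$ red if it is an edge of $\Gamma$, and blue otherwise. By Lemma~\ref{le:r2K2tcondition}, there is no orderable $K_{2,t}$ as long as every ordered pair $(x_1,x_2)$ of distinct vertices satisfies $n-2-\min\{n_{rb},n_{br}\}\leq t-1$. Equivalently, we need $\min\{n_{rb},n_{br}\}\geq n-1-t$. Once this holds, $r'_2(K_{2,t})\geq n+1$ follows at once.

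The main step is to compute $n_{rb}$ and $n_{br}$ from the parameters. There are two cases, according to whether the pair is adjacent in $\Gamma$.

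If $x_1x_2\in E(\Gamma)$, then $V_{rb}(x_1,x_2)$ consists of the neighbors of $x_1$ other than $x_2$ that are not common neighbors. Hence $n_{rb}=k-1-\lambda$, and by symmetry $n_{br}=k-1-\lambda$ as well.

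If $x_1x_2\notin E(\Gamma)$, then $x_2\notin N_\Gamma(x_1)$. The vertices adjacent to $x_1$ but not to $x_2$ therefore number $n_{rb}=k-\mu$, and likewise $n_{br}=k-\mu$.

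In both cases we get $\min\{n_{rb},n_{br}\}\geq\min\{k-1-\lambda,\,k-\mu\}\geq n-1-t$ by hypothesis. This yields the required inequality for every ordered pair.

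I do not expect a real obstacle here. The only points needing care are excluding $x_1,x_2$ themselves from the counts, which accounts for the $-1$ in the adjacent case, and observing that regularity makes both counts symmetric in $x_1$ and $x_2$. Finally, the coloring uses only two colors, so it automatically contains no rainbow $K_3$, although this is not needed for the bound on $r'_2$.
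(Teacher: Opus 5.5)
Your proposal is correct and follows essentially the same route as the paper: color the strongly regular graph's edges red and the complement blue, compute $n_{rb}=n_{br}=k-1-\lambda$ for adjacent pairs and $n_{rb}=n_{br}=k-\mu$ for nonadjacent pairs, and conclude via Lemma~\ref{le:r2K2tcondition} that $n-2-\min\{n_{rb},n_{br}\}\leq t-1$ for every ordered pair. This rules out an orderable $K_{2,t}$ in the coloring of $K_n$, so $r'_2(K_{2,t})\geq n+1$.
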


\noindent {\bf Proof.}
Let $G$ be an $SRG\left(n, k, \lambda, \mu\right)$ with $\min\{k-1-\lambda, k-\mu\}\geq n-1-t$.
We construct a 2-edge-colored $K_n$ by coloring all edges of $G$ with red and all edges of the complement $\overline{G}$ with blue.
Fix an ordered pair $(x_1, x_2)$ of distinct vertices arbitrarily.
If $x_1x_2$ is red, then $x_1$ and $x_2$ are adjacent in $G$, so $n_{rr}=|N_G(x_1)\cap N_G(x_2)|=\lambda$.
Then $n_{rb}=d_G(x_1)-|\{x_2\}|-n_{rr}=k-1-\lambda \geq n-1-t$ and $n_{br}=d_G(x_2)-|\{x_1\}|-n_{rr}=k-1-\lambda \geq n-1-t$.
If $x_1x_2$ is blue, then $x_1$ and $x_2$ are nonadjacent in $G$, so $n_{rr}=|N_G(x_1)\cap N_G(x_2)|=\mu$.
Then $n_{rb}=d_G(x_1)-n_{rr}=k-\mu\geq n-1-t$ and $n_{br}=d_G(x_2)-n_{rr}=k-\mu\geq n-1-t$.
In either case, we have $n-2-\min\{n_{rb}, n_{br}\}\leq n-2-(n-1-t)= t-1$.
By Lemma~\ref{le:r2K2tcondition}, there is no orderable $K_{2,t}$, and thus $r'_2(K_{2,t})\geq n+1.$
\hfill$\blacksquare$
\vspace{0.2cm}

Now we have all the ingredients to present lower bound constructions on $r'_2(K_{2,t})$.
Lemma~\ref{le:CRK2tlower-construction-SRG-1} is a strengthening of Theorem~\ref{thm:CRK2t3t} (i); it not only provides a sufficient condition for $r'_2(K_{2,t})= CR(K_{2,t}, K_3)=\frac{4t}{3}+2$, but actually gives a necessary and sufficient condition for this equality.

\begin{lemma}\label{le:CRK2tlower-construction-SRG-1}
For any positive integer $t$ with $t\equiv 0 \pmod{3}$, let $n=\frac{4t}{3}+1$.
Then $r'_2(K_{2,t})= CR(K_{2,t}, K_3)=\frac{4t}{3}+2$ if and only if there exists an $SRG\left(n, \frac{n-1}{2}, \frac{n-5}{4}, \frac{n-1}{4}\right).$
\end{lemma}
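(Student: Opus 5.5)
We prove the two directions separately. Throughout, write $n=\frac{4t}{3}+1$, so that $n-1=\frac{4t}{3}$ is even and $n-1-t=\frac{t}{3}$. By Theorem~\ref{thm:CRKstUpper} and the inequality $r'_2(K_{2,t})\leq CR(K_{2,t},K_3)$, we always have
$$r'_2(K_{2,t})\leq CR(K_{2,t},K_3)\leq \frac{4t}{3}+2=n+1.$$
Hence the displayed equality holds if and only if $r'_2(K_{2,t})\geq n+1$, that is, if and only if some $2$-edge-coloring of $K_n$ contains no orderable $K_{2,t}$.

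\emph{Sufficiency.} Suppose $G$ is an $SRG\left(n,\frac{n-1}{2},\frac{n-5}{4},\frac{n-1}{4}\right)$. Its parameters are $k=\frac{2t}{3}$, $\lambda=\frac{t}{3}-1$ and $\mu=\frac{t}{3}$. Then
$$k-1-\lambda=\frac{t}{3}=k-\mu=n-1-t.$$
Corollary~\ref{co:CRK2tlower-construction-SRG} therefore gives $r'_2(K_{2,t})\geq n+1$, and equality follows from the upper bound above.

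\emph{Necessity.} Suppose $r'_2(K_{2,t})=n+1$. Fix a red/blue coloring of $K_n$ with no orderable $K_{2,t}$, and let $G$ be its red graph. By Lemma~\ref{le:r2K2tcondition}, every ordered pair $(x_1,x_2)$ satisfies $n-2-\min\{n_{rb},n_{br}\}\leq t-1$. This means $\min\{n_{rb},n_{br}\}\geq n-1-t=\frac{t}{3}$, and so
$$n_{rb}(x_1,x_2)+n_{br}(x_1,x_2)\geq \frac{2t}{3}$$
for every unordered pair $\{x_1,x_2\}$.

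Next we double count the triples $(y,\{x_1,x_2\})$ in which $y$ is joined to $x_1$ and $x_2$ by edges of different colors. Counting by pairs gives at least $\binom{n}{2}\frac{2t}{3}$ such triples. Counting by $y$ gives exactly $\sum_y d_r(y)d_b(y)$, where $d_r(y)+d_b(y)=n-1$. Since $n-1$ is even, each term is at most $\left(\frac{n-1}{2}\right)^2=\left(\frac{2t}{3}\right)^2$. Moreover,
$$\binom{n}{2}\frac{2t}{3}=n\cdot\frac{2t}{3}\cdot\frac{2t}{3},$$
so the lower and upper counts coincide and every inequality must be tight. Consequently every vertex has red degree $d_r(y)=\frac{2t}{3}=\frac{n-1}{2}$, and every pair satisfies $n_{rb}+n_{br}=\frac{2t}{3}$. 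Combined with $\min\{n_{rb},n_{br}\}\geq\frac{t}{3}$, this forces $n_{rb}=n_{br}=\frac{t}{3}$ for every pair.

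We now read off the number of common red neighbours $n_{rr}$. If $x_1x_2$ is red, then $n_{rr}=k-1-n_{rb}=\frac{t}{3}-1=\frac{n-5}{4}$. If $x_1x_2$ is blue, then $n_{rr}=k-n_{rb}=\frac{t}{3}=\frac{n-1}{4}$. Hence $G$ is a conference graph with the required parameters.

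The only step that is not routine bookkeeping is the double-counting step: one has to see that the pair lower bound $\frac{2t}{3}$ exactly matches the maximum of $\sum_y d_r(y)d_b(y)$. It is this exact match that forces both regularity and the equality $n_{rb}=n_{br}$ for every pair.
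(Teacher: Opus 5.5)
Your proof is correct and follows essentially the same route as the paper: sufficiency via Corollary~\ref{co:CRK2tlower-construction-SRG} combined with the upper bound from Theorem~\ref{thm:CRKstUpper}, and necessity via double counting mixed-colour pairs against $\sum_y d_r(y)d_b(y)\le n\left(\frac{n-1}{2}\right)^2$, which forces regularity and $n_{rb}=n_{br}=\frac{t}{3}$. The only difference is cosmetic: you sum $n_{rb}+n_{br}$ over unordered pairs, while the paper sums $n_{rb}$ over ordered pairs, and these give the same count.
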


\noindent {\bf Proof.}
%\begin{proof}
First, assume that there exists an $SRG\left(n, \frac{n-1}{2}, \frac{n-5}{4}, \frac{n-1}{4}\right)$.
Since $n=\frac{4t}{3}+1$, we have $t=\frac{3(n-1)}{4}$ and $n-1-t=\frac{n-1}{4}$.
Then $\frac{n-1}{2}-1- \frac{n-5}{4}=\frac{n-1}{4}=n-1-t$ and $\frac{n-1}{2}-\frac{n-1}{4}=\frac{n-1}{4}=n-1-t$.
By Corollary~\ref{co:CRK2tlower-construction-SRG}, we have $r'_2(K_{2,t})\geq n+1= \frac{4t}{3}+2.$
Combining with Theorem~\ref{thm:CRKstUpper}, we have $r'_2(K_{2,t})= CR(K_{2,t}, K_3)=\frac{4t}{3}+2$.

Conversely, assume that $r'_2(K_{2,t})= CR(K_{2,t}, K_3)=\frac{4t}{3}+2$.
Then there exists a 2-edge-coloring of $K_{n}$ with $n=\frac{4t}{3}+1$ that contains no orderable $K_{2,t}$.
We proceed by double-counting $\sum\nolimits_{(x_1, x_2)}n_{rb}(x_1, x_2)$, where the sum is taken over all ordered pairs $(x_1, x_2)$ of distinct vertices of $V(K_n)$.
On the one hand, since there is no orderable $K_{2,t}$, Lemma~\ref{le:r2K2tcondition} gives $n-2-\min\{n_{rb}, n_{br}\}\leq t-1$ for all ordered pairs $(x_1, x_2)$ of distinct vertices.
Thus $n_{rb}\geq \min\{n_{rb}, n_{br}\}\geq n-2-(t-1)=\frac{t}{3}$, so
\begin{equation}\label{eq:CRK2tlower-construction-SRG-1-1}
\sum\nolimits_{(x_1, x_2)}n_{rb}(x_1, x_2)\geq n(n-1)\frac{t}{3},
\end{equation}
with equality if and only if $n_{rb}=\frac{t}{3}=\frac{n-1}{4}$ for all ordered pairs $(x_1, x_2)$ of distinct vertices.
On the other hand, denote the two colors in the edge-coloring by red and blue.
For each vertex $z\in V(K_n)$, let $d_r(z)$ (resp., $d_b(z)$) be the number of red (resp., blue) edges incident with $z$.
Then
\begin{align}\label{eq:CRK2tlower-construction-SRG-1-2}
\sum\nolimits_{(x_1, x_2)}n_{rb}(x_1, x_2)= &~\sum\nolimits_{z\in V(K_n)}d_r(z)d_b(z) = \sum\nolimits_{z\in V(K_n)}d_r(z)(n-1-d_r(z)) \nonumber\\
\leq &~n\frac{(n-1)^2}{4} = n(n-1)\frac{t}{3},
\end{align}
with equality if and only if $d_r(z)=d_b(z)=\frac{n-1}{2}$ for all vertices $z\in V(K_n)$.
By Inequalities~(\ref{eq:CRK2tlower-construction-SRG-1-1}) and (\ref{eq:CRK2tlower-construction-SRG-1-2}), we have $\sum\nolimits_{(x_1, x_2)}n_{rb}(x_1, x_2)=n(n-1)\frac{t}{3}$, $n_{rb}=\frac{t}{3}=\frac{n-1}{4}$ for all ordered pairs $(x_1, x_2)$ of distinct vertices, and $d_r(z)=d_b(z)=\frac{n-1}{2}$ for all vertices $z\in V(K_n)$.
Let $G$ be the spanning subgraph whose edges are precisely the red edges.
Then $G$ is an $n$-vertex $\frac{n-1}{2}$-regular graph.
For any ordered pair $(x_1, x_2)$ of distinct vertices, if $x_1x_2\in E(G)$ (i.e., $x_1x_2$ is red), then
$$|N_G(x_1)\cap N_G(x_2)|=n_{rr}=d_r(x_1)-|\{x_2\}|-n_{rb}=\frac{n-1}{2}-1-\frac{n-1}{4}=\frac{n-5}{4};$$
if $x_1x_2\notin E(G)$ (i.e., $x_1x_2$ is blue), then
$$|N_G(x_1)\cap N_G(x_2)|=n_{rr}=d_r(x_1)-n_{rb}=\frac{n-1}{2}-\frac{n-1}{4}=\frac{n-1}{4}.$$
Hence, $G$ is a strongly regular graph with parameters $\left(n, \frac{n-1}{2}, \frac{n-5}{4}, \frac{n-1}{4}\right)$.
%\end{proof}
\hfill$\blacksquare$
%\vspace{0.2cm}

\begin{lemma}\label{le:CRK2tlower-construction-Hadamard-1}
Let $m$ be a positive integer.
\begin{itemize}
\item[{\rm (i)}] If there exists a graphical Hadamard matrix of order $4m^2$, then
$$r'_2(K_{2,3m^2-1})= CR(K_{2,3m^2-1}, K_3)=4m^2.$$
\item[{\rm (ii)}] If there exists a Hadamard matrix of order $m$ $(m\geq 2)$, then
$$r'_2\Big(K_{2,\frac{3m^2}{4}-1}\Big)= CR\Big(K_{2,\frac{3m^2}{4}-1}, K_3\Big)=m^2.$$
\end{itemize}
\end{lemma}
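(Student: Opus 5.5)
The upper bounds come directly from Theorem~\ref{thm:CRKstUpper}, so the real work is a matching lower-bound construction via Corollary~\ref{co:CRK2tlower-construction-SRG}; part (ii) will then reduce to part (i).

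\emph{Upper bounds.} We have $r'_2(K_{2,t})\le CR(K_{2,t},K_3)\le \lfloor 4t/3\rfloor+2$. For $t=3m^2-1$ this gives $\lfloor 4m^2-\tfrac43\rfloor+2=4m^2$. For $t=\tfrac{3m^2}{4}-1$ it gives $\lfloor m^2-\tfrac43\rfloor+2=m^2$. It therefore suffices to exhibit a red/blue coloring of $K_{n-1}$ with no orderable $K_{2,t}$, where $n$ is the claimed value.

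\emph{Lower bound in (i).} Let $H$ be a graphical Hadamard matrix of order $4m^2$; replacing $H$ by $-H$ if needed, assume $h_{ii}=1$. Then $S\colonequals H-I$ is a symmetric $\pm1$ matrix with zero diagonal. From $H^2=4m^2I$, $H$ has eigenvalues $\pm 2m$, so $S$ has exactly two eigenvalues, $2m-1$ and $-2m-1$. Hence $S$ is the Seidel matrix of a regular two-graph on $4m^2$ points.

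Pick a vertex $v$, Seidel-switch so that $v$ becomes isolated, and delete $v$. The standard theory of regular two-graphs says this descendant is strongly regular with $k=2\mu$. Plugging in the eigenvalues gives parameters $(4m^2-1,\,2m^2,\,m^2,\,m^2)$, possibly for the complementary graph; that is harmless because complementation just swaps red and blue. I will derive these parameters directly from $S^2+2S-(4m^2-1)I=0$ restricted to the switched graph, rather than cite them, since this is the step most likely to hide a sign error.

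Now check Corollary~\ref{co:CRK2tlower-construction-SRG} with $n'=4m^2-1$ and $t=3m^2-1$, so that $n'-1-t=m^2-1$. We get $k-1-\lambda=2m^2-1-m^2=m^2-1$ and $k-\mu=m^2$, so both are at least $m^2-1$. This yields $r'_2(K_{2,3m^2-1})\ge 4m^2$.

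\emph{Reduction of (ii) to (i).} If a Hadamard matrix $A=(a_{ij})$ of order $m\ge2$ exists, then $m$ is even. Define the $m^2\times m^2$ matrix $M$ indexed by pairs, with $M_{(i,j),(k,l)}=a_{il}a_{kj}$. Swapping $(i,j)\leftrightarrow(k,l)$ gives $a_{kj}a_{il}$, so $M$ is symmetric. Its diagonal entries are $a_{ij}^2=1$, so the diagonal is constant. Orthogonality of distinct rows of $M$ follows from the row and column orthogonality of $A$: up to reindexing, $M$ is $A\otimes A^{\top}$ with a column permutation, so $MM^\top=m^2I$.

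Thus $M$ is a graphical Hadamard matrix of order $m^2=4(m/2)^2$. Applying (i) with $m/2$ in place of $m$ gives $t=3(m/2)^2-1=\tfrac{3m^2}{4}-1$ and value $m^2$, which is exactly (ii).

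The main obstacle is the parameter computation for the descendant in (i). The key requirement is that $\lambda$ and $\mu$ come out equal to $m^2$, so that both the adjacent and nonadjacent conditions of Corollary~\ref{co:CRK2tlower-construction-SRG} hold. Without deleting and switching, the full $4m^2$-vertex strongly regular graph has $k-1-\lambda=m^2-1$, which is one too small for the $4m^2$-vertex threshold $m^2$; this is why the switching step is needed.
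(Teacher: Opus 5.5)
Your proposal is correct and follows the same route as the paper. The upper bound comes from Theorem~\ref{thm:CRKstUpper}, the lower bound from feeding an $SRG(4m^2-1,2m^2,m^2,m^2)$ into Corollary~\ref{co:CRK2tlower-construction-SRG}, and (ii) reduces to (i) through a graphical Hadamard matrix of order $m^2$. The only difference is that you prove the two facts the paper simply cites: the descendant parameters (your identity $S^2+2S-(4m^2-1)I=0$ does give a $2m^2$-regular descendant with $A'^2=m^2I+m^2J$, so $\lambda=\mu=m^2$), and the Goethals--Seidel matrix, which is a column permutation of $A\otimes A^{\top}$.
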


\noindent {\bf Proof.}
(i) If $m=1$, then $r'_2(K_{2,2})= CR(K_{2,2}, K_3)=4$ by Corollary~\ref{cor:forest}.
Now assume that $m\geq 2$.
By Theorem~\ref{thm:CRKstUpper}, we have $r'_2(K_{2,3m^2-1})\leq CR(K_{2,3m^2-1}, K_3)\leq \left\lfloor\frac{4(3m^2-1)}{3}\right\rfloor+2=4m^2$.
Moreover, it is well-known that for $4m^2>4$, if there exists a graphical Hadamard matrix of order $4m^2$, then there exists an $SRG(4m^2-1, 2m^2, m^2, m^2)$ (see, for example, \cite[Section~4]{AbBH}).
Note that $\min\{2m^2-1-m^2, 2m^2-m^2\}=m^2-1= 4m^2-1-1-(3m^2-1)$.
By Corollary~\ref{co:CRK2tlower-construction-SRG}, we have $r'_2(K_{2,3m^2-1})\geq 4m^2-1+1= 4m^2.$
Therefore, we have $r'_2(K_{2,3m^2-1})= CR(K_{2,3m^2-1}, K_3)=4m^2.$

(ii) Since $m\geq 2$ and the order of a Hadamard matrix must be 1, 2, or a multiple of 4 (see, for example, \cite[Chapter~2]{Hor}), we have that $m$ is even.
Let $m=2m_1$.
Then $\frac{3m^2}{4}-1=3m_1^2-1$.
Goethals and Seidel~\cite[Theorem~4.4]{GoSe} proved that if there exists a Hadamard matrix of order $m$, then there exists a graphical Hadamard matrix of order $m^2$.
Thus there exists a graphical Hadamard matrix of order $m^2=4m_1^2$.
By (i), we have $r'_2(K_{2,3m_1^2-1})= CR(K_{2,3m_1^2-1}, K_3)=4m_1^2.$
Hence, $r'_2\Big(K_{2,\frac{3m^2}{4}-1}\Big)= CR\Big(K_{2,\frac{3m^2}{4}-1}, K_3\Big)=m^2.$
\hfill$\blacksquare$
%\vspace{0.2cm}

\begin{lemma}\label{le:CRK2tlower-construction-conference-1}
Let $m$ be a positive integer.
\begin{itemize}
\item[{\rm (i)}] If there exists a conference matrix of order $m+1$, then
$$r'_2\Big(K_{2,\frac{3m^2+1}{4}}\Big)= CR\Big(K_{2,\frac{3m^2+1}{4}}, K_3\Big)=m^2+2.$$
\item[{\rm (ii)}]If there exists a regular symmetric conference matrix of order $m^2+1$, then
$$r'_2\Big(K_{2,\frac{3m^2+1}{4}}\Big)= CR\Big(K_{2,\frac{3m^2+1}{4}}, K_3\Big)=m^2+2.$$
\end{itemize}
\end{lemma}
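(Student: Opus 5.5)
The plan is to sandwich both parameters between the upper bound of Theorem~\ref{thm:CRKstUpper} and a strongly regular lower-bound construction via Corollary~\ref{co:CRK2tlower-construction-SRG}. In both parts $m$ is odd: in (i) because conference matrices have even order (Belevitch), and in (ii) because $m^2+1\equiv 2\pmod 4$. Hence $t\colonequals\frac{3m^2+1}{4}$ is an integer, and $\frac{4t}{3}=m^2+\frac13$. Theorem~\ref{thm:CRKstUpper} then gives
$$r'_2(K_{2,t})\le CR(K_{2,t},K_3)\le \left\lfloor m^2+\tfrac13\right\rfloor+2=m^2+2.$$
The trivial case $m=1$, where $t=1$, is covered by Proposition~\ref{prop:forest}. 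So it remains to show $r'_2(K_{2,t})\ge m^2+2$. By Corollary~\ref{co:CRK2tlower-construction-SRG} it suffices to exhibit an $SRG(n,k,\lambda,\mu)$ with $n=m^2+1$ and
$$\min\{k-1-\lambda,\;k-\mu\}\ \ge\ n-1-t=\frac{m^2-1}{4}.$$

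I would prove (ii) first, directly. Let $C$ be a regular symmetric conference matrix of order $N=m^2+1$, so $C^2=m^2I$ and $CJ=\varepsilon mJ$ with $\varepsilon\in\{1,-1\}$. Let $G$ be the graph with adjacency matrix $A=\frac12(J-I-C)$. Its entries lie in $\{0,1\}$, $A$ is symmetric, and its diagonal is zero.

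Expanding $4A^2=(J-I-C)^2$ needs only the relations $(J-I)^2=(N-2)J+I$, $(J-I)C=C(J-I)=\varepsilon mJ-C$ and $C^2=m^2I$. Substituting $C=J-I-2A$ should give
$$A^2=\frac{(m-\varepsilon)^2}{4}J+\frac{m^2-1}{4}I-A.$$
Reading off the parameters, I expect $k=\frac{(m-\varepsilon)^2+m^2-1}{4}$, $\lambda=\frac{(m-\varepsilon)^2}{4}-1$ and $\mu=\frac{(m-\varepsilon)^2}{4}$. These give $k-1-\lambda=k-\mu=\frac{m^2-1}{4}$, which is exactly the required bound. Therefore (ii) follows from Corollary~\ref{co:CRK2tlower-construction-SRG} and the upper bound above.

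For (i), I would reduce to (ii) by quoting the known construction, which I believe is in \cite{GHKS} or the Goethals--Seidel line of work: if a conference matrix of order $m+1$ exists, then a regular symmetric conference matrix of order $m^2+1$ exists.

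If I cannot cite this, I would build it in two steps. First, normalize the conference matrix of order $m+1$ so that its core $Q$ of order $m$ satisfies $QJ=JQ=0$ and $QQ^\top=mI-J$. Second, form a Kronecker-type combination of $Q\otimes Q$, $J\otimes I$ and $I\otimes J$, bordered suitably, to obtain a symmetric matrix of order $m^2+1$ with zero diagonal, $\pm1$ off-diagonal entries and constant row sums $\pm m$.

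This step is the main obstacle. The symmetric case $m\equiv1\pmod 4$ and the antisymmetric case $m\equiv3\pmod 4$ of $Q$ probably need separate sign choices, for example using $Q\otimes Q^\top$ so that the result is symmetric in both cases. Once this matrix is in hand, (i) follows immediately from (ii).
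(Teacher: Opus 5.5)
Your architecture matches the paper's. The upper bound $m^2+2$ comes from Theorem~\ref{thm:CRKstUpper}, using $\lfloor 4t/3\rfloor=m^2$. The lower bound comes from Corollary~\ref{co:CRK2tlower-construction-SRG} applied to an $SRG(m^2+1,k,\lambda,\mu)$ with $k-1-\lambda=k-\mu=\frac{m^2-1}{4}$.

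In both parts, the paper simply cites the existence of $SRG\bigl(m^2+1,\frac{m^2-m}{2},\frac{m^2-2m-3}{4},\frac{m^2-2m+1}{4}\bigr)$. In (ii) you derive it instead. Your identity $A^2=\frac{(m-\varepsilon)^2}{4}J+\frac{m^2-1}{4}I-A$ is correct, and for $\varepsilon=1$ it gives exactly those parameters, so your (ii) is self-contained.

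The same computation run backwards, with $C=J-I-2A$, shows that such an SRG is equivalent to a regular symmetric conference matrix of order $m^2+1$. So your reduction of (i) to (ii) is exactly the fact the paper takes from \cite[Section~8.2]{BrVaMa}. Citing that reference closes (i) at the same level of rigor as the paper.

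Your fallback construction for (i), as described, would not give regularity. Take the normalized core $Q$, so that $Q\mathbf{1}=Q^{\top}\mathbf{1}=0$ and $QQ^{\top}=mI-J$. After normalization one automatically has $Q^{\top}=\pm Q$ (Delsarte--Goethals--Seidel), so $Q\otimes Q$ is already symmetric in both residue classes and no case split is needed. The bordered matrix
$$C'=\begin{pmatrix}0&\mathbf{1}^{\top}\\ \mathbf{1}&Q\otimes Q+I\otimes J-J\otimes I\end{pmatrix}$$
does satisfy $C'^{2}=m^2I$. However, its first row sums to $m^2$ and every other row sums to $1$, so it is not regular.

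The missing step is a Seidel switching. Let $d=\bigl(1,\mathrm{vec}(Q+I)\bigr)\in\{1,-1\}^{m^2+1}$, that is, the vector with entry $(Q+I)_{ab}$ at position $(a,b)$. Two facts give $C'd=md$:
\begin{itemize}
\item $(Q+I)\mathbf{1}=(Q+I)^{\top}\mathbf{1}=\mathbf{1}$;
\item $Q(Q+I)Q^{\top}=mQ+mI-J$.
\end{itemize}
Hence $\mathrm{diag}(d)\,C'\,\mathrm{diag}(d)$ is a regular symmetric conference matrix with all row sums equal to $m$. Your argument for (ii) then finishes (i).
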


\noindent {\bf Proof.}
We first show that $\frac{3m^2+1}{4}$ is an integer under the assumptions.
Belevitch~\cite{Bel} proved that the order of a conference matrix must be even.
Thus if there exists a conference matrix of order $m+1$, then $m$ is odd, say $m=2k+1$.
Then $3m^2+1=3(2k+1)^2+1=12k^2+12k+4=4(3k^2+3k+1)$, so $\frac{3m^2+1}{4}$ is an integer.
Moreover, if $n$ is the order of a symmetric conference matrix, then $n\equiv 2 \pmod{4}$; see~\cite{Bel}.
Thus if there exists a regular symmetric conference matrix of order $m^2+1$, then $m^2 \equiv 1 \pmod{4}$, so $3m^2+1 \equiv 0 \pmod{4}$, i.e., $\frac{3m^2+1}{4}$ is an integer.

We now show that $r'_2\Big(K_{2,\frac{3m^2+1}{4}}\Big)= CR\Big(K_{2,\frac{3m^2+1}{4}}, K_3\Big)=m^2+2$.
For $m=1$, we have $r'_2(K_{2,1})=CR(K_{2,1}, K_3)=3=1^2+2$ by Proposition~\ref{prop:forest}.
Hence, we assume that $m>1$.
By Theorem~\ref{thm:CRKstUpper}, we have $r'_2\Big(K_{2,\frac{3m^2+1}{4}}\Big)\leq CR\Big(K_{2,\frac{3m^2+1}{4}}, K_3\Big)\leq \left\lfloor\frac{4}{3}\cdot\frac{3m^2+1}{4}\right\rfloor+2=m^2+2.$
For the lower bound,
it is known that if there exists a conference matrix of order $m+1$ (resp., a regular symmetric conference matrix of order $m^2+1$), then there exists an $SRG\left(m^2+1,\frac{m^2-m}{2}, \frac{m^2-2m-3}{4}, \frac{m^2-2m+1}{4}\right)$; see, for example, \cite[Section~8.2]{BrVaMa} (resp., \cite[Section~4.2]{GHKS}).
Since
$\frac{m^2-m}{2}-1-\frac{m^2-2m-3}{4}=\frac{m^2-1}{4}=m^2+1-1-\frac{3m^2+1}{4}$
and
$\frac{m^2-m}{2}-\frac{m^2-2m+1}{4}=\frac{m^2-1}{4}=m^2+1-1-\frac{3m^2+1}{4}$,
we have $r'_2\Big(K_{2,\frac{3m^2+1}{4}}\Big)\geq m^2+1+1= m^2+2$ by Corollary~\ref{co:CRK2tlower-construction-SRG}.
The result follows.
\hfill$\blacksquare$
\vspace{0.2cm}

Finally, we present a lower bound construction for $r'_2(K_{3,t})$.

\begin{lemma}\label{le:CRK3tlower}
For any positive integer $t$ with $t\equiv 0 \pmod{2}$, if there exists an $SRG\left(2t+1, t, \frac{t}{2}-1, \frac{t}{2}\right)$, then
$$r'_2(K_{3,t})= CR(K_{3,t}, K_3)=2t+2.$$
\end{lemma}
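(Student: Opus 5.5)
The upper bound comes for free: for $s=3$, Theorem~\ref{thm:CRKstUpper} gives $r'_2(K_{3,t})\leq CR(K_{3,t},K_3)\leq \lceil 2(t-1)+\frac{2\cdot 9}{6}\rceil+1=2t+2$. So it suffices to show $r'_2(K_{3,t})\geq 2t+2$. To do this I would take $G$ to be an $SRG(2t+1,t,\frac{t}{2}-1,\frac{t}{2})$ (a conference graph with $n=2t+1$), color its edges red and the edges of $\overline{G}$ blue, and show that this $2$-edge-colored $K_{2t+1}$ has no orderable $K_{3,t}$. By Lemma~\ref{le:Y}, this amounts to checking that $|Y_\chi(\mathbf{x})|\leq t-1$ for every ordered triple $\mathbf{x}=(x_1,x_2,x_3)$ of distinct vertices.

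Fix such a triple. A vertex $y\notin\{x_1,x_2,x_3\}$ is compatible with $\mathbf{x}$ exactly when its pattern of colors to $(x_1,x_2,x_3)$ is one of $bbb, rbb, rrb, rrr$. I would count these four classes using the strongly regular parameters. For any two vertices, the number of other vertices adjacent to exactly one of them is $2(k-\lambda-1)=t$ if they are adjacent and $2(k-\mu)=t$ if not, so the same value in both cases. This suggests working with the pair $(x_1,x_2)$ and the pair $(x_2,x_3)$. The compatible patterns are those where the vertex agrees with both pairs ($bbb$, $rrr$) or disagrees with exactly one pair in a prescribed way ($rbb$ disagrees on $(x_1,x_2)$ only; $rrb$ disagrees on $(x_2,x_3)$ only).

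Concretely, let $D_{12}$ be the set of vertices outside $\{x_1,x_2,x_3\}$ that are distinguished by $x_1,x_2$, and similarly define $D_{23}$. Among the $2t-2$ vertices outside the triple, the compatible ones are:
\begin{itemize}
\item those in neither $D_{12}$ nor $D_{23}$, restricted to the patterns $bbb$ and $rrr$ (which is all of them);
\item those in $D_{12}\setminus D_{23}$ with pattern $rbb$ (excluding $brr$);
\item those in $D_{23}\setminus D_{12}$ with pattern $rrb$ (excluding $bbr$).
\end{itemize}
The vertices in $D_{12}\cap D_{23}$ have pattern $rbr$ or $brb$ and are never compatible. The sizes $|D_{12}|$ and $|D_{23}|$ are $t$ minus the number of vertices of $\{x_3\}$ (respectively $\{x_1\}$) they contain, so each is $t-\epsilon$ with $\epsilon\in\{0,1\}$.

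Rather than fixing the colors on the triple in cases, I expect the cleanest route is a symmetric argument. Use the fact that each of the four "half" classes is determined by $\lambda$ and $\mu$: for instance, the number of $y$ with $\chi(x_1y)=r$, $\chi(x_2y)=b$ is $k-\lambda-1$ or $k-\mu$, which equals $t/2$ in both cases, minus a correction if $x_3$ lies in that class. Similarly, the number with pattern $r\ast b$ on $(x_2,x_3)$ is about $t/2$. Writing $|Y|=(2t-2)-|\{rbr,brb\}|-|\{brr\}|-|\{bbr\}|$ and noting that $brr\cup brb$ is the class "$x_1$ blue, $x_2$ red" (about $t/2$), and that $bbr\cup rbr$ is "$x_2$ blue, $x_3$ red" (about $t/2$), these two classes overlap only in... they are disjoint, and together they cover $brr,brb,bbr,rbr$. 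So
\[
|Y|=(2t-2)-\bigl|\{\chi(x_1y)=b,\chi(x_2y)=r\}\bigr|-\bigl|\{\chi(x_2y)=b,\chi(x_3y)=r\}\bigr|,
\]
where both counts are taken over $y$ outside the triple. Each count equals $t/2$ minus $1$ if the remaining vertex of the triple falls in that class, so $|Y|\leq 2t-2-(t/2-1)-(t/2-1)=t$.

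The main obstacle is to shave this bound down to $t-1$. Equality requires both corrections to occur: $x_3$ must satisfy $\chi(x_1x_3)=b$ and $\chi(x_2x_3)=r$, while $x_1$ must satisfy $\chi(x_1x_2)=b$ and $\chi(x_1x_3)=r$. These conditions conflict, since the first requires $\chi(x_1x_3)=b$ and the second requires $\chi(x_1x_3)=r$. Hence at most one correction occurs, and $|Y|\leq t-1$.

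I would verify these counts carefully, checking both the adjacent and nonadjacent cases of $k-\lambda-1$ and $k-\mu$ equal to $t/2$, which is exactly where the parameters $(2t+1,t,t/2-1,t/2)$ are used. This yields $r'_2(K_{3,t})\geq 2t+2$, and combined with the upper bound gives the result.
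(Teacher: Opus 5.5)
Your proposal is correct and follows the paper's route. The upper bound $2t+2$ comes from Theorem~\ref{thm:CRKstUpper}, and for the lower bound you color the strongly regular graph red/blue and use Lemma~\ref{le:Y} to show $|Y_\chi(\mathbf{x})|\le t-1$ for every ordered triple. Your split of the four non-compatible patterns into the disjoint classes ``$x_1$ blue, $x_2$ red'' and ``$x_2$ blue, $x_3$ red'' gives $|Y_\chi(\mathbf{x})|=t-2+\mathbf{1}_{\{\chi(x_1x_3)=b,\ \chi(x_2x_3)=r\}}+\mathbf{1}_{\{\chi(x_1x_2)=b,\ \chi(x_1x_3)=r\}}$. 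This is exactly the formula the paper obtains from Lemma~\ref{le:CRK3tlower-SRG} via an inclusion--exclusion identity for the indicator of $\mathcal{C}_3$, so the two counts coincide up to bookkeeping.
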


Before proving Lemma~\ref{le:CRK3tlower}, we establish the following auxiliary lemma.
Given a graph $H$ and any ordered triple $\mathbf{x}=(x_1, x_2, x_3)$ of three distinct vertices of $H$, define
$$L_{H}(\mathbf{x})\colonequals \Big|\left\{y\in V(H)\setminus \{x_1, x_2, x_3\}\colon\, \left(\mathbf{1}_{\{x_1y\in E(H)\}}, \mathbf{1}_{\{x_2y\in E(H)\}}, \mathbf{1}_{\{x_3y\in E(H)\}}\right)\in \mathcal{C}_3\right\}\Big|.$$

\begin{lemma}\label{le:CRK3tlower-SRG}
Let $H$ be an $SRG(n, k, \lambda, \mu)$.
Then
\begin{align*}
L_{H}(\mathbf{x})= &~n-3-2k+2\mu+(1+\lambda-\mu)\mathbf{1}_{\{x_1x_2\in E(H)\}}+(2+\lambda-\mu)\mathbf{1}_{\{x_2x_3\in E(H)\}} \\
~&~+\mathbf{1}_{\{x_1x_3\in E(H)\}}\left(1-\mathbf{1}_{\{x_1x_2\in E(H)\}}-\mathbf{1}_{\{x_2x_3\in E(H)\}}\right).
\end{align*}
\end{lemma}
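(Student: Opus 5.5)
We will prove Lemma~\ref{le:CRK3tlower-SRG} by a direct counting argument. The set $\mathcal{C}_3$ consists of the four patterns $000,100,110,111$. Hence $L_H(\mathbf{x})$ counts the vertices $y\notin\{x_1,x_2,x_3\}$ whose adjacency pattern to $(x_1,x_2,x_3)$ is one of these four. Writing $a_{ijk}$ for the number of such $y$ with pattern $ijk$, we have $L_H(\mathbf{x})=a_{000}+a_{100}+a_{110}+a_{111}$. The plan is to express this in terms of quantities that the strongly regular structure fixes. These are the degrees $k$, the pairwise common neighbourhood sizes (equal to $\lambda$ or $\mu$ according to adjacency), and a single unknown triple term $a_{111}$, which we hope cancels.

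The key observation is that $L_H(\mathbf{x})$ is the number of $y$ that avoid the three ``bad'' local events. Since a non-$\mathcal{C}_3$ pattern is exactly one containing a $0$ followed later by a $1$, we have
\[
L_H(\mathbf{x})=\#\{y: \text{not }(x_1y\notin E,\ x_2y\in E)\ \text{and not }(x_2y\notin E,\ x_3y\in E)\ \text{and not }(x_1y\notin E,\ x_3y\in E)\}.
\]
It is cleaner to write the count as $(n-3)$ minus the number of bad $y$. The bad patterns are $010,011,001,101$. Using $\mathbf{1}$-indicators $e_i=\mathbf{1}_{\{x_iy\in E\}}$, a vertex is good iff $g(y)=(1-e_1)(1-e_2)(1-e_3)+e_1(1-e_2)(1-e_3)+e_1e_2(1-e_3)+e_1e_2e_3$. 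This simplifies to
\[
g = 1-e_2-e_3+e_1e_2+e_2e_3-e_1e_2e_3+e_1e_2e_3\cdot 0+\dots,
\]
and we will expand it carefully into a polynomial of the form $1-e_2-e_3+e_1e_2+e_2e_3+c\,e_1e_3+\dots$. We then check whether the cubic terms cancel. Summing over $y$, the linear terms give degrees, and the quadratic terms give common neighbourhood counts.

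Next we substitute the values. The sum $\sum_y e_i$ over $y\notin\{x_1,x_2,x_3\}$ equals $k$ minus the number of $x_j$ ($j\ne i$) adjacent to $x_i$. The sum $\sum_y e_ie_j$ equals $\lambda$ or $\mu$ (according to whether $x_ix_j\in E$), minus $1$ if the third vertex is adjacent to both. This correction term is what generates the mixed product $\mathbf{1}_{\{x_1x_3\in E\}}(\mathbf{1}_{\{x_1x_2\in E\}}+\mathbf{1}_{\{x_2x_3\in E\}})$ in the statement. Writing $\lambda$ or $\mu$ as $\mu+(\lambda-\mu)\mathbf{1}_{\{x_ix_j\in E\}}$ and collecting the indicator terms should then reproduce the stated formula. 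In particular, the constant $n-3-2k+2\mu$ arises from the terms $-e_2-e_3$ together with two quadratic terms each contributing $\mu$.

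The main obstacle is the bookkeeping. First, we must verify that the cubic term $e_1e_2e_3$ genuinely cancels, since otherwise $L_H$ would not be determined by the parameters. Second, we must correctly track the corrections from excluding $x_1,x_2,x_3$ from the sums, which produce the asymmetric coefficients $1+\lambda-\mu$ and $2+\lambda-\mu$. The expansion of $g$ is $1-e_1-e_2-e_3+\dots$, so some care is needed. We will organize the computation by first simplifying $g$ to the Boolean identity $g=1-e_3+e_2e_3-e_2+e_1e_2+e_3e_1-e_1e_2e_3\cdot(\cdots)$, checking it on all eight patterns. The final step is a case check on the eight adjacency configurations of $\{x_1,x_2,x_3\}$, which confirms the constant corrections.
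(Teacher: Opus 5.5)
Your strategy is the paper's: write the indicator that $(e_1,e_2,e_3)\in\mathcal{C}_3$ as a polynomial in $e_i=\mathbf{1}_{\{x_iy\in E(H)\}}$, sum over $y\notin\{x_1,x_2,x_3\}$, and evaluate linear terms by degrees and quadratic terms by $\lambda$ or $\mu$ with the exclusion corrections you describe. Your descriptions of those corrections are right. The gap is that the one identity everything rests on is never stated correctly.

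Your own four-term sum for $g$ collapses at once. The first two terms give $(1-e_2)(1-e_3)$ and the last two give $e_1e_2$, so
\[
g=1-e_2-e_3+e_2e_3+e_1e_2 .
\]
This has no $e_1$ term, no $e_1e_3$ term, and no cubic term at all. The versions you actually propose to work with are wrong:
\begin{itemize}
\item The first display's $-e_1e_2e_3$ gives $0$ on the good pattern $111$.
\item An expansion beginning $1-e_1-e_2-e_3$ would put $-3k$ into the constant.
\item Your final ``Boolean identity'' $g=1-e_3+e_2e_3-e_2+e_1e_2+e_3e_1-\cdots$ takes the value $1$ on the bad pattern $101$, since the cubic term vanishes there.
\end{itemize}
Summing an $e_1e_3$ term would add $\mu+(\lambda-\mu)\mathbf{1}_{\{x_1x_3\in E(H)\}}-\mathbf{1}_{\{x_1x_2\in E(H)\}}\mathbf{1}_{\{x_2x_3\in E(H)\}}$. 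That contradicts the target formula, and also your own correct remark that the constant $2\mu$ comes from exactly two quadratic terms. So as written, the plan does not yet establish the lemma.

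With the correct identity the bookkeeping is exactly as you outline. Write $\mathbf{1}_{ij}$ for $\mathbf{1}_{\{x_ix_j\in E(H)\}}$; all sums below run over $y\notin\{x_1,x_2,x_3\}$. Then
\begin{itemize}
\item $\sum e_2=k-\mathbf{1}_{12}-\mathbf{1}_{23}$,
\item $\sum e_3=k-\mathbf{1}_{13}-\mathbf{1}_{23}$,
\item $\sum e_2e_3=\mu+(\lambda-\mu)\mathbf{1}_{23}-\mathbf{1}_{12}\mathbf{1}_{13}$,
\item $\sum e_1e_2=\mu+(\lambda-\mu)\mathbf{1}_{12}-\mathbf{1}_{13}\mathbf{1}_{23}$.
\end{itemize}
Combining these with the constant $n-3$ gives the stated formula directly, so the final eight-case check over adjacency configurations is unnecessary. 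This is precisely the paper's proof.
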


\noindent {\bf Proof.}
For brevity, we write $\mathbf{1}_{i,j}$ for $\mathbf{1}_{\{x_ix_j\in E(H)\}}$, and $\mathbf{1}_{i,y}$ for $\mathbf{1}_{\{x_iy\in E(H)\}}$.
Then $$L_{H}(\mathbf{x})=\sum\nolimits_{y\in V(H)\setminus \{x_1, x_2, x_3\}}\mathbf{1}_{\left\{\left(\mathbf{1}_{1,y}, \mathbf{1}_{2,y}, \mathbf{1}_{3,y}\right)\in \mathcal{C}_3\right\}}.$$
By direct computation, we obtain
\begin{equation*}
1-\mathbf{1}_{2,y}-\mathbf{1}_{3,y}+\mathbf{1}_{2,y}\mathbf{1}_{3,y}+\mathbf{1}_{1,y}\mathbf{1}_{2,y} =
\left\{
   \begin{aligned}
    &1 & & \mbox{if $\left(\mathbf{1}_{1,y}, \mathbf{1}_{2,y}, \mathbf{1}_{3,y}\right)\in \mathcal{C}_3=\{000, 100, 110, 111\}$},\\
    &0 & & \mbox{if $\left(\mathbf{1}_{1,y}, \mathbf{1}_{2,y}, \mathbf{1}_{3,y}\right)\in \{0,1\}^3\setminus \mathcal{C}_3=\{001, 010, 011, 101\}$}.
   \end{aligned}
\right.
\end{equation*}
Hence, we have $\mathbf{1}_{\left\{\left(\mathbf{1}_{1,y}, \mathbf{1}_{2,y}, \mathbf{1}_{3,y}\right)\in \mathcal{C}_3\right\}}=1-\mathbf{1}_{2,y}-\mathbf{1}_{3,y}+\mathbf{1}_{2,y}\mathbf{1}_{3,y}+\mathbf{1}_{1,y}\mathbf{1}_{2,y}.$

Since $H$ is $k$-regular, we have
$$\sum\nolimits_{y\in V(H)\setminus \{x_1, x_2, x_3\}}\mathbf{1}_{2,y}=\left|N_H(x_2)\setminus \{x_1, x_2, x_3\}\right| =k-\mathbf{1}_{1,2}-\mathbf{1}_{2,3},$$
$$\sum\nolimits_{y\in V(H)\setminus \{x_1, x_2, x_3\}}\mathbf{1}_{3,y}=\left|N_H(x_3)\setminus \{x_1, x_2, x_3\}\right| =k-\mathbf{1}_{1,3}-\mathbf{1}_{2,3}.$$
Moreover, we have
$\left|N_H(x_2)\cap N_{H}(x_3)\right|=\lambda \mathbf{1}_{2,3}+\mu (1-\mathbf{1}_{2,3})= \mu + (\lambda-\mu)\mathbf{1}_{2,3}$ and $\mathbf{1}_{\{x_1\in N_H(x_2)\cap N_{H}(x_3)\}}=\mathbf{1}_{1,2}\mathbf{1}_{1,3}.$
Thus
$$\sum\nolimits_{y\in V(H)\setminus \{x_1, x_2, x_3\}}\mathbf{1}_{2,y}\mathbf{1}_{3,y}=\left|(N_H(x_2)\cap N_{H}(x_3))\setminus \{x_1, x_2, x_3\}\right| =\mu + (\lambda-\mu)\mathbf{1}_{2,3}-\mathbf{1}_{1,2}\mathbf{1}_{1,3}.$$
Similarly, we have
$$\sum\nolimits_{y\in V(H)\setminus \{x_1, x_2, x_3\}}\mathbf{1}_{1,y}\mathbf{1}_{2,y}=\left|(N_H(x_1)\cap N_{H}(x_2))\setminus \{x_1, x_2, x_3\}\right| =\mu + (\lambda-\mu)\mathbf{1}_{1,2}-\mathbf{1}_{1,3}\mathbf{1}_{2,3}.$$
Therefore, we have
\begin{align*}
L_{H}(\mathbf{x})= &~\sum\nolimits_{y\in V(H)\setminus \{x_1, x_2, x_3\}}\mathbf{1}_{\left\{\left(\mathbf{1}_{1,y}, \mathbf{1}_{2,y}, \mathbf{1}_{3,y}\right)\in \mathcal{C}_3\right\}} \\
= &~\sum\nolimits_{y\in V(H)\setminus \{x_1, x_2, x_3\}}\left(1-\mathbf{1}_{2,y}-\mathbf{1}_{3,y}+\mathbf{1}_{2,y}\mathbf{1}_{3,y}+\mathbf{1}_{1,y}\mathbf{1}_{2,y}\right) \\
= &~(n-3)-(k-\mathbf{1}_{1,2}-\mathbf{1}_{2,3})-(k-\mathbf{1}_{1,3}-\mathbf{1}_{2,3})+(\mu + (\lambda-\mu)\mathbf{1}_{2,3}-\mathbf{1}_{1,2}\mathbf{1}_{1,3}) \\
~ &~+(\mu + (\lambda-\mu)\mathbf{1}_{1,2}-\mathbf{1}_{1,3}\mathbf{1}_{2,3}) \\
= &~n-3-2k+2\mu+(1+\lambda-\mu)\mathbf{1}_{1,2}+(2+\lambda-\mu)\mathbf{1}_{2,3} +\mathbf{1}_{1,3}\left(1-\mathbf{1}_{1,2}-\mathbf{1}_{2,3}\right).
\end{align*}
The proof is complete.
\hfill$\blacksquare$
\vspace{0.2cm}

Now we present our proof of Lemma~\ref{le:CRK3tlower}.
\vspace{0.2cm}

\noindent {\bf Proof of Lemma~\ref{le:CRK3tlower}.}
By Theorem~\ref{thm:CRKstUpper}, we have $r'_2(K_{3,t})\leq CR(K_{3,t}, K_3)\leq 2t+2.$
Next, we show that if there exists an $SRG\left(2t+1, t, \frac{t}{2}-1, \frac{t}{2}\right)$, then $r'_2(K_{3,t})\geq 2t+2.$
Let $H$ be a strongly regular graph with parameters $(n, k, \lambda, \mu)=\left(2t+1, t, \frac{t}{2}-1, \frac{t}{2}\right)$.
We construct a 2-edge-coloring $\chi$ of $K_n$ by coloring all edges of $H$ with red and all edges of the complement $\overline{H}$ with blue.
Then for any pair $(x,y)$ of distinct vertices, we have $\mathbf{1}_{\scriptsize\{\mbox{$xy$ is red}\}}=\mathbf{1}_{\{xy\in E(H)\}}$.
Hence, for any ordered triple $\mathbf{x}=(x_1, x_2, x_3)$ of distinct vertices, we have $\left|Y_{\chi}(\mathbf{x})\right|=L_{H}(\mathbf{x})$.
Then, by Lemma~\ref{le:CRK3tlower-SRG}, we have
\begin{align*}
\left|Y_{\chi}(\mathbf{x})\right|= &~n-3-2k+2\mu+(1+\lambda-\mu)\mathbf{1}_{\{x_1x_2\in E(H)\}}+(2+\lambda-\mu)\mathbf{1}_{\{x_2x_3\in E(H)\}} \\
~&~+\mathbf{1}_{\{x_1x_3\in E(H)\}}\left(1-\mathbf{1}_{\{x_1x_2\in E(H)\}}-\mathbf{1}_{\{x_2x_3\in E(H)\}}\right) \\
= &~2t+1-3-2t+t+\mathbf{1}_{\{x_2x_3\in E(H)\}}+\mathbf{1}_{\{x_1x_3\in E(H)\}}\left(1-\mathbf{1}_{\{x_1x_2\in E(H)\}}-\mathbf{1}_{\{x_2x_3\in E(H)\}}\right) \\
= &~t-2+\mathbf{1}_{\{x_2x_3\in E(H)\}}+\mathbf{1}_{\{x_1x_3\in E(H)\}}\left(1-\mathbf{1}_{\{x_1x_2\in E(H)\}}-\mathbf{1}_{\{x_2x_3\in E(H)\}}\right).
\end{align*}
If $\mathbf{1}_{\{x_1x_3\in E(H)\}}=0$, then $$\mathbf{1}_{\{x_2x_3\in E(H)\}}+\mathbf{1}_{\{x_1x_3\in E(H)\}}\left(1-\mathbf{1}_{\{x_1x_2\in E(H)\}}-\mathbf{1}_{\{x_2x_3\in E(H)\}}\right)=\mathbf{1}_{\{x_2x_3\in E(H)\}}\leq 1;$$
if $\mathbf{1}_{\{x_1x_3\in E(H)\}}=1$, then $$\mathbf{1}_{\{x_2x_3\in E(H)\}}+\mathbf{1}_{\{x_1x_3\in E(H)\}}\left(1-\mathbf{1}_{\{x_1x_2\in E(H)\}}-\mathbf{1}_{\{x_2x_3\in E(H)\}}\right)=1-\mathbf{1}_{\{x_1x_2\in E(H)\}}\leq 1.$$
Thus $\left|Y_{\chi}(\mathbf{x})\right|\leq t-2+1=t-1.$
By Lemma~\ref{le:Y}, there is no orderable $K_{3,t}$.
Hence, we have $r'_2(K_{3,t})\geq 2t+2.$
The proof of Lemma~\ref{le:CRK3tlower} is complete.
\hfill$\blacksquare$
%\vspace{0.2cm}

\subsection{Proof of $r'_2(K_{s,t})=CR(K_{s,t}, K_3)$ for large $t$}
\label{subsec:pf_complete_bipar_equal}

In this subsection, we show that $r'_2(K_{s,t})=CR(K_{s,t}, K_3)$ for sufficiently large $t$.

\begin{theorem}\label{thm:CRKst-equal}
For any fixed integer $s \geq 1$ and sufficiently large $t$, we have $r'_2(K_{s,t})= CR(K_{s,t}, K_3)$.
\end{theorem}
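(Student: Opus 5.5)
The plan is to prove $CR(K_{s,t},K_3)\le r'_2(K_{s,t})$ for large $t$; the reverse inequality is the trivial one noted before Theorem~\ref{thm:CRKst}. I would argue by induction on $s$. The case $s=1$ is Proposition~\ref{prop:forest}. For the inductive step, fix $n=r'_2(K_{s,t})$ and take an edge-colored $K_n$ with no rainbow $K_3$ and no orderable $K_{s,t}$, aiming for a contradiction.

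Take a Gallai partition $V_1,\dots,V_m$ from Theorem~\ref{thm:Gallai}. If every part is a singleton, the coloring uses two colors, and the definition of $r'_2$ gives the contradiction. So some part has at least two vertices. I would then copy the case analysis from the proof of Theorem~\ref{thm:CRKstUpper}, but use the quantitative lower bound $n\ge c_st-O_s(\sqrt{t\ln t})$ from Lemma~\ref{le:CRKstlower} in place of the exact value $N_s(t)$. Throughout, let $U=V(K_n)\setminus V_i$ and $h=|U|$.

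\textbf{Small parts, $2\le |V_i|\le s-1$.} I would use the argument ending the proof of Theorem~\ref{thm:CRKstUpper}. It needs an orderable $K_{s-|V_i|,2t-1}$ inside $U$, which the induction hypothesis together with Theorem~\ref{thm:CRKstUpper} provides as soon as $n-|V_i|\ge N_{s-|V_i|}(2t-1)\approx 2c_{s-|V_i|}t$. The inequality $c_s>2c_{s-h'}$ for $h'\ge 2$ (shown in Lemma~\ref{le:CRKstUpper-Nst-2}) gives a linear gap in $t$, which absorbs the $O(\sqrt{t\ln t})$ error. For $|V_i|=s-1$ the target is $K_{1,2t-1}$, so I need $n\ge 2t+s-1$; this holds when $c_s>2$, and $s=3$ needs a separate look. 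The pigeonhole step on colors to $V_i$ then produces an orderable $K_{s,t}$.

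\textbf{Large parts, $|V_i|\ge s$.} Fact~\ref{fa:homo}~(i) forces $h\le t-1$.
\begin{itemize}
\item If $h\ge C_s\sqrt{t\ln t}$, then $n-h\ge c_s(t-h)+(c_s-1)h-O(\sqrt{t\ln t})\ge N_s(t-h)$. Theorem~\ref{thm:CRKstUpper} gives an orderable $K_{s,t-h}$ in $V_i$, and Fact~\ref{fa:homo}~(ii) extends it to an orderable $K_{s,t}$.
\item If $h$ is small, $V_i$ has $n-h$ vertices and the same no-orderable-$K_{s,t-h}$ property, and I recurse into $V_i$. Iterating produces a nested chain of color-homogeneous sets $V(K_n)=W_0\supset W_1\supset\cdots$. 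Each shell $W_j\setminus W_{j+1}$ sees $W_{j+1}$ monochromatically per vertex, and the total shell size $H$ stays below $t$. The chain stops at a set $W$ whose own Gallai partition consists of singletons, so $W$ is $2$-colored.
\end{itemize}
I then want to push the counting argument of Lemma~\ref{le:CRKstUpper-2colored} to the whole vertex set. Each shell vertex is compatible with every $s$-tuple from the deeper sets, which gives it an excess of compatibilities compared with a balanced random-like vertex. Combined with the compatibility counts inside $W$, averaging should find an $s$-tuple with at least $t$ compatible vertices.

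\textbf{Main obstacle.} I expect the hardest step to be exactly this small-$h$ case, and also $s=2$, where $c_2=4/3$ makes the error terms tight. Here the Gallai structure nearly collapses to a $2$-coloring plus a thin nested shell, and I must show the shell cannot lower the maximum of $|Y_\chi(\mathbf{x})|$ below what a genuine $2$-coloring on $n$ vertices can achieve. My first attempt would be a recoloring argument: recolor the edges inside each shell, and between shells, using only the two cross colors so that no new orderable $K_{s,t}$ is created. Homogeneity suggests this is possible, because a vertex joined monochromatically to a large set only increases compatibility counts. That would turn the Gallai coloring into a $2$-coloring of $K_n$ with no orderable $K_{s,t}$, contradicting $n=r'_2(K_{s,t})$. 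If a clean recoloring fails, the fallback is the explicit double count above with the shell's contribution estimated separately.
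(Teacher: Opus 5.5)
The case you flag as the main obstacle is where the proof has to happen, and neither of your two proposed methods closes it. At the end of your chain you have a $2$-colored, color-homogeneous core $W$ and a nonempty shell of total size $H$, where $H$ may be as small as $1$.

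\emph{The averaging fallback falls short.} Averaging over $s$-tuples of $W$ as in Lemma~\ref{le:CRKstUpper-2colored}, with every shell vertex compatible with every tuple, certifies only about $\alpha_s(n-H)+H=\alpha_s n+(1-\alpha_s)H$ compatible vertices. You only know $n\ge c_st-O_s(\sqrt{t\ln t})$, so this reaches $t$ only once $H$ is of order $\sqrt{t\ln t}$. Averaging cannot use the hypothesis $n=r'_2(K_{s,t})$, because that hypothesis concerns $2$-colorings of all $n$ vertices.

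\emph{The recoloring idea is essentially a restatement of the theorem.} As a general principle it is false: $CR(5,3)=11>10=r'_2(5)$, so some Gallai coloring of $K_{10}$ without an orderable $K_5$ cannot be recolored with two colors without creating one. You also give no rule for the shell--core and shell--shell edges that provably creates no new orderable $K_{s,t}$.

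\emph{The missing idea is short.} The shell is nonempty and $W$ is color-homogeneous, so any orderable $K_{s-1,t}$ inside $W$ becomes an orderable $K_{s,t}$ by putting one shell vertex first (Fact~\ref{fa:homo}~(ii)). Hence $|W|<r'_2(K_{s-1,t})\le c_{s-1}t+O_s(1)$, by Theorem~\ref{thm:CRKstUpper} or $r'_2(K_{1,t})=t+1$. This bound has slope $c_{s-1}<c_s$. If you apply your large-$h$ estimate to the cumulative shell rather than to one step, you get $H=O_s(\sqrt{t\ln t})$, hence $n\le c_{s-1}t+O_s(\sqrt{t\ln t})$, which is a contradiction for large $t$. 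The paper instead splits into $|Y|\le s-1$ and $|Y|\ge s$ (Lemma~\ref{le:CRKst-equal-2}, Type~II). In the second case Fact~\ref{fa:homo}~(i) forces $|W|\le t-1$, giving $n\le(2-1/c_s)t+O_s(1)$. This step is where the existence of a third color is actually used.

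\emph{The case $s=3$ with a part of size $2$ is a real hole, not a side remark.} Your pigeonhole step there needs $n\ge 2t+2$, which is the exact value $N_3(t)$, and that is not available. Your chain can also terminate at a level where every part has size at most $s-1$ and some part has size at least $2$, not only at singletons. The paper handles this terminal Type~I situation, after first recursing into all parts of size at least $s$, by weighted averaging on the reduced graph (Lemma~\ref{le:CRKst-equal-1}). For $s=3$ a weighted double count finds a second part $V_2$ giving an orderable $K_{3,\ell}$ with $\ell\ge\frac34|X|-O(1)$, which suffices because $\frac43<2=c_3$. For $s\ge4$ your pigeonhole version of this step does work at every level of the chain, since the cumulative shell only enlarges the margin.
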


The proof proceeds by showing that if $K_n$ admits a Gallai-coloring that uses at least three colors and contains no orderable $K_{s,t}$, then $n<r'_2(K_{s,t})$.
One of the main ingredients is a weighted argument related to $Y_{\chi}(\mathbf{x})$.
Recall that $Y_{\chi}(\mathbf{x})$ is defined in the setting of 2-edge-colorings.
However, a Gallai-coloring $\chi$ of $K_n$ can use any number of colors.
To adapt the argument involving $Y_{\chi}(\mathbf{x})$ to the Gallai-coloring setting,
we consider a Gallai partition $V_1, V_2, \ldots, V_m$ ($m\geq 2$) of $V(K_n)$.
Note that the edges between the parts use at most two colors (say red and blue).
Thus we can define a {\it reduced graph} $R$ of the partition with a 2-edge-coloring $\chi'$ as follows:
the graph $R$ is an $m$-vertex complete graph with $V(R)=[m]$, and for each edge $ij$ we have $\chi'(ij)=\chi(V_i, V_j)$.
Now we can apply an argument related to $Y_{\chi'}(\mathbf{x})$ to $R$.
To return to the original Gallai-coloring of $K_n$, we actually need a weighted version of the argument.

By our earlier definition, for an ordered $s$-tuple $\mathbf{x}=(x_1, \ldots, x_s)$ of $s$ distinct vertices of $R$,
$$Y_{\chi'}(\mathbf{x})= \left\{y\in V(R)\setminus \{x_1, \ldots, x_s\}\colon\, \left(\mathbf{1}_{\scriptsize\{\mbox{$\chi'(x_1y)$ is red}\}}, \ldots, \mathbf{1}_{\scriptsize\{\mbox{$\chi'(x_sy)$ is red}\}}\right)\in \mathcal{C}_s\right\}.$$
Let $M$ be a positive integer.
For each vertex $i\in V(R)=[m]$, we assign a weight $w_i$ ($0\leq w_i\leq M$) to $i$.
Define $W(R)\colonequals \sum\nolimits_{i\in [m]}w_i$
and $L_{\chi',w}(\mathbf{x})\colonequals \sum\nolimits_{y\in Y_{\chi'}(\mathbf{x})}w_y$.
We shall use the following two lemmas.

\begin{lemma}\label{le:CRKst-equal-weight-1}
Let $\chi'$ be a 2-edge-coloring of a complete graph $R$ with $V(R)=[m]$ and $m\geq s+1$.
Fix a vertex $y\in V(R)$.
We choose $\mathbf{x}=(x_1, \ldots, x_s)$ from all ordered $s$-tuples of $s$ distinct vertices of $V(R)\setminus\{y\}$ uniformly at random.
Let $E$ be the event that $\left(\mathbf{1}_{\scriptsize\{\mbox{\rm $\chi'(x_1y)$ is red}\}}, \ldots, \mathbf{1}_{\scriptsize\{\mbox{\rm $\chi'(x_sy)$ is red}\}}\right)\in \mathcal{C}_s.$
Then $\pr\left[E\right]\geq \frac{s+1}{2^s}-\frac{1}{m-1}{s\choose 2}.$
\end{lemma}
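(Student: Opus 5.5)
Let $q\colonequals m-1$ and let $d$ be the number of red edges from $y$ to $V(R)\setminus\{y\}$, so $q-d$ edges at $y$ are blue. The plan is to count exactly as in the proof of Lemma~\ref{le:CRKstUpper-2colored}. The number of ordered $s$-tuples $\mathbf{x}$ of distinct vertices of $V(R)\setminus\{y\}$ for which $E$ holds is
$$F_{q,s}(d)=\sum\nolimits_{j=0}^{s}(d)_j(q-d)_{s-j}.$$
Indeed, such a tuple is determined by some $j$, an ordered choice of $j$ red neighbours of $y$ followed by an ordered choice of $s-j$ blue neighbours; distinct $j$ give disjoint sets of tuples. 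Since there are $(q)_s$ tuples in total,
$$\pr[E]=\frac{F_{q,s}(d)}{(q)_s}\geq \frac{F_{q,s}(\lfloor q/2\rfloor)}{(q)_s}=\frac{A_s(q)}{q-s+1}$$
by Lemma~\ref{le:CRKstUpper-balancing}. We need $q\geq s$, which holds because $m\geq s+1$.

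Next we convert this to the claimed bound, handling $s=1$ separately. For $s=1$, $\mathcal{C}_1=\{0,1\}$, so $\pr[E]=1$ and the claim is trivial. For $s\geq 2$ and $q>s$, Lemma~\ref{le:CRKstUpper-Asq} gives
$$A_s(q)\geq \alpha_s(q-D_s)=\alpha_s\big(q-s+1-\kappa_s\big),\qquad \kappa_s=\tfrac{s(s-1)}{6}.$$
Hence
$$\pr[E]\geq \alpha_s\Big(1-\frac{\kappa_s}{q-s+1}\Big)=\frac{s+1}{2^s}-\frac{s+1}{2^s}\cdot\frac{s(s-1)}{6(q-s+1)}.$$
So it suffices to show
$$\frac{s+1}{2^s}\cdot\frac{s(s-1)}{6(q-s+1)}\leq \frac{1}{q}\binom{s}{2},\quad\text{i.e.}\quad \frac{s+1}{3\cdot 2^s}\cdot q\leq q-s+1.$$
Since $\frac{s+1}{3\cdot 2^s}\leq \frac14$ for $s\geq 2$, this reduces to $\tfrac34 q\geq s-1$, which holds because $q>s$.

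The remaining boundary case is $q=s$, that is, $m=s+1$, where Lemma~\ref{le:CRKstUpper-Asq} does not apply. Here we bound directly. The right-hand side is $\frac{s+1}{2^s}-\frac{s-1}{2}$, which is at most $0$ for $s\geq 2$ (for $s=2$ it equals $\tfrac34-\tfrac12=\tfrac14$, so that case needs its own check). When $q=s=2$, we have $F_{2,2}(d)/(2)_2\geq F_{2,2}(1)/2=\tfrac12\geq\tfrac14$. For $s\geq3$ the right-hand side is nonpositive and the bound is trivial.

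The main obstacle is only bookkeeping: making sure the inequality of Lemma~\ref{le:CRKstUpper-Asq}, which is stated with denominator $(q)_{s-1}$, converts correctly into the denominator $(q)_s$, and checking the small boundary cases. If the constant comparison failed for some small $s$, I would instead bound $F_{q,s}(\lfloor q/2\rfloor)/(q)_s$ directly via the expansions in Lemmas~\ref{le:CRKstUpper-Eq-1} and \ref{le:CRKstUpper-Eq-2}.
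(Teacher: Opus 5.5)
Your proof works, with one exception. After reducing to $\frac{s+1}{3\cdot 2^s}\,q\le q-s+1$, you weaken $\frac{s+1}{3\cdot 2^s}$ to $\frac14$ and assert that $\frac34 q\ge s-1$ follows from $q>s$. That is true only for $s\le 7$. For $s=8$ and $q=m-1=9$ you get $\frac34\cdot 9=6.75<7$. In general the step fails whenever $s\ge 8$ and $s<q<\frac43(s-1)$.

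The inequality you actually need is still true, and the repair is short. Since $\frac{s+1}{3\cdot 2^s}<1$, the quantity $q-s+1-\frac{s+1}{3\cdot 2^s}\,q$ is increasing in $q$. At $q=s+1$ it equals $2-\frac{(s+1)^2}{3\cdot 2^s}$, which is nonnegative because $(s+1)^2\le 6\cdot 2^s$ for all $s\ge 1$. The rest is correct:
\begin{itemize}
\item the exact count $\pr[E]=F_{q,s}(d)/(q)_s$;
\item the appeal to Lemma~\ref{le:CRKstUpper-balancing};
\item the conversion $(q)_s=(q)_{s-1}(q-s+1)$ and $D_s=s-1+\kappa_s$;
\item your treatment of $s=1$ and of $m=s+1$.
\end{itemize}
There is no circularity, since Lemma~\ref{le:CRKstUpper-Asq} is proved earlier and independently.

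Your route differs genuinely from the paper's. The paper samples $x_1,\dots,x_s$ with replacement, so the colour indicators are i.i.d.\ Bernoulli$(p)$ with $p=d_r(y)/(m-1)$. It shows $\sum_{j=0}^{s}p^j(1-p)^{s-j}\ge\frac{s+1}{2^s}$ by proving that $(1+x+\cdots+x^s)/(1+x)^s$ is nonincreasing on $[0,1]$. It then pays for collisions through $\pr[E]=\pr[A\mid D]\ge \pr[A]-\pr[\overline{D}]$ and the union bound $\pr[\overline{D}]\le\binom{s}{2}/(m-1)$. That argument is short, self-contained, and needs no boundary cases. Yours reuses the exact-count machinery built for Theorem~\ref{thm:CRKstUpper}, so it depends on the long coefficient computations behind Lemma~\ref{le:CRKstUpper-Asq} and forces separate handling of $s=1$ and $m=s+1$. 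In return it gives a sharper loss: $\frac{s+1}{2^s}\cdot\frac{s(s-1)}{6(m-s)}$ instead of $\frac{1}{m-1}\binom{s}{2}$, smaller by roughly a factor $\frac{s+1}{3\cdot 2^s}$. For the application in Lemma~\ref{le:CRKst-equal-weight-2}, either $O_s(1/m)$ loss suffices.
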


\noindent {\bf Proof.}
Assume that the colors of $\chi'$ lie in $\{\mbox{red, blue}\}$.
Let $d_r(y)$ be the number of red edges incident with $y$, and let $p\colonequals \frac{d_r(y)}{m-1}$.
We first choose vertices $x_1, \ldots, x_s$ independently and uniformly from $V(R)\setminus\{y\}$, with replacement.
Note that the variables $\mathbf{1}_{\scriptsize\{\mbox{$\chi'(x_1y)$ is red}\}}, \ldots, \mathbf{1}_{\scriptsize\{\mbox{$\chi'(x_sy)$ is red}\}}$ are independent Bernoulli random variables with $\pr\left[\mathbf{1}_{\scriptsize\{\mbox{$\chi'(x_iy)$ is red}\}}=1\right]=p$ for each $i\in [s]$.
Let $A$ be the event that $\left(\mathbf{1}_{\scriptsize\{\mbox{$\chi'(x_1y)$ is red}\}}, \ldots, \mathbf{1}_{\scriptsize\{\mbox{$\chi'(x_sy)$ is red}\}}\right)\in \mathcal{C}_s$.
Note that for each $0\leq j\leq s$, we have $\pr\left[\left(\mathbf{1}_{\scriptsize\{\mbox{$\chi'(x_1y)$ is red}\}}, \ldots, \mathbf{1}_{\scriptsize\{\mbox{$\chi'(x_sy)$ is red}\}}\right)=1^{j}0^{s-j}\right]=p^{j}(1-p)^{s-j}.$
Thus $\pr[A]=\sum\nolimits_{j=0}^{s}p^{j}(1-p)^{s-j}.$

\begin{claim}\label{cl:CRKst-equal-weight-1}
$\pr[A]\geq \frac{s+1}{2^s}$.
\end{claim}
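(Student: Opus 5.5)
We must show $g(p)\colonequals\sum_{j=0}^{s}p^{j}(1-p)^{s-j}\geq \frac{s+1}{2^s}$ for every $p\in[0,1]$. The plan is to observe that $g$ is symmetric under $p\mapsto 1-p$ and takes the value $\frac{s+1}{2^s}$ at $p=\frac12$, and then to prove that $p=\frac12$ is a global minimum. The cleanest route is a termwise argument: pair up the term $j$ with the term $s-j$.

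For $0\leq j\leq s$, set $a=p^{j}(1-p)^{s-j}$ and $b=p^{s-j}(1-p)^{j}$. By AM--GM,
$$\frac{a+b}{2}\geq \sqrt{ab}=\sqrt{p^{s}(1-p)^{s}}=\bigl(p(1-p)\bigr)^{s/2}.$$
Summing over all $j$ (each unordered pair $\{j,s-j\}$ counted twice, and the middle term $j=s/2$ when $s$ is even counted as $a=b$), we get
$$g(p)=\frac12\sum_{j=0}^{s}\bigl(p^{j}(1-p)^{s-j}+p^{s-j}(1-p)^{j}\bigr)\geq (s+1)\bigl(p(1-p)\bigr)^{s/2}.$$
However, this lower bound is at most $\frac{s+1}{2^s}$, since $p(1-p)\leq\frac14$, so it points the wrong way and does not suffice.

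The better approach is a convexity statement in a different variable. Write $p=\frac{1+u}{2}$ and $1-p=\frac{1-u}{2}$ with $u\in[-1,1]$. Then
$$g=2^{-s}\sum_{j=0}^{s}(1+u)^{j}(1-u)^{s-j}.$$
Pairing $j$ with $s-j$, each pair equals $(1-u^2)^{\min(j,s-j)}\bigl((1+u)^{d}+(1-u)^{d}\bigr)$ with $d=|s-2j|$. Since $(1+u)^{d}+(1-u)^{d}\geq 2$ with no help from $1-u^2\leq 1$, this still does not close the argument directly.

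A genuinely clean route is the geometric-sum identity. For $p\neq\frac12$,
$$g(p)=\frac{p^{s+1}-(1-p)^{s+1}}{2p-1}.$$
Writing $x=p$ and $y=1-p$, so that $x+y=1$, we have $g=h_s(x,y)$, the complete homogeneous symmetric polynomial of degree $s$ in two variables. The plan is to show that, on the line $x+y=1$, the function $h_s$ is minimized at $x=y$. Substituting $x=\frac{1+u}{2}$ and $y=\frac{1-u}{2}$ gives
$$g=2^{-s}\,\frac{(1+u)^{s+1}-(1-u)^{s+1}}{2u}=2^{-s}\sum_{k\ \mathrm{odd}}\binom{s+1}{k}u^{k-1}.$$
This is a polynomial in $u^2$ with nonnegative coefficients, so it is minimized at $u=0$. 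There its value is $2^{-s}\binom{s+1}{1}=\frac{s+1}{2^s}$.

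The main step is this expansion; the only care needed is the case $p=\frac12$, which is handled by continuity or by direct evaluation.
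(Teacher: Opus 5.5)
Your final argument (the third one) is correct and complete. It takes a different route from the paper's.

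The paper first uses the symmetry $p\leftrightarrow 1-p$ to restrict to $p\le\frac12$. It then substitutes $x=\frac{p}{1-p}\in[0,1]$, so that $\pr[A]=\frac{1+x+\cdots+x^s}{(1+x)^s}$, and proves this is nonincreasing by computing the numerator of the derivative:
$$(1+x)P'(x)-sP(x)=\sum_{0\le j<\frac{s-1}{2}}(2j+1-s)(x^j-x^{s-1-j})\le 0.$$
The minimum is therefore at $x=1$. This is a continuous analogue of the paper's earlier balancing computation for $F_{q,s}(d)$, so the two proofs stay parallel.

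You instead use the closed form $\frac{p^{s+1}-(1-p)^{s+1}}{2p-1}$ and recenter at $u=2p-1$. This gives the exact expansion
$$\pr[A]=2^{-s}\sum_{i\ge 0}\binom{s+1}{2i+1}u^{2i},$$
which checks out, e.g.\ $\frac14(3+u^2)=1-p+p^2$ for $s=2$. Your route needs no calculus and no separate symmetry reduction, since evenness in $u$ encodes the symmetry. It also gives more than the claim: a quantitative gap
$$\pr[A]-\frac{s+1}{2^s}\ge \binom{s+1}{3}\frac{(2p-1)^2}{2^s},$$
and hence uniqueness of the minimizer $p=\frac12$ for $s\ge 2$.

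The case $p=\frac12$ needs no continuity argument. The identity $\sum_{j=0}^{s}(1+u)^j(1-u)^{s-j}=\sum_{k\ \mathrm{odd}}\binom{s+1}{k}u^{k-1}$ is between two polynomials that agree for all $u\neq 0$, so it holds identically.

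In a write-up, drop the AM--GM and the pairing attempts. You correctly flag that they fail, but they contribute nothing to the proof.
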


\begin{proof}
For $p\in [0,1]$, let $\Psi_s(p)\colonequals \sum\nolimits_{j=0}^{s}p^{j}(1-p)^{s-j}$.
We shall show that $\Psi_s(p)\geq \frac{s+1}{2^s}$.
Since the function $\Psi_s(p)$ is symmetric in the sense $\Psi_s(p)=\Psi_s(1-p)$, it suffices to consider $0\leq p\leq \frac{1}{2}$.
Let $x=\frac{p}{1-p}$, so $0\leq x\leq 1$, $p=\frac{x}{1+x}$ and $1-p=\frac{1}{1+x}.$
Then $\Psi_s(p)=\sum\nolimits_{j=0}^{s}\left(\frac{x}{1+x}\right)^{j}\left(\frac{1}{1+x}\right)^{s-j}=\frac{1+x+\cdots+x^s}{(1+x)^s}$.
Let $P(x)\colonequals 1+x+\cdots+x^s$ and $f(x)\colonequals \frac{P(x)}{(1+x)^s}= \Psi_s(p)$.
Then
$$f'(x)=\frac{P'(x)(1+x)^s-s(1+x)^{s-1}P(x)}{(1+x)^{2s}}=\frac{(1+x)P'(x)-sP(x)}{(1+x)^{s+1}},$$
and
\begin{align*}
  (1+x)P'(x)-sP(x) = &~(1+x)(1+2x+\cdots+sx^{s-1})-s(1+x+\cdots+x^s) \\
  = &~(1+2x+\cdots+sx^{s-1}) + (x+2x^2+\cdots+sx^{s})-s(1+x+\cdots+x^s) \\
  = &~(1-s)\cdot 1 + (2+1-s)x +\cdots + (s+s-1+s)x^{s-1} + (sx^{s}-sx^{s}) \\
  = &~\sum\nolimits_{j=0}^{s-1}(2j+1-s)x^j = \sum\nolimits_{j\in \{0, 1, \ldots, s-1\}\setminus \{\frac{s-1}{2}\}}(2j+1-s)x^j \\
  = &~\sum\nolimits_{0\leq j<\frac{s-1}{2}}(2j+1-s)x^j + \sum\nolimits_{\frac{s-1}{2}<j\leq s-1}(2j+1-s)x^j \\
  = &~\sum\nolimits_{0\leq j<\frac{s-1}{2}}(2j+1-s)x^j + \sum\nolimits_{0\leq i<\frac{s-1}{2}}(2(s-1-i)+1-s)x^{s-1-i} \\
  = &~\sum\nolimits_{0\leq j<\frac{s-1}{2}}(2j+1-s)x^j + \sum\nolimits_{0\leq i<\frac{s-1}{2}}(-(2i+1-s))x^{s-1-i} \\
  = &~\sum\nolimits_{0\leq j<\frac{s-1}{2}}(2j+1-s)(x^j-x^{s-1-j}).
\end{align*}
Note that for $0\leq j<\frac{s-1}{2}$, we have $2j+1-s<0$ and $j<s-1-j$, so $x^j\geq x^{s-1-j}$ for $0\leq x\leq 1$.
Then $(1+x)P'(x)-sP(x)=\sum\nolimits_{0\leq j<\frac{s-1}{2}}(2j+1-s)(x^j-x^{s-1-j})\leq 0$, so $f'(x)\leq 0$.
Thus $f(x)$ is nonincreasing.
Therefore, we have $\pr[A]=\Psi_s(p)=f(x)\geq f(1)= \frac{s+1}{2^s}$.
\end{proof}

Recall that the vertices $x_1, \ldots, x_s$ are chosen with replacement.
We actually need $x_1, \ldots, x_s$ to be distinct.
Let $D$ be the event that the vertices $x_1, \ldots, x_s$ are distinct.
Note that the conditional distribution of the with-replacement sample given $D$ is uniform over all ordered $s$-tuples of $s$ distinct vertices of $V(R)\setminus\{y\}$, so $\pr[E]=\pr[A~|~D].$
For every pair $(i,j)$ with $1\leq i<j\leq s$, we have $\pr[x_i=x_j]=\frac{m-1}{(m-1)^2}=\frac{1}{m-1}$.
Applying the union bound, we have $\pr\Big[\overline{D}\Big]\leq \sum\nolimits_{1\leq i<j\leq s}\pr[x_i=x_j] = \frac{1}{m-1}{s\choose 2}.$
Combining with Claim~\ref{cl:CRKst-equal-weight-1}, we have
$$\pr[E]=\pr[A~|~D]=\frac{\pr[A\wedge D]}{\pr[D]}\geq \pr[A\wedge D]\geq \pr[A]-\pr\Big[\overline{D}\Big]\geq \frac{s+1}{2^s}-\frac{1}{m-1}{s\choose 2}.$$
The proof of Lemma~\ref{le:CRKst-equal-weight-1} is complete.
\hfill$\blacksquare$
%\vspace{0.2cm}

\begin{lemma}\label{le:CRKst-equal-weight-2}
Let $\chi'$ be a 2-edge-coloring of a complete graph $R$ with $V(R)=[m]$.
For each vertex $i\in V(R)=[m]$, we assign a weight $w_i$ $(0\leq w_i\leq M)$ to $i$.
If $m\geq s+1\geq 3$, then there exists an ordered $s$-tuple $\mathbf{x}$ of $s$ distinct vertices satisfying $L_{\chi',w}(\mathbf{x})\geq \frac{s+1}{2^s}W(R)-M\left(\frac{s(s+1)}{2^s}+{s\choose 2}\right).$
\end{lemma}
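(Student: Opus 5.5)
The plan is a weighted averaging argument: pick the ordered $s$-tuple $\mathbf{x}$ uniformly at random among all $(m)_s$ ordered $s$-tuples of distinct vertices of $R$, and bound $\mathbb{E}[L_{\chi',w}(\mathbf{x})]$ from below. Some $\mathbf{x}$ attains at least the expectation, which proves the lemma.

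By linearity,
$$\mathbb{E}[L_{\chi',w}(\mathbf{x})]=\sum_{y\in[m]} w_y\,\pr[y\notin\{x_1,\ldots,x_s\}\text{ and } y\in Y_{\chi'}(\mathbf{x})].$$
For a fixed $y$, the event $y\notin\{x_1,\ldots,x_s\}$ has probability $\frac{m-s}{m}$. Conditioned on this event, $\mathbf{x}$ is uniform over ordered $s$-tuples of distinct vertices of $V(R)\setminus\{y\}$. So the conditional probability that $y$ is compatible with $\mathbf{x}$ is exactly $\pr[E]$ from Lemma~\ref{le:CRKst-equal-weight-1}, which applies since $m\geq s+1$. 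This gives
$$\pr[y\in Y_{\chi'}(\mathbf{x})]\geq \frac{m-s}{m}\left(\frac{s+1}{2^s}-\frac{1}{m-1}{s\choose 2}\right).$$

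Multiplying by $w_y$ and summing over $y$, we get
$$\mathbb{E}[L]\geq \frac{s+1}{2^s}W(R)-\frac{s}{m}\cdot\frac{s+1}{2^s}W(R)-\frac{m-s}{m(m-1)}{s\choose 2}W(R).$$
The error terms are then bounded using $W(R)\leq mM$:
\begin{itemize}
\item the first error term is at most $\frac{s(s+1)}{2^s}M$;
\item the second is at most $\frac{m-s}{m-1}{s\choose 2}M\leq {s\choose 2}M$.
\end{itemize}
Together these give exactly $\frac{s+1}{2^s}W(R)-M\bigl(\frac{s(s+1)}{2^s}+{s\choose 2}\bigr)$.

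There is one sign subtlety. If $\frac{s+1}{2^s}-\frac{1}{m-1}{s\choose 2}$ is negative, the product bound above still holds, because we only multiply by the nonnegative factor $\frac{m-s}{m}$ and the weights are nonnegative. The main care point is the reduction to Lemma~\ref{le:CRKst-equal-weight-1}, namely checking that conditioning on $y\notin\mathbf{x}$ gives the uniform distribution on tuples from $V(R)\setminus\{y\}$. This holds by symmetry. The remaining steps are routine, and the hypothesis $s\geq 2$ plays no essential role.
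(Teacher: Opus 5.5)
Your proposal is correct and follows the paper's proof essentially step for step. It takes a uniform random ordered $s$-tuple, applies linearity of expectation over the indicators $I_y$, and conditions on $y\notin\{x_1,\ldots,x_s\}$ to invoke Lemma~\ref{le:CRKst-equal-weight-1} with the factor $\frac{m-s}{m}$. It then bounds the two error terms via $W(R)\leq mM$ and $\frac{m-s}{m-1}\leq 1$, exactly as the paper does.
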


\noindent {\bf Proof.}
We choose an ordered $s$-tuple $\mathbf{x}=(x_1, \ldots, x_s)$ uniformly at random from the set of all ordered $s$-tuples of $s$ distinct vertices of $V(R)$.
For each vertex $y\in V(R)$, define an indicator variable as follows:
$$I_y \colonequals
\left\{
   \begin{aligned}
    &1 & & \mbox{if $y\notin \{x_1, \ldots, x_s\}$ and $\left(\mathbf{1}_{\scriptsize\{\mbox{$\chi'(x_1y)$ is red}\}}, \ldots, \mathbf{1}_{\scriptsize\{\mbox{$\chi'(x_sy)$ is red}\}}\right)\in \mathcal{C}_s$},\\
    &0 & & \mbox{otherwise}.
   \end{aligned}
   \right.$$
Then
\begin{equation}\label{eq:CRKst-equal-weight-2-1}
L_{\chi',w}(\mathbf{x})=\sum\nolimits_{y\in Y_{\chi'}(\mathbf{x})}w_y =\sum\nolimits_{y\in V(R)}w_yI_y.
\end{equation}

Fix a vertex $y\in V(R)$.
The probability that none of $x_1, \ldots, x_s$ is $y$ is $\pr[y\notin \{x_1, \ldots, x_s\}]=\frac{(m-1)_s}{(m)_s}=\frac{m-s}{m}.$
Conditional on $y\notin \{x_1, \ldots, x_s\}$, $\mathbf{x}$ is uniformly distributed over all ordered $s$-tuples of $s$ distinct vertices of $V(R)\setminus \{y\}$.
Thus by Lemma~\ref{le:CRKst-equal-weight-1}, we have
\begin{equation}\label{eq:CRKst-equal-weight-2-2}
\pr[I_y=1]\geq \frac{m-s}{m}\left(\frac{s+1}{2^s}-\frac{1}{m-1}{s\choose 2}\right).
\end{equation}

Since $0\leq w_i\leq M$ for all $i\in [m]$, we have $W(R)= \sum\nolimits_{i\in [m]}w_i\leq mM.$
Combining with Equality~(\ref{eq:CRKst-equal-weight-2-1}), Inequality~(\ref{eq:CRKst-equal-weight-2-2}) and using linearity of expectation, we have
\begin{align*}
\mathds{E}\left[L_{\chi',w}(\mathbf{x})\right]= &~\sum\nolimits_{y\in V(R)}w_y\mathds{E}[I_y] = \sum\nolimits_{y\in V(R)}w_y\pr[I_y=1] \\
\geq &~\sum\nolimits_{y\in V(R)}w_y\frac{m-s}{m}\left(\frac{s+1}{2^s}-\frac{1}{m-1}{s\choose 2}\right) = W(R)\frac{m-s}{m}\left(\frac{s+1}{2^s}-\frac{1}{m-1}{s\choose 2}\right) \\
= &~\frac{s+1}{2^s}W(R)-\frac{W(R)}{m}\frac{s(s+1)}{2^s} -\frac{W(R)}{m}\frac{m-s}{m-1}{s\choose 2} \\
\geq &~\frac{s+1}{2^s}W(R)-\frac{mM}{m}\frac{s(s+1)}{2^s} -\frac{mM}{m}\frac{m-s}{m-1}{s\choose 2} \\
\geq &~\frac{s+1}{2^s}W(R)-M\frac{s(s+1)}{2^s} -M{s\choose 2} = \frac{s+1}{2^s}W(R)-M\left(\frac{s(s+1)}{2^s}+{s\choose 2}\right).
\end{align*}
Therefore, there exists an $\mathbf{x}$ satisfying $L_{\chi',w}(\mathbf{x})\geq \mathds{E}\left[L_{\chi',w}(\mathbf{x})\right] \geq \frac{s+1}{2^s}W(R)-M\left(\frac{s(s+1)}{2^s}+{s\choose 2}\right).$
\hfill$\blacksquare$
\vspace{0.2cm}

We next state and prove two additional lemmas.

\begin{lemma}\label{le:CRKst-equal-1}
For any fixed integer $s\geq 3$ and sufficiently large $N$,
suppose that a Gallai-colored $K_N$ admits a Gallai partition $V_1, V_2, \ldots, V_m$ with $2\leq \max\nolimits_{i\in [m]}|V_i|\leq s-1$.
Then it contains an orderable $K_{s, \ell}$ with $\ell\geq \gamma_sN-O_s(1)$, where $\gamma_3=\frac{3}{4}$ and $\gamma_s=\frac{s-1}{2^{s-1}}$ for $s\geq 4$.
\end{lemma}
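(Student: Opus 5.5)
We combine the reduced-graph weighted averaging (Lemma~\ref{le:CRKst-equal-weight-2}) with the trick already used at the end of the proof of Theorem~\ref{thm:CRKstUpper}: a small part of size at least two is placed first in the ordering. Let $R$ be the reduced graph on $[m]$ with its 2-edge-coloring $\chi'$ in red and blue, and give vertex $i$ the weight $w_i=|V_i|$. Then $M=s-1$, $W(R)=N$ and $m\ge N/(s-1)$, so $m$ is large. Fix a part, say $V_1$, with $2\le |V_1|=:a\le s-1$. The edges from $V_1$ to every other part are monochromatic, so $[m]\setminus\{1\}=A\cup B$, where $A$ collects the parts joined to $V_1$ in red and $B$ those joined in blue. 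We look for an orderable $K_{s,\ell}$ whose $s$-side consists of $V_1$ together with $s-a$ further vertices taken from distinct parts $x_1,\dots,x_{s-a}$ (one vertex from each), and whose $\ell$-side consists of whole parts $V_y$. Put the vertices of $V_1$ first in the ordering, then order the remaining configuration as in Lemma~\ref{le:Y}; this gives an orderable $K_{s,\ell}$ provided all chosen $V_y$ lie on one side, $A$ or $B$, and each such $y$ is compatible with $(x_1,\dots,x_{s-a})$ in $R$. Edges inside a single part $V_y$ are not used by $K_{s,\ell}$, so a whole part may be taken.

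\emph{Main estimate.} Take the heavier of $A$ and $B$, say $A$, so that $W(A)\ge (N-a)/2$. Apply Lemma~\ref{le:CRKst-equal-weight-2} to $R[A]$ with $s' = s-a$ in place of $s$. This gives $\mathbf{x}$ with
\[
L\ge \frac{s-a+1}{2^{s-a}}\cdot\frac{N}{2}-O_s(1).
\]
Each $x_i$ contributes only a single vertex of its part, and the remaining vertices of the parts $V_{x_i}$ are simply discarded; this costs $O_s(1)$. For $s'=1$ we instead get every $y\in A$ directly, and Lemma~\ref{le:CRKst-equal-weight-2} needs $s'\ge2$ anyway. The coefficient is $\frac{s-a+1}{2^{s-a+1}}$, which is largest when $a$ is as large as possible. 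We do not choose $a$, however, so for the worst case we take $a=2$, giving $\frac{s-1}{2^{s-1}}=\gamma_s$. For $s\ge4$ this matches the claimed $\gamma_s$.

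\emph{The case $s=3$.} Here $a=2$ forces $s'=1$, and the plain bound above is only $N/2$. To reach $3/4$ we also use vertices of $B$: with $x_1\in A$, a part $y$ is usable if, after ordering, its color pattern to $(V_1,x_1)$ lies in $\mathcal{C}_3$. We allow either $V_1$ first or $x_1$ first. Concretely, put $x_1$ first and $V_1$ after it. Then a part $y$ joined to $x_1$ in red is always usable when $y\in A$, and the admissible patterns are rr, br, bb. Choosing $x_1$ so that the weighted red or blue degree into the smaller side is favourable, an averaging over $x_1$ (the $s=2$ case of Lemma~\ref{le:CRKstUpper-balancing} in weighted form) should yield $\frac34 N - O(1)$. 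This mirrors $\alpha_2=3/4$ from the $K_{2,t}$ bound, with the homogeneous pair $V_1$ acting as one vertex of color pattern fixed by $A/B$.

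The main obstacle is making this averaging for $s=3$ rigorous, that is, showing that the combination of the $A/B$ split with the choice of $x_1$ always captures a $3/4$ fraction of the weight. For general $s$ the same concern arises in checking that the $1/2$ loss from restricting to one side is not worse than claimed. I would first try to reformulate the problem as an $(s-a+1)$-tuple in the reduced graph in which $V_1$ plays a vertex whose row is constant on $A\cup B$, and then apply Lemma~\ref{le:CRKst-equal-weight-2} directly, checking by hand that the resulting compatibility density is at least $\gamma_s$.
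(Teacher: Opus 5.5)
Your argument for $s\ge 4$ is correct and is essentially the paper's Case~1. The paper always takes two vertices of a largest part and applies Lemma~\ref{le:CRKst-equal-weight-2} to the heavier side with $(s-2)$-tuples, which is exactly your worst case $a=2$.

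The genuine gap is $s=3$, which you flag but do not close. You reduce it correctly. With $V_1=\{z_1,z_2\}$ and $z_3$ in a part $V_i$, the two possible orderings lose either the weight $a_i$ of parts of $A$ joined to $V_i$ in blue, or the weight $b_i$ of parts of $B$ joined to $V_i$ in red. So you need some $i\neq 1$ with $\min\{a_i,b_i\}\le \frac{N-2}{4}$. Nothing in the proposal proves this, and your hints do not lead to it.

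Restricting to $x_1\in A$, as in your $s\ge 4$ argument, fails. Suppose $W_A=W_B$, all $A$--$B$ edges of $R$ are red, and $R[A]$ is entirely blue. Then every $i\in A$ has $a_i=W_A-w_i$ and $b_i=W_B$, so you get only $\ell=\frac{N-2}{2}$. Yet any $i\in B$ has $a_i=0$.

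The $s=2$ case of Lemma~\ref{le:CRKstUpper-balancing} does not transfer either. That lemma averages over both vertices of a random pair. Here $V_1$ is fixed, only $x_1$ is free, and the ordering must be chosen according to the side on which $x_1$ lies. The same objection applies to your closing idea of treating $V_1$ as a coordinate in Lemma~\ref{le:CRKst-equal-weight-2}, which averages over uniformly random tuples.

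The missing step is a weighted double count of the $A$--$B$ cross edges of $R$. Use the bound $\min\{a_x,b_x\}\le b_x$ for $x\in A$ (ordering $V_1$ before $z_3$) and $\min\{a_y,b_y\}\le a_y$ for $y\in B$ (ordering $z_3$ before $V_1$). Each red cross edge is then counted from its $A$-end and each blue one from its $B$-end, so
\[
\sum_{i\in A\cup B}w_i\min\{a_i,b_i\}\le\sum_{x\in A}w_xb_x+\sum_{y\in B}w_ya_y=\sum_{x\in A,\,y\in B}w_xw_y=W_AW_B\le\frac{(N-2)^2}{4}.
\]
Since $\sum_{i\in A\cup B}w_i=N-2$, some $i$ has $\min\{a_i,b_i\}\le\frac{N-2}{4}$. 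This is the paper's Claim~\ref{cl:CRKst-equal-1-1}, which it proves by contradiction.

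Now take $z_3\in V_i$. When $a_i\le b_i$, use the paper's ordering $(Y',Y''',z_3,Y'',z_1,z_2)$; this gives $\ell\ge N-4-\frac{N-2}{4}=\frac34N-\frac72$. Without this step, your proposal proves the case $s=3$ only with $\frac12$ in place of $\gamma_3=\frac34$.
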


\noindent {\bf Proof.}
Let $\chi$ be a Gallai-coloring of $K_N$ that admits a Gallai partition $V_1, V_2, \ldots, V_m$ with $2\leq \max\nolimits_{i\in [m]}|V_i|\leq s-1$.
Without loss of generality, we may assume that $|V_1|=\max\nolimits_{i\in [m]}|V_i|$ and the colors between the parts belong to $\{\mbox{red, blue}\}$.
Then $2\leq |V_1|\leq s-1$.
Let $R$ be the reduced graph with $V(R)=[m]$ and $\chi'$ be the 2-edge-coloring of $R$.
For each vertex $i\in V(R)=[m]$, we assign a weight $w_i=|V_i|$ to $i$.
Since $\max\nolimits_{i\in [m]}|V_i|\leq s-1$, we have $0\leq w_i \leq s-1$ for all $i\in [m]$.
Let
\begin{align*}
  A &\colonequals \{i\in [m]\setminus \{1\}\colon\, \mbox{$\chi(V_i, V_1)$ is red}\},\hspace{-1.5cm} & &W_A\colonequals \sum\nolimits_{i\in A} w_i = \sum\nolimits_{i\in A}|V_i|, \\
  B &\colonequals \{i\in [m]\setminus \{1\}\colon\, \mbox{$\chi(V_i, V_1)$ is blue}\},\hspace{-1.5cm} & &W_B\colonequals \sum\nolimits_{i\in B} w_i = \sum\nolimits_{i\in B}|V_i|.
\end{align*}
Then $W(R)=\sum\nolimits_{i\in [m]}w_i = W_A+W_B+|V_1|=N.$
We divide the rest of the proof into two cases.
\vspace{0.2cm}

{\bf Case~1.} $s\geq 4$.
\vspace{0.2cm}

Without loss of generality, we may assume that $W_A\geq W_B$.
Then $W_A\geq \frac{N-|V_1|}{2}\geq \frac{N-(s-1)}{2}$.
Since $\max\nolimits_{i\in [m]}|V_i|\leq s-1$, we further have $|A|\geq \frac{W_A}{s-1}\geq \frac{N-(s-1)}{2(s-1)}\geq s-1$ for sufficiently large $N$.
Let $R_A$ be the subgraph of $R$ induced by $A$, and let $\chi'_A$ be the 2-edge-coloring of $R_A$ inherited from $\chi'$.
Then $R_A$ is a complete graph of order $|A|\geq s-1=(s-2)+1.$
Applying Lemma~\ref{le:CRKst-equal-weight-2} to $R_A$, we obtain an ordered $(s-2)$-tuple $\mathbf{x}=(x_1, \ldots, x_{s-2})$ of $s-2$ distinct vertices of $R_A$ satisfying \begin{align}\label{eq:CRKst-equal-1-1}
\sum\nolimits_{y\in Y_{\chi'_A}(\mathbf{x})}|V_y|= &~\sum\nolimits_{y\in Y_{\chi'_A}(\mathbf{x})}w_y=L_{\chi'_A,w}(\mathbf{x}) \nonumber\\
\geq &~\frac{(s-2)+1}{2^{s-2}}W_A-(s-1)\left(\frac{(s-2)((s-2)+1)}{2^{s-2}}+{s-2\choose 2}\right) \nonumber\\
\geq &~\frac{s-1}{2^{s-2}}\frac{N-(s-1)}{2}-O_s(1) = \frac{s-1}{2^{s-1}}N -O_s(1).
\end{align}
Let $Z\colonequals \bigcup_{y\in Y_{\chi'_A}(\mathbf{x})}V_y.$
For each $i\in [s-2]$, we choose one vertex $v_i$ from $V_{x_i}$.
Then every edge between $Z$ and $\{v_1, \ldots, v_{s-2}\}$ is either red or blue,
and for every $z\in Z$, the vector $\left(\mathbf{1}_{\scriptsize\{\mbox{$\chi(v_1z)$ is red}\}}, \ldots, \mathbf{1}_{\scriptsize\{\mbox{$\chi(v_{s-2}z)$ is red}\}}\right)$ belongs to $\mathcal{C}_{s-2}$.
By the implication (iv)$\Rightarrow$(i) in Lemma~\ref{le:lonesum}, those edges form an orderable $K_{s-2, \ell}$, where $\ell=|Z|\geq \frac{s-1}{2^{s-1}}N -O_s(1)$ by Inequality~(\ref{eq:CRKst-equal-1-1}).
Note that every edge between $V_1$ and $Z$ is red.
Then we can obtain an orderable $K_{s,\ell}$ from this $K_{s-2,\ell}$ by adding two vertices of $V_1$ as the first two vertices.
\vspace{0.2cm}

{\bf Case~2.} $s=3$.
\vspace{0.2cm}

In this case, we have $|V_1|=2$.
For each $i\in [m]\setminus \{1\}$, let
\begin{align*}
  a_i &\colonequals \sum\nolimits_{\scriptsize\mbox{$j\in A$, $j\neq i$, $\chi'(ij)$ is blue}}w_j=\sum\nolimits_{\scriptsize\mbox{$j\in A$, $j\neq i$,  $\chi'(ij)$ is blue}}|V_j|, \\
  b_i &\colonequals \sum\nolimits_{\scriptsize\mbox{$j\in B$, $j\neq i$, $\chi'(ij)$ is red}}w_j=\sum\nolimits_{\scriptsize\mbox{$j\in B$, $j\neq i$, $\chi'(ij)$ is red}}|V_j|.
\end{align*}

\begin{claim}\label{cl:CRKst-equal-1-1}
There exists an $i\in [m]\setminus \{1\}$ with $\min\{a_i, b_i\}\leq \frac{N-2}{4}$.
\end{claim}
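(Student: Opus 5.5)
Averaging seems the right tool here: I would show that a suitable weighted average of $\min\{a_i,b_i\}$ over $i\in[m]\setminus\{1\}$ is at most $\frac{N-2}{4}$, so that some $i$ attains it. Since $\min\{a_i,b_i\}\le \lambda a_i+(1-\lambda)b_i$ for any $\lambda\in[0,1]$, it is enough to find nonnegative weights on $[m]\setminus\{1\}$ and a convex combination of $a_i$ and $b_i$ whose weighted average is at most $\frac{N-2}{4}$. The natural weights are $w_i=|V_i|$, the same ones used for the reduced graph $R$. Recall that $W_A+W_B=N-2$ here, because $|V_1|=2$.

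The first step is to double count. I would use only $a_i$ for $i\in A$ and only $b_i$ for $i\in B$, and bound
\[
\sum_{i\in A}w_i a_i=\sum_{\substack{i,j\in A,\, i\ne j\\ \chi'(ij)\text{ blue}}}w_iw_j\le \sum_{\substack{i,j\in A\\ i\neq j}} w_iw_j\le W_A^2,
\]
and similarly $\sum_{i\in B}w_ib_i\le W_B^2$. This alone only gives an average of $\min$ of order $\max\{W_A,W_B\}$, which is too weak. So I would mix the two: for $i\in A$ use $\min\{a_i,b_i\}\le \frac12(a_i+b_i)$, and likewise for $i\in B$. Then
\[
\sum_{i\ne 1}w_i\min\{a_i,b_i\}\le \frac12\Bigl(\sum_{i}w_ia_i+\sum_i w_ib_i\Bigr).
\]
The term $\sum_i w_ia_i$ counts ordered pairs $(i,j)$ with $j\in A$ and $\chi'(ij)$ blue, weighted by $w_iw_j$; similarly $\sum_i w_ib_i$ counts pairs with $j\in B$ and $\chi'(ij)$ red. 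Each unordered pair $\{i,j\}$ with $i\in A$, $j\in B$ contributes at most $w_iw_j$ in total, since its colour is either red or blue. Pairs inside $A$ contribute $2w_iw_j$ only if the edge is blue, and pairs inside $B$ contribute $2w_iw_j$ only if the edge is red. Thus the crude total bound is $W_A^2+W_B^2+W_AW_B$, and dividing by $W_A+W_B=N-2$ gives an average of $\frac{W_A^2+W_AW_B+W_B^2}{2(W_A+W_B)}$. This is at most $\frac{N-2}{2}$, which is not yet $\frac{N-2}{4}$.

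The main obstacle is therefore to exploit the colour structure rather than count every pair. The extra leverage I expect to use is the hypothesis on the partition itself: the Gallai partition can be taken with no part $V_j$ that is joined to everything else in a single colour (otherwise we would be in the homogeneous situation handled elsewhere). I would also use the freedom to take $\lambda$ different from $\frac12$, and weights that depend on whether $i\in A$ or $i\in B$, with for instance weight $w_iW_B$ on $i\in A$ and $w_iW_A$ on $i\in B$. One would then optimize the resulting quadratic form in $W_A$ and $W_B$ so that the bound becomes $\frac{(W_A+W_B)}{4}$, perhaps with equality exactly when $W_A=W_B$ and every relevant pair is counted once.

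If the symmetric averaging still falls short, my fallback is a direct extremal argument. I would suppose that $a_i,b_i>\frac{N-2}{4}$ for every $i$, and then sum $a_i$ over $A$ and $b_i$ over $B$ with weights. This gives more than $\frac{N-2}{4}W_A$ weighted blue mass inside $A$ and more than $\frac{N-2}{4}W_B$ weighted red mass inside $B$. I would then try to contradict this using the cross pairs between $A$ and $B$ counted through the $b_i$ for $i\in A$ and the $a_i$ for $i\in B$, whose total is at most $W_AW_B$.
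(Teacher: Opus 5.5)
This does not yet prove the claim. Your main averaging uses the wrong quantities. For $i\in A$ and $i\in B$ respectively, $a_i$ and $b_i$ measure blue mass inside $A$ and red mass inside $B$. If $A$ is internally all blue and $B$ internally all red, these equal $W_A-w_i$ and $W_B-w_i$. So no averaging of them, alone or mixed with $\lambda=\frac12$, gives $\frac{N-2}{4}$ in general.

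The remedies you reach for next do not close the gap. Lemma~\ref{le:CRKst-equal-1} allows an arbitrary Gallai partition with $2\le\max_i|V_i|\le s-1$. Hence a part, including $V_1$, may be joined to everything else in a single colour, and that hypothesis is unavailable (and unnecessary). The quadratic-form optimization with weights $w_iW_B$, $w_iW_A$ is never specified.

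Your fallback names the right objects in its last sentence. However, it ties them to lower bounds on the inside masses, which play no role, and it stops at ``try to contradict''.

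The missing idea is to use only the cross quantities: $b_i$ for $i\in A$ and $a_i$ for $i\in B$. Each pair $x\in A$, $y\in B$ is counted exactly once: in $b_x$ if $\chi'(xy)$ is red, and in $a_y$ if it is blue. Therefore
\[
\sum_{x\in A}w_xb_x+\sum_{y\in B}w_ya_y=W_AW_B\le\frac{(W_A+W_B)^2}{4}=\frac{(N-2)^2}{4}.
\]
The weights $w_i\ge 1$ on $[m]\setminus\{1\}$ sum to $N-2$. Hence some $i$ has $b_i\le\frac{N-2}{4}$ (if $i\in A$) or $a_i\le\frac{N-2}{4}$ (if $i\in B$), which gives the claim. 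This also covers $A=\emptyset$ or $B=\emptyset$.

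This is exactly the paper's proof. There it is phrased as a double count of the red cross mass $X=\sum_{x\in A}w_xb_x=\sum_{y\in B}w_y(W_A-a_y)$, under the assumption that all $a_i,b_i$ exceed $\frac{W_AW_B}{N-2}$.
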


\begin{proof}
If $A=\emptyset$ or $B=\emptyset$, then the claim holds trivially, so we may assume that $A\neq \empty$ and $B\neq \emptyset$.
Since $W_A+W_B=N-|V_1|=N-2$, we have $\frac{1}{N-2}W_AW_B=\frac{1}{N-2}W_A(N-2-W_A)\leq \frac{1}{N-2}\frac{(N-2)^2}{4}=\frac{N-2}{4}$.
Hence, it suffices to show that there exists an $i\in [m]\setminus \{1\}$ with $\min\{a_i, b_i\}\leq \frac{W_AW_B}{N-2}$.
Suppose for a contradiction that for all $i\in [m]\setminus \{1\}$, we have $a_i> \frac{W_AW_B}{N-2}$ and $b_i> \frac{W_AW_B}{N-2}$.
We proceed by double-counting $X\colonequals \sum\nolimits_{\scriptsize\mbox{$x\in A$, $y\in B$, $\chi'(xy)$ is red}}w_xw_y$.
Since $b_i> \frac{W_AW_B}{N-2}$ for all $i\in [m]\setminus \{1\}$, we have
\begin{align*}
X= &~\sum\nolimits_{x\in A}\sum\nolimits_{\scriptsize\mbox{$y\in B$, $\chi'(xy)$ is red}}w_xw_y =\sum\nolimits_{x\in A}w_x\sum\nolimits_{\scriptsize\mbox{$y\in B$, $\chi'(xy)$ is red}}w_y \\
= &~\sum\nolimits_{x\in A}w_x b_x > \sum\nolimits_{x\in A}w_x \frac{W_AW_B}{N-2} =W_A\frac{W_AW_B}{N-2}.
\end{align*}
Since $a_i> \frac{W_AW_B}{N-2}$ for all $i\in [m]\setminus \{1\}$, we have
\begin{align*}
X= &~\sum\nolimits_{y\in B}\sum\nolimits_{\scriptsize\mbox{$x\in A$, $\chi'(xy)$ is red}}w_xw_y =\sum\nolimits_{y\in B}w_y\sum\nolimits_{\scriptsize\mbox{$x\in A$, $\chi'(xy)$ is red}}w_x \\
= &~\sum\nolimits_{y\in B}w_y\left(W_A-\sum\nolimits_{\scriptsize\mbox{$x\in A$, $\chi'(xy)$ is blue}}w_x\right) = \sum\nolimits_{y\in B}w_y\left(W_A-a_y\right) \\
< &~\sum\nolimits_{y\in B}w_y\left(W_A-\frac{W_AW_B}{N-2}\right) = W_B\left(W_A-\frac{W_AW_B}{N-2}\right).
\end{align*}
Then $W_A\frac{W_AW_B}{N-2}<X<W_B\left(W_A-\frac{W_AW_B}{N-2}\right).$
But $W_A\frac{W_AW_B}{N-2}-W_B\left(W_A-\frac{W_AW_B}{N-2}\right)=(W_A+W_B)\frac{W_AW_B}{N-2}-W_AW_B=(N-2)\frac{W_AW_B}{N-2}-W_AW_B=0,$ a contradiction.
This completes the proof of Claim~\ref{cl:CRKst-equal-1-1}.
\end{proof}

Without loss of generality, we may assume that for $i=2$ we have $\min\{a_i, b_i\}\leq \frac{N-2}{4}$ by Claim~\ref{cl:CRKst-equal-1-1}.
By symmetry, we further assume that $a_2\leq b_2$, so $a_2\leq \frac{N-2}{4}.$
Let $V_1=\{z_1, z_2\}$, $z_3\in V_2$,
\begin{align*}
Y'= &~\bigcup\nolimits_{\scriptsize\mbox{$i\in A\cap ([m]\setminus \{1,2\})$, $\chi(V_2, V_i)$ is red}}V_i, \\
Y''= &~\bigcup\nolimits_{\scriptsize\mbox{$i\in B\cap ([m]\setminus \{1,2\})$, $\chi(V_2, V_i)$ is red}}V_i, \\
Y'''= &~\bigcup\nolimits_{\scriptsize\mbox{$i\in B\cap ([m]\setminus \{1,2\})$, $\chi(V_2, V_i)$ is blue}}V_i.
\end{align*}
Then $|Y'\cup Y''\cup Y'''|=N-|V_1|-|V_2|-a_2\geq N-2-2-\frac{N-2}{4}=\frac{3}{4}N-\frac{7}{2}$.
Note that the edges between $\{z_1, z_2, z_3\}$ and $Y'\cup Y''\cup Y'''$ form an orderable $K_{3, \ell}$ with $\ell\geq \frac{3}{4}N-\frac{7}{2}$ under the ordering $(Y', Y''', z_3, Y'', z_1, z_2)$, with arbitrary ordering within each of $Y', Y'', Y'''$.
The proof of Lemma~\ref{le:CRKst-equal-1} is complete.
\hfill$\blacksquare$
\vspace{0.2cm}

For integers $s\geq 1$ and $t\geq 1$, let $M_s(t)$ be the largest integer $n$ for which there exists a Gallai-coloring of $K_n$ that uses at least three colors and contains no orderable copy of $K_{s,t}$.
If no such Gallai-coloring exists, define $M_s(t)\colonequals 0.$

\begin{lemma}\label{le:CRKst-equal-2}
For any fixed integer $s\geq 2$ and sufficiently large $t$, we have $M_s(t)\leq \theta_s t +O_s(1)$, where $\theta_2=\frac{5}{4}$, $\theta_3=\frac{3}{2}$ and $\theta_s=\frac{2^{s-1}}{s-1}$ for $s\geq 4$.
\end{lemma}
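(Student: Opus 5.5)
The plan is to fix a Gallai-coloring $\chi$ of $K_N$ that uses at least three colors and has no orderable $K_{s,t}$, and to show $N\le \theta_s t+C_s$ by induction on $t$. Take a Gallai partition $V_1,\dots,V_m$ from Theorem~\ref{thm:Gallai}, with inter-part colors red and blue, and order the parts so that $|V_1|=\max_i|V_i|$. Since at least three colors are used, some part spans an edge, so $|V_1|\ge 2$. I then split according to whether $|V_1|\le s-1$ or $|V_1|\ge s$.

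\emph{Small parts} ($2\le |V_1|\le s-1$, which only arises for $s\ge 3$). Lemma~\ref{le:CRKst-equal-1} gives an orderable $K_{s,\ell}$ with $\ell\ge\gamma_s N-O_s(1)$. Hence $\gamma_s N-O_s(1)\le t-1$, that is, $N\le t/\gamma_s+O_s(1)$. Since $1/\gamma_3=\tfrac43\le\tfrac32=\theta_3$ and $1/\gamma_s=\frac{2^{s-1}}{s-1}=\theta_s$ for $s\ge 4$, this case gives exactly the claimed bound.

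\emph{A large part} ($|V_1|\ge s$). Let $h\colonequals N-|V_1|$. Since $V_1$ is color-homogeneous, Fact~\ref{fa:homo}~(i) forces $h\le t-1$, and also forces either $|V_1|\le t-1$ or $h\le s-1$.
\begin{itemize}
\item If $|V_1|\le t-1$, then $N\le 2t-2$. This suffices for $s\ge 4$, where $\theta_s\ge 2$. For $s\in\{2,3\}$ it is not enough, and I will refine as described below.
\item If $h\le s-1$, then the bulk of the coloring lives in $V_1$.
\begin{itemize}
\item If $\chi$ restricted to $G[V_1]$ still uses at least three colors, then Fact~\ref{fa:homo}~(ii) shows that $G[V_1]$ has no orderable $K_{s,t-h}$. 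Induction gives $|V_1|\le M_s(t-h)\le\theta_s(t-h)+C_s$, and therefore $N\le\theta_s t+C_s$ because $\theta_s\ge1$.
\item Otherwise the third color lies inside one of the at most $s-1$ outside vertices. This configuration is rigid: the outside set is tiny and contains a part $V_j$ of size at least $2$. I would re-choose the Gallai partition, for instance by taking the coarsest one, or by splitting $V_1$ with its own Gallai partition and merging it with the outside. The aim is a partition where the third-color edge sits inside a part $V_j$ that has large complement. Then every vertex $v\notin V_j$ is monochromatic to $V_j$, and pigeonholing over those $N-|V_j|$ vertices yields a long orderable $K_{s,t}$ with $V_j$ placed first, as in the final step of the proof of Theorem~\ref{thm:CRKstUpper}.
\end{itemize}
\end{itemize}

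For $s=2$ and $s=3$ I would sharpen the large-part case directly. Let $u\in V_j$ and $u'\in V_1$ be vertices in two parts of size at least $2$, with one of these parts carrying the third color. Count the compatible vertices for pairs or triples drawn from $V_1$ together with one outside vertex, using the characterization in Lemma~\ref{le:r2K2tcondition} and the reduced-graph averaging of Lemma~\ref{le:CRKst-equal-weight-2}, applied with weights $w_i=|V_i|$ (as in Case~2 of Lemma~\ref{le:CRKst-equal-1}). The target is to find an $\mathbf{x}$ whose compatible set has weight at least $\tfrac45N-O(1)$ for $s=2$ and $\tfrac23N-O(1)$ for $s=3$.

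The main obstacle is this large-part case for $s\in\{2,3\}$, where the crude estimate $N\le 2t$ exceeds $\theta_s t$. The first thing I would try is the weighted double-counting of Claim~\ref{cl:CRKst-equal-1-1}. For $s=2$ this means showing that some vertex of the reduced graph has $\min\{n_{rb},n_{br}\}$-weight at most $N/5$, which combines a homogeneous part of size at least $2$ with the balance forced by the absence of orderable $K_{2,t}$. I expect the constant $\tfrac54$ to come out of optimizing a quadratic in $|V_1|/N$.
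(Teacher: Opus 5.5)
There is a genuine gap exactly where you locate the main obstacle, and the tool you propose cannot close it. Your small-parts case and your case $h\le s-1$ with at least three colors inside $V_1$ are fine.

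For $s\in\{2,3\}$ and a part with $s\le |V_1|\le t-1$, you hope to get compatible weight $\frac45N-O(1)$ (resp.\ $\frac23N-O(1)$) from Lemma~\ref{le:CRKst-equal-weight-2} with $w_i=|V_i|$. That cannot work for two reasons.
\begin{itemize}
\item The main term of the lemma is only $\frac{s+1}{2^s}W(R)$, i.e.\ $\frac34N$ resp.\ $\frac12N$. This can at best yield $N\le c_st$, and $c_2=\frac43>\theta_2$, $c_3=2>\theta_3$.
\item Its error term $M(\frac{s(s+1)}{2^s}+{s\choose 2})$ has $M=|V_1|\ge N-t+1$, which is linear in $t$ in the relevant range.
\end{itemize}

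The missing idea is to go \emph{inside} $V_1$. For any $h=N-|V_1|\le t-1$, Fact~\ref{fa:homo}~(ii) shows that $G[V_1]$ has no orderable $K_{s,t-h}$.
\begin{itemize}
\item If $G[V_1]$ uses at least three colors, your induction already applies for every such $h$, not only $h\le s-1$.
\item If $G[V_1]$ is $2$-colored, Theorem~\ref{thm:CRKstUpper} gives $|V_1|\le r'_2(K_{s,t-h})-1\le c_s(t-h)+O_s(1)$. For $h\ge s$, Fact~\ref{fa:homo}~(i) also gives $|V_1|\le t-1$. Hence $N\le h+\min\{t-1,\ c_s(t-h)+O_s(1)\}\le (2-\frac{1}{c_s})t+O_s(1)$.
\end{itemize}
The last bound is exactly $\frac54t$ for $s=2$ and $\frac32t$ for $s=3$; this piecewise-linear optimization is where $\theta_2,\theta_3$ come from.

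The paper organizes this as an iteration. It repeatedly replaces a color-homogeneous $X$ by a Gallai part of size at least $s$, moving the rest into $Y$ and keeping $\tau=t-|Y|>0$. This stops in one of two ways.
\begin{itemize}
\item All parts of $G[X]$ have size at most $s-1$. Then Lemma~\ref{le:CRKst-equal-1} and Fact~\ref{fa:homo}~(ii) give $\gamma_s|X|+|Y|\le t+O_s(1)$.
\item $G[X]$ is $2$-colored. Then the bound above applies, or $|X|\le c_{s-1}t+O_s(1)$ when $|Y|\le s-1$.
\end{itemize}

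Your subcase analysis for $h\le s-1$ is also incomplete. The claim that otherwise the third color lies inside one of the outside vertices is false. $G[V_1]$ may be $2$-colored using a color other than red and blue (e.g.\ red and green inside $V_1$). For $s=2$, where $h\le 1$, this is the only possibility. It is handled by another $2$-color bound: since $h\ge 1$, Fact~\ref{fa:homo}~(ii) shows $G[V_1]$ has no orderable $K_{s-1,t}$. So $|V_1|\le r'_2(K_{s-1,t})-1\le c_{s-1}t+O_s(1)$, and $c_1=1$, $c_2=\frac43$, $c_{s-1}<\theta_s$ for $s\ge 4$.

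Even the subcase you do describe needs repair. There the third color is inside an outside part $V_j$ with $2\le |V_j|\le s-1$. Pigeonholing as in the proof of Theorem~\ref{thm:CRKstUpper} only gives $N\le 2c_{s-|V_j|}t+O_s(1)$, which is $2t$ for $s=3$. Use instead that $V_1$ is a single part, hence joined to $V_j$ in one color; then an orderable $K_{s-|V_j|,t}$ inside $G[V_1]$ already suffices.

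Finally, an induction on $t$ for a statement asserted only for large $t$ needs a constant $C_s$ valid for all $t$. Small $t$ can be absorbed using $M_s(t)\le CR(K_{s,t},K_3)-1$.
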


\noindent {\bf Proof.}
Let $G$ be a Gallai-colored $K_n$ using at least three colors.
Assume that $G$ contains no orderable $K_{s,t}$, and we shall show that $n\leq \theta_s t +O_s(1).$
We recursively construct triples $(X, Y, \tau)$, where $X$ is a color-homogeneous set, $Y=V(G)\setminus X$, and $\tau$ is a positive integer satisfying $\tau +|Y|=t$.

Initially, $X=V(G)$, $Y=V(G)\setminus X=\emptyset$ and $\tau=t$; note that $V(G)$ is a color-homogeneous set, which holds vacuously since no vertex lies outside $V(G)$.
Take a Gallai partition $V_1, V_2, \ldots, V_m$ of $V(G[X])$.
Note that each part $V_i$ is a color-homogeneous set.
If there exists a part $V_i$ with $|V_i|\geq s$, then we must have $\tau-|X\setminus V_i|>0$
(otherwise if $|X\setminus V_i|\geq \tau$, then $|(X\setminus V_i)\cup Y|\geq \tau + |Y|=t$, and thus the edges between $V_i$ and $(X\setminus V_i)\cup Y$ form an orderable $K_{s,t}$ by Fact~\ref{fa:homo}~(i), a contradiction).
Thus we can replace
$$Y\leftarrow Y\cup (X\setminus V_i), ~~~~~~ \tau\leftarrow \tau-|X\setminus V_i|, ~~~~~~ X\leftarrow V_i.$$
We repeat this procedure until all parts in a Gallai partition of $V(G[X])$ have size at most $s-1$.
Note that in each step, the new set $X=V_i$ is still color-homogeneous in the original graph $G$.
Indeed, for every vertex $y$ newly added to $Y$, all edges between $y$ and $V_i$ are of the same color by the Gallai partition property;
for every old vertex $y$ of $Y$, all edges between $y$ and the old $X$ (and thus new $X=V_i$) are of the same color since the old $X$ was color-homogeneous.

Since the Gallai partition has at least two nonempty parts, the replacement strictly decreases $|X|$, and thus the procedure terminates.
If we make a more careful distinction, then when the procedure terminates, exactly one of the following occurs:
\begin{itemize}
\item[] (Type I) All parts in a Gallai partition of $V(G[X])$ have size at most $s-1$, and at least one part has size at least 2.
\item[] (Type II) All parts in a Gallai partition of $V(G[X])$ have size 1.
\end{itemize}
We divide the rest of the proof into two cases.
\vspace{0.2cm}

{\bf Case~1.} Type I occurs.
\vspace{0.2cm}

Since $\tau +|Y|=t$ and $\tau$ is positive, we have $|Y|<t$.
If $|X|=O_s(1)$, then $n=|X|+|Y|\leq t+O_s(1)\leq \theta_s t +O_s(1)$, and we are done.
Hence, we may assume that $|X|$ is sufficiently large.
Moreover, Type I can occur only when $s\geq 3$.
By Lemma~\ref{le:CRKst-equal-1}, $G[X]$ contains an orderable $K_{s, \ell}$ with $\ell\geq \gamma_s|X|-O_s(1)$, where $\gamma_3=\frac{3}{4}$ and $\gamma_s=\frac{s-1}{2^{s-1}}$ for $s\geq 4$.
By Fact~\ref{fa:homo}~(ii), $G$ contains an orderable $K_{s,\ell+|Y|}$ since $X$ is color-homogeneous.
Since $G$ contains no orderable $K_{s,t}$, we have $\gamma_s|X|-O_s(1)+|Y|\leq \ell+|Y|\leq t-1$.
Thus $|X|\leq \frac{1}{\gamma_s}\left(t-|Y|+O_s(1)\right)\leq \theta_s\left(t-|Y|+O_s(1)\right)$ and
$$n=|X|+|Y|\leq \theta_s\left(t-|Y|+O_s(1)\right)+|Y|\leq \theta_s t +O_s(1).$$
%\vspace{0.2cm}

{\bf Case~2.} Type II occurs.
\vspace{0.2cm}

In this case, the edge-coloring of $G[X]$ uses at most two colors.
Since the original edge-coloring of $G$ uses at least three colors, we must have $Y=V(G)\setminus X\neq \emptyset$.
By Fact~\ref{fa:homo}~(ii) and since $G$ contains no orderable $K_{s,t}$, $G[X]$ contains no orderable $K_{s, t-|Y|}$ or $K_{s-1, t}$.
Combining with Theorem~\ref{thm:CRKstUpper} and $r'_2(K_{1,t})=t+1$, we have
\begin{equation}\label{eq:CRKst-equal-2-1}
|X|\leq r'_2(K_{s,t-|Y|})-1\leq c_s(t-|Y|)+C_s,
\end{equation}
\begin{equation}\label{eq:CRKst-equal-2-2}
|X|\leq r'_2(K_{s-1,t})-1\leq c_{s-1}t+C'_s,
\end{equation}
where $c_s= \frac{2^s}{s+1}$, and $C_s$ and $C'_s$ are some fixed constants.
Notice that if $t-|Y|<s$, Theorem~\ref{thm:CRKstUpper} cannot be applied directly to $K_{s, t-|Y|}$. Nevertheless, Inequality~(\ref{eq:CRKst-equal-2-1}) still holds in this case, since we have $r'_2(K_{s,t-|Y|})\leq r'_2(K_{s,s})=O_s(1)$ by Theorem~\ref{thm:CRKstUpper}.

If $|Y|\leq s-1$, then by Inequality~(\ref{eq:CRKst-equal-2-2}), we have
\begin{equation}\label{eq:CRKst-equal-2-3}
n=|Y|+|X|\leq s-1+ c_{s-1}t+C'_s =c_{s-1}t+O_s(1).
\end{equation}
If $|Y|\geq s$, then to avoid an orderable $K_{s,t}$, we have $|X|\leq t-1$ by Fact~\ref{fa:homo}~(i).
Combining with Inequality~(\ref{eq:CRKst-equal-2-1}), we have
$$n=|Y|+|X|\leq |Y|+\min\{t-1, c_s(t-|Y|)+C_s\}.$$
When $t-1\leq c_s(t-|Y|)+C_s$, we have $|Y|\leq \left(1-\frac{1}{c_s}\right)t+\frac{C_s+1}{c_s}$, so
$n\leq |Y|+t-1\leq \left(1-\frac{1}{c_s}\right)t+\frac{C_s+1}{c_s}+t-1=\left(2-\frac{1}{c_s}\right)t+O_s(1)$;
when $t-1\geq c_s(t-|Y|)+C_s$, we have $|Y|\geq \left(1-\frac{1}{c_s}\right)t+\frac{C_s+1}{c_s}$, so
$n\leq |Y|+c_s(t-|Y|)+C_s=c_s t -(c_s-1)|Y|+C_s\leq c_s t -(c_s-1)\left(\left(1-\frac{1}{c_s}\right)t+\frac{C_s+1}{c_s}\right)+C_s=\left(2-\frac{1}{c_s}\right)t+O_s(1).$
Hence, in the case $|Y|\geq s$, we have
\begin{equation}\label{eq:CRKst-equal-2-4}
n\leq \left(2-\frac{1}{c_s}\right)t+O_s(1).
\end{equation}

By Inequalities~(\ref{eq:CRKst-equal-2-3}) and (\ref{eq:CRKst-equal-2-4}), we have
$$n\leq \max \left\{c_{s-1}, 2-\frac{1}{c_s}\right\}\cdot t+O_s(1).$$
For $s=2$, we have $\max \left\{c_{s-1}, 2-\frac{1}{c_s}\right\}=\max \left\{1, 2-\frac{3}{4}\right\}=\frac{5}{4}=\theta_2$.
For $s=3$, we have $\max \left\{c_{s-1}, 2-\frac{1}{c_s}\right\}=\max \left\{\frac{4}{3}, 2-\frac{1}{2}\right\}=\frac{3}{2}=\theta_3$.
For $s\geq 4$, we have $\max \left\{c_{s-1}, 2-\frac{1}{c_s}\right\}\leq \max \left\{\frac{2^{s-1}}{s}, 2\right\}=\frac{2^{s-1}}{s}<\frac{2^{s-1}}{s-1}=\theta_s$.
The proof of Lemma~\ref{le:CRKst-equal-2} is complete.
\hfill$\blacksquare$
\vspace{0.2cm}

Now we have all the ingredients to present our proof of Theorem~\ref{thm:CRKst-equal}.
\vspace{0.2cm}

\noindent {\bf Proof of Theorem~\ref{thm:CRKst-equal}.}
Since $r'_2(K_{1,t})=CR(K_{1,t}, K_3)=t+1$ and $r'_2(K_{s,t})\leq CR(K_{s,t}, K_3)$ for any $t\geq s\geq 2$, it suffices to show that $CR(K_{s,t}, K_3)\leq r'_2(K_{s,t})$ for fixed $s \geq 2$ and sufficiently large $t$.
Consider an edge-coloring $\chi$ of $K_n$ with $n=r'_2(K_{s,t})$, where $s \geq 2$ is fixed and $t$ is sufficiently large.
For a contradiction, suppose that it contains neither an orderable $K_{s,t}$ nor a rainbow $K_3$.
Then $\chi$ uses at least three colors (otherwise there is an orderable $K_{s,t}$ since $n=r'_2(K_{s,t})$).
By Lemma~\ref{le:CRKst-equal-2}, we have $n\leq M_s(t)\leq \theta_s t +O_s(1)$, where $\theta_2=\frac{5}{4}$, $\theta_3=\frac{3}{2}$ and $\theta_s=\frac{2^{s-1}}{s-1}$ for $s\geq 4$.
On the other hand, by Lemma~\ref{le:CRKstlower}, we have $n=r'_2(K_{s,t}) \geq \frac{2^s}{s+1} t-O_s\left(\sqrt{t \ln t}\right)$ for any fixed integer $s \geq 2$ and sufficiently large $t$.
Hence, we have $$\frac{2^s}{s+1} t-O_s\left(\sqrt{t \ln t}\right)\leq n\leq \theta_s t +O_s(1).$$
But $\theta_2=\frac{5}{4}<\frac{4}{3}$, $\theta_3=\frac{3}{2}<2$, and $\theta_s=\frac{2^{s-1}}{s-1}<\frac{2^s}{s+1}$ for $s\geq 4$,
where the last inequality follows from $\frac{2^{s-1}}{s-1}/\frac{2^s}{s+1}=\frac{s+1}{2(s-1)}<1$.
Hence, for sufficiently large $t$, we have $$\theta_s t +O_s(1)<\frac{2^s}{s+1} t-O_s\left(\sqrt{t \ln t}\right).$$
This contradiction completes the proof of Theorem~\ref{thm:CRKst-equal}.
\hfill$\blacksquare$

\section{Concluding remarks}
\label{sec:conclu}

In this paper, we obtain exact values or bounds for $r'_2(G)$ and $CR(G, K_3)$, primarily when $G$ is a complete graph or a complete bipartite graph.
From the definitions, we know that $r'_2(G)\leq CR(G, K_3)$.
For complete graphs $K_s$ with $s\geq 6$, we have $r'_2(K_s)< \frac{13}{12}\left(\frac{2}{\sqrt{5}}\right)^{s-3}\cdot CR(K_s, K_3)+1$ by Theorems~\ref{thm:r2s} and \ref{thm:CRs3};
whereas for complete bipartite graphs $K_{s,t}$ with $s$ fixed and $t$ large, we have
$r'_2(K_{s,t})=CR(K_{s,t}, K_3)=\left(\frac{2^s}{s+1}+o(1)\right)t$
by Theorem~\ref{thm:CRKst}.
In light of this, we propose the following problem.

\begin{problem}\label{prob:Kst-1}
For integers $t\geq s\geq 1$, what is the minimum integer $N$ such that, for every $n\geq N$, every edge-coloring of $K_n$ using at least three colors contains either an orderable copy of $K_{s,t}$ or a rainbow copy of $K_3$?
\end{problem}

Note that the value $N$ in Problem~\ref{prob:Kst-1} is in fact $M_s(t)+1$, where $M_s(t)$ is defined in Section~\ref{subsec:pf_complete_bipar_equal}.
For fixed $s\geq 2$ and sufficiently large $t$, Lemma~\ref{le:CRKst-equal-2} shows $M_s(t)\leq \theta_s t +O_s(1)$, where $\theta_s$ is strictly smaller than $\frac{2^s}{s+1}$.

Using properties of strongly regular graphs, Hadamard matrices and conference matrices, we show that $r'_2(K_{2,t})= CR(K_{2,t}, K_3)=\frac{4t}{3}+2$ and $r'_2(K_{3,t})= CR(K_{3,t}, K_3)=2t+2$ for infinitely many values of $t$.
We now remark on the challenges in employing our current arguments to determine exact values of $r'_2(K_{s,t})$ and $CR(K_{s,t}, K_3)$ for $s\geq 4$.
Essentially, our approach is to give an upper bound for $\left|Y_{\chi}(\mathbf{x})\right|$ or $L_{H}(\mathbf{x})$.
Both quantities can be expressed as sums, over vertices $y$, of indicators of the form $\mathbf{1}_{\left\{\left(\mathbf{1}_{\{x_1y \in E(H)\}}, \ldots, \mathbf{1}_{\{x_sy\in E(H)\}}\right)\in \mathcal{C}_s\right\}}$.
For $s\in \{2,3\}$, this can be controlled by vertex degrees and numbers of common neighbors of two vertices, so the parameters of a strongly regular graph provide exactly the required information.
For $s\geq 4$, this depends on numbers of common neighbors of more than two vertices.
Strongly regular graphs, Hadamard matrices and conference matrices do not in general determine the required information.

Another natural problem is to study the $k$-color variants $r'_k(G)$ and $CR_k(G, H)$.
For any integer $k\geq 2$ and graph $G$, we define $r'_k(G)$ as the minimum integer $n$ such that every $k$-edge-coloring of $K_n$ contains an orderable copy of $G$.
For any integer $k\geq 2$ and graphs $G$ and $H$, we define $CR_k(G, H)$ as the minimum integer $n$ such that every $k$-edge-coloring of $K_n$ contains either an orderable copy of $G$ or a rainbow copy of $H$.
Further research in this direction could be fruitful.

\vspace{-0.2cm}
\section*{Acknowledgements}
\vspace{-0.3cm}

This work was supported by the National Natural Science Foundation of China (Grant No. 12501492),
Shaanxi Fundamental Science Research Project for Mathematics and Physics (Grant No. 25JSQ043),
and the Fundamental Research Funds for the Central Universities (Grant No. GK202506024).

\vspace{-0.4cm}
\section*{Declaration of competing interests}
\vspace{-0.4cm}

The author declares no competing interests.
\vspace{-0.4cm}

\section*{Data availability}
\vspace{-0.4cm}

No data were used for the research described in the article.

\begin{spacing}{0.8} %行间距

\end{spacing}

\end{document}